\newtheorem{theorem}{Theorem}[section]
\newtheorem{definition}{Definition}[section]
\newtheorem{lemma}{Lemma}[section]
\newtheorem{remark}{Remark}[section]
\newtheorem{proposition}{Proposition}[section]
\newtheorem{corollary}{Corollary}[section]
\numberwithin{equation}{section}
\renewcommand{\d}{\operatorname{d}}
\renewcommand{\u}{\mathbf{u}}
\newcommand{\pt}{\partial}
\newcommand{\ls}{\leqslant}
\newcommand{\gs}{\geqslant}
\newcommand{\odiv}{\operatorname{div}}
\newcommand{\ra}{\rightarrow}
\newcommand{\vp }{\varphi}
\newcommand{\eps}{\varepsilon}
\newcommand{\R}{{\mathbb R}}
\renewcommand{\O}{\mathbb{T}^3}
\begin{document}
\title[Martingale Solutions to SCNS with Density-dependent Viscosity]{The Global Existence of Martingale Solutions to Stochastic Compressible Navier-Stokes Equations with Density-dependent Viscosity}

\author{Yachun Li}
\address[Y. Li]{School of Mathematical Sciences, CMA-Shanghai, MOE-LSC, and SHL-MAC, Shanghai Jiao Tong University, Shanghai 200240, P. R. China} \email{\tt ycli@sjtu.edu.cn}

\author{Lizhen Zhang}
\address[L. Zhang]{School of Mathematical Sciences, Shanghai Jiao Tong University, Shanghai 200240, P. R. China}
\email{\tt ZhangLizhen@sjtu.edu.cn}

\begin{abstract}
The global existence of martingale solutions to the compressible Navier-Stokes equations driven by stochastic external forces, with density-dependent viscosity and vacuum, is established in this paper. This work can be regarded as a stochastic version of the deterministic Navier-Stokes equations \cite{Vasseur-Yu2016} (Vasseur-Yu, Invent. Math., 206:935--974, 2016.), in which the global existence of weak solutions was established for adiabatic exponent $\gamma > 1$. For the stochastic case, the regularity of density and velocity is even worse for passing the limit in nonlinear terms. We design a regularized system to approximate the original system. To make up for the lack of regularity of velocity, we need to add an artificial Rayleigh damping term besides the artificial viscosity and damping forces in \cite{Vasseur-Yu-q2016,Vasseur-Yu2016}. Moreover, we have to send the artificial terms to $0$ in a different order.
\end{abstract}

\date{\today}
\subjclass[2010]{35A01, 35Q30, 35Q35, 35Q40, 35R60, 60H15. \\
{\bf Acknowledgements. } The authors' research was supported in part
by Chinese National Natural Science Foundation under grants 12371221, 12161141004, and 11831011. The authors were also grateful to the supports by Shanghai Frontiers Science Center of Modern Analysis and the Fundamental Research Funds for the Central Universities. The authors appreciate Prof. Hermano Frid and Prof. Deng Zhang for valuable discussion.}
\keywords{Global existence, martingale solutions, Navier-Stokes equations, multiplicative noise, degenerate viscosity, vacuum}

\maketitle
{ \small{\small \tableofcontents}}
 \setcounter{tocdepth}{2}

\section{Introduction}
Compressible Navier-Stokes equations describe the motion of compressible viscous Newtonian fluid. Practically, besides the structural vibration of the fluid, the fluid will also be affected by random external forces such as humidity, wind, solar radiation, industrial pollution, etc. Therefore it is reasonable to add stochastic forces to Navier-Stokes equations. 
The system of stochastic compressible Navier-Stokes equations (SCNS for short) in torus $\mathbb{T}^{3}$ reads as
\begin{equation}\label{sto NS}
\left\{\begin{array}{l}
\rho_{t}+\operatorname{div}\left(\rho \mathbf{u}\right)=0,\\
\d (\rho \mathbf{u})+\left(\operatorname{div}\left(\rho \mathbf{u} \otimes \mathbf{u}\right)+\nabla p-\operatorname{div}\mathcal{T}\right)\d t = \rho \mathbb{F}(\rho,\mathbf{u}) \d W,
\end{array}\right.
\end{equation}
where $(t,x) \in \mathbb{R}^+ \times \O$, $\rho$ is the density, $\mathbf{u}=(u_{1}, u_{2}, u_{3})\in \R^{3}$ denotes the velocity, $p$ is the pressure. For polytropic isentropic gas, the pressure can be expressed as $\displaystyle p=a\rho^{\gamma}$, where $a$ is a constant. $\mathcal{T}=2 \mu \mathbb{D}(\u)+\lambda \operatorname{div} \u \mathbb{I}_{3}$ is the viscous stress tensor, $\mu>0$ is the shear viscosity coefficient, $\lambda+\frac{2}{3}\mu >0 $ is the bulk viscosity coefficient, $\mathbb{I}_{3}$ is the $3\times 3$ identity matrix,
\begin{equation}
\mathbb{D}\mathbf{u}=\frac{\nabla \u+\nabla \u^{\top}}{2}
\end{equation}
is the deformation tensor.
 $\rho \mathbb{F}\left(\rho,\mathbf{u}\right)\d W$ is the external multiplicative noise. $W$ is a cylindrical $\mathcal{F}_{t}$-adapted Wiener process in the stochastic basis $\left(\Omega,\mathcal{F},\mathbb{P}\right)$, $\left(\mathcal{F}_{t}\right)_{t\geqslant 0}$ is the right-continuous filtration, see \S \ref{def in app} in Appendix.
\begin{equation}
W=\sum\limits_{k=1}^{+\infty}e_{k}\beta_{k}, \quad \d W=\sum\limits_{k=1}^{+\infty}e_{k}\d \beta_{k},
\end{equation}
where $\beta_{k}$ is the standard real Brownian motion, $\{e_{k}\}_{k=1}^{+\infty}$ is an orthonormal basis in an auxiliary separable Hilbert space $\mathcal{H}$, which is isometrically isomorphic to $l^{2}$, the space of square-summable sequences. $\mathcal{H}$ is independent of domain $\O$.
Let $H$ be a Bochner space, $\mathbb{F}\left(\rho,\mathbf{u}\right)$ is an $H$-valued operator from $ \mathcal{H}$ to $ \mathcal{H}$. Denoting the inner product in $\mathcal{H}$ as $  \langle\cdot,\cdot \rangle $,
the inner product
\begin{equation}
\langle\mathbb{F}\left(\rho,\mathbf{u}\right),e_{k}\rangle = \mathbf{F}_{k}\left(t,x,\rho,\u\right)
\end{equation}
 is a 3D $H$-valued vector function, which shows the strength of the external stochastic forces,
 \begin{equation}
 \mathbb{F}\left(\rho,\mathbf{u}\right)\d W=\sum\limits_{k=1}^{+\infty}\mathbf{F}_{k}\left(t,x,\rho,\u\right)\d \beta_{k}, \quad \mathbb{F}\left(\rho,\mathbf{u}\right)=\sum\limits_{k=1}^{+\infty}\mathbf{F}_{k}\left(t,x,\rho,\u\right) e_{k}.
 \end{equation}

We will write $\mathbf{F}_{k}\left(t,x,\rho,\u\right)$ as $\mathbf{F}_{k}\left(\rho,\u\right)$ for short in the sequel.
Besides, $\rho$ and $\mathbf{u}$ are actually random variables depending on $\omega \in \Omega$ : $\rho=\rho(\omega,t,x)$, $\mathbf{u}=\mathbf{u}(\omega,t,x)$, for $\omega\in \Omega$; however, we write $\rho(t,x)$ and $\mathbf{u}(t,x)$ for short to make the notations consistent with the usual deterministic studies in fluid dynamic equations.

\subsection{Some progresses}
 When $\rho\mathbb{F}(\rho,\u)\equiv0$, system \eqref{sto NS} reduces to deterministic Navier-Stokes system. There are numerous results concerning the well-posedness of weak solutions, among which we select in this subsection the ones most relevant to our work. Kazhikhov-Shelukhin \cite{Kazhikhov-Shelukhin1977}
obtained the existence of global weak solutions to the initial boundary problem of 1D Navier-Stokes-Fourier system in 1977. Lions \cite{Lions1998} proved the global existence with large initial data in 1998 for  $\gamma \geqslant \frac{3}{2}$ as $n=2$ and $\gamma \geqslant \frac{9}{5}$ as $n=3$. Then in 2001, Feireisl-Novotn\'y-Petzeltov\'a \cite{Feireisl-Novotny-Petzeltova2001} extended the range of adiabatic exponent to $\gamma>\frac{n}{2}$ in Leray sense. 
 In 2003, Jiang-Zhang \cite{Jiang-Zhang2003} extended to $\gamma>1$ for the spherically symmetric solutions. In 2021, Hu \cite{Huxianpeng2021} obtained the renormalized global weak solutions for $\frac{6}{5}<\gamma<\frac{3}{2}$ up to a closed set with zero parabolic Hausdorff measure.

The above results are all for the case of constant viscosities.
However, the viscosity coefficients $\mu$ and $\lambda$ may depend on the temperature or the density, or both, which can be mathematically derived from the Chapman-Enskorg expansion in Boltzman equation \cite{TatsienLi}. Physically, one can also find this dependence in the  inverse cut-off power law models and Maxwellian molecules \cite{bookChanpmanCowling}. In the isentropic case, by Boyle law and Gay-Lussac law, such dependence is reduced to the dependence on the density, denoted as $\mu (\rho)$ and $\lambda(\rho)$. In 1995, Kazhikhov-Vaigant \cite{Kazhikhov-Vaigant1995} considered the global existence of solutions to Navier-Stokes-Fourier system for the 2D case with the viscosity coefficient satisfying $\mu\left(\rho\right)=1$, $\lambda\left(\rho\right)=\rho^{\beta}$, $\beta>3$.
In 2006, Bresch and Desjardins \cite{BD2006} first introduced Bresch-Desjardins entropy to get further more regularity of density so as to solve the shallow water problem, in which the Bresch-Desjardins relation (B-D relation) $\lambda(\rho)=2(\rho\mu^{\prime}(\rho)-\mu(\rho))$ is necessary to balance the terms in deducing the B-D entropy. One year later, Mellet-Vasseur  \cite{Mellet-Vasseur2007} deduced the Mellet-Vasseur type inequality which shows that $\rho\u^{2}$ is bounded in $L^{\infty}\left(0,T; L\log L\left( \O \right)\right)$, and established the compactness of $\sqrt{\rho}\u$ in $L^{2}([0,T]\times \O)$.
 In 2016 Vasseur-Yu \cite{Vasseur-Yu2016} proved the global existence when the viscosity coefficients satisfy $\mu(\rho)=\frac{\rho}{2}$, $\lambda(\rho)=0$ without extra constraint on the range of adiabatic exponent $\gamma$ by means of vanishing viscosity method, the Bresch-Desjardins entropy (B-D entropy) and Mellet-Vasseur type inequality.\\
Now we turn to the stochastic case: $\rho\mathbb{F}(\rho,\u)\not\equiv 0$. We consider the so-called martingale solutions of the system, which can be interpreted as weak solutions in the stochastic version \cite{Stroock-Varadhan,Karatzas1988}. The existence of 2D periodic martingale solutions to Navier-Stokes-Fourier equations was first studied by Tornatore \cite{Tornatore2000} for Cauchy problem in 2000. In 2013, Feireisl-Malowski-Notovn\'y \cite{Feireisl-Maslowski-Novotny2013} get the global existence for the 3D Dirichlet problem with the same stochastic force $\rho\d W$ in \cite{Tornatore2000}. For multiplicative noise, Wang-Wang \cite{Wang-Wang2015} obtained the global martingale solutions in a bounded domain with non-slip boundary conditions in 2015. In 2016, Breit-Hofmanov\'a \cite{Breit-Hofmanova2016} showed the existence of global martingale solutions on a torus with periodic boundary conditions.
 In 2017, Smith \cite{Smith2017} got the global martingale solutions with Lipschitz continuous stochastic force. There are also results on singular limit \cite{BreitFeireislHofmanova2016}, the stationary solutions \cite{BreitFeireislHofmanovaMalowski2019}, and the weak-strong uniqueness \cite{BreitFeireislHofmanova2017}, see also \cite{bookFeireislNovotny}. The above results are all for the case of constant viscosity coefficients except in \cite{Tornatore2000} where the case  $\lambda(\rho)=1+\rho^{\beta}$ was considered but no degeneracy is involved.
In 2020, Breit-Feireisl \cite{BreitFeireisl2020} considered the case with temperature-dependent viscosities but no degeneracy:  $\underline{\mu} \cdot \left(1+\theta\right)<\mu(\theta)<\bar{\mu} \cdot\left(1+\theta\right)$, where $\underline{\mu}$ and $\bar{\mu}$ are specific positive constants, they proved the existence of martingale solution to Navier-Stokes-Fourier equations. Very recently in 2022, for the density-dependent viscosities same as in  \cite{Vasseur-Yu2016}, Brze\'zniak-Dhariwal-Zatorska \cite{Brzeziak-Dhariwal-Zatorska2022} proved the sequential stability of the martingale solutions to \eqref{sto NS} with density-dependent viscosities, based on the assumption that there exist global martingale solutions. Our purpose in this paper is to establish the global existence of martingale solutions.

\subsection{Our main result}

We consider the stochastic isentropic compressible Navier-Stokes equations with density-dependent viscosities. Once the viscosity coefficients depend on density, $\d\left(\rho \mathbf{u}\right)$ will degenerate at vacuum $\rho(t,x)=0$, which causes the uncertainty of the evolution of velocity $\d \u$. Moreover, $\operatorname{div}\mathcal{T}$ also degenerates at vacuum, which has an influence on the regularity of $\u$. Like \cite{Vasseur-Yu2016}, we consider
\begin{equation}\label{viscosity condition}
\mu(\rho)=\frac{\rho}{2},\quad \lambda(\rho)=0,
\end{equation}
which satisfies the B-D relation $\lambda(\rho)=2(\rho\mu^{\prime}(\rho)-\mu(\rho))$.
With \eqref{viscosity condition}, \eqref{sto NS} turns into
\begin{equation}\label{sto NS system}
\left\{\begin{array}{l}\vspace{1.2ex}
\rho_{t}+\odiv\left(\rho \mathbf{u}\right)=0,\\
\d\left(\rho \mathbf{u}\right)+\left(\odiv\left(\rho \mathbf{u} \otimes \mathbf{u}\right)+\nabla p-\odiv(\rho \mathbb{D} \mathbf{u})\right)\d t
=\rho \mathbb{F}\left(\rho,\mathbf{u}\right)\operatorname{d}W.\\
\end{array}\right.
\end{equation}
We aim at the global existence of martingale solutions to \eqref{sto NS system} in torus $\O$, given the initial conditions
\begin{equation}\label{initial}
\rho|_{t=0}=\rho_{0},\quad \rho \mathbf{u}|_{t=0}=\mathbf{q}_{0}.
\end{equation}

We assume that there exists $f=\left(f_{1},f_{2},\cdots,f_{k},\cdots\right)$, the constant $f_{k}\geqslant 0$, such that
\begin{align}\label{property of rho F}
 \left|\rho \mathbf{F}_{k}\left(\rho,\mathbf{u}\right)\right|\ls f_{k}\cdot \left(|\rho|+|\rho\u|\right), \quad \sum\limits_{k=1}^{+\infty} f_{k}^{2}<+\infty.
\end{align}
When $\rho=0$, we define $\mathbf{F}_{k}\left(\rho,\u\right)=0$, when $\rho>0$, we write $\mathbf{F}_{k}\left(\rho,\u\right)=\mathbf{F}_{k}\left(\rho,\frac{\rho\u}{\rho}\right)$, $\mathbf{q}=\rho\u$.

We first give the definition of martingale solutions.
Let $\Lambda$ be the law (see \S \ref{def in app} in Appendix) of $\rho_{0}$ and $\mathbf{q}_{0}$, i.e. $\Lambda= \mathcal{L}\left[\rho_{0},\mathbf{q}_{0}\right]$.
\begin{definition}
$\left(\left(\Omega,\mathcal{F},\mathbb{P} \right),\rho, \u, W\right)$ is called a martingale solution to Cauchy problem \eqref{sto NS system}-\eqref{initial} with initial law $\Lambda$, where $\left(\rho, \u\right)$ is defined in probability space $\left(\Omega,\mathcal{F},\mathbb{P} \right)$, if
\begin{enumerate}
  \item $\left(\Omega,\mathcal{F},\mathbb{P} \right)$ is a stochastic basis with a complete right-continuous filtration $\mathcal{F}=\left(\mathcal{F}_{t}\right)_{t\geq 0}$;
  \item the density $\rho$ and the velocity $\u$ are stochastic processes adapted to $\mathcal{F}$;
  \item  $\mathcal{L}\left[\rho(0,x),\mathbf{q}(0,x)\right]=\Lambda$;
  \item the equation of continuity
\begin{align}
  -\int_0^T  \partial_{t} \varphi (t) \int_{\O}  \rho \psi(x) \d x \d t = \int_{\O} \varphi(0,x) \rho(0,x)\d x +\int_0^T  \varphi(t)  \int_{\O}\rho\u \cdot \nabla \psi(x) \d x \d t,
\end{align}
  holds $\mathbb{P}$ {\rm a.s.} for all $\varphi(t)\in C_{c}^{\infty}\left([0,T)\right)$, and all $\psi(x)\in C^{\infty}\left(\O\right)$;
  \item the momentum equation
\begin{align}
&-\varphi(0)\int_{\O}\mathbf{q}(0,x) \cdot \psi(x) \d x-\int_{0}^{T}\varphi_{t}(t) \int_{\O} \rho\u \psi(x) \d x \d t \notag\\
 &-\int_{0}^{T}\varphi(t) \int_{\O} \rho\u\otimes \mathbf{u}: \nabla \psi(x) \d x \d t-\int_{0}^{T} \varphi(t)\int_{\O} \rho^{\gamma} \odiv \psi(x) \d x \d t\\
 &-\int_{0}^{T}\varphi(t) \int_{\O} \rho \mathbb{D} \mathbf{u}: \nabla \psi(x) \d x \d t =\int_{0}^{T} \varphi(t)\int_{\O} \rho\mathbb{F}(\rho,\u)\psi(x)\d x\d W,\notag
\end{align}
 holds $\mathbb{P}$ {\rm a.s.} for all $\varphi(t)\in C_{c}^{\infty}\left([0,T)\right)$, and all $\psi(x)\in C^{\infty}\left(\O\right)$.
\end{enumerate}
\end{definition}
For the convenience of stating our main theorem, we introduce the notation of the energy
\begin{equation}\label{energy estimate1}
e(t)=\int_{\O}\left(\frac{1}{2}\rho|\mathbf{u}|^{2} + \frac{a}{\gamma}\rho^{\gamma}\d z\right)\d x=\int_{\O}\left(\frac{1}{2}\frac{|\mathbf{q}|^{2}}{\rho} + \frac{a}{\gamma}\rho^{\gamma}\d z\right)\d x,
\end{equation}
the B-D entropy
\begin{align}
\tilde{e}(t)=&\int_{\O}\left(\frac{1}{2}\rho\left|\u+\nabla \ln\rho\right|^{2}+\frac{a}{\gamma}\rho^{\gamma}\right)\d x\notag\\
= &\int_{\O}\left(\frac{1}{2}\frac{|\mathbf{q}|^{2}}{\rho}+\left|\mathbf{q}\cdot\nabla \ln\rho\right|+\frac{1}{2}\rho\left|\nabla \ln\rho\right|^{2}+\frac{a}{\gamma}\rho^{\gamma}\right)\d x,
\end{align}
and the Mellet-Vasseur quantity
\begin{align}
\tilde{\tilde{e}}(t)=\int_{\O} \rho\left(1+\frac{\left|\mathbf{q}\right|^{2}}{\rho^{2}}\right) \ln \left(1+\frac{\left|\mathbf{q}\right|^{2}}{\rho^{2}}\right) \d x.
\end{align}
 Our main theorem is as follows.
\begin{theorem}\label{my theorem}
Let $\gamma>1$. If there exists a positive constant $C$, such that, for some constant $r> 4$,
\begin{align}
&\mathbb{E}\left[e(0)^{r}\right]\ls C, \quad \mathbb{E}\left[\tilde{e}(0)^{r}\right]\ls C, \quad \mathbb{E}\left[\tilde{\tilde{e}}(0)^{r}\right]\ls C,
\end{align}
and
\begin{equation}\label{Condition for rho}
\Lambda\left(\left\{\left.\omega\in\Omega \right|\rho_{0}(x)\gs 0\right\}\right)=1,\quad \Lambda \left(\left\{\omega\in\Omega \left|0<\underline{m}\ls \int_{\O}\rho_{0}(x) \d x \ls \overline{m}<\infty\right.\right\}\right)=1,
\end{equation}
where $\underline{m}$ and $\overline{m}$ are constants, then there exists a martingale solution $\left(\left(\Omega,\mathcal{F},\mathbb{P} \right),\rho, \u, W\right)$ to (\ref{sto NS system}) in a completed probability space $\left(\Omega,\mathcal{F},\mathbb{P} \right)$. Moreover, there hold
\begin{enumerate}
\item the energy inequality
  \begin{equation}
\mathbb{E}\left[\sup\limits_{t\in[0,T]}e(t)^{r}\right] \ls C(1+\mathbb{E}\left[e(0)^{r}\right]),
\end{equation}
where the constant $C>0$ depends on $r$, $T$, and $\sum\limits_{k=1}^{+\infty}f^{2}_{k}$;
\item the B-D entropy estimates:
\begin{align}
\mathbb{E}\left[\sup\limits_{t\in[0,T]}\tilde{e}(t)^{r}\right] \ls C\left(1+\mathbb{E}\left[\tilde{e}(0)^{r}\right] \right),
\end{align}
where the constant $C>0$ depends on $r,T$, $\sum\limits_{k=1}^{+\infty}f^{2}_{k}$ and the initial data;
  \item the stochastic Mellet-Vasseur inequality:
\begin{align}
     \mathbb{E}\left[\sup\limits_{t\in[0,T]}\left(\tilde{\tilde{e}}(t)\right)^{r}\right]
\ls C\left(1+\mathbb{E}\left[\left(\tilde{\tilde{e}}(0)\right)^{r}\right]\right),
\end{align}
where  the constant $C>0$ depends on $r, T$, $\sum\limits_{k=1}^{+\infty}f^{2}_{k}$ and the initial data.
\end{enumerate}
\end{theorem}
 Our proof is based on the vanishing viscosity method. We regularize the original system to construct approximate solutions with better regularity as follows:
 \begin{align}\label{sto quantum damping NS system}
\left\{\begin{aligned}\vspace{1.2ex} 
&\rho_{t}+\operatorname{div}(\rho \mathbf{u})=\eps \triangle \rho,\\
&\d(\rho \mathbf{u})+\left(\operatorname{div}\left(\rho \mathbf{u} \otimes \mathbf{u}\right)+\nabla p-\operatorname{div}\left(\rho \mathbb{D} \mathbf{u}\right)+ \eps \triangle^{2} \u + r_{0} |\u|^{2}\u+r_{1} \rho|\mathbf{u}|^{2} \mathbf{u}+r_{2}\u\right)\d t\\
&=\left(\delta \rho\nabla \triangle^{9}\rho + \eta \rho^{-10}+\kappa \rho \left(\nabla\left(\frac{\triangle \sqrt{\rho}}{\sqrt{\rho}}\right)\right)\right)\d t+\rho \mathbb{F}\left(\rho,\mathbf{u}\right)\d W,\\
\end{aligned}\right.
\end{align}
where $\eps \triangle \rho$ is artificial viscosity, $r_{0} |\u|^{2}\u$ is Rayleigh damping force, $r_{1} \rho|\mathbf{u}|^{2} \mathbf{u}+r_{2}\u$ are the drag forces, $\delta \rho\nabla \triangle^{9}\rho + \eta \rho^{-10}$ are artificial pressures, $\kappa \rho \left(\nabla\left(\frac{\triangle \sqrt{\rho}}{\sqrt{\rho}}\right)\right)$ is the quantum term.
The initial conditions are $\rho(t,x)|_{t=0}=\rho_{0}(x), \left(\rho \mathbf{u}\right)(t, x)|_{t=0}=\mathbf{q}_{0}(x)$. 
By Galerkin approximation, we project \eqref{sto quantum damping NS system} in $m$D approximated space $H_{m}$ and get the global existence of approximated solutions in $H_{m}$. Employing Burkholder-Davis-Gundy's inequality, we estimate the $r$-th moment of the stochastic term. Then we do the energy estimates by It\^o's formula so as to figure out the tightness of the laws of approximated solutions $\Pi_{m}\left(\rho\right)$ and $\Pi_{m}\left(\rho\u\right)$. By Jakubowski's extension of Skorokhod's representation theorem and the energy estimates, we pass to the limit $m\ra +\infty$ to obtain the global existence of martingale solutions of \eqref{sto quantum damping NS system}. It is worth mentioning that some estimates of the higher order derivatives of density are not uniform in $\eps$. So we would establish the stochastic B-D entropy estimates uniformly in $\eps$. More specifically, the stochastic B-D entropy estimates give the $H^{1}$ regularity of $\rho^{\frac{1}{2}}$. Then we vanish the artificial viscosity by sending $\eps\ra 0$. By It\^o's formula, passing to the limit $\kappa \ra 0$, we derive the Mellet-Vasseur type inequality in stochastic version. The Mellet-Vasseur's inequality gives the strong convergence of $\rho^{\frac{1}{2}}\u$, so that the convergence of the nonlinear term $\rho\u\otimes \u$ holds  $\mathbb{P}$ a.s. Let $n\ra \infty$, $\delta \ra 0$, $\eta\ra 0$, $r_{0}\ra 0$ at the same time, where $n$ is the truncation of $\u$, with $|\u|\ls n $. Finally, we take the limit $r_{1}\ra 0$ and $r_{2}\ra 0$, then we obtain the global existence of martingale solutions of (\ref{sto NS system}) for $\gamma>1$, i.e., Theorem \ref{my theorem}.
In summary, the order of taking these limits is as follows:
\begin{enumerate}
  \item The artificial viscosity $\eps \triangle \rho$ vanishes, i.e., $ \eps \ra 0 $.
  \item The quantum term $ \kappa \rho \left(\nabla\left(\frac{\triangle \sqrt{\rho}}{\sqrt{\rho}}\right)\right)$ vanishes, i.e., $\kappa \ra 0$.
  \item The artificial pressure terms $\delta \rho\nabla \triangle^{9}\rho$ and $\eta \rho^{-10}$ vanish at the same time with the Rayleigh damping term $r_{0}|\u|^{2}\u$, i.e., $\delta \ra 0$, $\eta\ra 0$, $r_{0}\ra 0$, $n\ra \infty$ at the same time, where $n$ is the truncation of $\u$, $|\u|\ls n $.
  \item The artificial damping terms $r_{1}\rho|\u|^{2}\u$ and $r_{2}\u$ vanish, i.e., $r_{1}\ra 0$ and $r_{2}\ra 0$.
\end{enumerate}

The difficulties and the strategies for our problem are analyzed as follows.
\begin{itemize}
  \item [\textbf{1.}] \textbf{Loss of regularity of $\rho\u$ with respect to time $t$ caused by the stochastic external forces.} The Gaussian process in the right-hand side of the $(\ref{sto NS system})_2$ is at most H\"older-$\frac{1}{2}$ continuous. Moreover, the Aubin-Lions' lemma does not apply to $\rho\u$ in this case either. Therefore, we should consider the time continuity of $\rho\u$ so as to obtain the compactness of $\rho\u$ in $C\left([0,T];L^{\frac{3}{2}}\left( \O \right)\right)$. Compared with the regularity of $\rho\u\in C\left([0,T];L^{2}\left( \O \right)\right)$ in \cite{Breit-Hofmanova2016}, $\rho\u$ exhibits lower regularity in spatial variables in this paper.
  \item [\textbf{2.}] \textbf{Low regularity of $\u$ on account of degenerate viscosities}. Compared with the case of constant viscosity coefficients in \cite{Breit-Hofmanova2016}, the equation $(\ref{sto NS system})_{2}$ becomes degenerate since the diffusion term $\odiv (\rho\mathbb{D}\u)$ will vanish at vacuum. 
 We cannot obtain the estimate of $\left\|\mathbb{D}\u\right\|_{L^{2}([0,T]\times\left( \O \right))}$ from the term $\odiv (\rho\mathbb{D}\u)$ since $\rho$ does not have positive  lower bound. The lack of regularity of $\u$ is an obstacle to the convergence of approximations for the nonlinear term $\rho\u\otimes \u$. To make up the lack of regularity of $\u$, motivated by \cite{BD2006}, we construct an approximated scheme by adding artificial damping terms $-r_{1}\rho|\u|^{2}\u-r_{2}\u$ to the momentum equation. $-r_{1}\rho|\u|^{2}\u-r_{2}\u$ are drag forces in shallow water model, where $r_{1}\rho|\u|^{2}\u$ is in the turbulent regime, and $r_{2}\u$ ($r_{2}\geqslant 0$) is for the laminar case. We add the term $\eps\triangle^{2}\u$ for the estimate of $\triangle\u$, see \cite{Zatorska2012}. Inspired by \cite{Vasseur-Yu-q2016}, we added the quantum term $\kappa \rho \left(\nabla\left(\frac{\triangle \sqrt{\rho}}{\sqrt{\rho}}\right)\right)$, the artificial pressure term $\delta \rho\nabla \triangle^{9}\rho+\eta \rho^{-10}$  to provide more regularity for $\rho$ and $\u$. For the theory about quantum Navier-Stokes equations, please refer to \cite{Dong2010, Jungle2010, Gisclon-Violet2015, Jiang2011}. 
However, for our stochastic case, since $\rho\u \in C\left([0,T];L^{\frac{3}{2}}\left( \O \right)\right)$, these regularities are still not enough for the convergence of approximations of the nonlinear term $\rho\u\otimes \u$, and we find that the desired regularity of $\u$ can be obtained by adding an artificial damping term $-r_{0}|\u|^{2}\u$ which is exactly Rayleigh damping term \cite{dampingintro,bookRayleigh}. When we estimate the stochastic integral, the high regularity of $\rho$ from the stochastic B-D entropy estimates makes up the lack of regularity of $\u$.
  \item [\textbf{3.}]\textbf{Order of passing to the limit in stochastic case.} When vanishing these artificial terms to recover the original system, we have to take the order of passing to the limit different from the deterministic case. For example, Vasseur-Yu \cite{Vasseur-Yu2016} vanished artificial pressure terms before vanishing quantum terms. For the stochastic case, if taking the same order, we would not have the uniform bound of the Rayleigh damping term $-r_{0}|\u|^{2}\u$ in the stochastic B-D entropy. Hence we vanish the quantum term $\kappa \rho \left(\nabla\left(\frac{\triangle \sqrt{\rho}}{\sqrt{\rho}}\right)\right)$ first, then let the Rayleigh damping term $-r_{0}|\u|^{2}\u$ vanish at the same time with the artificial pressure terms $\delta \rho\nabla \triangle^{9}\rho$ and $\eta \rho^{-10}$. In addition, we use It\^o's formula instead of the renormalization of momentum equation to deduce the stochastic Mellet-Vasseur type inequality. Vasseur-Yu \cite{Vasseur-Yu2016} renormalized the momentum equation to obtain the convergence of terms for the Mellet-Vasseur's inequality. 
       Without the ``renormalized'' solutions for our stochastic case, the arising trouble of convergence is dealt by the regularity of artificial pressure terms and the subtle order of passing limit. 
\end{itemize}

The paper is organized as follows. In \S 2, we first show the global-in-time existence of solutions to stochastic quantum Navier-Stokes system with the damping terms and the artificial pressures. In \S 3, we establish the stochastic B-D entropy estimates uniformly in $\eps$, then we take the limit $\eps\ra 0$. In \S 4, we derive the Mellet-Vasseur inequality in stochastic version, meanwhile let $\delta \ra 0$, $\eta\ra 0$, $r_{0}\ra 0$, $n\ra \infty$. Finally we prove Theorem \ref{my theorem} by sending the limit $r_{1}\ra 0$ and $r_{2}\ra 0$. \S 5 is Appendix, the basic theories in Stochastic analysis is listed for the convenience of readers.

\section{Global existence of martingale solutions to the regularized system}
In this section, we first construct a parabolic approximation structure to $\eqref{sto quantum damping NS system}_{1}$, and then apply Galerkin approximation to $\eqref{sto quantum damping NS system}_{1}$. Then like \cite{BreitFeireislHofmanova-book2018,Karatzas1988}, for the convenience of energy estimates, we truncate $|\u|$ by $R$, dividing the time interval $[0,T]$ into $\frac{T}{h}$ parts, next we construct an iteration scheme (\ref{iteration of rho and u}). Next, we pass to the limit $h\ra 0$, which exactly shows that the limit of solution of (\ref{iteration of rho and u}) satisfies the system (\ref{Galerkin approximation scheme}) for $\forall t$. Then we let $R\ra +\infty$ to achieve the global existence of the Galerkin approximated regularized system. At the end of this section, we aim to obtain the global existence of martingale solutions in infinite dimensional space by sending $m\ra +\infty$.

\subsection{Galerkin approximation of the regularized  system in finite dimensional space $H_{m}$}
Let $\{w_{\mathbf{k}}\}_{|\mathbf{k}|=1}^{+\infty}$ be an orthogonal basis of $L^{2}(\O)$, $\mathbf{k}=(k_{1},k_{2},k_{3})$, $|\mathbf{k}|=\max\limits_{j=1,2,3}k_{j}$, more precisely, $w_{\mathbf{k}}(\mathbf{x})=e^{i2\pi \mathbf{k}\cdot \mathbf{x}}$.
The finite-dimensional space is defined as
\begin{equation}
H_{m}=\left\{ w=\sum\limits_{\mathbf{m},\max |m_{i}|\ls m}a_{\mathbf{m}}\cos2\pi \mathbf{m}\cdot \mathbf{x}+b_{\mathbf{m}}\sin 2\pi \mathbf{m}\cdot \mathbf{x} \right\}^{3}.
\end{equation}
For any $f\in W^{k, p}(\O)$, $\{w_{\mathbf{k}}\}_{|\mathbf{k}|=1}^{+\infty}$ is also an orthogonal basis of $W^{k, p}(\O)$,  $\Pi_{m}\left(f\right)=\sum\limits_{\left|\mathbf{k}\right|=1}^{m}\langle f, w_{\mathbf{k}}\rangle w_{\mathbf{k}}$. By choosing appropriate $a_{\mathbf{m}}$ and $b_{\mathbf{m}}$ (cf. \cite{Grafakos2008}), it holds $\Pi_{m}\left(f\right)\rightarrow f$ in $W^{k, p}\left( \O \right)$ as $m\rightarrow \infty$, $1 \ls p < +\infty$.

We employ the following Galerkin approximation scheme for \eqref{sto quantum damping NS system}:
\begin{align}\label{Galerkin approximation scheme}
\left\{\begin{array}{l}\vspace{1.2ex}
\rho_{t}+\operatorname {div}(\rho \mathbf{u})= \eps \triangle \rho,\\
\d  \Pi_{m}\left(\rho \mathbf{u}\right)+\Pi_{m}\left(\operatorname{div}(\rho \mathbf{u} \otimes \mathbf{u})+ \nabla\left( a\rho^{\gamma}\right)-\operatorname{div}(\rho
\mathbb{D} \mathbf{u})-\frac{11}{10}\eta \nabla \rho^{-10}\right)\d t\\
=\Pi_{m}\left(\left(-r_{0}|\u|^{2}\u-r_{1} \rho|\u|^{2} \u-r_{2}\u\right)\right)\d t                                                                                     -\Pi_{m}\left(\eps \nabla \rho\nabla \u\right)\d t-\Pi_{m}\left(\eps \triangle^{2} \u\right)\d t\\
+\Pi_{m}\left(\delta \rho\nabla \triangle^{9}\rho \right)\d t+\kappa \Pi_{m}\left(\rho\left(\nabla\left(\frac{\triangle \sqrt{\rho}}{\sqrt{\rho}}\right)\right)\right)\d t+\Pi_{m}\left(\rho \mathbb{F}(\rho,\u)\right)\d W. \\
\end{array}\right.
\end{align}
By the definition of $\Pi_{m}$, $\rho_{0}$ is approximated by $\Pi_{m}\left(\rho_{0}\right)$, by choosing suitable coefficients $a_{\mathbf{m}}, b_{\mathbf{m}}$, there exists a positive constant $\underline{\rho}$ and $\Pi_{m_{k}}\left(\rho_{0}\right)$, a subsequence of $\Pi_{m}\left(\rho_{0}\right)$, $\Pi_{m_{k}}\left(\rho_{0}\right)\geqslant \underline{\rho}$, where $\underline{\rho}$ changes with $m_{k}$. The subsequence is still denoted by $\Pi_{m}\left(\rho_{0}\right)$ for convenience.
By the maximum principle of the parabolic structure and $\eqref{Galerkin approximation scheme}_1$, we have
\begin{equation}\label{maximum principle}
\inf\limits_{x\in \O}\Pi_{m}\left(\rho_{0}\right) e^{-\int_{0}^{t}\left\| \odiv \mathbf{u} \right\|_{L^{\infty}}\d s}\ls\rho\ls \sup\limits_{x\in \O}\Pi_{m}\left(\rho_{0}\right) e^{-\int_{0}^{t}\left\| \odiv \mathbf{u} \right\|_{L^{\infty}}\d s}.
\end{equation}

%
In the finite dimensional space $H_{m}$, it naturally holds 
  $\Lambda\left(\{ \Pi_{m}\left(\rho_{0}\right)\ls \bar{\rho}\}\right)=1$, where $\bar{\rho}$ is a constant; $\Lambda\left(\left\{\left\| \Pi_{m}\left(\mathbf{q}_{0}\right)\right\|_{H_{m}}\ls \bar{q}\right\}\right)=1$, where $\bar{q}$ is a positive constant, and $\underline{\rho}$, $\bar{q}$ and $\bar{\rho}$ may change with $m$.
For the sake of (\ref{maximum principle}), we define a cut-off function in $C^{\infty}\left(\mathds{R}^{3}\right)$:
\begin{equation}
\chi_{R}(z)=\left\{\begin{array}{l}
1,\quad \text{for} \quad \left\| z\right\|_{H_{m}}\ls R,\\
0\ls\chi_{R}(z)\ls 1,\quad \text{for} \quad R \ls \left\| z\right\|_{H_{m}}\ls R+1,\\
0,\quad \text{for} \quad \left\| z\right\|_{H_{m}}\gs R+1.\\
\end{array}\right.
\end{equation}
$[\u]_{R}=\chi_{R}(\u)\u$ represents the truncation of $\u$.

\subsection{The global existence of martingale solutions of the regularized approximated system in $H_{m}$}
 We get the solutions of (\ref{Galerkin approximation scheme}) by linearization, iteration, uniform energy estimates and stochastic tightness theory.

\subsubsection{Linearization}

We split the time interval $[0,T]$ into grid, setting $h$ as the step length. 
We define the approximated iteration scheme recursively,
\begin{align}\label{iteration of rho and u}
\left\{\begin{aligned}\vspace{1.2ex}
&\rho_{t}+\odiv\left(\rho(t)[\mathbf{u}(nh)]_{R}\right) = \eps \triangle \rho(t), \quad t\in [nh,(n+1)h],\\
&\d \Pi_{m}\left(\rho \mathbf{u}\right)+\Pi_{m}\left(\odiv (\rho(t)\left[\mathbf{u}(nh)\right]_{R}\otimes \mathbf{u}(nh))\right)\d t+ \Pi_{m} \left(\chi_{R}(\u(nh))\nabla p(\rho(t))\right)\d t\\
&-\left(\Pi_{m} \left(\chi_{R}(\u(nh))\odiv \left(\rho\mathbb{D} \mathbf{u}(nh)\right)\right)\right)\d t \\
&+\left(\Pi_{m} \left(\frac{11}{10} \eta \nabla \rho^{-10}\chi_{R}\left(\u(nh)\right)+\chi_{R}(\u(nh))\delta \rho \nabla \triangle^{9}\rho \right)\right)\d t \\
&=\Pi_{m} \left((-r_{0} |\mathbf{u}(nh)|^{2}\left[\mathbf{u}(nh)\right]_{R}-r_{1} \rho|\mathbf{u}(nh)|^{2}\left[\mathbf{u}(nh)\right]_{R}- r_{2} \left[\mathbf{u}(nh)\right]_{R})\right)\d t\\
&-\eps \Pi_{m}\left(\chi_{R}(\u(nh))\nabla \rho\nabla \mathbf{u}(nh)\right)\d t -\eps \Pi_{m}\left(\chi_{R}(\u(nh))\triangle^{2} \mathbf{u}(nh)\right)\d t\\
&+\kappa \Pi_{m}\left(\chi_{R}(\u(nh))\rho\left(\nabla\left(\frac{\triangle \sqrt{\rho}}{\sqrt{\rho}}\right)\right)\right)\d t
+ \Pi_{m}\left(\chi_{R}(\u(nh))\rho \Pi_{m}\left[\mathbb{F}(\rho,\mathbf{u}(nh))\right]\right)\d W, \\
&\rho(nh)=\rho(nh-)=\lim\limits_{s\ra nh} \rho(s),\quad \rho(0)=\Pi_{m}\left(\rho_{0}\right),\\
&\u(nh)=\u(nh-)=\lim\limits_{s\ra nh} \u(s), \quad \rho\u(0)=\mathbf{q}_{0}.
\end{aligned}\right.
\end{align}

We introduce a linear mapping $\mathscr{M}\left(\rho\right)$,
\begin{equation}
\mathscr{M}\left(\rho\right): H_{m} \rightarrow H_{m}^{*}, \quad \mathscr{M}\left(\rho\right)(\mathbf{z})=\Pi_{m}(\rho \mathbf{z}),
\end{equation}
or
\begin{equation}
\int_{\O} \mathscr{M}\left(\rho\right) \mathbf{z} \cdot \mathbf{\vp}_{m} \d  x \equiv \int_{\O} \Pi_{m}(\rho \mathbf{z}) \cdot \mathbf{\varphi}_{m} \d  x \quad \text { for all } \mathbf{\vp}_{m} \in H_{m}.
\end{equation}
From (\ref{iteration of rho and u}), $\mathscr{M}\left(\rho\right)$ is invertible for $\rho\neq 0$,  we write $\rho=S(\mathbf{u}(nh), \Pi_{m}\left(\rho_{0}\right))$, $S$ is a Lipschitz-continuous operator. This gives a representation of $\u$ explicitly. 
For any $h$, the stochastic equation admits a unique solution in $[nh,(n+1)h]$, since the stochastic equation admits a unique solution in a time interval $[0,t)$ for any $t$.  $\left\|\u\right\|_{H_{m}}\ls C $ is necessary for taking the limit $h\ra 0$, where $C$ is independent of $R$, $h$, time $\tau$, $\tau \in [0,T]$, which will be showed in the following.
 The solution of \eqref{iteration of rho and u} are concerned with $h, R, m, \eps, \kappa, \eta, \delta, r_{0}, r_{1}$, and $ r_{2}$, since the limit $h\ra 0$ is mainly focused in this subsection, and for simplicity, we denote the solutions as $\rho_{m,h}$ and $\mathbf{u}_{m,h}$. During the limit taking process, we will use the following notations:
\begin{align}
\rho_{m,h} \stackrel{h\ra 0 }{\longrightarrow} \rho_{m,R} \stackrel{R\ra \infty }{\longrightarrow}\rho_{m} \stackrel{m\ra \infty }{\longrightarrow}\rho_{\rm reg} \stackrel{\eps \ra 0 }{\longrightarrow} \rho_{{\rm I}}\stackrel{\kappa \ra 0 }{\longrightarrow} \rho_{{\rm II}}\stackrel{\eta, \delta, r_{0} \ra 0}{\longrightarrow} \rho_{{\rm III}} \stackrel{r_{1},r_{2} \ra 0}{\longrightarrow} \rho,\\
\u_{m,h} \stackrel{h\ra 0 }{\longrightarrow} \u_{m,R} \stackrel{R\ra \infty }{\longrightarrow}\u_{m} \stackrel{m\ra \infty }{\longrightarrow} \u_{\rm reg} \stackrel{\eps \ra 0 }{\longrightarrow} \u_{{\rm I}}\stackrel{\kappa \ra 0 }{\longrightarrow} \u_{{\rm II}}\stackrel{\eta, \delta, r_{0} \ra 0}{\longrightarrow} \u_{{\rm III}} \stackrel{r_{1},r_{2} \ra 0}{\longrightarrow} \u,
\end{align}
where $\rho_{m,R}$ and $\u_{m,R}$ are concerned with $ R, m, \eps, \kappa, \eta, \delta, r_{0}, r_{1}$, and $ r_{2}$; $\rho_{m}$ and $\u_{m}$ are concerned with $m,\eps, \delta, \eta, r_{0}, r_{1}, r_{2}$ and $\kappa$; $\rho_{\rm reg}$ and $\u_{\rm reg}$ are concerned with $ \eps, \kappa, \eta, \delta, r_{0}, r_{1}$, and $ r_{2}$; $\rho_{{\rm I}}$ and $\u_{{\rm I}}$ are concerned with $\kappa, \eta, \delta, r_{0}, r_{1}$, and $ r_{2}$; $\rho_{{\rm II}}$ and $\u_{{\rm II}}$ are concerned with $ \eta, \delta, r_{0}, r_{1}$, and $ r_{2}$;  $\rho_{{\rm III}}$ and $\u_{{\rm III}}$ are concerned with $ r_{1}$ and $ r_{2}$; $\rho$ and $\u$ are solutions to the original system.

From \eqref{iteration of rho and u}, by Burkh\"older-Davis-Gundy's inequality (see Appendix \ref{appendix 2nd part}) and the fact that norms are equivalent in $H_{m}$,
 the following estimate holds uniformly in $h$ and $\tau$:
\begin{equation}
\begin{split}
\mathbb{E}[\left\| \rho_{m,h} \u_{m,h}(\tau)\right\|_{H_{m}}^{r}]+\mathbb{E}[\left\| \u_{m,h}(\tau)\right\|_{H_{m}}^{r}]\ls C\left(\mathbb{E}\left[\left\| \mathbf{q}_{0}\right\|_{H_{m}}^{r}\right] +1\right),\\
\end{split}
\end{equation}
where $C$ depends on $m, R, T, \underline{\rho}(m), \bar{\rho}$ and $r$ \cite{BreitFeireislHofmanova-book2018}. Moreover, it holds $\rho_{m,h}|_{t=0}=\rho_{0}$, $\rho_{m,h}\u_{m,h}|_{t=0}=\mathbf{q}_{0}$, $ \mathbb{P}$ a.s.

To determine the space of $\rho_{m,h}$ and $\mathbf{u}_{m,h}$, for \eqref{Galerkin approximation scheme}, we estimate
\begin{equation}
\begin{split}
\mathbb{E}\left[\Pi_{m}\left\|\rho_{m,h} \mathbf{u}_{m,h}(t_{1},\cdot)-\rho_{m,h} \mathbf{u}_{m,h}(t_{2},\cdot)\right\|^{r}_{H_{m}}\right]\ls C(t_{1}-t_{2})^{\frac{r}{2}} \left(\mathbb{E}\left[\left\| \mathbf{q}_{0}\right\|_{H_{m}}^{r}\right] +1\right),\\
\end{split}
\end{equation}
 for $r>1$, $0\ls t_{1},t_{2}\ls T$, since the deterministic nonlinearity terms are linearized and $\u_{m,h}$ is truncated by $R$. Since $\rho_{m,h}(t)$ is lower bounded by $\underline{\rho}(m)$, upper bounded by $\bar{\rho}$, by Kolmogorov-Centov's continuity theorem, it holds
\begin{equation}
\begin{aligned}
   \mathbb{E}\left[\left\| \tilde{\u}_{m,h}\right\|^r_{{C_{t}^{0,\beta}}H_{m}}\right]\ls C \left(\mathbb{E}\left[\left\| \mathbf{q}_{0}\right\|_{H_{m}}^{r}\right] +1\right),\\
\end{aligned}
\end{equation}
where $\tilde{\u}_{m,h}$ is a modification of $\u_{m,h}$, and we still use the notation $\u_{m,h}$ in the following, $r>2$, $\beta\in\left(0,\frac{1}{2}-\frac{1}{r}\right)$ and $C$ only depends on $m, R, \underline{\rho}(m), \bar{\rho},T$ and $r$.
 For $\rho_{m,h}$, \eqref{iteration of rho and u} 
implies
\begin{align}
&\left\| \rho_{m,h}(\tau_{1})-\rho_{m,h}(\tau_{2}) \right\|_{H_{m}}
\ls \left\|\int_{\tau_{1}}^{\tau_{2}}\operatorname {div}\left(\rho_{m,h} [\mathbf{u}_{m,h}(nh)]_{R}\right)\d t\right\|_{H_{m}}+ \left\| \int_{\tau_{2}}^{\tau_{1}}\eps \triangle \rho_{m,h} \d t\right\|_{H_{m}}\notag\\
\ls &C \left\| \rho_{m,h}\right\|_{H_{m}}\left(\tau_{1}-\tau_{2}\right),
\end{align}
where $C$ depends on $R,m$, so $\rho_{m,h} \in C^{0,1}\left([0,T];H_{m}\right)$, the sequence $\{\left\|\rho_{m,h}\right\|_{H_{m}}\}_{h}$ is equiv-continuous in $C([0,T])$.

We define the solution space $\mathcal{X}_{1}=\mathcal{X}_{\rho}^{1}\times \mathcal{X}_{\mathbf{u}}^{1}\times \mathcal{X}_{W}^{1}=C\left([0,T];H_{m}\right)$ $\times C^{0,\nu}\left([0,T];H_{m}\right)\times C\left([0,T];\mathcal{H}\right)$, $0<\nu<\beta$, $\beta\in\left(0,\frac{1}{2}-\frac{1}{r}\right)$, for some $r>2$.
We consider the set
\begin{equation}
B_{\rho,m}=\left\{\left.\rho_{m,h} \in C^{0,1}\left([0,T];H_{m}\right)\right|\|\rho_{m,h}\|_{C^{0,1}\left([0,T];H_{m}\right)} \leq L\right\},
\end{equation}
which is relatively compact in $\mathcal{X}_{\rho}^{1}$ due to Arzel\`a-Ascoli's theorem. In fact, by Arzel\`a-Ascoli's continuity theorem, we gain that $\{\left\|\rho_{m,h}\right\|_{H_{m}}\}_{h}$ is precompact in a subset of $C\left([0,T]\right)$, i.e., $C^{0,1}\left([0,T];H_{m}\right)$ $\hookrightarrow C\left([0,T];H_{m}\right)$.
Due to Chebyshev's inequality, we get
\begin{equation}
\mathcal{L}\left[\rho_{m,h}\right]\left(B_{\rho,m}^c\right)=\mathbb{P}\left[\left\{\left\|\rho_{m,h}\right\|_{C_t^{0,1} H_m}>L\right\}\right] \leq \frac{1}{L^r} \mathbb{E}\left[\left\|\rho_{m,h}\right\|_{C_t^{0,1} H_m}^r \right] \leq \frac{C}{L^r},
\end{equation}
where $C$ is uniformly in $h$. Choosing $L$ sufficiently large yields the tightness of $\mathcal{L}\left[\rho_{m,h}\right]$. Similarly,
\begin{equation}
B_{\u,m}=\left\{\left.\u_{m,h} \in C^{0,\beta}\left([0, T] ; H_m\right) \right|\|\u_{m,h}\|_{C_t^{0,\beta} H_m} \leq L\right\},
\end{equation}
by Chebyshev's inequality,
\begin{equation}
\mathcal{L}\left[\u_{m,h}\right]\left(B_{\u,m}^c\right)=\mathbb{P}\left[\left\{\left\|\u_{m,h}\right\|_{C_t^{0,\beta} H_m}>L\right\}\right] \leq \frac{1}{L^r} \mathbb{E}\left[\left\|\u_{m,h}\right\|_{C_t^{0,\beta} H_m}^r \right] \leq \frac{C}{L^r},
\end{equation}
which shows the tightness of $\mathcal{L}\left[\u_{m,h}\right]$.
 $\mathcal{L}[W]$ is tight on $\mathcal{X}_W$ since it is a Radon measure on a Polish space.
To conclude, for any given small $\iota>0$, there exist compact sets $K_{\varrho} \subset \mathcal{X}_{\varrho}^{1}, K_{\mathbf{u}} \subset$ $\mathcal{X}_{\mathbf{u}}^{1}$, $K_W \subset \mathcal{X}_W^{1}$ such that
\begin{equation}
\mathcal{L}\left[\rho_{m,h}\right]\left(K_{\rho}\right) \geq 1-\frac{\iota}{3}, \quad \mathcal{L}\left[\mathbf{u}_{m,h}\right]\left(K_{\mathbf{u}}\right) \geq 1-\frac{\iota}{3}, \quad \mathcal{L}[W]\left(K_W\right) \geq 1-\frac{\iota}{3}.
\end{equation}
Applying de Morgan's law $A \cap B=\left(A^c \cup B^c\right)^c$, we get
\begin{equation}
\begin{aligned}
& \mathcal{L}\left[\rho_{m,h}, \mathbf{u}_{m,h}, W\right]\left(K_{\rho} \times K_{\mathbf{u}} \times K_W\right)=\mathbb{P}\left[\left\{\rho_{m,h} \in K_{\rho}, \mathbf{u}_{m,h} \in K_{\mathbf{u}}, W \in K_W\right\}\right] \\
& \quad=1-\mathbb{P}\left[\left\{\rho_{m,h} \notin K_{\rho}\right\} \cup\left\{\mathbf{u}_{m,h} \notin K_{\mathbf{u}}\right\} \cup\left\{W \notin K_W\right\}\right] \\
& \quad \geq 1-\mathbb{P}\left[\left\{\rho_{m,h} \notin K_{\rho}\right\}\right]-\mathbb{P}\left[\left\{\mathbf{u}_{m,h} \notin K_{\mathbf{u}}\right\}\right]-\mathbb{P}\left[\left\{W \notin K_W\right\}\right] \geq 1-\iota,
\end{aligned}
\end{equation}
this gives the tightness of $\mathcal{L}[\rho_{m,h},\mathbf{u}_{m,h},W]$.

Based on the tightness of $\mathcal{L}[\rho_{m,h},\mathbf{u}_{m,h},W]$ and Jakubowski-Skorokhod's representation theorem (see \ref{Skorokhod thm} in Appendix \ref{appendix 2nd part}), we have the following convergence theorem.
\begin{proposition}  There exist $\left\{\bar{\rho}_{m,h},\bar{\mathbf{u}}_{m,h}, \bar{W}_{m,h}\right\}$ and $\left\{\rho_{m,R}, \mathbf{u}_{m,R}, W_{m,R}\right\}$, two families of $\mathcal{X}_{1}$-valued Borel measurable random variables, defined on a complete probability space $\left(\bar{\Omega}, \bar{\mathcal{F}}, \mathbb{\bar{P}}\right)$, such that
\begin{enumerate}
  \item $\mathcal{L}\left[\bar{\rho}_{m,h}, \bar{\mathbf{u}}_{m,h}, \bar{W}_{m,h}\right]$ on $\mathcal{X}_{1}$ is given by $\mathcal{L}\left[\rho_{m,h}, \mathbf{u}_{m,h}, W\right]$;
  \item  $\mathcal{L}\left[\rho_{m,R}, \mathbf{u}_{m,R}, W_{m,R}\right]$ on $\mathcal{X}_{1}$ is a Radon measure;
  \item $\left\{\bar{\rho}_{m,h}, \bar{\mathbf{u}}_{m,h}, \bar{W}_{m,h}\right\}$ converges to $\left\{\rho_{m,R}, \mathbf{u}_{m,R}, W_{m,R}\right\}$ $\bar{\mathbb{P}}$ {\rm a.s.} in the topology of $\mathcal{X}_{1}$, i.e.,
\begin{equation}
\begin{aligned}
\bar{\rho}_{m,h} \rightarrow \rho_{m,R} & \text { in } C\left([0,T];H_{m,R}\right) \quad \bar{\mathbb{P}} \text{ {\rm a.s.}};\\
\bar{\mathbf{u}}_{m,h} \rightarrow \mathbf{u}_{m,R} & \text { in } C^{0,\nu}\left([0, T] ; H_{m,R}\right) \quad\bar{\mathbb{P}}\text{ {\rm a.s.}}; \\
\bar{W}_{m,h} \rightarrow W_{m,R} & \text { in } C\left([0, T];\mathcal{H}\right) \quad \bar{\mathbb{P}}\text{ {\rm a.s.}}\\
\end{aligned}
\end{equation}
\end{enumerate}
\end{proposition}
Here we omit the proof which can be obtained by using similar arguments as in \cite{BreitFeireislHofmanova-book2018}.

\begin{proposition} The processes $\rho_{m,R}$ and $\u_{m,R}$ satisfy
\begin{equation}\label{m layer mass equ}
\Pi_{m}\left(\left(\rho_{m,R}\right)_{t}\right)+\Pi_{m}\left(\operatorname {div}\left(\rho_{m,R}\left[\u_{m,R}\right]_{R}\right)\right)= \Pi_{m}\left(\eps \triangle \rho_{m,R}\right)\\
\end{equation}
in distribution and $\bar{\mathbb{P}}$ {\rm a.s.}
\end{proposition}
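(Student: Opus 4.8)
The plan is to pass to the limit $h\to 0$ in the integrated (weak) form of the iteration scheme \eqref{iteration of rho}, using the $\bar{\mathbb{P}}$-almost sure convergences supplied by the preceding lemma. Write $\lfloor s\rfloor_h:=nh$ for $s\in[nh,(n+1)h)$. Summing \eqref{iteration of rho} over the cells up to time $t$ and testing against a basis element $w_{\mathbf{k}}$, $|\mathbf{k}|\ls m$, the pair $(\rho_{m,h},\mathbf{u}_{m,h})$ satisfies, $\mathbb{P}$-almost surely and for every $t\in[0,T]$,
\begin{align*}
\langle \rho_{m,h}(t),w_{\mathbf{k}}\rangle = {}&\langle \Pi_m[\rho_0],w_{\mathbf{k}}\rangle + \int_0^t \langle \rho_{m,h}(s)[\mathbf{u}_{m,h}(\lfloor s\rfloor_h)]_R,\nabla w_{\mathbf{k}}\rangle\,\d s \\
& - \eps\int_0^t\langle \nabla\rho_{m,h}(s),\nabla w_{\mathbf{k}}\rangle\,\d s .
\end{align*}
Because the map sending a trajectory $(\rho,\mathbf{u})\in\mathcal{X}_\rho\times\mathcal{X}_{\mathbf{u}}$ to the supremum over $t\in[0,T]$ and over $|\mathbf{k}|\ls m$ of the modulus of the residual of this identity is Borel measurable, the equality of laws in part (1) of the lemma forces the same identity to hold $\bar{\mathbb{P}}$-almost surely for $(\bar\rho_{m,h},\bar{\mathbf{u}}_{m,h})$ (with the initial datum $\Pi_m[\rho_0]$, whose law is preserved by the construction).

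Next I would fix $\omega$ in the $\bar{\mathbb{P}}$-full set on which $\bar\rho_{m,h}\to\rho_m$ in $C([0,T];H_m)$ and $\bar{\mathbf{u}}_{m,h}\to\mathbf{u}_m$ in $C^\nu([0,T];H_m)$, and pass to the limit term by term. The only non-routine point is the frozen time in the convective term: by the triangle inequality
\begin{equation*}
\sup_{s\in[0,T]}\| \bar{\mathbf{u}}_{m,h}(\lfloor s\rfloor_h)-\mathbf{u}_m(s)\|_{H_m}\ls \|\bar{\mathbf{u}}_{m,h}-\mathbf{u}_m\|_{C([0,T];H_m)} + \sup_{|s-s'|\ls h}\|\mathbf{u}_m(s)-\mathbf{u}_m(s')\|_{H_m},
\end{equation*}
and both terms tend to $0$ as $h\to 0$, the first by uniform convergence and the second since $\mathbf{u}_m\in C^\nu([0,T];H_m)$ is uniformly continuous. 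As $H_m$ is finite-dimensional, the cut-off $z\mapsto[z]_R=\chi_R(z)z$ is continuous, the product $(\rho,\mathbf{v})\mapsto\rho\mathbf{v}$ and the operators $\nabla$, $\Pi_m$, $\langle\,\cdot\,,\nabla w_{\mathbf{k}}\rangle$ are continuous, and the $h$-uniform moment bounds established earlier dominate the integrands; hence dominated convergence applies to the $\d s$-integrals, and in the limit one obtains, $\bar{\mathbb{P}}$-almost surely and for all $t$ and all $|\mathbf{k}|\ls m$,
\begin{equation*}
\langle \rho_m(t),w_{\mathbf{k}}\rangle = \langle \Pi_m[\rho_0],w_{\mathbf{k}}\rangle + \int_0^t \langle \rho_m(s)[\mathbf{u}_m(s)]_R,\nabla w_{\mathbf{k}}\rangle\,\d s - \eps\int_0^t\langle \nabla\rho_m(s),\nabla w_{\mathbf{k}}\rangle\,\d s .
\end{equation*}

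Running over the countably many $w_{\mathbf{k}}$, $|\mathbf{k}|\ls m$, and using the continuity in $t$ of both sides, this identity holds simultaneously for all such test functions and all $t\in[0,T]$ on a single $\bar{\mathbb{P}}$-full set; differentiating in $t$ in the distributional sense and re-applying $\Pi_m$ yields exactly \eqref{m layer mass equ}. I expect the main obstacle to be not a single estimate but the simultaneous handling of two compatibility points: transferring the $\omega$-wise validity of the equation from the original to the new probability space through the equality of laws — which forces one to encode the equation as the vanishing of a Borel functional of the trajectories — and controlling the gap between the running time $s$ and the frozen grid time $\lfloor s\rfloor_h$ in the convective term, where the Hölder-in-time regularity of $\mathbf{u}_m$ coming from the Kolmogorov--Centsov step is precisely what closes the argument. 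Since the continuity equation carries no stochastic integral, no martingale identification is required here, so this passage is considerably milder than the analogous one for the momentum equation.
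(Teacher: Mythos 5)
Your proposal is correct and follows essentially the same route as the paper: transfer the weak form of the iteration scheme to the new probability space via equality of joint laws, then pass to the limit $h\to 0$ along the $\bar{\mathbb{P}}$-almost sure convergences from the Jakubowski--Skorokhod lemma (the paper invokes Vitali's convergence theorem where you use $\omega$-wise dominated/uniform convergence, an immaterial difference here). If anything you are more careful than the paper on the one genuinely delicate point, namely controlling the gap between the frozen grid time $\lfloor s\rfloor_h$ and $s$ in the convective term via the $C^{\nu}$-regularity of $\mathbf{u}_m$, which the paper's proof passes over silently.
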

\begin{proof}
Firstly, the new processes hold $\mathcal{L}[\bar{\u}_{m, h}]=\mathcal{L}[\u_{m, h}],\mathcal{L}[\bar{\rho}_{m, h}]=\mathcal{L}[\rho_{m, h}]$, hence $\bar{\rho}_{m, h}$ and $\bar{\u}_{m h}$ still satisfy the continuity equation $\bar{\mathbb{P}}$ a.s., namely,
\begin{align}\label{mlayer mass equ distribution1}
&\int_{0}^{t}\int_{\O}\Pi_{m}\left(\left(\bar{\rho}_{m, h}\right)_{t}+\odiv\left(\bar{\rho}_{m, h} \left[\bar{\mathbf{u}}_{m, h}\right]_{R}\right)\right)\varphi(t)\psi(x)\d x\d s\\
=&\int_{0}^{t}\int_{\O} \Pi_{m}\left(\eps \triangle \bar{\rho}_{m, h}\right)\varphi(t)\psi(x)\d x \d s,\quad t \in[0, T],\notag
\end{align}
 for all $\varphi\in C_{c}^{\infty}\left([0,T)\right)$, and all $\psi(x)\in C^{\infty}\left(\O\right)$, $\bar{\mathbb{P}}$ a.s. 
For $\int_{\O}\Pi_{m}\left(\bar{\rho}_{m,h}(t)\right) \varphi(t)\psi(x) \mathrm{d}x$, $\bar{\rho}_{m,h} \rightarrow \rho_{m,R} \text { in } C\left([0,T];H_{m}\right)$, $ \bar{\mathbb{P}}$ a.s., therefore $\Pi_{m}\left(\bar{\rho}_{m, h}(t,x)\right)\varphi(t)\psi(x)$ is uniformly integrable in $L^{1}(\Omega\times[0,T]\times\O)$, by Vitali's convergence theorem, we have
 \begin{align}\int_{\O}\Pi_{m}\left(\bar{\rho}_{m, h}(t,x)\right) \varphi(t)\psi(x) \mathrm{d}x\rightarrow\int_{\O}\Pi_{m}\left(\rho_{m,R}(t,x)\right)\varphi(t)\psi(x) \mathrm{d}x\quad\bar{\mathbb{P}}\text{ a.s.}
 \end{align}
Similarly, we have the convergence of other integrals in \eqref{mlayer mass equ distribution1}. So we have
 \begin{align}\label{mlayer mass equ distribution3}
&\varphi(t) \int_{\O}\Pi_{m}\left(\rho_{m,R}(t,x)\right) \psi(x) \mathrm{d}x- \varphi(0)\int_{\O}\Pi_{m}\left[\rho_{m,R}(0,x)\right]\psi(x)\mathrm{d}x\notag\\
&-\int_{0}^{t} \varphi_{s}(s) \int_{\O}\Pi_{m}\left(\rho_{m,R}\right)(s,x)\psi(x) \d x\d s \\
&+\int_{0}^{t}\varphi(s) \int_{\O}\Pi_{m}\left[\operatorname{div}(\rho_{m,R}\left[\u_{m,R}\right]_{R})\right]\psi(x)\d x\d s \notag \\
=&\int_{0}^{t}\varphi(s)\int_{\O} \Pi_{m}\left(\eps \triangle \rho_{m,R}\right)\psi(x)\d x\d s, \notag
\end{align}
for all $\varphi\in C_{c}^{\infty}\left([0,T)\right)$, and all $\psi(x)\in C^{\infty}\left(\O\right)$, and $t \in[0, T), \quad \bar{\mathbb{P}}$ a.s. \hfill $\square$
\end{proof}
For simplicity, we give the new notations here:
\begin{equation}
\begin{aligned}
\left\|\cdot\right\|_{L^{p}(0,T; L^{q}\left( \O \right))}=\left\|\cdot\right\|_{L_{t}^{p} L_{x}^{q}}, \text{ for } 1\ls p\ls \infty, 1\ls q\ls \infty,\\
\left\|\cdot\right\|_{L^{p}(0,T;H^{q}\left( \O \right))}=\left\|\cdot\right\|_{L_{t}^{p} H_{x}^{q}}, \text{ for } 1\ls p\ls \infty, 1\ls q< \infty.\\
\end{aligned}
\end{equation}

To show that the limits $\rho_{m}$ and $\mathbf{u}_{m}$ satisfy the approximated momentum equation, we first generalize the convergence theorem of stochastic integral \cite{DEBUSSCHE20111123} from ``in probability'' to ``$\bar{\mathbb{P}}$ a.s.'' as follows.
\begin{lemma}\label{lemma sto int convergence}
If $\mathbf{G}_{k}\ra \mathbf{G}$ in $ L^{2}\left([0,T] \times \O \right)$ $\mathbb{P}$ {\rm a.s.}, $W_{k}\ra W$ in $C\left([0,T]; \mathcal{H}\right)$ $\mathbb{P}$ {\rm a.s.}, then
 \begin{equation}
  \int_{0}^{t} \int_{\O} \mathbf{G}_{k}\d x \d W_{k} \ra \int_{0}^{t} \int_{\O} \mathbf{G}\d x \d W \quad  \mathbb{P} \text{ {\rm a.s.}}\\
  \end{equation}
\end{lemma}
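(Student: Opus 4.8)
The plan is to prove convergence of the stochastic integrals by the standard localization-plus-Skorokhod argument used throughout stochastic compressible fluid dynamics (see \cite{BreitFeireislHofmanova-book2018, Breit-Hofmanova2016}). First I would reduce the claim to a statement about real-valued stochastic integrals: writing $W_k = \sum_{j} e_j \beta_j^{(k)}$ and $W = \sum_j e_j \beta_j$, and expanding $\mathbf{G}_k = \sum_j \mathbf{G}_{k,j} e_j$ (componentwise), the quantity $\int_0^t \int_{\O} \mathbf{G}_k \,\d x\, \d W_k$ is the $\ell^2$-sum $\sum_j \int_0^t \big(\int_{\O} \mathbf{G}_{k,j}\,\d x\big)\, \d \beta_j^{(k)}$. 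So it suffices to show, for each $j$, that $\int_0^t h_k \,\d\beta_j^{(k)} \to \int_0^t h \,\d\beta_j$ in probability, where $h_k(s) = \int_{\O}\mathbf{G}_{k,j}(s,x)\,\d x$ and $h$ is the corresponding limit, together with a uniform-in-$k$ tail estimate on $\sum_{j>N}$ coming from $\sum_k f_k^2 < \infty$ (which controls $\|\mathbf{G}_k\|_{L^2}$ via the Lipschitz bound $|\mathbf{G}_k(\rho,\rho\u)| \lesssim f_k(|\rho|+|\rho\u|)$), so that the $j$-sum can be truncated at finitely many terms uniformly.

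Next I would invoke the abstract convergence lemma for stochastic integrals under Skorokhod-type almost sure convergence. Concretely: $\mathbf{G}_k \to \mathbf{G}$ in $L^r(\Omega; L^2([0,T]\times\O))$, which in particular gives $\mathbf{G}_k \to \mathbf{G}$ in $L^2(\Omega\times[0,T]\times\O)$ after passing to the $\mathfrak{H}$-valued formulation; and $W_k \to W$ in $C([0,T];\mathfrak{H})$ $\mathbb{P}$-a.s. The key structural facts are that each $W_k$ is a cylindrical Wiener process with respect to its own filtration $(\mathcal{F}^k_t)$ generated by $(\mathbf{G}_k, W_k)$, that $\mathbf{G}_k$ is $(\mathcal{F}^k_t)$-progressively measurable (this is inherited from the equality of laws in the Jakubowski--Skorokhod step, via an argument of Bensoussan or the martingale-characterization of the noise as in \cite{BreitFeireislHofmanova-book2018}), and that the limit objects $\mathbf{G}$, $W$ enjoy the analogous adaptedness. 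Granting these, one applies the elementary approximation: replace $h_k$ by a piecewise-constant-in-time adapted approximation $h_k^{(\ell)}$, for which the convergence of $\int_0^t h_k^{(\ell)}\,\d\beta^{(k)}$ reduces to convergence of finitely many increments $\beta^{(k)}(t_{i+1}) - \beta^{(k)}(t_i)$ times $h_k^{(\ell)}(t_i)$, and this follows from the a.s. convergence of $W_k$ and the $L^2$-convergence of $\mathbf{G}_k$; then control the error $\int_0^t (h_k - h_k^{(\ell)})\,\d\beta^{(k)}$ uniformly in $k$ by It\^o's isometry and the uniform $L^2$-bound, letting $\ell\to\infty$ after $k\to\infty$. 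Upgrading convergence in probability to convergence $\mathbb{P}$-a.s. (along a subsequence, which suffices since the limit is identified) is automatic, or can be arranged by the usual diagonal extraction.

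The main obstacle is the measurability/adaptedness bookkeeping rather than any hard estimate: one must verify that after the Skorokhod representation the new random variables $\mathbf{G}_k$ are genuinely progressively measurable with respect to a filtration making $W_k$ a Wiener process, and that passing to the limit preserves this so that $\int_0^t\int_{\O}\mathbf{G}\,\d x\,\d W$ is even well-defined as an It\^o integral. This is the point where one cites the standard machinery (the "equality in law implies equality of the stochastic structure" lemma, together with the fact that the quadratic variation and cross-variation of the martingale $\int_0^\cdot\int_{\O}\mathbf{G}_k\,\d x\,\d W_k$ are determined pathwise by $\mathbf{G}_k$, hence converge). Because the lemma is stated in exactly the form needed for repeated use in the $m\to\infty$, $\eps\to 0$ and $\kappa\to 0$ passages, I would keep the proof at the level of reducing to this cited abstract result plus the tail estimate from $\sum_k f_k^2<\infty$, rather than reproving the abstract stochastic-integral convergence lemma from scratch; a reference to \cite[Lemma~2.6.6]{BreitFeireislHofmanova-book2018} or the analogous statement in \cite{Breit-Hofmanova2016} closes the argument.
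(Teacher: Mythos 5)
Your proposal is correct in substance but takes a genuinely different route from the paper. You reduce the lemma to the standard stochastic-integral convergence result under Skorokhod-type almost sure convergence (the Debussche--Glatt-Holtz--Temam lemma, Lemma~2.6.6 in \cite{BreitFeireislHofmanova-book2018}), whose proof you sketch via piecewise-constant-in-time adapted approximation of the integrand, convergence of finitely many Brownian increments, and an It\^o-isometry control of the approximation error uniformly in $k$; you also foreground the progressive-measurability bookkeeping after the Jakubowski--Skorokhod step. The paper instead gives a self-contained argument: it first peels off $\int(\mathbf{G}_k-\mathbf{G})\,\mathrm{d}W_k$ by Burkholder--Davis--Gundy and the assumed convergence of $\mathbf{G}_k$, then regularizes the \emph{limit} integrand in time by the exponential mollification $\bar{\mathbf{G}}=\frac{1}{\iota}\int_0^t e^{-(t-s)/\iota}\mathbf{G}(s)\,\mathrm{d}s$ and integrates by parts (It\^o) so that $\int\bar{\mathbf{G}}\,\mathrm{d}W_k-\int\bar{\mathbf{G}}\,\mathrm{d}W$ becomes a pathwise Riemann integral against $W_k-W$, which vanishes by the uniform convergence of $W_k$ in $C([0,T];\mathfrak{H})$; the mollification parameter $\iota$ is coupled quantitatively to the rate $|W_k-W|\le k_n^{-2}$ so that the factor $\iota^{-1}$ is beaten. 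Both arguments hinge on the same idea -- regularize the integrand in time so the stochastic integral against the difference of the Wiener processes can be handled pathwise -- so the choice is largely one of packaging: your version buys brevity and a clean citation trail (at the cost of having to verify the adaptedness hypotheses of the cited lemma, which the paper never makes fully explicit), while the paper's version is explicit, avoids any appeal to an external lemma, and makes the interplay between the mollification scale and the noise convergence rate visible. One small caveat: the tail estimate you invoke from $\sum_k f_k^2<\infty$ belongs to the later applications of the lemma, not to the lemma itself, whose hypothesis $\mathbf{G}_k\to\mathbf{G}$ in $L^r(\Omega;L^2([0,T]\times\mathbb{T}^3))$ already supplies the needed $\mathfrak{H}$-summability; and, like the paper, you only obtain almost sure convergence along a subsequence from convergence in probability, which is harmless for the compactness arguments in which the lemma is used.
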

\begin{proof}
Since $W_{k}\ra W$ as $k \ra \infty$, so there exists a constant $k_{1}=\max\{1, K_{1}\}>0$, such that for $k>k_{1}$, $\left|W_{k}-W\right|\ls 1 $. And there exists a constant $k_{2}=\max\{2, K_{2}\}>k_{1}$, such that for $k>k_{2}$, $\left|W_{k}-W\right|\ls \frac{1}{k_{1}^{2}}$. All the time all the way, there exists $k_{n+1}=\max\{n+1, K_{n+1}\}>k_{n}$, when $k>k_{n+1}$, it holds $|W_{k}-W|< \frac{1}{k_{n}^{2}}$.
\begin{align}
&\left|\int_{0}^{t} \int_{\O} \mathbf{G}_{k}\d x \d W_{k}-\int_{0}^{t} \int_{\O} \mathbf{G}\d x \d W\right|\notag\\
= &\left|\int_{0}^{t} \int_{\O} \mathbf{G}_{k}\d x \d W_{k}-\int_{0}^{t} \int_{\O} \mathbf{G}\d x \d W_{k}+\int_{0}^{t} \int_{\O} \mathbf{G}\d x \d W_{k}-\int_{0}^{t} \int_{\O} \mathbf{G}\d x \d W\right|\\
\ls & \left|\int_{0}^{t} \int_{\O} \left(\mathbf{G}_{k}-\mathbf{G}\right)\d x \d W_{k}\right|+\left|\int_{0}^{t} \int_{\O} \mathbf{G}\d x \d W_{k}-\int_{0}^{t} \int_{\O} \mathbf{G}\d x \d W\right|.\notag
\end{align}
The first term in the right hand side of the above formula goes to $0$, $\mathbb{P}$ a.s., as $k\ra +\infty$. Actually, for any $\eps > 0 $, for $k$ large enough, $\left|\mathbf{G}_{k}-\mathbf{G}\right|\ls \eps^{2}$,
 \begin{align}\label{conver of Gk-G Wk}
 & \mathbb{P}\left[\left|\int_{0}^{t} \int_{\O} \left(\mathbf{G}_{k}-\mathbf{G}\right)\d x \d W_{k}\right|\geq \eps \right]\\
 \ls &\frac{\mathbb{E}\left[\left|\int_{0}^{t} \int_{\O} \left(\mathbf{G}_{k}-\mathbf{G}\right)\d x \d W_{k}\right|^{2} \right]}{\eps^{2}}
 \ls  \frac{\mathbb{E}\left[\int_{0}^{t}\left|\int_{\O} \left(\mathbf{G}_{k}-\mathbf{G}\right)\d x\right|^{2}\d t\right]}{\eps^{2}}
 \ls   \eps^{2}.\notag
 \end{align}
Now we focus on the second term. We define a mollification $\bar{\mathbf{G}}=\frac{1}{\iota}\int_0^t \exp^{-\frac{t-s}{\iota}}\mathbf{G}(s)\d s$, here $\iota\in \left\{1,\frac{1}{k_{1}},\frac{1}{k_{2}},\cdots,\frac{1}{k_{n}},\cdots\right\}$. By Heine's theorem, $\bar{\mathbf{G}}\ra \mathbf{G}$ still holds as $\iota\ra 0$. We estimate
\begin{align}\label{GdWk-GdW}
&\left|\int_{0}^{t} \int_{\O} \mathbf{G}\d x \d W_{k}-\int_{0}^{t} \int_{\O} \mathbf{G}\d x \d W\right|\notag\\
\ls & \left|\int_{0}^{t} \int_{\O} \left(\mathbf{G}-\bar{\mathbf{G}}\right)\d x \d W_{k}+ \int_{0}^{t} \int_{\O} \left(\bar{\mathbf{G}}-\mathbf{G}\right)\d x \d W\right.\notag\\
&\left.+\left( \int_{0}^{t}\int_{\O} \bar{\mathbf{G}}\d x \d W_{k}- \int_{0}^{t}\int_{\O} \bar{\mathbf{G}}\d x \d W\right)\right|\\
\ls & \left|\int_{0}^{t} \int_{\O} \left(\mathbf{G}-\bar{\mathbf{G}}\right)\d x \d W_{k}\right|+\left| \int_{0}^{t} \int_{\O} \left(\bar{\mathbf{G}}-\mathbf{G}\right)\d x \d W\right|\notag\\
&+ \left|\int_{0}^{t}\int_{\O} \bar{\mathbf{G}}\d x \d W_{k}- \int_{0}^{t}\int_{\O} \bar{\mathbf{G}}\d x \d W\right|.\notag
\end{align}
Like \eqref{conver of Gk-G Wk}, the first term and the second term in the right-hand-side of \eqref{GdWk-GdW} will converge to $0$ as $\iota\ra 0$.
For the term $\left|\int_{0}^{t}\int_{\O} \bar{\mathbf{G}}\d x \d W_{k}- \int_{0}^{t}\int_{\O} \bar{\mathbf{G}}\d x \d W\right|$, we calculate
\begin{equation}
\frac{\d \bar{\mathbf{G}}}{\d t}= \frac{1}{\iota}\mathbf{G}(t)- \frac{1}{\iota^{2}}\int_0^t \exp^{-\frac{t-s}{\iota}}\mathbf{G}(s)\d s = \frac{1}{\iota}\mathbf{G}(t)-\frac{1}{\iota}\bar{\mathbf{G}}.\\
\end{equation}
 For the third integral in \eqref{GdWk-GdW}, we obtain
\begin{align}
 &\left|\int_{0}^{t}\int_{\O} \bar{\mathbf{G}}\d x \d W_{k}- \int_{0}^{t}\int_{\O} \bar{\mathbf{G}}\d x \d W\right|\notag\\
 =& \left| \int_{\O} \bar{\mathbf{G}}\d x W_{k} - \int_{\O} \bar{\mathbf{G}}\d x W- \int_{0}^{t}\int_{\O} \frac{\d \bar{\mathbf{G}}}{\d s}W_{k}\d x \d s+\int_{0}^{t}\int_{\O} \frac{\d \bar{\mathbf{G}}}{\d s}W \d x \d s\right|\\
 \ls &\left| \int_{\O} \bar{\mathbf{G}}\d x \left(W_{k}-W\right)\right|+ \left| \int_{0}^{t}\int_{\O}\left(W-W_{k}\right) \frac{\mathbf{G}}{\iota}\d x \d s \right|\notag\\
  & +\left|\int_{0}^{t}\int_{\O}\left(W-W_{k}\right) \frac{\bar{\mathbf{G}}}{\iota}\d x \d s \right|.\notag
\end{align}
by It\^o's formula and the integration by parts with respect to $t$ for the stochastic integral of semi-martingale (see \S \ref{def in app} in Appendix).
 $\left| \int_{\O} \bar{\mathbf{G}}\d x \left(W_{k}-W\right)\right|$ will converge to $0$. In fact, we calculate
\begin{align}
 & \left| \int_{\O} \bar{\mathbf{G}}\d x \left(W_{k}-W\right)\right|= \left| \int_{\O} \sum\limits_{k=1}^{+\infty}\bar{\mathbf{G}}_{i}e_{i}\d x \sum\limits_{i=1}^{+\infty}\left(W_{k}-W\right)_{i}e_{i}\right|\notag\\
=& \left| \sum\limits_{i=1}^{+\infty} \int_{\O} \bar{\mathbf{G}}_{i}\d x \left(W_{k}-W\right)_{i}\right|\ls \sum\limits_{i=1}^{+\infty}\left|\int_{\O} \bar{\mathbf{G}}_{i}\d x\right| \left|\left(W_{k}-W\right)_{i}\right|\\
\ls &\sum\limits_{i=1}^{+\infty}\left(\int_{\O} \left|\bar{\mathbf{G}}_{i}\right|^{2}\d x\right)^{\frac{1}{2}} \left|\left(W_{k}-W\right)_{i}\right|
\ls \|\bar{\mathbf{G}}\|_{L_{t}^{\infty}L_{x}^{2}}\sup_{i}\left|\left(W_{k}-W\right)_{i}\right|.\notag
\end{align}
  Since $W_{k}\ra W$ as $k \ra \infty$, so for any $\iota>0$, $k>k_{n+1}$,
\begin{align}
\left| \int_{0}^{t}\int_{\O}\left(W-W_{k}\right) \frac{\bar{\mathbf{G}}}{\iota}\d x \d s \right|\ls  k_{n}\frac{1}{k_{n}^{2}} \left| \int_{0}^{t}\int_{\O} \bar{\mathbf{G}}\d x \d s \right|\ra 0 \text{ as }k_{n} \ra +\infty .
\end{align}
 So does the term $\left| \int_{0}^{t}\int_{\O}\left(W-W_{k}\right) \frac{\mathbf{G}}{\iota}\d x \d s \right|$. \hfill $\square$
\end{proof}
Now we have the following proposition for the approximated momentum equation.
\begin{proposition} The processes $\left(\rho_{m,R}, \u_{m,R}, W_{m,R}\right)$ satisfy
\begin{align}
& \d \Pi_{m}\left(\rho_{m,R} \u_{m,R}\right)+\Pi_{m}\left(\operatorname{div}(\rho_{m,R} [\u_{m,R}]_{R} \otimes \u_{m,R})+\nabla \left(a\rho_{m,R}^{\gamma}\right)\chi_R(\left\| \u_{m,R}\right\|_{H_{m}})\right)\d t\notag \\
&-\Pi_{m}\left(\operatorname{div}(\rho_{m,R}\mathbb{D} \u_{m,R})\chi_R(\left\| \u_{m,R}\right\|_{H_{m}})\right)\d t\notag\\
=&\Pi_{m}\left((-r_{0} |\u_{m,R}|^{2}[\u_{m,R}]_{R}-r_{1} \rho_{m,R}|\u_{m,R}|^{2}[\u_{m,R}]_{R}-r_{2} [\u_{m,R}]_{R})\right)\d t \notag \\
&+\Pi_{m}\left(\eps \nabla \rho_{m,R}\nabla \u_{m,R}\chi_R(\left\| \u_{m,R}\right\|_{H_{m}})\right)\d t -\Pi_{m}\left(\eps \triangle^{2}\u_{m,R}\chi_R(\left\| \u_{m,R}\right\|_{H_{m}})\right)\d t\\
&+\Pi_{m}\left(\frac{11}{10}\eta \nabla \rho_{m,R}^{-10}\chi_R(\left\| \u_{m,R}\right\|_{H_{m}})\right)\d t + \Pi_{m}\left(\delta \rho_{m,R} \nabla \triangle^{9}\rho_{m,R}\chi_R(\left\| \u_{m,R}\right\|_{H_{m}})\right)\d t \notag\\
& +\Pi_{m}\left(\kappa \rho_{m,R} \left(\nabla\left(\frac{\triangle \sqrt{\rho_{m,R}}}{\sqrt{\rho_{m,R}}}\right)\right)\chi_R\left(\left\| \u_{m,R}\right\|_{H_{m}}\right)\right) \notag\\
&+\Pi_{m}\left(\rho_{m,R} \mathbb{F}(\rho_{m,R},\u_{m,R})\chi_R\left(\left\| \u_{m,R}\right\|_{H_{m}}\right)\d W_{m,R}\right), \notag
\end{align}
$\bar{\mathbb{P}}$ {\rm a.s.} in distribution sense, for $\forall t \in[0, T]$.
 \end{proposition}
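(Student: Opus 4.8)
The plan is to pass to the limit $h\to 0$ in the iteration scheme \eqref{iterated momentum equ}, using the almost-sure convergences supplied by the Jakubowski–Skorokhod representation (the preceding Lemma), together with Lemma~\ref{lemma sto int convergence} for the stochastic integral. First I would recall that, since the laws are preserved under the Skorokhod identification, the tilded/barred processes $\bar\rho_{m,h},\bar{\mathbf u}_{m,h}$ satisfy, $\bar{\mathbb P}$-almost surely and tested against $\vp\in H_m$, exactly the same weak form of \eqref{iterated momentum equ} (with $W_{m,h}$ replaced by $\bar W_{m,h}$); this is the analogue of \eqref{mlayer mass equ distribution1} for the momentum balance and follows because \eqref{iterated momentum equ} only involves countably many continuous functionals of the trajectory, so the relation transfers to any random variable with the same law. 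I would then fix a test function $\vp\in H_m$ and a time $t\in[0,T]$ and examine each term of the weak formulation separately.

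The deterministic terms are handled by the $\bar{\mathbb P}$-a.s. convergences $\bar\rho_{m,h}\to\rho_m$ in $C([0,T];H_m)$ and $\bar{\mathbf u}_{m,h}\to\mathbf u_m$ in $C^\nu([0,T];H_m)$. On the finite-dimensional space $H_m$ all Sobolev norms are equivalent and multiplication, $\Pi_m$, the cut-offs $\chi_R$ and $[\cdot]_R$, the nonlinear algebraic expressions $\rho|\mathbf u|^2\mathbf u$, $|\mathbf u|^2\mathbf u$, the high-order terms $\rho\nabla\triangle^9\rho$, $\rho^{-10}$ (using the uniform lower bound from the maximum principle \eqref{maximum principle} valid for each fixed $m$), and the quantum term $\rho\nabla(\triangle\sqrt\rho/\sqrt\rho)$ are all continuous operations in these topologies; hence each deterministic integrand converges uniformly on $[0,T]\times\O$ and the time integrals converge pointwise in $\omega$. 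A subtle point is the replacement of the stepwise-frozen arguments $[\mathbf u(nh)]_R$, $\chi_R(\mathbf u(nh))$ by the continuous limits $[\mathbf u_m]_R$, $\chi_R(\|\mathbf u_m\|_{H_m})$: this is where the $C^\nu$-in-time convergence is used, since $\|\mathbf u_{m,h}(\lfloor t/h\rfloor h)-\mathbf u_{m,h}(t)\|_{H_m}\le [\mathbf u_{m,h}]_{C^\nu_t H_m}\,h^\nu\to 0$ uniformly, so the piecewise-constant interpolations converge to the same limit as $\mathbf u_m$. I would also note that the last identity in the proof of the previous Proposition, \eqref{mlayer mass equ distribution3}, shows the mass equation is already in the limiting form, which justifies writing $\rho_m$ consistently in both equations.

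For the stochastic term I would apply Lemma~\ref{lemma sto int convergence} with $\mathbf G_{k}=\Pi_m[\bar\rho_{m,h}\,\Pi_m[\mathbb F(\bar\rho_{m,h},\mathbf u_{m,h}(nh))]\,\chi_R(\|\bar{\mathbf u}_{m,h}\|_{H_m})]\cdot\vp$ and $W_k=\bar W_{m,h}$: the growth/Lipschitz hypothesis on $\mathbf F_k$ with $\sum f_k^2<\infty$, together with the uniform-in-$h$ bounds $\mathbb E[\|\mathbf u_{m,h}\|^r_{C^\nu_t H_m}]\le C$ and the boundedness of $\rho_{m,h}$ on $H_m$, gives $\mathbf G_k\to\mathbf G$ in $L^r(\Omega;L^2([0,T]\times\O))$ along the a.s.-convergent subsequence, while $\bar W_{m,h}\to W_m$ in $C([0,T];\mathfrak H)$; hence the stochastic integrals converge $\bar{\mathbb P}$-a.s. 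Finally I would verify that $W_m$ is a cylindrical Wiener process with respect to the filtration generated by $(\rho_m,\mathbf u_m,W_m)$ — a routine martingale-characterisation argument (the quadratic variation and cross-variations pass to the limit because they are continuous functionals) — so that the limiting stochastic integral is the genuine Itô integral appearing in the statement. Passing $h\to 0$ in every term then yields the asserted equation in distribution, $\bar{\mathbb P}$-a.s.; letting $R\to\infty$ afterwards (using the uniform-in-$R$ bound $\|\mathbf u_{m,h}\|_{H_m}\le C$ mentioned before the Linearization subsection, so that $\chi_R$ and $[\cdot]_R$ eventually act as the identity) removes the cut-offs and produces \eqref{Galerkin approximation scheme}.

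\textbf{Main obstacle.} The delicate point is the convergence of the multiplicative stochastic integral: one must simultaneously replace the frozen noise coefficient $\mathbb F(\rho_{m,h},\mathbf u_{m,h}(nh))$ by $\mathbb F(\rho_m,\mathbf u_m)$, deal with the double projection $\Pi_m[\rho\,\Pi_m[\mathbb F]]$, pass the cut-off $\chi_R$ to the limit, and change the integrator from $\bar W_{m,h}$ to $W_m$ — all while only having a.s.\ (not $L^2(\Omega)$) convergence. This is exactly what Lemma~\ref{lemma sto int convergence} is designed to absorb, but applying it correctly requires the uniform moment bounds to upgrade a.s.\ convergence of the integrands to $L^r(\Omega;L^2_{t,x})$ convergence (via Vitali), and requires checking that the stepwise-frozen time argument converges in the right topology, which is why the Kolmogorov–Čentsov estimate giving $\mathbf u_{m,h}\in C^\nu_t H_m$ uniformly in $h$ is essential.
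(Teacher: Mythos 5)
Your proposal is correct and follows essentially the same route as the paper: transfer the weak form of \eqref{iterated momentum equ} to the barred variables by equality of laws, pass to the limit in the deterministic terms using the $\bar{\mathbb{P}}$-a.s. convergences on the finite-dimensional space $H_m$, and invoke Lemma \ref{lemma sto int convergence} together with the Lipschitz/growth bounds on $\mathbf{F}_k$ for the stochastic integral. The paper's own proof only spells out the stochastic term and defers the rest to \cite{BreitFeireislHofmanova-book2018}; your treatment of the frozen time arguments via the uniform $C^\nu_t H_m$ bound and of the martingale identification of $W_m$ simply fills in details the paper leaves implicit.
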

\begin{proof}
We mainly need to show the convergence of stochastic term in this proof, because the other terms in momentum equation converges in distribution coming from the strong convergence of variables and Vitali's convergence theorem. Similarly as in \cite{BreitFeireislHofmanova-book2018}, by the strong convergence of $\bar{\rho}_{m,h}$ and $\bar{\u}_{m,h}$, and the continuity of $\Pi_{m}\left(\mathbf{F}_{k}\right)$, it holds
\begin{equation}
\begin{aligned}
\Pi_{m}\left(\bar{\rho}_{m,h}\Pi_{m}\left(\mathbf{F}_{k}\left(\bar{\rho}_{m,h},\bar{\mathbb{\u}}_{m,h}\right)\right)\right) \rightarrow \Pi_{m}\left(\rho_{m,R} \Pi_{m}\left(\mathbf{F}_{k}(\rho_{m,R}, \u_{m,R})\right)\right),
\end{aligned}
\end{equation}
$\bar{\mathbb{P}}$ a.s. in $L^{2}\left(\Omega; L^{2}\left([0, T]; L^{2}\left(\O\right)\right)\right)$. Combining this with the convergence of $\bar{W}_{m,h}$, by lemma \ref{lemma sto int convergence}, for all $\varphi(t)\in C_{c}^{\infty}\left([0,T)\right)$, and all $\psi(x)\in C^{\infty}\left(\O\right)$, we have
\begin{align}
&\int_{0}^{t}\int_{\O}\Pi_{m}\left(\bar{\rho}_{m,h}\Pi_{m}\left(\mathbb{F}\left(\bar{\rho}_{m,h},\bar{\mathbf{u}}_{m,h}\right)\right)\right)\varphi(t)\psi(x)\d \d x\bar{W}_{m,h}\\
&\rightarrow \int_{0}^{t}\int_{\O}\Pi_{m}\left(\rho_{m,R} \Pi_{m}\left(\mathbb{F}\left(\rho_{m,R},\u_{m,R}\right)\right)\right)\varphi(t)\psi(x)\d x \d W_{m,R}, \quad t\in [0,T], \quad \bar{\mathbb{P}} \text{ a.s.}\notag \qquad \quad \square
\end{align}
\end{proof}
\subsubsection{Energy balance and energy estimates for the regularized system}

 We first derive the energy estimates uniformly in $R$ for regularized system \eqref{Galerkin approximation scheme}. We use the general notation $\rho$ and $\u$ to take place of the notation $\rho_{m,R}$ and $\u_{m,R}$ in the process of the a priori energy estimates.

We apply It\^o's formula to the functional
\begin{equation}
\begin{aligned}
g:L^{\gamma}\left( \O \right)\times H_{m}^{*}\ra \mathds{R}, \quad
(\rho, \mathbf{q})\mapsto \frac{1}{2} \mathbf{q} \mathscr{M}^{-1}\left(\rho\right)\mathbf{q},
\end{aligned}
\end{equation}
 where $\mathbf{q}\in H_{m}^{*}$, and $\mathscr{M}^{-1}\left(\rho\right)$ maps $H_{m}^{*}$ to $H_{m}$. Then it holds 
\begin{align}
&\pt_{\mathbf{q}}g\left(\rho, \mathbf{q}\right)=\mathscr{M}^{-1}\left(\rho\right)\mathbf{q}\in H_{m}, \quad \pt^{2}_{\mathbf{q}}g\left(\rho, \mathbf{q}\right)=\mathscr{M}^{-1}\left(\rho\right),\\
&\pt_{\rho}g\left(\rho,\mathbf{q}\right)=-\frac{1}{2} \mathscr{M}^{-1}\left(\rho\right)\left(\mathbf{q}\cdot \mathscr{M}^{-1}\left(\rho\right)\mathbf{q}\right)\in \mathds{R}.\notag %
\end{align}
Therefore, for $g=\frac{1}{2}\rho |\mathbf{u}|^{2}=\frac{1}{2}\frac{|\mathbf{q}|^{2}}{\rho}$, according to It\^o's formula (see \S\ref{appendix 2nd part} in Appendix),
we deduce
\begin{align}
&\mathrm{d}\left(\int_{\O}\frac{1}{2}\rho |\u|^{2}\d x\right)=\frac{1}{2} \int_{\O}|\u|^{2} \mathrm{d} \rho \mathrm{d}x + \int_{\O}\u\cdot \d\left(\rho \mathbf{u}\right)\mathrm{d}x +\left(\frac{1}{2}\int_{\O}\rho\left|\mathbb{F}(\rho,\mathbf{u})\right|^{2}\mathrm{d}x\right)\mathrm{d}t.
\end{align}
Here and hereafter $"\d"$ outside the integral in the above equality means differentiation with respect to time $t$.
Substituting expression of $\d \left(\rho \u\right)$ of momentum equation into the above formula, it holds
\begin{align}
& \mathrm{d}\left(\int_{\O}\frac{1}{2}\rho |\u|^{2}\d x\right) \notag\\
=&-\frac{1}{2}\int_{\O}|\u|^{2} \mathrm{d}\rho \mathrm{d}x +\left(\int_{\O}\rho \mathbb{F}(\rho,\mathbf{u})\cdot \mathbf{u}\mathrm{d}x\right)\mathrm{d}W+\left(\frac{1}{2}\int_{\O}\rho \left|\mathbb{F}(\rho,\mathbf{u})\right|^{2}\mathrm{d}x\right)\mathrm{d}t\notag\\
&- \left(\int_{\O}\mathbf{u}\cdot \left(\odiv\left(\rho\mathbf{u}\otimes \mathbf{u}\right)+\nabla p\left(\rho\right)\right)\mathrm{d}x-\int_{\O}\eps \nabla \rho\nabla \mathbf{u}\cdot \mathbf{u}\mathrm{d}x-\eps\int_{\O}\left|\triangle \mathbf{u}\right|^{2}\mathrm{d}x\right.\\
&\quad \left.+ \int_{\O}\odiv\left(\rho\mathbb{D}\mathbf{u}\right)\mathbf{u}\mathrm{d}x+ \int_{\O} \frac{11}{10}\eta\nabla\rho^{-10}\cdot \mathbf{u}\mathrm{d}x+\delta \int_{\O} \rho \nabla\triangle^{9}\rho\cdot\mathbf{u}\mathrm{d}x\right.\notag\\
&\quad \left.-\int_{\O} r_{0}|\u|^{2}\mathbf{u}\cdot\mathbf{u}\d x-\int_{\O} r_{1}\rho|\u|^{2}\u\cdot \mathbf{u}\mathrm{d}x-\int_{\O} r_{2}\mathbf{u}\cdot\mathbf{u}\d x+\kappa \int_{\O} \rho \nabla \left(\frac{\triangle \sqrt{\rho}}{\sqrt{\rho}}\right)\cdot \mathbf{u}\mathrm{d}x\right)\mathrm{d}t.\notag
\end{align}
Since $\left(\int_{\O}\mathbf{u}\cdot \odiv\left(\rho\u\otimes\u\right)\d x\right)\d t=\left(\int_{\O}\odiv\left(\rho\mathbf{u}\right)\frac{1}{2}|\u|^{2}\d x\right)\d t,$
 we deduce that
\begin{equation}
-\frac{1}{2}\int_{\O}|\u|^{2} \mathrm{d}\rho \d x - \left(\int_{\O}\odiv\left(\rho\mathbf{u}\otimes \mathbf{u}\right)\cdot \mathbf{u}\d x\right)\d t - \left(\int_{\O}\eps \nabla \rho\nabla \mathbf{u}\cdot \mathbf{u}\d x\right)\d t=0.
\end{equation}
Multiplying $-\eta \rho_{\eta}^{-11}$ to the mass conservation equation, and integrating on $\O$, we have
\begin{equation}
\begin{array}{l}
\left(\int_{\O}\frac{11}{10}\eta \nabla \rho^{-10}\cdot \u \d x\right)\d t=-\d \left(\int_{\O}\frac{\eta}{10}\rho^{-10}\d x\right)-\eps\eta\frac{11}{25}\left(\int_{\O}|\nabla \rho^{-5}|^{2}\d x\right)\d t.
\end{array}
\end{equation}
Similarly, we have
\begin{equation}
-\left(\int_{\O} \nabla p\cdot\mathbf{u} \d x\right)\d t = -\d \left(\int_{\O} \frac{p}{\gamma-1}\d x\right) - \eps\frac{4a}{\gamma}\left(\int_{\O}|\nabla (\rho^{\frac{\gamma}{2}})|^{2}\d x\right)\d t,\\
\end{equation}
\begin{equation}
-\left(\int_{\O} \delta \rho \nabla\triangle^{9}\rho\cdot \mathbf{u} \d x\right)\d t = - \d \left(\int_{\O} \frac{\delta}{2}|\nabla \triangle^{4}\rho|^{2} \d x\right) - \eps\delta \left(\int_{\O}|\triangle^{5} \rho|^{2}\d x\right)\d t.\\
\end{equation}
For the term $\kappa \rho\nabla \left(\frac{\triangle \sqrt{\rho}}{\sqrt{\rho}}\right)$, we use the equality relation $2\rho\nabla \left(\frac{\triangle \sqrt{\rho}}{\sqrt{\rho}}\right)=\odiv \left(\rho \nabla^{2}\ln\rho\right)$. Therefore, we have
\begin{equation}
\int_{\O}\kappa \rho\nabla \left(\frac{\triangle \sqrt{\rho}}{\sqrt{\rho}}\right)\cdot \u \d x 
=-\kappa\frac{\d}{\d t}\int_{\O}|\nabla\sqrt{\rho}|^{2}\d x- \kappa\eps\int_{\O}\rho|\nabla^{2}\ln\rho|^{2}\d x.
\end{equation}
To conclude, we obtain the energy balance
\begin{align}\label{energy balance}
&\int_{\O}\left(\frac{1}{2}\rho|\mathbf{u}|^{2}+\frac{a}{\gamma}\rho^{\gamma} +\frac{\eta}{10}\rho^{-10}+\kappa|\nabla\sqrt{\rho}|^{2}+\frac{\delta}{2}|\nabla \triangle^{4} \rho|^{2}\right)\d x\notag\\
&+\eps \frac{4a}{\gamma}\int_{0}^{t}\int_{\O}|\nabla \rho^{\frac{\gamma}{2}}|^{2}\d x\d s+\eps\eta \frac{11}{25} \int_{0}^{t}\int_{\O}|\nabla \rho^{-5}|^{2}\d x\d s\notag\\
&+\eps \delta \int_{0}^{t}\int_{\O}|\triangle^{5} \rho|^{4}\d x\d s+\frac{\eps \kappa}{2} \int_{\O} \rho|\nabla^{2}\ln\rho|^{2}\d x\d t+\int_{\O}\rho|\mathbb{D}\mathbf{u}|^{2}\d x\d s \\
&+\eps\int_{0}^{t}\int_{\O}\left|\triangle \mathbf{u}\right|^{2}\d x\d s + r_{0}\int_{0}^{t}\int_{\O}|\mathbf{u}|^{4}\d x\d s+r_{1}\int_{\O}\rho |\mathbf{u}|^{4}\d x\d s+ r_{2}\int_{0}^{t}\int_{\O}|\mathbf{u}|^{2}\d x\d s\notag\\
&=\int_{\O}\left(\frac{1}{2}\frac{|\mathbf{q}_{0}|^{2}}{\rho_{0}}+\frac{a}{\gamma}\rho_{0}^{\gamma} +\frac{\eta}{10}\rho_{0}^{-10}+\kappa|\nabla\sqrt{\rho_{0}}|^{2}+\frac{\delta}{2}|\nabla \triangle^{4} \rho_{0}|^{2}\right)\d x\notag\\
&\quad +\frac{1}{2}\int_{0}^{t}\int_{\O}\rho|\mathbb{F}_{k}(\rho,\mathbf{u})|^{2}\d x\d s+\frac{1}{2}\int_{0}^{t}\int_{\O}\rho\mathbb{F}_{k}(\rho,\mathbf{u})\cdot \mathbf{u}\d x\d W.\notag
\end{align}
The $C_{r}$ inequality states that: for real-valued $r>1$, $a$ and $b$, by the property of convex functions, it holds $  |a+b|^{r}\ls \max (1,2^{r-1})(|a|^{r}+|b|^{r})$. Hence to estimate $r-$th expectation of the right-hand side, for some $r\geqslant 1$, we only need to consider
\begin{align}
\mathbb{E}\left[\left|\int_{0}^{t}\int_{\O} \rho|\mathbb{F}(\rho,\mathbf{u})|^{2} \d x\d s\right|^{r}\right]
=\mathbb{E}\left[\left|\sum\limits_{k=1}^{+\infty}\int_{0}^{t}\int_{\O} \rho\left|\mathbf{F}_{k}(\rho,\mathbf{u})\right|^{2} \d x\d s\right|^{r}\right]
\end{align}
 and
\begin{align}\label{the second term in the right-hand side of energy balance}
&\mathbb{E}\left[\left|\int_{0}^{t}\int_{\O} \rho \mathbb{F}\left(\rho,\mathbf{u}\right)\cdot \mathbf{u} \d x\d W\right|^{r}\right]
\ls C\mathbb{E}\left[\left|\int_{0}^{t}\left|\int_{\O} \rho  \sum\limits_{k=1}^{+\infty}\mathbf{F}_{k}\left(\rho,\mathbf{u}\right)e_{k}\cdot \mathbf{u} \d x\right|^{2}\d s\right|^{\frac{r}{2}}\right]\\
=& C\mathbb{E}\left[\left|\sum\limits_{k=1}^{+\infty}\int_{0}^{t}\left| \int_{\O} \rho \mathbf{F}_{k}\left(\rho,\mathbf{u}\right)\cdot \mathbf{u} \d x\right|^{2}\d s\right|^{\frac{r}{2}}\right]\notag
\end{align}
respectively, where $C$ depends on $r$.

For any function $\xi$, $1<\gamma<3$, $\xi^{\frac{1}{\gamma}}<(1+\xi)^{\frac{1}{\gamma}}<(1+\xi)$. By the property of $\mathbf{F}_{k}$, we do the following energy estimates,
\begin{align}
&\sum\limits_{k=1}^{+\infty}\int_{0}^{t}\int_{\O}\rho\left|\Pi_{m}\left(\mathbf{F}_{k}(\rho,\u)\right)\right|^{2}\d x\d t \ls \sum\limits_{k=1}^{+\infty} \sum\limits_{k=1}^{+\infty}f^{2}_{k}\int_{0}^{t}\int_{\O} \frac{\left(\left|\rho\right|+ \left|\rho\u\right|\right)^{2}}{\rho} \d x\d s \notag\\
\ls &C \left\| \rho\right\|_{L_t^{1}L_x^{1}}+C \int_{0}^{t}\int_{\O} \rho\left|\u\right|^{2}\d x\d s \ls C \int_{0}^{t}\left(\int_{\O} \rho ^{\gamma}\d x\right)^{\frac{1}{\gamma}} \d s+C \int_{0}^{t}\int_{\O} \rho\left|\u\right|^{2}\d x\d s\\
\ls &C \int_{0}^{t} \left(\int_{\O}\rho ^{\gamma}\d x+1\right)\d s+C \int_{0}^{t}\int_{\O} \rho\left|\u\right|^{2}\d x\d s , \quad t\in\left[0,T\right], \notag
\end{align}
uniformly in $R$ and $m$, where $C$ depends on $  \sum\limits_{k=1}^{+\infty}f^{2}_{k}$. By Jensen's inequality, we have
\begin{align}\label{Jensen used example}
&\quad\mathbb{E}\left[\left|\sum\limits_{k=1}^{+\infty}\int_{0}^{t}\int_{\O} \rho\left|\Pi_{m}\left(\mathbf{F}_{k}(\rho,\u)\right)\right|^{2} \d x\d s\right|^{r}\right] \notag\\
&\ls C\mathbb{E}\left[\left(\int_{0}^{t}\left(\int_{\O}\left(\rho\left|\u\right|^{2}+\rho^{\gamma}\right)(s,x)\d x+1\right)\d s \right)^{r}\right]\\
&\ls C\mathbb{E}\left[t^{r}\int_{0}^{1}\left(\int_{\O}\left(\rho\left|\u\right|^{2}+\rho^{\gamma}\right)(ts,x)\d x+1\right)^{r} \d s\right]\notag\\
&\ls C \left(t^{r-1}\mathbb{E}\left[\int_{0}^{t}\left(\int_{\O}\left(\rho\left|\u\right|^{2}+\rho^{\gamma}\right)(s,x)\d x\right)^{r} \d s\right]+t^{r}\right),\quad t\in [0,T],  \notag
\end{align}
for some $r\geqslant 1$, where $C$ depends on $r$ and $ \sum\limits_{k=1}^{+\infty}f^{2}_{k}$.
For \eqref{the second term in the right-hand side of energy balance},
 $  \frac{1}{2\gamma}+\frac{1}{2}+\frac{1}{q}=1$, it holds
\begin{align}
 \sum\limits_{k=1}^{+\infty}\left|\int_{\O} \rho \Pi_{m}\left(\mathbf{F}_{k}\left(\rho,\u\right)\right)\cdot \u \d x\right|^{2}
\ls C \sum\limits_{k=1}^{+\infty} f_{k}^{2} \left|\left\|\sqrt{\rho}\right\|_{L_{x}^{2\gamma}}\left\|\sqrt{\rho}\u\right\|_{L_{x}^{2}}+ \int_{\O}\rho\left|\u\right|^{2}\d x \right|^{2},
\end{align}
\begin{align}
&\left| \left\|\sqrt{\rho}\right\|_{L_{x}^{2\gamma}}\left\|\sqrt{\rho}\u\right\|_{L_{x}^{2}}\right|^{2}
\ls C \left|\left\|\sqrt{\rho}\right\|_{L_{x}^{2\gamma}}\left\|\sqrt{\rho}\u\right\|_{L_{x}^{2}}\right|^{2}\\
\ls & C \left(\int_{\O}\rho^{\gamma}\d x\right)^{\frac{1}{\gamma}}\int_{\O}\rho\u^{2}\d x \ls C \left(\int_{\O}\rho^{\gamma}\d x+1\right)\left(\int_{\O}\rho\u^{2}\d x\right),\notag
\end{align}
where $C$ depends on $\sum\limits_{k=1}^{+\infty}f^{2}_{k}$.

Due to the property of convex functions, for some $r \geqslant2$, by Jensen's inequality, we have
\begin{align}\label{estimate of rho F u}
&\quad\mathbb{E}\left[\left|\int_{0}^{t}\int_{\O} \rho \Pi_{m}\left(\mathbb{F}\left(\rho \u\right)\right)\cdot \u\d x\d W\right|^{r}\right]\notag\\
&\ls C \mathbb{E}\left[\left(\int_{0}^{t}\left(\left(\int_{\O}\rho^{\gamma}(s)\d x+\int_{\O}\rho(s)\u^{2}(s)\d x\right)^{2}+1\right) \d s\right)^{\frac{r}{2}}\right] \\
&= C \mathbb{E}\left[\left(t\int_{0}^{1}\left(\int_{\O}\rho^{\gamma}(ts)\d x + \int_{\O}\rho(ts )\u^{2}(ts)\d x\right)^{2}\d s+t \right)^{\frac{r}{2}}\right] \notag\\
&\ls C \left(t^{\frac{r}{2}-1}\int_{0}^{t}\mathbb{E}\left[\left(\int_{\O}\rho^{\gamma}(s)\d x + \int_{\O}\rho(s, x)\u^{2}(s )\d x\right)^{r} \right]\d s+t^{\frac{r}{2}}\right), \notag
\end{align}
uniformly in $R$ and $m$, where $C$ depends on $r$. Now we recover the notation.
Let
\begin{equation}\label{energy estimate1}
e_{m,R}(t)=\int_{\O}\left(\frac{1}{2}\rho_{m,R}|\u_{m,R}|^{2}+\frac{a}{\gamma}\rho_{m,R}^{\gamma} +\frac{\eta}{10}\rho_{m,R}^{-10}+\frac{\kappa}{2}|\nabla\sqrt{\rho_{m,R}}|^{2}+\frac{\delta}{2}|\nabla \triangle^{4} \rho_{m,R}|^{2}\right)\d x. \\
\end{equation}
Combining the above estimates for the stochastic integrals, therefore, we have
\begin{align}
& \mathbb{E}\left[\left(e_{m,R}\left(t\right) + \eps \frac{4a}{\gamma}\int_{0}^{t}\int_{\O}\left|\nabla \rho_{m,R}^{\frac{\gamma}{2}}\right|^{2}\d x\d s+\eps\eta \frac{11}{25} \int_{0}^{t}\int_{\O}\left|\nabla \rho_{m,R}^{-5}\right|^{2}\d x \d s\right.\right.\notag\\
&+\left.\left.\eps\delta \int_{0}^{t}\int_{\O}|\triangle^{5} \rho_{m,R}|^{2}\d x\d s + \eps \kappa \int_{0}^{t}\int_{\O} \rho_{m,R}|\nabla^{2}\ln\rho_{m,R}|^{2}\d x\d s+\eps\int_{\O}\left|\triangle \u_{m,R}\right|^{2}\d x\d s\right.\right.\notag\\
&+\left.\left.\int_{0}^{t}\int_{\O}\rho_{m,R}|\mathbb{D}\u_{m,R}|^{2}\d x\d s+r_{0}\int_{0}^{t}\int_{\O}|\u_{m,R}|^{4}\d x\d s\right.\right.\\
&+\left.\left.r_{1}\int_{0}^{t}\int_{\O}\rho_{m,R} |\u_{m,R}|^{4}\d x\d s+r_{2}\int_{0}^{t}\int_{\O}|\u_{m,R}|^{2}\d x\d s\right)^{r}\right]\notag\\
&\ls C \left( (t^{\frac{r}{2}-1}+t^{r-1})\int_{0}^{t}\mathbb{E}\left[e_{m,R}(s)^{r}\right]\d s+t^{\frac{r}{2}}+t^{r}+\mathbb{E}\left[e_{m,R}(0)^{r}\right]\right).\notag
\end{align}
Hence it holds
\begin{equation}
\quad \mathbb{E} [\left(e_{m,R}(t)\right)^{r}] \ls C \left( (t^{\frac{r}{2}-1}+t^{r-1})\int_{0}^{t}\mathbb{E}\left[e_{m,R}(s)^{r}\right]\d s+t^{\frac{r}{2}}+t^{r}+\mathbb{E}\left[e_{m,R}(0)^{r}\right]\right),
\end{equation}
where $C$ depends on $r$ and $\sum\limits_{k=1}^{+\infty}f^{2}_{k}$.
By Gr\"onwall's inequality, $ t^{\frac{r}{2}}+t^{r}+\mathbb{E}\left[e_{m,R}(0)^{r}\right]$ is non-decreasing with respect to $t$, it holds
\begin{equation}\label{energy estimate2}
\mathbb{E}[\sup\limits_{t\in[0,T]}\left(e_{m,R}(t)\right)^{r}] \ls  C\left(t^{\frac{r}{2}}+t^{r}+\mathbb{E}\left[e_{m,R}(0)^{r}\right]\right)e^{C(t^{\frac{r}{2}}+t^{r})}\ls C(\mathbb{E}\left[e_{m,R}(0)^{r}\right]+1),
\end{equation}
where $C$ depends on $r$ $T$, and $\sum\limits_{k=1}^{+\infty}f^{2}_{k}$, $r \geqslant2$.
Thereupon, we have
\begin{align}\label{energy estimate3}
& \mathbb{E}\left[\left(\eps \frac{4a}{\gamma}\int_{0}^{t}\int_{\O}|\nabla \rho_{m,R}^{\frac{\gamma}{2}}|^{2}\d x\d s+\eps\eta \frac{11}{25} \int_{0}^{t}\int_{\O}|\nabla \rho_{m,R}^{-5}|^{2}\d x\d s\right.\right.\notag\\
&\left.\left.\quad  +\eps\delta \int_{0}^{t}\int_{\O}|\triangle^{5} \rho_{m,R}|^{2}\d x\d s + \eps \kappa \int_{0}^{t}\int_{\O} \rho_{m,R}|\nabla^{2}\ln\rho_{m,R}|^{2}\d x\d s\right.\right.\notag\\
&\left.\left.\quad +\eps\int_{\O}\left|\triangle \u_{m,R}\right|^{2}\d x\d s +\int_{0}^{t}\int_{\O}\rho_{m,R}|\mathbb{D}\u_{m,R}|^{2}\d x\d s\right.\right.\\
&\left.\left.\quad+ r_{0}\int_{0}^{t}\int_{\O}|\u_{m,R}|^{4}\d x\d s + r_{1}\int_{0}^{t}\int_{\O}\rho_{m,R} |\u_{m,R}|^{4}\d x\d s+ r_{2}\int_{0}^{t}\int_{\O}|\u_{m,R}|^{2}\d x\d s \right)^{r}\right]\notag\\
&\ls C(\mathbb{E}\left[e_{m,R}(0)^{r}\right]+1),\notag
\end{align}
where $C$ depends on $r$ $T$, and $\sum\limits_{k=1}^{+\infty}f^{2}_{k}$, $r \geqslant2$.

\subsubsection{The global existence of the classical solutions to the nonlinear regularized Galerkin scheme in $H_{m}$}

We define $\tau_{R}$, a stoping time (see \S \ref{def in app} in Appendix), until which $\left(\rho_{m,R},\u_{m,R}\right)$ is the well-defined in $C\left([0,T];H_{m}\right)\times C^{0,\nu}\left([0,T];H_{m}\right)$,
\begin{equation}
\tau_{R}=\inf\left\{t\in[0,T]\left|\left\|\u_{m,R}\right\|_{H_{m}}>R \right.\right\}.
\end{equation}
For this layer $R\ra +\infty$, we need show that
\begin{equation}
\bar{\mathbb{P}}\left[\left\{\sup\limits_{R\in\mathds{N}}\tau_{R}=T \right\}\right]=1.
\end{equation}
In other words, till time $T$, the solutions will not blow up as $R\ra \infty$.
Due to \eqref{maximum principle}, we only need consider $\left\|\u_{m,R}\right\|_{H_{m}}$ and then the well-posedness of $\rho_{m,R}$ follows.
From \eqref{energy estimate3} we know that $\left\|\triangle \u_{m,R}\right\|_{L_t^{2}L_x^{2}}\ls C$, $\bar{\mathbb{P}}$ a.s., this implies that $\left\|\nabla \u_{m,R}\right\|_{L_t^{2}L_x^{6}}\ls C $, $\bar{\mathbb{P}}$ a.s., where $C$ depends on $r$, $T$, and $\sum\limits_{k=1}^{+\infty}f^{2}_{k}$. Since $\mathbb{E}\left[\left(r_{2}\int_{0}^{t}\int_{\O}|\u_{m,R}|^{2}\d x\d s\right)^{r}\right]\ls C(\mathbb{E}\left[e_{m,R}(0)^{r}\right]+1)$, it holds
\begin{equation}
\left\|\u_{m,R}\right\|_{H_{m}}\ls C, \quad \left\|\odiv \u_{m,R}\right\|_{H_{m}}\ls C,\quad \bar{\mathbb{P}} \quad \text{a.s.}
\end{equation}
This shows that $\tau_{R}=T$ for $R$ large enough.

\subsection{$m$ goes to infinity}

For the space of $\u_{m}$, we try not to use the regular property from $\triangle^{2} \mathbf{u}_{m}$ because  $\eps$ converges to zero finally.

Let $R\ra \infty$ in the energy estimates (\ref{energy estimate1}), (\ref{energy estimate2}) and (\ref{energy estimate3}), then we have
\begin{equation}\label{energy squa rho and energy quarter rho}
\mathbb{E}\left[\left(\eps \kappa\left\|\rho_{m}^{\frac{1}{2}}\right\|_{L_{t}^{2}H^{2}_{x}}\right)^{r}\right] \ls C,\quad
\mathbb{E}\left[\left(\eps \kappa\left\|\nabla\left(\rho_{m}^{\frac{1}{4}}\right)\right\|_{L_t^{4}L_x^{4}}^{2}\right)^{r}\right] \ls C,
\end{equation}
where $C$ depends on $T,r$, $\sum\limits_{k=1}^{+\infty}f^{2}_{k}$ and $\mathbb{E}\left[e_{m}(0)^{r}\right]$.
To prove it, we used the following lemma which is from J\"ungle's original idea \cite{Jungle2010}.
\begin{lemma}\label{Jungle inequality} For any smooth positive function $f(x)$, it holds
\begin{equation}
\begin{aligned}
\int_{\O} f\left|\nabla^{2} \ln f\right|^{2} \d x \gs \frac{1}{7} \int_{\O}\left|\nabla^{2} f^{\frac{1}{2}}\right|^{2} \d x+\frac{1}{8} \int_{\O}\left|\nabla f^{\frac{1}{4}}\right|^{4} \d  x. \\
\end{aligned}
\end{equation}
\end{lemma}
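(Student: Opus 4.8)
The plan is to turn the estimate into a finite algebraic problem in three scalar integrals and then close it with one integration by parts and one Cauchy--Schwarz bound. Since $f$ is smooth and strictly positive, I set $g=\log f$, so that $\nabla f=f\nabla g$,
\[
\nabla^{2}f^{1/2}=\tfrac12 f^{1/2}\!\left(\nabla^{2}g+\tfrac12\,\nabla g\otimes\nabla g\right),\qquad
\nabla f^{1/4}=\tfrac14 f^{1/4}\nabla g .
\]
Introducing
\[
T_{1}=\int_{\O}f\,|\nabla^{2}g|^{2}\,\d x,\qquad
T_{2}=\int_{\O}f\,\big(\nabla^{2}g:\nabla g\otimes\nabla g\big)\,\d x,\qquad
T_{3}=\int_{\O}f\,|\nabla g|^{4}\,\d x,
\]
and expanding the squares (using $|\nabla g\otimes\nabla g|^{2}=|\nabla g|^{4}$), one finds $\int_{\O}|\nabla^{2}f^{1/2}|^{2}\,\d x=\tfrac14\big(T_{1}+T_{2}+\tfrac14T_{3}\big)$ and $\int_{\O}|\nabla f^{1/4}|^{4}\,\d x=\tfrac1{256}T_{3}$, so the claim is equivalent to
\[
27\,T_{1}\;\gs\;T_{2}+\tfrac{135}{512}\,T_{3}.
\]

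Next I would establish the algebraic relations between $T_{1},T_{2},T_{3}$. On $\O=\mathbb{T}^{3}$ there are no boundary terms, so integrating $\odiv\!\big(f\,|\nabla g|^{2}\nabla g\big)$ and using $\nabla f\cdot\nabla g=f|\nabla g|^{2}$ gives
\[
\int_{\O}f\,|\nabla g|^{2}\,\Delta g\,\d x=-2T_{2}-T_{3}.
\]
Integrating $T_{1}$ by parts twice --- first moving a derivative off one Hessian factor, then commuting to recognise $\partial_{i}\Delta g$ --- and substituting the previous identity yields the key relation
\[
\int_{\O}f\,(\Delta g)^{2}\,\d x=T_{1}+3T_{2}+T_{3}.
\]
Because $|\nabla^{2}g|^{2}\gs\tfrac13(\Delta g)^{2}$ pointwise in space dimension three, this gives $T_{1}\gs\tfrac13(T_{1}+3T_{2}+T_{3})$, i.e.\ $2T_{1}\gs 3T_{2}+T_{3}$, and Cauchy--Schwarz under the integral sign gives $|T_{2}|\ls T_{1}^{1/2}T_{3}^{1/2}$, with of course $T_{1},T_{3}\gs 0$.

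Finally I would combine these. From $2T_{1}\gs 3T_{2}+T_{3}\gs-3T_{1}^{1/2}T_{3}^{1/2}+T_{3}$ one extracts the unconditional lower bound $T_{1}\gs c_{0}T_{3}$ with $c_{0}=\big((\sqrt{17}-3)/4\big)^{2}$ (solving a quadratic in $T_{1}^{1/2}/T_{3}^{1/2}$); the case $T_{3}=0$ is trivial. Then, using only $T_{2}\ls T_{1}^{1/2}T_{3}^{1/2}$, the target inequality $27T_{1}\gs T_{2}+\tfrac{135}{512}T_{3}$ follows from $27\,t\gs t^{1/2}+\tfrac{135}{512}$ with $t=T_{1}/T_{3}\gs c_{0}$, which holds comfortably since the left side is increasing in $t$ and already exceeds the right side at $t=c_{0}$; this is the step that pins down the (non-optimal) constants $\tfrac17$ and $\tfrac18$. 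I expect the only delicate point to be precisely this last bookkeeping: a single application of Cauchy--Schwarz to $T_{2}$ does not suffice, because a priori $T_{1}/T_{3}$ could be small; one must first feed the integration-by-parts identity back in to obtain $T_{1}\gs c_{0}T_{3}$, and only with that lower bound does the one-variable estimate close. Everything else is routine expansion of derivatives of $f^{1/2}$ and $f^{1/4}$ together with periodic integration by parts.
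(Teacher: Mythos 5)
Your proof is correct, and I checked the arithmetic at every stage: the expansions give $\int_{\O}|\nabla^{2}f^{1/2}|^{2}\,\d x=\tfrac14(T_{1}+T_{2}+\tfrac14T_{3})$ and $\int_{\O}|\nabla f^{1/4}|^{4}\,\d x=\tfrac1{256}T_{3}$, so the lemma is indeed equivalent to $27T_{1}\gs T_{2}+\tfrac{135}{512}T_{3}$; the two periodic integration-by-parts identities $\int f|\nabla g|^{2}\Delta g=-2T_{2}-T_{3}$ and $\int f(\Delta g)^{2}=T_{1}+3T_{2}+T_{3}$ are both right; combined with $|\nabla^{2}g|^{2}\gs\tfrac13(\Delta g)^{2}$ they give $2T_{1}\gs 3T_{2}+T_{3}$, hence $T_{1}\gs c_{0}T_{3}$ with $c_{0}=\bigl((\sqrt{17}-3)/4\bigr)^{2}\approx 0.0788$, and since the target only needs $T_{1}/T_{3}\gs\bigl((1+\sqrt{1+4\cdot 27\cdot 135/512})/54\bigr)^{2}\approx 0.0142$ after one Cauchy--Schwarz on $T_{2}$, the estimate closes with room to spare. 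You are also right that the preliminary lower bound $T_{1}\gs c_{0}T_{3}$ is genuinely needed, since $27t\gs\sqrt{t}+\tfrac{135}{512}$ fails as $t\to 0$. Note that the paper itself does not prove this lemma at all; it only states it and cites J\"ungel's 2010 paper, so there is no in-text argument to compare against. Your route is the same circle of ideas as the cited source (logarithmic substitution, integration by parts on the torus, the trace inequality, Cauchy--Schwarz), but your self-contained bookkeeping of the three integrals $T_{1},T_{2},T_{3}$ and the explicit quadratic argument for the constants is a clean, complete replacement for the omitted proof.
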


To show the convergence of $\rho_{m}$ and $\u_{m}$, we will proceed in the following four steps.\\

\begin{flushleft}
\textbf{Step 1:} Choose the path space.\\
\end{flushleft}

We will roughly give the estimates for $\rho_{m}$, $\rho_{m} \u_{m}$, and $\rho_{m}^{\frac{1}{2}}$.
\begin{lemma}\label{strong conver of rho}
For some $r\geqslant 2$, there exists a constant $C$ such that
\begin{equation}\label{independent on delta as eps ra to 0}
 \mathbb{E}\left[\left(\left\|(\rho_{m})_{t}\right\|_{L_t^2 L_x^{\frac{3}{2}}}\right)^{r}\right]\ls  C, \quad \mathbb{E}\left[\left(\left\|\left(\rho_{m}^{\frac{1}{2}}\right)_{t}\right\|_{L_t^2 L_x^2}\right)^{r}\right] \ls C,\quad \mathbb{E}\left[\left(\left\|\rho_{m}\u_{m}\right\|_{L_t^2 L_x^2}\right)^{r}\right] \ls C,
\end{equation}
\begin{equation}\label{square root of rhom}
 \mathbb{E}\left[\left(\left\|\rho_{m}^{\frac{1}{2}}\right\|_{L_t^2 H_x^{2}}\right)^{r}\right] \ls C, \quad
 \mathbb{E}\left[\left(\left\|\nabla\left(\rho_{m}^{\frac{1}{4}}\right)\right\|_{L_t^4 L_x^4}\right)^{r}\right] \ls C.
 \quad \mathbb{E}\left[\left(\left\|\rho_{m}^{\gamma}\right\|_{L_t^{\frac{5}{3}} L_x^{\frac{5}{3}}}\right)^{r}\right]
\ls C,
\end{equation}
where $C$ is merely dependent on $T,r,r_{1},\delta,\eps\kappa$, and \eqref{independent on delta as eps ra to 0} will be independent on $\delta$ after $\eps\rightarrow 0$.
\end{lemma}
\begin{proof}
Recall that
\begin{equation}
\begin{aligned}
(\rho_{m})_{t}&=-\rho_{m}\odiv\u_{m}-\nabla\rho_{m}\cdot\u_{m}+\eps\triangle\rho_{m}\\
        &=-\rho_{m}^{\frac{1}{2}}\rho_{m}^{\frac{1}{2}}\odiv\u_{m}-\left(\rho_{m}^{\frac{1}{2}}\u_{m}\right)\cdot \left(2\nabla\rho_{m}^{\frac{1}{2}}\right)+\eps\triangle\rho_{m}.
\end{aligned}
\end{equation}
 On one hand, for some $r \geqslant2$,
\begin{equation}
\mathbb{E}\left[\left\|\rho_{m}^{\frac{1}{2}}\rho_{m}^{\frac{1}{2}}\odiv\u_{m}\right\|_{L_t^{2} L_x^{\frac{3}{2}}}^{r}\right]  \ls \mathbb{E}\left[\left(\left\|\rho_{m}^{\frac{1}{2}}\right\|_{L_t^{\infty}L_x^{6}}\left\|\rho_{m}^{\frac{1}{2}}\odiv\u_{m}\right\|_{L_t^{2}L_x^{2}}\right)^{r}\right]
\ls C.
\end{equation}
On the other hand,
\begin{equation}
\mathbb{E}\left[\left\| \nabla \rho_{m}^{\frac{1}{2}}\cdot \left(\rho_{m}^{\frac{1}{2}}\u_{m}\right)\right\|_{L_t^{2}L_x^{\frac{3}{2}}}^{r}\right]
\ls \mathbb{E}\left[\left(\left\|\rho_{m}^{\frac{1}{2}}\u_{m}\right\|_{L_t^{\infty}L_x^{2}}\left\|\nabla \rho_{m}^{\frac{1}{2}}\right\|_{L_t^{2} L_x^{6}}\right)^{r}\right]
\ls C,
\end{equation}
so
\begin{equation}
 \mathbb{E}\left[\left(\left\|(\rho_{m})_{t}\right\|_{L_t^{2}L_x^{\frac{3}{2}}}\right)^{r}\right]\ls  C,
\end{equation}
$C$ depends on $T,r, \eps$ and $\kappa$.
Meanwhile, the mass equation reduces to
\begin{equation}
\begin{aligned}
2\left(\rho_{m}^{\frac{1}{2}}\right)_{t}&=-\rho_{m}^{\frac{1}{2}}\odiv\u_{m}-2\nabla\rho_{m}^{\frac{1}{2}}\cdot\u_{m}+\eps \frac{\triangle\rho_{m}}{\rho_{m}^{\frac{1}{2}}}.\\
\end{aligned}
\end{equation}
Letting $R\ra \infty$ in the energy estimates (\ref{energy estimate1}), (\ref{energy estimate2}) and (\ref{energy estimate3}), 
 and by $H^{8}\left( \O \right)\hookrightarrow C^{6,\frac{1}{2}}\left( \O \right)$, we have
\begin{equation}
 \mathbb{E}\left[\left(\left\|\eps \frac{\triangle\rho_{m}}{\rho_{m}^{\frac{1}{2}}} \right\|_{L_t^2 L_x^2}\right)^{r}\right]\ls  C,
\end{equation}
where $C$ depends on $T,r,\eps\eta$ and $\eps\delta$.
Hence we obtain
\begin{equation}
 \mathbb{E}\left[\left(\left\|\left(\rho_{m}^{\frac{1}{2}}\right)_{t}\right\|_{L_t^2 L_x^2}\right)^{r}\right]\ls  C,
\end{equation}
for $C$ depends on $T,r,r_{1},\eps\eta,\eps\delta$ and $\eps\kappa$, $r \geqslant2$.
Notice that the above estimates are independent of $\delta$ as $\eps$ goes to zero firstly. The estimate
$\mathbb{E}\left[\left(\left\|\nabla\rho_{m}^{\frac{\gamma}{2}}\right\|_{L_t^2 L_x^2}\right)^{r}\right] \ls C$
implies
\begin{equation}
 \mathbb{E}\left[\left(\left\|\rho_{m}^{\gamma}\right\|_{L_{t}^{1}L_{x}^{3}}\right)^{r}\right] \ls C,
\end{equation}
together with $\rho_{m}^{\gamma}\in L^{\infty}(0,T;L^{1}\left( \O \right))$, by H\"older's inequality we have
\begin{equation}
\begin{aligned}
\quad \mathbb{E}\left[\left(\left\|\rho_{m}^{\gamma}\right\|_{L_t^{\frac{5}{3}}L_x^{\frac{5}{3}}}\right)^{r}\right]
\ls\mathbb{E}\left[\left\|\rho_{m}^{\gamma}\right\|^{\frac{2}{5}r}_{L_t^{\infty}L_x^{1}}\left\|\rho_{m}^{\gamma}\right\|^{\frac{3}{5}r}_{L_t^{1}L_x^{3}}\right]
\ls C,
\end{aligned}
\end{equation}
where $C$ depends on $T ,r$ and $\eps$, $r \geqslant2$. For $\rho_{m}\u_{m}$, for some $r \geqslant2$, we estimate
\begin{equation}
\begin{aligned}
\quad \mathbb{E}\left[\left(\left\|\rho_{m}\u_{m}\right\|_{L_t^2 L_x^2}\right)^{r}\right]
\ls\mathbb{E}\left[\left\|\rho_{m}^{\frac{1}{2}}\right\|_{L_t^{2}L_x^{\infty}}^{r}\left\|\rho_{m}^{\frac{1}{2}}\u_{m}\right\|_{L_t^{\infty}L_x^{2}}^{r}\right]
\ls C,
\end{aligned}
\end{equation}
due to $ H^{2}\left( \O \right)\hookrightarrow C^{0,\frac{1}{2}}\left( \O \right)$, $\rho_{m}^{\frac{1}{2}}\u_{m} \in L^{\infty}([0,T];L^{2}\left( \O \right))$, where $C$ depends on $T ,r$ and $\eps\kappa$, $r \geqslant2$.
We calculate
\begin{equation}
\nabla (\rho_{m}\u_{m})=2\nabla\rho_{m}^{\frac{1}{2}}\otimes \rho_{m}^{\frac{1}{2}}\u_{m}+\rho_{m}^{\frac{1}{2}}\cdot\rho_{m}^{\frac{1}{2}}\nabla\u_{m},\\
\end{equation}
 so we have
$\nabla \left(\rho_{m}\u_{m}\right)\in L^{r}\left(\Omega; L^{2}\left(0,T; L^{\frac{3}{2}}\left( \O \right)\right)\right)$, $W^{1,\frac{3}{2}}\left( \O \right)\hookrightarrow L^3\left( \O \right)$.  $\rho_{m}\u_{m}$ are bounded in $ \L^{r}\left(\Omega;L^{2}\left([0,T];L^2\left( \O \right)\right)\right)$, $r \geqslant2$. This is the end of the proof. \hfill $\square$
\end{proof}

We choose the path space
\begin{equation}
\mathcal{X}_{2}=\mathcal{X}_{\rho_{0}}^{2} \times \mathcal{X}_{\mathbf{q}_{0}}^{2} \times \mathcal{X}_{\frac{\mathbf{q}_{0}}{\sqrt{\rho_{0}}}}^{2} \times \mathcal{X}_{\rho}^{2} \times \mathcal{X}_{\u}^{2} \times \mathcal{X}_{\rho\u}^{2} \times \mathcal{X}_{W}^{2},
\end{equation}
where
$\mathcal{X}_{\rho_{0}}^{2}=L^{\gamma}\left(\O\right)\cap L^{1}\left(\O\right) \cap L^{-10}\left(\O\right)\cap H^{9}\left(\O\right)$,  
$\mathcal{X}_{\mathbf{q}_{0}}^{2}=L^{1}\left(\O\right))$,
$\mathcal{X}_{\frac{\mathbf{q}_{0}}{\sqrt{\rho_{0}}}}^{2}=L^{2}\left(\O\right)$,
$\mathcal{X}_{\rho}^{2}=L^{2}\left([0,T];H^{10}\left(\O\right)\right)\cap L^{2}\left([0,T];W^{1,3}\left( \O \right)\right)\cap L^{\frac{5}{3}\gamma}\left([0,T]\times\O\right)$, 
$\mathcal{X}_{\u}^{2}= L^{2}\left([0,T]\times\O\right)\cap L^{4}\left([0,T]\times\O\right)$,
$\mathcal{X}_{\rho\u}^{2}= L^{2}\left([0,T]; W^{1,\frac{3}{2}}\left(\O\right)\right)\cap C\left([0,T];L^{\frac{3}{2}}\left( \O \right)\right)$,
$\mathcal{X}_{W}^{2}= C\left([0,T];\mathcal{H}\right)$.\\

\begin{flushleft}
\textbf{Step 2:} Show the tightness of the laws.
\end{flushleft}
\begin{proposition}
$\left\{\mathcal{L}\left[\rho_{0,m}, \mathbf{q}_{0,m}, \frac{\mathbf{q}_{0,m}}{\sqrt{\rho_{0,m}}}, \rho_{m}, \u_{m},\rho_{m}\u_{m}, W_{m} \right]; m \in \mathds{N}\right\}$ is tight on $\mathcal{X}_{2}$.
\end{proposition}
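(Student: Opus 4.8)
The plan is to establish tightness one marginal at a time, since a finite product of tight families is tight: given $\epsilon>0$ it suffices to produce, for each of the seven components, a compact set $K_i$ in the corresponding factor with $\sup_{m}\mathbb{P}\{X_i^{(m)}\notin K_i\}<\epsilon/7$, whereupon $\prod_i K_i$ is compact in $\mathcal{X}_2$ and carries mass exceeding $1-\epsilon$ uniformly in $m$. The Wiener marginal is immediate: $\mathcal{L}[W_m]$ is the law of the fixed cylindrical Wiener process, a single Radon measure on the Polish space $C([0,T];\mathfrak{H})$, hence tight by Ulam's lemma. The three initial-data marginals are the laws of the deterministic projections $\Pi_m[\rho_0]$, $\Pi_m[\rho_0]\Pi_m[\u_0]$ and $\Pi_m[\rho_0]^{1/2}\Pi_m[\u_0]$, which converge $\mathbb{P}$-almost surely in $\mathcal{X}_{\rho_0}$, $\mathcal{X}_{\rho_0\u_0}$ and $\mathcal{X}_{\rho_0^{1/2}\u_0}$ by the defining property of $\Pi_m$ together with the moment hypotheses on $E(0)$, $\tilde E(0)$; an almost surely convergent sequence has tight laws (equivalently, the laws converge weakly and Prokhorov applies). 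So the work is entirely in the three dynamic components $\rho_m$, $\u_m$, $\rho_m\u_m$.

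\textbf{Velocity and density.} For $\u_m$ I would not try to use any spatial regularity (in accordance with the remark preceding the path space, since $\eps\to0$ later); instead, $\mathcal{X}_{\u}=L^{2}([0,T]\times\O)\cap L^{4}([0,T]\times\O)$ is reflexive, so bounded balls are compact in its weak topology, which is the topology under which Jakubowski--Skorokhod is applied. The uniform bounds $\sup_m\mathbb{E}\big[\|\u_m\|_{L^2_{t,x}\cap L^4_{t,x}}^{r}\big]\le C$ from \eqref{u estimates from energy} together with Chebyshev's inequality then give tightness on $\mathcal{X}_{\u}$. For $\rho_m$ the argument is the deterministic Aubin--Lions--Simon lemma: \eqref{rho estimates from energy} controls $\rho_m$ uniformly in $L^{r}(\Omega;L^{2}_tH^{10}_x\cap L^{5\gamma/3}_{t,x})$ (and in $L^\infty_tH^9_x$), while \eqref{independent on delta as eps ra to 0} controls $(\rho_m)_t$ in $L^{r}(\Omega;L^{2}_tL^{3/2}_x)$; hence bounded subsets of $\{v:\ v\in L^2_tH^{10}_x,\ v_t\in L^2_tL^{3/2}_x\}$ are relatively compact in $L^{2}([0,T];W^{1,3}(\O))$ and, after interpolation with the $L^{5\gamma/3}$-bound, in $L^{5\gamma/3}([0,T]\times\O)$, the $L^2_tH^{10}_x$ component being carried in its weak topology. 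Markov's inequality upgrades these $L^r(\Omega)$-bounds into uniform tightness on $\mathcal{X}_{\rho}$.

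\textbf{Momentum.} The genuinely delicate factor is $\rho_m\u_m$ with $\mathcal{X}_{\rho\u}=L^{2}([0,T];W^{1,3/2}(\O))\cap C([0,T];L^{3/2}(\O))$. Weak compactness in the first intersectand follows from the uniform bound on $\nabla(\rho_m\u_m)$ in $L^{2}_tL^{3/2}_x$ obtained in the lemma above. The new ingredient is time-compactness in $C([0,T];L^{3/2}(\O))$, and here Aubin--Lions cannot be applied directly, because in the momentum equation (the proposition just preceding) the increment $\rho_m\u_m(t)-\rho_m\u_m(s)$ contains the It\^o integral $\int_s^t\Pi_m[\rho_m\mathbb{F}(\rho_m,\u_m)\chi_R]\,\d W_m$, which is only H\"older-$(1/2-)$ continuous in time and admits no time derivative. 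The plan is to split the increment: the deterministic flux-plus-damping-plus-artificial part is estimated, uniformly in $m$, in a sufficiently negative Sobolev space $W^{-\ell,2}(\O)$ using \eqref{rho estimates from energy}--\eqref{u estimates from energy}, which gives an honest $W^{1,p}_t(W^{-\ell,2}_x)$-bound; the stochastic part is estimated in $L^{r}(\Omega;C^{\nu}([0,T];L^{2}(\O)))$ for $\nu<\tfrac12-\tfrac1r$ by the Burkholder--Davis--Gundy inequality followed by Kolmogorov's continuity theorem, using exactly the moment computations of \eqref{Jensen used example}--\eqref{energy estimate3}. Combining this fractional-in-time equicontinuity with the compact spatial embedding $W^{1,3/2}(\O)\hookrightarrow\hookrightarrow L^{3/2}(\O)$ through an Arzel\`a--Ascoli/Aubin--Lions--Simon argument (the $m$-analogue of Lemma~1.1) yields relative compactness of $\{\rho_m\u_m\}$ in $C([0,T];L^{3/2}(\O))$ on sets of probability exceeding $1-\epsilon$; Chebyshev completes this marginal, and with it the proposition.

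\textbf{Main obstacle.} The hard part is this last step: since the multiplicative noise kills any bound on $\partial_t(\rho_m\u_m)$, the $C([0,T];L^{3/2})$-compactness must be synthesised by hand from the stochastic moment estimates (Burkholder--Davis--Gundy together with Kolmogorov's criterion) rather than read off from the classical Aubin--Lions lemma, and every constant has to be kept uniform in $m$. The low spatial integrability $L^{3/2}$ here, in contrast to the $L^{2}$ available in the constant-viscosity setting of \cite{Breit-Hofmanova2016}, is precisely the price of the degenerate viscosity and the weak control of $\u_m$, and it is what forces the choice of $\mathcal{X}_{\rho\u}$ made above.
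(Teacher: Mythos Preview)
Your proposal is correct and follows essentially the same route as the paper: component-wise tightness, with the Wiener and initial-data marginals handled by Polish-space/convergence considerations, $\u_m$ by Chebyshev in the weak topology of $L^2\cap L^4$, $\rho_m$ by Aubin--Lions using the $H^{10}$ and $(\rho_m)_t$ bounds, and $\rho_m\u_m$ by splitting the increment into a deterministic part controlled in $W^{-l,2}$ plus a stochastic part handled by Burkholder--Davis--Gundy and Kolmogorov--Centsov, then concluding via Arzel\`a--Ascoli in $C([0,T];L^{3/2})$. Your identification of the main obstacle---the absence of any bound on $\partial_t(\rho_m\u_m)$ due to the noise---and its resolution match the paper's exactly.
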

 To prove this proposition, we need to prove proposition \ref{the first propo in m layer} -- \ref{compactness of Y}.
\begin{proposition}\label{the first propo in m layer}
 The set $\left\{\mathcal{L}\left[\rho_{0,m}\right]; m\in \mathds{N}\right\}$ is tight on $\mathcal{X}_{\rho_{0}}^{2}$. The set $\left\{\mathcal{L}\left[\mathbf{q}_{0,m}\right]; m\in \mathds{N}\right\}$ is tight on $\mathcal{X}_{\mathbf{q}_{0}}^{2}$,
and $\left\{\mathcal{L}\left[\frac{\mathbf{q}_{0,m}}{\sqrt{\rho_{0,m}}}\right]; m\in \mathds{N}\right\}$ is tight on $\mathcal{X}_{\frac{\mathbf{q}_{0}}{\sqrt{\rho_{0}}}}^{2}$.
\end{proposition}
Probability measures on the $\sigma$-algebra of Borel sets of any Polish space are Radon measures. Therefore probability measures are inner regular or tight, see \S \ref{Radon measure} in Appendix.
We recall that $L^{p}\left( \O \right)(1\ls p<+\infty)$ is separable, $L^{\infty}\left( \O \right)$ is inseparable, and $C\left([a,b]\right)$, space of continuous functions in $[a,b]$ is separable.
 $L^{p}(1\ls p<+\infty)$ is complete so that it is a Polish space. Consequently, $\mathcal{L}\left[\rho_{0,m}\right]$ is tight on
   $\mathcal{X}_{\rho_{0}}^{2}=L^{\gamma}\left( \O \right)$.
Similarly, we get the tightness of the measures generated by $\mathbf{q}_{0,m},\frac{\mathbf{q}_{0,m}}{\sqrt{\rho_{0,m}}}$.

\begin{proposition}
 The set $\left\{\mathcal{L}\left[\u_{m}\right]; m\in \mathds{N}\right\}$ is tight on $\mathcal{X}_{\u}^{2}$. The set $\left\{\mathcal{L}\left[\rho_{m}\right]; m\in \mathds{N}\right\}$ is tight on $\mathcal{X}_{\rho}$.
\end{proposition}
\begin{proof} For any given $\iota>0$, some $r \geqslant2$, $\mathbb{E}\left[\left\| \u_{m}\right\|_{L_t^{4}L_x^{4}}^{4r}\right]\ls C\left(r_{0}\right)$, let $L\gs \frac{\iota}{C}$, and we define the set
\begin{equation}
B_{L}=\left\{\u_{m}\in L^{4}\left([0,T]\times\O\right)|\left\| \u_{m}\right\|_{L_t^{4}L_x^{4}}\ls L\right\}.
\end{equation}
Then it holds
\begin{equation}
\mathcal{L}\left[\u_{m}\right](B^{c}_{L})=\mathbb{P}\left[\left\{\left\| \u_{m}\right\|_{L_t^{4}L_x^{4}}\gs L\right\}\right] \ls \frac{\mathbb{E}\left[\left\| \u_{m}\right\|_{L_t^{4}L_x^{4}}^{4r}\right]}{L^{4r}} \ls\frac{C}{L^{4r}} \ls \iota,
\end{equation}
which shows the tightness of $\left\{\mathcal{L}\left[\u_{m}\right]; m\in \mathds{N}\right\}$.
For the more information about $\rho_{m}$, 
 we calculate that
\begin{equation}
\triangle \rho_{m}=2\left|\nabla \rho_{m}^{\frac{1}{2}}\right|^{2}+2\rho_{m}^{\frac{1}{2}}\triangle \rho_{m}^{\frac{1}{2}}.\\
\end{equation}
Since $\rho_{m}^{\frac{1}{2}}\in L^{r}\left(\Omega;L^{\infty}\left(0,T;H^{1}\left( \O \right)\right)\right)$, $H^{1}\left( \O \right)\hookrightarrow L^{6}\left( \O \right)$,
$\rho_{m}^{\frac{1}{2}} \in L^{r}(\Omega;L^{2}(0,T;H^{2}\left( \O \right)))$, it holds
 $\rho_{m}^{\frac{1}{2}}\triangle \rho_{m}^{\frac{1}{2}} \in L^{r}\left(\Omega;L^{2}\left(0,T;L^{\frac{3}{2}}\left( \O \right)\right)\right)$ for some $r \geqslant2$.
 $\left|\nabla \rho_{m}^{\frac{1}{2}}\right|^{2}\in L^{r}\left(\Omega;L^{2}\left(0,T;L^{\frac{3}{2}}\left( \O \right)\right)\right)$ holds for some $r \geqslant2$ similarly. This proves the proposition. \hfill$\square$
 \end{proof}
\begin{proposition}\label{compactness of Y}
 The set $\left\{\mathcal{L}\left[\rho_{m}\u_{m}\right]; m\in \mathds{N}\right\}$ is tight on $\mathcal{X}_{\rho\u}^{2}$.
\end{proposition}
\begin{proof} We first consider the deterministic part, and we denote
\begin{align}
Y_{m}(t)
=&\Pi_{m}\left(\rho_{m}\u_{m}(0)\right)-\int_{0}^{t}\Pi_{m}\left(\odiv(\rho_{m}\u_{m}\otimes \u_{m})\right) \d s+\int_{0}^{t}\Pi_{m}\left(\odiv(\rho_{m}\mathbb{D}\u_{m})\right) \d s\notag\\
&-\int_{0}^{t}\Pi_{m}\left(\nabla \rho_{m}^{\gamma}\right) \d s +\int_{0}^{t}\frac{11}{10}\eta \Pi_{m}\left(\nabla \rho_{m}^{-10}\right)\d s
+\int_{0}^{t}\delta \Pi_{m}\left(\rho_{m} \nabla \triangle^{9}\rho_{m}\right)\d s\\
&-\int_{0}^{t} r_{0}\Pi_{m}\left( |\mathbf{u}_{m}|^{2}\mathbf{u}_{m}\right)\d s-\int_{0}^{t} r_{1} \Pi_{m}\left(\rho_{m}|\mathbf{u}_{m}|^{2}\mathbf{u}_{m}\right)\d s-\int_{0}^{t}r_{2}\Pi_{m}\left( \mathbf{u}_{m}\right)\d s\notag\\
&- \int_{0}^{t}\left(\eps \Pi_{m}\left(\triangle^{2} \u_{m}\right)+\eps \Pi_{m}\left(\nabla \rho_{m}\nabla \u_{m}\right)\right)\d s +\int_{0}^{t}\kappa
\Pi_{m}\left(\rho_{m}\left(\nabla\left(\frac{\triangle \sqrt{\rho_{m}}}{\sqrt{\rho_{m}}}\right)\right)\right)\d s.\notag
\end{align}

We claim that $\mathbb{E}\left[\left\| Y_{m}(t)\right\|_{C^{0,\varsigma}([0,T];W^{-l,2}\left( \O \right))}^{r}\right]\ls C(T,r)$ for some $r >4$, where $\varsigma < \frac{1}{4}-\frac{1}{r}$, $l>\frac{5}{2}$.
In fact, $\mathbb{E}\left[\left\| \rho_{m}\right\|_{L^{\infty}([0,T];L^{\gamma}\left( \O \right))}^{r}\right]\ls C(T,r)$,
$\mathbb{E}\left[\left\| \rho_{m}^{\frac{1}{4}}\u_{m}\right\|_{L^{4}([0,T];L^{4}\left( \O \right))}^{r}\right]\ls C(T,r,r_{1})$,
rewrite $\rho_{m}\u_{m}\otimes \u_{m}=\rho_{m}^{\frac{1}{2}}\rho_{m}^{\frac{1}{2}}\u_{m}\otimes \u_{m}$,
 so $\rho_{m}\u_{m}\otimes \u_{m} \in L^{2}\left([0,T];L^{\frac{2\gamma}{\gamma+1}}\left( \O \right)\right)$,
 $\left\{\Pi_{m}\left(\odiv \left(\rho_{m}\u_{m}\otimes \u_{m}\right)\right)\right\}$ is bounded in $L^{r}\left(\Omega ; L^{2}\left([0,T];W^{-1,\frac{2\gamma}{\gamma+1}}\left( \O \right)\right)\right)$, which is compactly embedded in $W^{-l,2}\left( \O \right)$, for $l>\frac{5}{2}$.
Similarly, by (\ref{energy estimate1}), (\ref{energy estimate2}), (\ref{energy estimate3}) with $R\ra \infty$, and Sobolev's embedding, we know that the families $\left\{\nabla p(\rho_{m})\right\}$, $\left\{\Pi_{m}\left(\odiv(\rho_{m}\mathbb{D}\u_{m})\right)\right\}$, $\left\{\Pi_{m}\left(\frac{11}{10}\eta\nabla\rho_{m}^{-10} \right)\right\}$, $\left\{\Pi_{m}\left(\eps \nabla \rho_{m}\nabla\u_{m}\right)\right\}$,
$\left\{\Pi_{m}\left(\kappa\odiv \left(\rho_{m}\nabla^{2}\ln\rho_{m}\right)\right)\right\}$, and
$\left\{ \Pi_{m}\left(\delta\rho_{m}\nabla \triangle^{9}\rho_{m}\right)\right\}$ are bounded in $L^{r}\left(\Omega ; L^{2}\left([0,T];W^{-l,2}\left( \O \right)\right)\right)$, for $l>\frac{5}{2}$.
 Therefore $\frac{\d}{\d t}Y_{m}(t)$ is bounded
 in $L^{r}\left(\Omega; L^{\frac{4}{3}}\left([0,T];W^{-l,2}\left( \O \right)\right)\right)$, for $l>\frac{5}{2}$.
 For any test function $\varphi(x) \in  W^{l,2}\left( \O \right)$, $\left\|\varphi\right\|_{W^{l,2}\left( \O \right)}=1$, by stochastic Fubini's theorem, we estimate
\begin{align}\label{the result of deterministic part}
&\quad \mathbb{E}\left[\left(\left\|Y_{m}(t_{2})\right\|_{W^{-l,2}\left( \O \right)}-\left\|Y_{m}(t_{1})\right\|_{W^{-l,2}\left( \O \right)}\right)^{r}\right]
\ls \mathbb{E}\left[\left(\left\|Y_{m}(t_{2})-Y_{m}(t_{1})\right\|_{W^{-l,2}\left( \O \right)}\right)^{r}\right]\notag\\
&= \mathbb{E}\left[\left(\sup\limits_{\varphi}\int_{\O}\int_{t_{1}}^{t_{2}} \d Y_{m}(t)\varphi(x)\d x\right)^{r}\right]
= \mathbb{E}\left[\left(\sup\limits_{\varphi}\int_{\O}\int_{t_{1}}^{t_{2}} \varphi(x)\d Y_{m}(t)\d x\right)^{r}\right]\notag\\
&= \mathbb{E}\left[\left(\sup\limits_{\varphi}\int_{t_{1}}^{t_{2}}\int_{\O} \varphi(x)\frac{\d}{\d t} Y_{m}(t)\d x \d t\right)^{r}\right] \\
&\ls \mathbb{E}\left[\left(\sup\limits_{\varphi}\left(\int_{t_{1}}^{t_{2}}\left(\left\|\varphi(x)\right\|_{W^{l,2}\left(\O\right)}\right)^{4}\d t\right)^{\frac{1}{4}}\left(\int_{t_{1}}^{t_{2}} \left\| \frac{\d}{\d t} Y_{m}(t)\d x\right\|_{W^{-l,2}\left( \O \right)}^{\frac{4}{3}} \d t\right)^{\frac{3}{4}}\right)^{r}\right]\notag\\
&\ls C (t_{2}-t_{1})^{\frac{r}{4}} \mathbb{E}\left[\sup\limits_{\varphi}\left(\int_{t_{1}}^{t_{2}} \left\| \frac{\d}{\d t} Y_{m}(t)\d x\right\|_{W^{-l,2}\left( \O \right)}^{\frac{4}{3}} \d t\right)^{\frac{3r}{4}}\right]
\ls C (t_{2}-t_{1})^{\frac{r}{4}}, \notag
\end{align}
by Kolmogorov-Centov's theorem, it holds $\left\| \bar{Y}_{m}(t)\right\|_{C^{0,\varsigma}([0,T];W^{-l,2}\left( \O \right))}\ls C$ uniformly in $m$, where $\bar{Y}_{m}(t)$ is a modification of $Y_{m}(t)$, and $C$ depends on $T$ and $r$, $\bar{\mathbb{P}}$ a.s., for $\varsigma < \frac{1}{4}-\frac{1}{r}$. We will use the notation $ Y_{m}(t)$ here and hereafter.
Next, we consider the time regularity of stochastic integral. Applying Burkh\"older-Davis-Gundy's inequality, we obtain
\begin{align}
&\quad \mathbb{E}\left[\left(\left\|\int_{\tau_{1}}^{\tau_{2}} \rho_{m} \Pi_{m}(\mathbb{F}(\rho_{m},\u_{m}))\d W_{m}\right\|_{L^{2}\left( \O \right)}\right)^{r}\right]\notag\\
&\ls C \mathbb{E}\left[\left(\int_{\O}\int_{\tau_{1}}^{\tau_{2}}\left|\rho_{m} \Pi_{m}(\mathbb{F}(\rho_{m},\u_{m}))\right|^{2}\d t\d x\right)^{\frac{r}{2}}\right]\\
&\ls C \sum\limits_{k=1}^{\infty}f_{k}^{2}\mathbb{E}\left[\left(\int_{\tau_{1}}^{\tau_{2}} \int_{\O}\left(\left|\rho_{m}\right|^{2}+\left|\rho_{m}\u_{m}\right|^{2}\right) \d x\d t\right)^{\frac{r}{2}}\right],\notag
\end{align}
where
\begin{align}
\mathbb{E}\left[\left(\int_{\tau_{1}}^{\tau_{2}} \int_{\O}\left|\rho_{m}\right|^{2}\d x\d t\right)^{\frac{r}{2}}\right] \ls &C (\tau_{2}-\tau_{1})^{\frac{r}{2}}\mathbb{E}\left[\left(\left\|\rho_{m}\right\|_{L_t^{\infty}L_x^{3}}\right)^{r}\right]\\
\ls &(\tau_{2}-\tau_{1})^{\frac{r}{2}}\mathbb{E}\left[\left(\left\|\rho_{m}^{\frac{1}{2}}\right\|_{L_t^{\infty}H_x^{1}}^{2}\right)^{r}\right] , \notag
\end{align}
\begin{align}
\mathbb{E}\left[\left(\int_{\tau_{1}}^{\tau_{2}} \int_{\O}\left|\rho_{m}\u_{m}\right|^{2}\d x\d t\right)^{\frac{r}{2}}\right] \ls &C (\tau_{2}-\tau_{1})^{\frac{r}{2}}\mathbb{E}\left[\left(\left\|\rho_{m}\right\|_{L_t^{\infty}L_x^{3}}\right)^{r}\right]\\
\ls &(\tau_{2}-\tau_{1})^{\frac{r}{2}}\mathbb{E}\left[\left(\left\|\rho_{m}^{\frac{1}{2}}\right\|_{L_t^{\infty}H_x^{1}}^{2}\right)^{r}\right] , \notag
\end{align}
for $r>1$, where $C$ depends on $\omega$ and $r$.
By Kolmogorov-Centov's continuity theorem (see \S \ref{Centov thm} in Appendix), up to a modification, there holds
\begin{equation}
\mathbb{E}\left[\left(\left\|\int_{0}^{t} \rho_{m} \Pi_{m}(\mathbb{F}(\rho_{m},\u_{m}))\d W_{m}\right\|_{C^{0,\varsigma}([0,T];L^{2}\left( \O \right))}\right)^{r}\right]\ls C,
\end{equation}
uniformly in $m$, for $r>2,\varsigma\in \left(0,\frac{1}{2}-\frac{1}{r}\right)$. Combined with the result of deterministic part \eqref{the result of deterministic part}, with $C$ dependent on $\omega$, $T$ and $r$, it holds
\begin{equation}
\mathbb{E}\left[\left(\left\|\Pi_{m}\left(\rho_{m}\u_{m}\right)\right\|_{C^{0,\varsigma}\left([0,T];L^{2}\left( \O \right)\right)}\right)^{r}\right]\ls C ,\quad \varsigma\in \left(0,\frac{1}{4}-\frac{1}{r}\right), \quad r>4.
\end{equation}
The embedding $C^{0,\varsigma}\left([0,T];W^{-l,2}\left( \O \right)\right)\hookrightarrow C^{0,\varsigma-\eps}\left([0,T];W^{-k,2}\left( \O \right)\right)$, $k>l,$
implies the tightness of $\mathcal{L}\left[\rho_{m}\u_{m}\right] $ in $C^{0,\varsigma}\left([0,T];W^{-k,2}\left( \O \right)\right)$, for $k>l>\frac{5}{2}$.

 Since the quantum term provides a better regularity in the energy estimates, we obtain
$\Pi_{m}\left(\rho_{m}\u_{m}\right)\in L^{\infty}\left([0, T]; L^{\frac{3}{2}}\left(\O\right)\right)\cap C^{0,\varsigma}\left([0,T];W^{-k,2}\left( \O \right)\right)$,
which is compactly embedded in $C\left([0,T];L^{\frac{3}{2}}\left( \O \right)\right)$.
In fact, 
\begin{align}
 &\mathbb{E}\left[\left(\left\| \Pi_{m}\left(\rho_{m}\u_{m}(t_{2})-\rho_{m}\u_{m}(t_{1})\right)\right\|_{L^{\frac{3}{2}}\left( \O \right)}\right)^{r}\right]\notag\\
 =&\mathbb{E}\left[\left(\sup\limits_{\phi\in L^{3}\left( \O \right),\left\|\phi\right\|_{L^{3}}=1}\int_{\O}\Pi_{m}\left(\rho_{m}\u_{m}(t_{2})-\rho_{m}\u_{m}(t_{1})\right)\phi \d x\right)^{r}\right]\\
 =&\mathbb{E}\left[\left(\sup\limits_{\phi\in L^{3}\left( \O \right),\left\|\phi\right\|_{L^{3}}=1}\left(\int_{\O}\Pi_{m}\left(\rho_{m}\u_{m}(t_{2})-\rho_{m}\u_{m}(t_{1})\right)\left(\phi-\phi \ast \eta_{\imath}\right) \d x \right.\right.\right.\notag\\
 &\left.\left.\left.\quad +\int_{\O}\Pi_{m}\left(\rho_{m}\u_{m}(t_{2})-\rho_{m}\u_{m}(t_{1})\right) \phi \ast \eta_{\imath} \d x \right)\right)^{r}\right],\notag
\end{align}
where $\eta_{\imath} $ is a mollifier and $\left\|\phi \ast \eta_{\imath}\right\|_{L^{3}\left( \O \right)}\ra \left\|\phi\right\|_{L^{3}\left( \O \right)}$ holds as $\imath$ goes to $0$. Since $L^{p}\left( \O \right), 1<p<\infty$ is uniformly convex space, by Hanner's inequality, weak convergence and convergence in norm, we imply that $\left\|\phi \ast \eta_{\imath}-\phi\right\|_{L^{3}\left( \O \right)}\ra 0$. By H\"older's inequality, the first term in the above formula will go to $0$. By stochastic Fubini's theorem, it holds
\begin{align}
&\int_{\O}\Pi_{m}\left(\rho_{m}\u_{m}(t_{2})-\rho_{m}\u_{m}(t_{1})\right) \phi \ast \eta_{\imath} \d x  \notag\\
&=\int_{\O}\left(\int_{t_{1}}^{t_{2}}\d Y_{m}(t)+ \int_{t_{1}}^{t_{2}}\rho_{m}\mathds{F}(\rho_{m},\u_{m})\d W_{m}\right) \phi \ast \eta_{\imath} \d x \notag\\
&=\int_{t_{1}}^{t_{2}}\int_{\O}\frac{\d Y_{m}(t)}{\d t}\phi \ast \eta_{\imath} \d x\d t + \int_{t_{1}}^{t_{2}}\int_{\O}\rho_{m}\mathds{F}(\rho_{m},\u_{m})\phi\ast \eta_{\imath} \d x\d W_{m}\\
 &\ls \left\|\frac{\d Y_{m}(t)}{\d t}\right\|_{L^{\frac{4}{3}}([0,T];W^{-l,2}\left( \O \right))} \left\|\phi \ast \eta_{\imath}\right\|_{W^{l,2}\left( \O \right)}(t_{2}-t_{1})^{\frac{1}{4}}\notag\\
 &+\int_{\O}\int_{t_{1}}^{t_{2}}\rho_{m}\mathds{F}(\rho_{m},\u_{m}) \d W_{m} \left(\phi\ast \eta_{\imath}\right)\d x.\notag
\end{align}
With $C$ dependent on $T$ and $r$, it holds
\begin{align}
&\mathbb{E}\left[\left\|\int_{t_{1}}^{t_{2}}\rho_{m}\mathbb{F}(\rho_{m},\u_{m})\d W_{m}\right\|^{r}_{L^{\frac{3}{2}}\left( \O \right)}\right]\\
\ls &\mathbb{E}\left[\left\|\int_{t_{1}}^{t_{2}}\rho_{m}\mathbb{F}(\rho_{m},\u_{m})\d W_{m}\right\|^{r}_{L^{2}\left( \O \right)}\right]\ls C (t_{2}-t_{1})^{\frac{r}{2}}.\notag
\end{align}
Therefore, we have
\begin{equation}
 \mathbb{E}\left[\left(\left\|\Pi_{m}\left(\rho_{m}\u_{m}(t_{2})-\rho_{m}\u_{m}(t_{1})\right)\right\|_{L^{\frac{3}{2}}\left( \O \right)}\right)^{r}\right]\ls C (t_{2}-t_{1})^{\frac{r}{4}},
\end{equation}
by Kolmogorov-Centov's theorem, up to a modification, it holds
\begin{align}
\mathbb{E}\left[\left\| \Pi_{m}\left(\rho_{m}\u_{m}(t)\right)\right\|_{C^{0,\varsigma}([0,T];L^{\frac{3}{2}}\left( \O \right)}^{r}\right]\ls C,
\end{align}
 uniformly in $m$, where $C$ depends on $T$ and $r$, for $\varsigma < \frac{1}{4}-\frac{1}{r}$. Alternatively, by Chebyshev's inequality, we have
\begin{align}
&\sup\limits_{m\geqslant 1} \mathbb{P}_{m}\left[\left\{\left\|\Pi_{m}\left(\rho_{m}\u_{m}(t_{2})-\rho_{m}\u_{m}(t_{1})\right)\right\|_{L^{\frac{3}{2}}\left( \O \right)}>\eps \right\} \right]\notag\\
\ls & \sup\limits_{m\geqslant 1} \frac{\mathbb{E}\left[\left(\left\|\Pi_{m}\left(\rho_{m}\u_{m}(t_{2})-\rho_{m}\u_{m}(t_{1})\right)\right\|_{L^{\frac{3}{2}}\left( \O \right)}\right)^{r}\right]}{\eps^{r}} \\
\ls & C \delta ^{\frac{r}{4}}\ra 0 ,\quad \left|t_{2}-t_{1}\right| \ls \delta,\text{ as }\delta\ra 0,\notag
\end{align}
 by Kolmogorov's theorem (see \S \ref{Kolmogorov thm} in Appendix), we get the tightness.
 This completes the proof. \hfill$\square$
\end{proof}
\begin{flushleft}
\textbf{Step 3:} Pass to the limit $m\ra \infty$.
\end{flushleft}

$\mathcal{X}_{2}$ is sub-Polish space instead of Polish space because weak topologies are generally not measurable. So our stochastic compactness argument
 is based on Jakubowski's extension of Skorokhod's representation theorem.
\begin{proposition} There exist
 $\left\{{\bar{\rho}_{0,m}}, \bar{\mathbf{q}}_{0,m}, \frac{\bar{\mathbf{q}}_{0,m}}{\sqrt{\bar{\rho}_{0,m}}}, \bar{\rho}_{m}, \bar{\u}_{m}, \bar{\rho}_{m}\bar{\u}_{m}, \bar{W}_{m} \right\},\quad m\in \mathds{N},$
 and \\
 $\left\{\rho_{0,{\rm reg}}, \mathbf{q}_{0,{\rm reg}},\frac{\mathbf{q}_{0,{\rm reg}}}{\sqrt{\rho_{0,{\rm reg}}}}, \rho_{\rm reg}, \u_{\rm reg},\rho_{\rm reg}\u_{\rm reg}, W_{\rm reg}\right\}$, two families of $\mathcal{X}_{2}$-valued Borel measurable random variables defined on a complete probability space, still denoted as $\left(\bar{\Omega},\bar{\mathcal{F}},\bar{\mathbb{P}} \right)$, such that (up to a subsequence):
\begin{enumerate}
  \item For all $m\in \mathds{N}$,
  $\mathcal{L}\left[\bar{\rho}_{0,m}, \bar{q}_{0,m}, \frac{\bar{\mathbf{q}}_{0,m}}{\sqrt{\bar{\rho}_{0,m}}}, \bar{\rho}_{m}, \bar{\u}_{m}, \bar{\rho}_{m}\bar{\u}_{m}, \bar{W}_{m} \right],\quad m\in \mathds{N}$\\
   coincides with $\mathcal{L}\left[\rho_{0,m}, \mathbf{q}_{0,m}, \frac{\mathbf{q}_{0,m}}{\sqrt{\rho_{0,m}}}, \rho_{m}, \u_{m},\rho_{m}\u_{m}, W_{m}\right], \quad m\in \mathds{N} $ on $\mathcal{X}_{2}$;
  \item $\mathcal{L}\left[\rho_{0,{\rm reg}}, \mathbf{q}_{0,{\rm reg}},\frac{\mathbf{q}_{0,{\rm reg}}}{\sqrt{\rho_{0,{\rm reg}}}}, \rho_{\rm reg}, \u_{\rm reg},\rho_{\rm{reg}}\u_{\rm{reg}}, W_{\rm{reg}}\right]$ on $\mathcal{X}_2$ is a Radon measure;
  \item  $\left\{\bar{\rho}_{0,m}, \bar{\mathbf{q}}_{0,m}, \frac{\bar{\mathbf{q}}_{0,m}}{\sqrt{\bar{\rho}_{0,m}}}, \bar{\rho}_{m}, \bar{\u}_{m}, \bar{\rho}_{m}\bar{\u}_{m}, \bar{W}_{m}\right\}$, $m\in \mathds{N}$,
   converges to\\
    $\left\{\rho_{0,{\rm reg}}, \mathbf{q}_{0,{\rm reg}},\frac{\mathbf{q}_{0,{\rm reg}}}{\sqrt{\rho_{0,{\rm reg}}}}, \rho_{\rm reg}, \u_{\rm reg},\rho_{\rm reg}\u_{\rm reg}, W_{\rm reg}\right\}$ in the topology of $\mathcal{X}_{2}$, $\bar{\mathbb{P}}$ {\rm a.s.} as $m\ra +\infty$.
\end{enumerate}
\end{proposition}
\begin{corollary}
 $L^{2}\left(0,T; W^{2,\frac{3}{2}}\left( \O \right)\right)$ is continuously embedded in $L^{2}\left([0,T]\times \O \right)$ and \\
  $L^{2}\left(0,T; L^{\frac{3}{2}}\left( \O \right)\right)$.
 Since $W^{2,\frac{3}{2}}\left( \O \right)$ and $ L^{\frac{3}{2}}\left( \O \right)$ are reflexive spaces, applying the Aubin-Lion's lemma,
  we have the strong convergence of $\rho_{m}$: 
\begin{equation}
\bar{\rho}_{m}\rightarrow \rho_{\rm reg} \quad \text{in} \quad L^{r}\left(\Omega;L^{2}\left([0,T];W^{1,3}\left( \O \right)\right)\right), \quad r \geqslant2 .
\end{equation}
In addition, by \eqref{square root of rhom}, $\bar{\rho}_{m}^{\frac{1}{2}} \in L^{r}\left(\Omega;L^{2}\left([0,T];H^{2}\left( \O \right)\right)\right)$, $H^{2}\left( \O \right)\hookrightarrow W^{1,6}\left( \O \right)$, $ W^{1,6}\left( \O \right)$ is continuously embedded
in $L^{2}\left( \O \right)$, therefore, the strong convergence of $\bar{\rho}_{m}^{\frac{1}{2}}$ holds:
\begin{equation}
\bar{\rho}_{m}^{\frac{1}{2}}\rightarrow \rho_{\rm reg}^{\frac{1}{2}} \quad \text{in} \quad L^{r}\left(\Omega;L^{2}\left([0,T];W^{1,6}\left( \O \right)\right)\right), \quad r \geqslant2.
\end{equation}
\end{corollary}
\begin{remark}\label{convergence of q=rho u}
 $\left\|\Pi_{m}\left[\bar{\rho}_{m}\bar{\u}_{m}\right]\right\|_{L^{\frac{3}{2}}\left( \O \right)}$ is continuous in $[0,T]$ uniformly in $m$, $\bar{\mathbb{P}}$ {\rm a.s.}
The compactness of $\{\Pi_{m}\left[\bar{\rho}_{m}\bar{\u}_{m}\right]\}$ yields that there exists $\mathbf{q}_{\rm{reg}}$ in $C\left([0,T];L^{\frac{3}{2}}\left( \O \right)\right)$ such that
 \begin{equation}
\left\|\Pi_{m}\left[\bar{\rho}_{m}\bar{\u}_{m}\right]\right\|_{L^{\frac{3}{2}}\left( \O \right)}\rightarrow \left\| \mathbf{q}_{\rm reg} \right\|_{L^{\frac{3}{2}}\left( \O \right)} \text{ in }  C([0,T]), \quad \bar{\mathbb{P}}\text{ {\rm a.s.}}
\end{equation}
Since $L^{\frac{3}{2}}\left( \O \right)$ is an uniformly convex space, therefore we have
\begin{equation}
\Pi_{m}\left[\bar{\rho}_{m}\bar{\u}_{m}\right]\rightarrow \mathbf{q}_{\rm reg} \text{ in }  C\left([0,T];L^{\frac{3}{2}}\left( \O \right)\right), \quad \bar{\mathbb{P}}  \text{ {\rm a.s.}}
\end{equation}
Hence $\mathcal{L}\left[\bar{\rho}_{m}\bar{\u}_{m}\right]$ is tight in $\mathcal{X}_{\rho\u}$. Since $\left\{\bar{\rho}_{m}^{\frac{1}{2}}\bar{\u}_{m}\right\}$ converges weakly in $L^{r}\left(\Omega;L^{\infty}\left([0,T];L^{2}\left( \O \right)\right)\right)$, \eqref{energy squa rho and energy quarter rho},
 $\bar{\rho}_{m}^{\frac{1}{2}}\ra \rho_{\rm reg}^{\frac{1}{2}}$ in $L^{r}\left(\Omega;L^{2}\left([0,T];W^{1,6}\left( \O \right)\right)\right)$, in $L^{r}\left(\Omega;L^{\infty}\left([0,T];L^{6}\left( \O \right)\right)\right)$ $\left\{\bar{\rho}_{m}^{\frac{1}{2}}\right\}$ converges weakly to $\rho_{\rm reg}^{\frac{1}{2}}$,
  $\mathbb{E}\left[r_{0}^{r}\left\| \bar{\u}_{m}\right\|^{4r}_{L_t^{4}L_x^{4}}\right]$$\ls C$, $C$ depends on $T$,$r$ and $r_{0}$,
  so $\bar{\rho}_{m}\bar{\u}_{m}\rightharpoonup \rho_{\rm reg} \u_{\rm reg} $ in $L^{r}\left(\Omega;C\left([0,T];L^{2}\left( \O \right)\right)\right)$. Therefore $\mathbf{q}_{\rm reg} =\rho_{\rm reg} \u_{\rm reg} $ in $ C\left([0,T];L^{\frac{3}{2}}\left( \O \right)\right)$, $\bar{\mathbb{P}}$ {\rm a.s.}
\end{remark}

It is worth mentioning is that whether the stochastic integral $\int_{0}^{T} \rho_{\rm reg} \mathbb{F}(\rho_{\rm reg},\u_{\rm reg})\d W_{\rm reg}$ makes sense or not. Firstly, we know
$\mathbb{E}\left[\int_{0}^{T}\left(\int_{\O}\rho_{\rm reg}  \mathbb{F}_{k}(\rho_{\rm reg} ,\u_{\rm reg} )\d x\right)^{2} \d t\right]<+\infty$. $\rho_{\rm reg} \mathbf{F}_{k}\left(\rho_{\rm reg},\u_{\rm reg}\right)$ is adapted in the canonical filtration $\tilde{\mathcal{F}}=\sigma\left(\sigma[\rho_{\rm reg} (t)]\bigcup\sigma[\rho_{\rm reg}\u_{\rm reg} (t) ]\bigcup \sigma[W_{\rm reg} (t)]\right)$; by the property of $\rho_{\rm reg}  \in  C\left([0,T];L^{2}\left( \O \right)\right)$ and $\left|\rho_{\rm reg}  \mathbf{F}_{k}(\rho_{\rm reg} ,\u_{\rm reg}  )\right|\leqslant f_{k}\left(\left|\rho_{\rm reg}\right|  +\left|\rho_{\rm reg} \u_{\rm reg} \right|\right)$, we have that the integral $\int_{\O}\rho_{\rm reg}  \mathbf{F}_{k}(\rho_{\rm reg}  ,\u_{\rm reg}  )\d x$ is continuous in time, therefore it is progressively measurable, see \S \ref{progressive measurability} in Appendix. Hence the the stochastic integral $\int_{0}^{T} \rho_{\rm reg} \mathbb{F}(\rho_{\rm reg},\u_{\rm reg})\d W_{\rm reg}$ is well-defined.

In this $m\rightarrow \infty$ layer, the damping terms provide $\bar{\u}_{m}\rightharpoonup \u_{\rm reg} $ weakly in $L^{4}([0,T]\times \O)$,
$\bar{\rho}_{m}\bar{\u}_{m}\ra\rho_{\rm reg} \u_{\rm reg} $ strongly in $C\left([0,T]; L^{\frac{3}{2}}\left( \O \right)\right)$,
$\bar{\rho}_{m}\rightharpoonup  \rho_{\rm reg} $ weakly in $L^{2}\left([0,T];H^{9}\left( \O \right)\right)$,
so $\bar{\rho}_{m},\bar{\u}_{m}$ satisfy the continuity equation in a weak sense by Vitali's convergence theorem. With the weak convergence of
 $ p(\bar{\rho}_{m}), \bar{\rho}_{0,m}^{-10}$, $\nabla\triangle^{4}\bar{\rho}_{0,m}$,
 $\nabla\bar{\rho}_{m}^{-5}$, $\triangle^{5}\bar{\rho}_{m}$,
  $\nabla\bar{\rho}_{m}^{\frac{\gamma}{2}}$, $\bar{\rho}_{m}\nabla^{2}\ln\bar{\rho}_{m}$ in the corresponding spaces, we have the convergence of deterministic terms in approximated momentum equation.
We mainly consider the convergence of $\int_{\O}\int_{0}^{T}\Pi_{m}\left(\bar{\rho}_{m}\Pi_{m}\left(\mathbb{F}(\bar{\rho}_{m},\bar{\u}_{m})\right)\right)\d \bar{W}_{m}\d x.$
 \begin{lemma}\label{Conver of sto int for m layer}
   $\int_{\O}\int_{0}^{T}\Pi_{m}\left(\bar{\rho}_{m}\Pi_{m}\left(\mathbb{F}(\bar{\rho}_{m},\bar{\u}_{m})\right)\right)\d \bar{W}_{m}\d x
\ra \int_{\O}\int_{0}^{T}\rho_{\rm reg} \mathbb{F}(\rho_{\rm reg} ,\u_{\rm reg} )\d W_{\rm reg} \d x$  $\bar{\mathbb{P}}$ {\rm a.s.}
\end{lemma}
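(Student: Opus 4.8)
The plan is to deduce this from the general convergence lemma for stochastic integrals, Lemma \ref{lemma sto int convergence}, applied with $\mathbf G_m\triangleq\Pi_m[\bar\rho_m\mathbb F(\bar\rho_m,\bar\u_m)]$, $\mathbf G\triangleq\rho\mathbb F(\rho,\u)$ (both viewed as $\mathfrak H$-valued functions of $(t,x)$), $W_m\triangleq\bar W_m$, and the limiting Wiener process $W$. That lemma has two hypotheses: the convergence $\bar W_m\to W$ in $C([0,T];\mathfrak H)$ $\bar{\mathbb{P}}$-a.s., which is already part of the Skorokhod representation, and the convergence of the noise coefficients
$$\Pi_m[\bar\rho_m\mathbb F(\bar\rho_m,\bar\u_m)]\longrightarrow\rho\mathbb F(\rho,\u)\quad\text{in }L^{r}\big(\bar\Omega;L^{2}([0,T]\times\O;\mathfrak H)\big),\ \bar{\mathbb{P}}\text{-a.s.};$$
so the whole task is to establish this latter convergence.

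First I would pass to the limit componentwise in $k$ and without the outer projection. Recall from the structural assumptions that $\mathbf G_k(\rho,\mathbf q)\triangleq\rho\mathbf F_k(\rho,\u)$, seen as a function of the pair $(\rho,\mathbf q=\rho\u)$, is Lipschitz with constant $\ls C f_k$, so for each $k$
$$\big\|\bar\rho_m\mathbf F_k(\bar\rho_m,\bar\u_m)-\rho\mathbf F_k(\rho,\u)\big\|_{L^{2}([0,T]\times\O)}\ls Cf_k\Big(\|\bar\rho_m-\rho\|_{L^{2}([0,T]\times\O)}+\|\bar\rho_m\bar\u_m-\rho\u\|_{L^{2}([0,T]\times\O)}\Big).$$
The first norm on the right tends to $0$ $\bar{\mathbb{P}}$-a.s.\ because $\bar\rho_m\to\rho$ in $L^{2}([0,T];W^{1,3}(\O))\hookrightarrow L^{2}([0,T]\times\O)$. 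For the second, $\bar\rho_m\bar\u_m\to\rho\u$ in $C([0,T];L^{3/2}(\O))$, while $\{\bar\rho_m\bar\u_m\}$ is, $\bar{\mathbb{P}}$-a.s., bounded in $L^{2}([0,T];W^{1,3/2}(\O))\hookrightarrow L^{2}([0,T];L^{3}(\O))$ thanks to the identity $\nabla(\rho_m\u_m)=2\nabla\rho_m^{1/2}\otimes\rho_m^{1/2}\u_m+\rho_m^{1/2}\cdot\rho_m^{1/2}\nabla\u_m$ and the energy estimates; interpolating $\|f\|_{L^{2}_x}\ls\|f\|_{L^{3/2}_x}^{1/2}\|f\|_{L^{3}_x}^{1/2}$ and applying the Cauchy--Schwarz inequality in $t$ upgrades this to strong convergence in $L^{2}([0,T]\times\O)$. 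Squaring, summing over $k$, and using $\sum_kf_k^{2}<\infty$ together with dominated convergence in $k$ then gives $\bar\rho_m\mathbb F(\bar\rho_m,\bar\u_m)\to\rho\mathbb F(\rho,\u)$ in $L^{2}([0,T]\times\O;\mathfrak H)$, $\bar{\mathbb{P}}$-a.s.; the outer projection is harmless, since $\Pi_m$ is an $L^{2}$-contraction and $\Pi_m\to\mathrm{Id}$ in $L^{2}$, whence $\|\Pi_m[g_m]-g\|_{L^{2}}\ls\|g_m-g\|_{L^{2}}+\|\Pi_m[g]-g\|_{L^{2}}\to0$.

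It then remains to promote this $\bar{\mathbb{P}}$-a.s.\ convergence in $L^{2}([0,T]\times\O;\mathfrak H)$ to convergence in $L^{r}(\bar\Omega;L^{2}([0,T]\times\O;\mathfrak H))$, which I would obtain from Vitali's theorem. The needed uniform integrability comes from a uniform higher moment bound: since $\Pi_m$ is an $L^{2}$-contraction and $|\bar\rho_m\mathbf F_k|\ls f_k\bar\rho_m$,
$$\big\|\Pi_m[\bar\rho_m\mathbb F(\bar\rho_m,\bar\u_m)]\big\|^{2}_{L^{2}([0,T]\times\O;\mathfrak H)}\ls\Big(\sum_kf_k^{2}\Big)\|\bar\rho_m\|^{2}_{L^{2}([0,T]\times\O)},$$
and in this layer $\kappa>0$, so $\bar\rho_m^{1/2}$ is bounded in $L^{\infty}([0,T];H^{1}(\O))\hookrightarrow L^{\infty}([0,T];L^{6}(\O))$, hence $\bar\rho_m$ in $L^{\infty}([0,T];L^{3}(\O))\hookrightarrow L^{\infty}([0,T];L^{2}(\O))$, with moments of every order by the energy estimates \eqref{rho estimates from energy}; the same bound controls $\rho\mathbb F(\rho,\u)$. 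With both hypotheses of Lemma \ref{lemma sto int convergence} verified, the lemma gives $\int_\O\int_0^T\Pi_m[\bar\rho_m\mathbb F(\bar\rho_m,\bar\u_m)]\,\d\bar W_m\,\d x\to\int_\O\int_0^T\rho\mathbb F(\rho,\u)\,\d W\,\d x$, $\bar{\mathbb{P}}$-a.s., and the same argument works with any bounded test function inserted, as needed for the momentum equation.

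I expect the main obstacle to be the upgrade of $\bar\rho_m\bar\u_m\to\rho\u$ from the $C([0,T];L^{3/2}(\O))$-convergence supplied by Skorokhod to strong convergence in $L^{2}([0,T]\times\O)$: this leans on the extra spatial regularity $\nabla(\rho_m\u_m)\in L^{2}([0,T];L^{3/2}(\O))$, i.e.\ on $\rho_m\u_m$ being bounded in $L^{2}([0,T];L^{3}(\O))$, which is itself a by-product of the quantum term keeping $\nabla\rho_m^{1/2}$ in $L^{\infty}([0,T];L^{2}(\O))$. Everything else --- the Lipschitz bound on $\mathbf G_k$, the summability $\sum_kf_k^{2}<\infty$, the $L^{2}$-contraction property of $\Pi_m$, and Vitali's theorem --- is routine once it is fed into Lemma \ref{lemma sto int convergence}; the genuinely delicate form of this convergence, where the noise estimate forces $\gamma>\frac{6}{5}$, appears only at the later stage when $\kappa\to0$ removes that quantum regularity.
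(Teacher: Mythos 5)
Your argument is correct and, in its key step, takes a genuinely different and leaner route than the paper's. The paper first establishes the auxiliary Proposition \ref{vacuum measure 0} (that $\{\rho=0\}$ has measure zero, via the $\eta\rho^{-10}$ bounds), then decomposes $[0,T]\times\O$ into $O^{\iota}=\{\rho<\iota\}$, the vacuum set and their complements so as to prove $\mathbf{F}_{k}(\bar{\rho}_{m},\bar{\u}_{m})\ra\mathbf{F}_{k}(\rho,\u)$ in $L^{2}([0,T]\times\O)$ \emph{first}, and only afterwards multiplies back by $\bar{\rho}_{m}$, using $\bar{\rho}_{m}\ra\rho$ in $L^{2}$ together with $\left\|\rho\right\|_{L_t^{\infty}L_x^{\infty}}\ls\left\|\rho\right\|_{L_t^{\infty}H_x^{9}}^{2}$. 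You never divide by $\rho$: you bound $\bar{\rho}_{m}\mathbf{F}_{k}(\bar{\rho}_{m},\bar{\u}_{m})-\rho\mathbf{F}_{k}(\rho,\u)=\mathbf{G}_{k}(\bar{\rho}_{m},\bar{\rho}_{m}\bar{\u}_{m})-\mathbf{G}_{k}(\rho,\rho\u)$ pointwise by $Cf_{k}\left(|\bar{\rho}_{m}-\rho|+|\bar{\rho}_{m}\bar{\u}_{m}-\rho\u|\right)$ --- the same Lipschitz inequality the paper invokes inside its own case analysis --- so the vacuum proposition and the $O^{\iota}$ splitting become unnecessary, which is consistent with the paper's own closing remark that only the convergence of $\rho\mathbf{F}_{k}$, not of $\mathbf{F}_{k}$, is ever needed. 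Your upgrade of $\bar{\rho}_{m}\bar{\u}_{m}\ra\rho\u$ from $C([0,T];L^{3/2}(\O))$ to $L^{2}([0,T]\times\O)$ by interpolation against the $L^{2}_{t}L^{3}_{x}$ bound coming from $\nabla(\rho_{m}\u_{m})\in L^{2}_{t}L^{3/2}_{x}$ is a valid substitute for the paper's a.e.-convergence-plus-domination argument, and your treatment of $\Pi_{m}$ and of the Vitali step matches the paper's own estimates.

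Two small caveats. First, the moment bounds give $\sup_{m}\mathbb{E}\left[\|\bar{\rho}_{m}\bar{\u}_{m}\|^{r}_{L^{2}_{t}L^{3}_{x}}\right]<\infty$, not an $\bar{\mathbb{P}}$-a.s.\ bound uniform in $m$; to make the interpolation step airtight, either perform it inside the expectation (H\"older in $\omega$, then Vitali) or pass to a further a.s.\ convergent subsequence. Second, the Lipschitz continuity of $\mathbf{G}_{k}$ with constant of order $f_{k}$ is asserted in the paper's setup but does not literally follow from the stated $L^{\infty}$ bounds on $\mathbf{F}_{k}$, $\nabla_{\rho}\mathbf{F}_{k}$, $\nabla_{\u}\mathbf{F}_{k}$ (the derivative $\nabla_{\rho}\mathbf{G}_{k}$ contains a term $\nabla_{\u}\mathbf{F}_{k}\cdot\u$); since the paper's own proof of this lemma rests on exactly the same inequality, this does not put you at a disadvantage relative to the paper, but it is a hypothesis rather than a consequence.
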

\begin{proof}
We first claim that $\rho_{\rm reg} >0$ for almost everywhere $\left(\omega, t, x\right)\in\Omega\times (0,T)\times \O$.

 Since $\mathbb{E}\left[\left(\left\| \rho_{\rm reg} ^{-1} \right\|_{L_{t}^{\infty}L_{x}^{\infty}}\right)^{r}\right]\ls \mathbb{E}\left[\left(\left\| D^{2}\rho_{\rm reg} ^{-1}\right\|_{L_{t}^{\infty} L_{x}^{2}}\right)^{r}\right]$, computing
\begin{equation}
D^{2}\rho_{\rm reg} ^{-1}=D\left(D\left(\rho_{\rm reg}^{-1}\right)\right)=D\left(-\rho_{\rm reg} ^{-2}D\rho_{\rm reg} \right)=2\rho_{\rm reg} ^{-3}D\rho_{\rm reg} \otimes D\rho_{\rm reg} -\rho_{\rm reg} ^{-2}D^{2}\rho_{\rm reg} ,\\
\end{equation}
so by H\"older's inequalities, we have
\begin{align}\label{lower bound of rho}
&\quad \mathbb{E}\left[\left\| \rho_{\rm reg} ^{-1} \right\|_{L_t^{\infty}L_x^{\infty}}^{r}\right]
\ls C_{r}\mathbb{E}\left[\left\|\rho_{\rm reg} ^{-3}|D\rho_{\rm reg} |^{2}\right\|_{L_t^{\infty}L_x^{2}}^{r}+\left\|\rho_{\rm reg} ^{-2}|D^{2}\rho_{\rm reg} |\right\|_{L_t^{\infty}L_x^{2}}^{r}\right]\notag\\
&\ls C_{r}\mathbb{E}\left[\left\|\rho_{\rm reg} ^{-1}\right\|_{L_t^{\infty}L_x^{10}}^{3r} \left\| \rho_{\rm reg} \right\|_{L_t^{\infty}H_x^{9}}^{2r}\right]+C_{r}\mathbb{E}\left[\left\|\rho_{\rm reg} ^{-1}\right\|_{L_t^{\infty}L_x^{10}}^{2r}\left\| \rho_{\rm reg} \right\|_{L_t^{\infty}H_x^{9}}^{r}\right]\\
&\ls C_{r}\mathbb{E}\left[e_{\rm reg}(t)^{\frac{3r}{10}}e_{\rm reg}(t)^{r}\right]+C_{r}\mathbb{E}\left[e_{\rm reg}(t)^{\frac{2r}{10}}e_{\rm reg}(t)^{\frac{r}{2}}\right]\notag\\
&\ls C_{r}\left(\mathbb{E}\left[e_{\rm reg}(0)^{\frac{13r}{10}}\right]+\left(\mathbb{E}\left[e_{\rm reg}(0)^{\frac{13r}{10}}\right]\right)^{\frac{7}{13}}\right),\notag
\end{align}
for some $r>4$, where $C_{r}$ may be different constants in different inequality. The last inequality holds up to a constant $K^{r}$, which comes from Burkholder-Davis-Gundy's inequality, which is equivalent to $e^{r}$, see \S \ref{BDG inequa} in Appendix. By Chebyshev's inequality, it holds
\begin{align}
\bar{\mathbb{P}}\left[\left\{\omega\in \Omega\left|\left\| \rho_{\rm reg} ^{-1}(\omega) \right\|_{L_{t}^{\infty}L_{x}^{\infty}}>L\right.\right\}\right] \ls \frac{C_{r}\left(\mathbb{E}\left[e_{\rm reg}(0)^{\frac{13r}{10}}\right]+\left(\mathbb{E}\left[e_{\rm reg}(0)^{\frac{13r}{10}}\right]\right)^{\frac{7}{13}}\right)}{L^{r}}\ra 0,
\end{align}
as $L\ra +\infty$,
which shows that $\rho_{\rm reg} >0$ for almost everywhere in $\Omega\times (0,T)\times \O$ $\mathbb{P}$ a.s.

Now we turn to the proof of lemma \ref{Conver of sto int for m layer}.
Recalling the definition of $\mathbf{F}_{k}(\rho_{\rm reg},\u_{\rm reg})$, when $\rho_{\rm reg}\neq 0$,
 we write $\mathbf{F}_{k}(\rho_{\rm reg},\u_{\rm reg})=\mathbf{F}_{k}\left(\rho_{\rm reg},\frac{\rho_{\rm reg}\u_{\rm reg}}{\rho_{\rm reg}}\right)$.
Recall that $|\rho_{\rm reg}\mathbf{F}_{k}|\leqslant f_{k}(\left|\rho_{\rm reg}\right|+\left|\rho_{\rm reg}\u_{\rm reg}\right|)$, $\sum\limits_{k=1}^{\infty} f_{k}^{2}<+\infty$.
  Defining $O^{\iota}=\left\{(t, x)|\rho_{\rm reg} <\iota\right\}$,
 for $\rho_{\rm reg} $ in $(O^{\iota})^{c}$, when $m$ large enough, it holds $\rho_{m}>\frac{\iota}{2}$.
From Remark \ref{convergence of q=rho u}, we know that $\bar{\rho}_{m}\bar{\u}_{m}$ converges strongly to $\rho_{\rm reg} \u_{\rm reg} $ in $C\left([0,T];L^{\frac{3}{2}}\left( \O \right)\right)$, $\bar{\rho}_{m}\ra \rho_{\rm reg} $ strongly in $C\left([0,T];W^{1,3}\left( \O \right)\right)$, therefore they converge almost everywhere up to a subsequence.
 We claim that $\mathbf{F}_{k}\left(\bar{\rho}_{m},\frac{\bar{\rho}_{m}\bar{\u}_{m}}{\bar{\rho}_{m}}\right)\ra \mathbf{F}_{k}\left(\rho_{\rm reg} ,\frac{\rho_{\rm reg} \u_{\rm reg} }{\rho_{\rm reg} }\right)$ due to the strong convergence of $\bar{\rho}_{m}\mathbf{F}_{k}(\bar{\rho}_{m},\frac{\bar{\rho}_{m}\bar{\u}_{m}}{\bar{\rho}_{m}})\ra \rho_{\rm reg} \mathbf{F}_{k}(\rho_{\rm reg} ,\frac{\rho_{\rm reg} \u_{\rm reg} }{\rho_{\rm reg} })$.
 Actually, for the situation in $O^{\iota}$, $\rho_{\rm reg} \neq 0$, we have
\begin{align}
&\bar{\rho}_{m}\mathbf{F}_{k}\left(\bar{\rho}_{m},\frac{\bar{\rho}_{m}\bar{\u}_{m}}{\bar{\rho}_{m}}\right)-\rho_{\rm reg}\mathbf{F}_{k}\left(\rho_{\rm reg},\frac{\rho_{\rm reg}\u_{\rm reg}}{\rho_{\rm reg} }\right)\notag\\
=&\bar{\rho}_{m}\mathbf{F}_{k}\left(\bar{\rho}_{m},\frac{\bar{\rho}_{m}\bar{\u}_{m}}{\bar{\rho}_{m}}\right)-\rho_{\rm reg} \mathbf{F}_{k}\left(\bar{\rho}_{m},\frac{\bar{\rho}_{m}\bar{\u}_{m}}{\bar{\rho}_{m}}\right)\\
& +\rho_{\rm reg}\mathbf{F}_{k}\left(\bar{\rho}_{m},\frac{\bar{\rho}_{m}\bar{\u}_{m}}{\bar{\rho}_{m}}\right) -\rho_{\rm reg}\mathbf{F}_{k}\left(\rho_{\rm reg},\frac{\rho_{\rm reg}\u_{\rm reg}}{\rho_{\rm reg} }\right),\notag
\end{align}
so we get
\begin{align}
&\left|\rho_{\rm reg}\mathbf{F}_{k}\left(\bar{\rho}_{m},\frac{\bar{\rho}_{m}\bar{\u}_{m}}{\bar{\rho}_{m}}\right)-\rho_{\rm reg}\mathbf{F}_{k}\left(\rho_{\rm reg},\frac{\rho_{\rm reg}\u_{\rm reg}}{\rho_{\rm reg} }\right)\right|\notag\\
\ls & \left|\bar{\rho}_{m}\mathbf{F}_{k}\left(\bar{\rho}_{m},\frac{\bar{\rho}_{m}\bar{\u}_{m}}{\bar{\rho}_{m}}\right)-\rho_{\rm reg} \mathbf{F}_{k}\left(\bar{\rho}_{m},\frac{\bar{\rho}_{m}\bar{\u}_{m}}{\bar{\rho}_{m}}\right)\right|\\
&+\left|\bar{\rho}_{m}\mathbf{F}_{k}\left(\bar{\rho}_{m},\frac{\bar{\rho}_{m}\bar{\u}_{m}}{\bar{\rho}_{m}}\right)-\rho_{\rm reg} \mathbf{F}_{k}\left(\bar{\rho}_{m},\frac{\bar{\rho}_{m}\bar{\u}_{m}}{\bar{\rho}_{m}}\right)\right|\notag\\
\ls & f_{k} \left(\left|\bar{\rho}_{m}-\rho_{\rm reg}\right|+\left|\bar{\rho}_{m}\bar{\u}_{m}-\rho_{\rm reg}\u_{\rm reg}\right|\right)+\left|\bar{\rho}_{m}-\rho_{\rm reg}\right| \left|\mathbf{F}_{k}\left(\bar{\rho}_{m},\frac{\bar{\rho}_{m}\bar{\u}_{m}}{\bar{\rho}_{m}}\right)\right|. \notag
\end{align}
Therefore, for $\rho_{\rm reg}>0$, we have
\begin{align}\label{conver of Fk}
\left|\mathbf{F}_{k}\left(\bar{\rho}_{m},\frac{\bar{\rho}_{m}\bar{\u}_{m}}{\bar{\rho}_{m}}\right)-\mathbf{F}_{k}\left(\rho_{\rm reg},\frac{\rho_{\rm reg}\u_{\rm reg}}{{\rho} }\right)\right|
\rightarrow 0, \text{ almost everywhere as } m \rightarrow +\infty.
\end{align}
When $\rho_{\rm reg}=0$, $\left|\bar{\rho}_{m}\mathbf{F}_{k}\left(\bar{\rho}_{m},\frac{\bar{\rho}_{m}\bar{\u}_{m}}{\bar{\rho}_{m}}\right)\right|\ls f_{k}\left(|\bar{\rho}_{m}|+\left|\bar{\rho}_{m}\bar{\u}_{m}\right|\right)$,
this implies $\left|\mathbf{F}_{k}\left(\bar{\rho}_{m},\frac{\bar{\rho}_{m}\bar{\u}_{m}}{\bar{\rho}_{m}}\right)-0\right|\ls f_{k}\left(1+\left|\bar{\u}_{m}\right|\right)$, hence the integral is bounded, which gives that
\begin{align}\label{conver of Fk on vacuum}
\int_{0}^{T}\int_{\rho_{\rm reg}=0}\left|\mathbf{F}_{k}\left(\bar{\rho}_{m},\bar{\u}_{m}\right)-\mathbf{F}_{k}(\rho_{\rm reg},\u_{\rm reg})\right|^{2}\d x\d t
= 0 \text{ as }m\rightarrow +\infty.
\end{align}
So \eqref{conver of Fk on vacuum} along with \eqref{conver of Fk} yields
\begin{align}
&\int_{0}^{T}\int_{O^{\iota}}|\mathbf{F}_{k}(\bar{\rho}_{m},\bar{\u}_{m})-\mathbf{F}_{k}(\rho_{\rm reg},\u_{\rm reg})|^{2}\d x\d t\notag\\
= &\int_{0}^{T}\int_{O^{\iota}\cap\{\rho_{\rm reg}=0\}}|\mathbf{F}_{k}(\bar{\rho}_{m},\bar{\u}_{m})-\mathbf{F}_{k}(\rho_{\rm reg},\u_{\rm reg})|^{2}\d x\d t \\ &+\int_{0}^{T}\int_{O^{\iota}\cap\{\rho_{\rm reg} \neq0\}}|\mathbf{F}_{k}(\bar{\rho}_{m},\bar{\u}_{m})-\mathbf{F}_{k}(\rho_{\rm reg},\u_{\rm reg})|^{2}\d x\d t \rightarrow 0 \text{ as }m\rightarrow +\infty.\notag
\end{align}
Therefore, it holds
\begin{align}
&\int_{0}^{T}\int_{\O}|\mathbf{F}_{k}(\bar{\rho}_{m},\bar{\u}_{m})-\mathbf{F}_{k}(\rho_{\rm reg},\u_{\rm reg})|^{2}\d x\d t\\
=&\int_{O^{\iota}} |\mathbf{F}_{k}(\bar{\rho}_{m},\bar{\u}_{m})-\mathbf{F}_{k}(\rho_{\rm reg},\u_{\rm reg})|^{2}\d x\d t +\int_{(O^{\iota})^{c}} |\mathbf{F}_{k}(\bar{\rho}_{m},\bar{\u}_{m})-\mathbf{F}_{k}(\rho_{\rm reg},\u_{\rm reg})|^{2} \d x\d t\notag\\
&\rightarrow 0 \text{ as }m\rightarrow +\infty.\notag
\end{align}
Consequently, we have
\begin{align}
\mathbf{F}_{k}(\bar{\rho}_{m},\bar{\u}_{m})\ra \mathbf{F}_{k}(\rho_{\rm reg},\u_{\rm reg})\quad \text{in} \quad L^{2}\left([0,T]\times \O\right).
\end{align}
Therefore, $\Pi_{m}\left(\mathbf{F}_{k}(\bar{\rho}_{m},\bar{\u}_{m})\right)$ converges to $\mathbf{F}_{k}(\rho_{\rm reg},\u_{\rm reg})$ in $L^{2}\left([0,T]\times \O\right)$ due to the continuity of projection operator $\Pi_{m}$.
On the other hand, we have the strong convergence of $\bar{\rho}_{m}$ in $L^{2}\left([0,T]\times \O\right)$, so
\begin{equation}\label{con. rhoF in m}
\Pi_{m}\left(\bar{\rho}_{m}\Pi_{m}\left(\mathbf{F}_{k}(\bar{\rho}_{m},\bar{\u}_{m})\right)\right)
\ra \rho_{\rm reg}\mathbf{F}_{k}(\rho_{\rm reg},\u_{\rm reg})\text{ in }L^{1}\left([0,T]\times \O\right).
\end{equation}
The strong convergence of
$\Pi_{m}\left(\bar{\rho}_{m}\Pi_{m}\left(\mathbf{F}_{k}\left(\bar{\rho}_{m},\bar{\u}_{m}\right)\right)\right)$ in $L^{2}\left([0,T]\times \O\right)$ is required in the convergence of the stochastic integral. Since
$\int_{0}^{T}\int_{\O}\Pi_{m}\left(\bar{\rho}_{m}\mathbb{F}_{k}\left(\bar{\rho}_{m},\bar{\u}_{m}\right)\right)^{2}\d x\d t $ is bounded,
so we get $\bar{\rho}_{m}\Pi_{m}\left(\mathbf{F}_{k}\left(\bar{\rho}_{m},\bar{\u}_{m}\right)\right)\ra \rho_{\rm reg}\mathbf{F}_{k}\left(\rho_{\rm reg},\u_{\rm reg}\right)$ in $L^{r}\left(\Omega;L^{2}\left([0,T]\times \O\right)\right)$.
By dominated convergence theorem, the strong convergence of $\bar{\rho}_{m}$, and $\left\|\rho_{\rm reg}\right\|_{L_t^{\infty}L_x^{\infty}} \ls \left\|\rho_{\rm reg} \right\|_{L_t^{\infty}H_x^{9}}$, we have
\begin{align}
&\int_{0}^{T}\int_{\O}\left|\bar{\rho}_{m}\Pi_{m}\left(\mathbf{F}_{k}(\bar{\rho}_{m},\bar{\u}_{m})\right)-
\rho_{\rm reg}\mathbf{F}_{k}(\rho_{\rm reg},\u_{\rm reg})\right|^{2}\d x\d t\notag\\
\ls&2\int_{0}^{T}\int_{\O}\left|\bar{\rho}_{m}\Pi_{m}\left[\mathbf{F}_{k}(\bar{\rho}_{m},\bar{\u}_{m})\right]-\rho_{\rm reg} \Pi_{m}\left(\mathbf{F}_{k}(\bar{\rho}_{m},\bar{\u}_{m})\right)\right|^{2}\d x\d t\notag\\
 &\quad\quad+2\int_{0}^{T}\int_{\O}\left| \rho_{\rm reg}\Pi_{m}\left(\mathbf{F}_{k}(\bar{\rho}_{m},\bar{\u}_{m})\right)-\rho_{\rm reg}\mathbf{F}_{k}(\rho_{\rm reg},\u_{\rm reg})\right|^{2}\d x\d t\\
\ls& 2f_{k}^{2}\int_{0}^{T}\int_{\O}|\bar{\rho}_{m}-\rho_{\rm reg}|^{2}\d x \d t\notag\\
&+\left\|\rho_{\rm reg} \right\|_{L_t^{\infty}L_x^{\infty}}\int_{0}^{T}\int_{\O}\left|\Pi_{m}\left(\mathbf{F}_{k}(\bar{\rho}_{m},\bar{\u}_{m})\right)-\mathbf{F}_{k}(\rho_{\rm reg},\u_{\rm reg})\right|^{2}\d x\d t\notag\\
&\rightarrow 0 \text{ as }m\rightarrow +\infty.\notag
\end{align}
This shows that
\begin{equation}\label{L2 con. rhoF in m}
\Pi_{m}\left(\bar{\rho}_{m}\Pi_{m}\left(\mathbf{F}_{k}(\bar{\rho}_{m},\bar{\u}_{m})\right)\right)\ra \rho_{\rm reg}\mathbf{F}_{k}(\rho_{\rm reg},\u_{\rm reg})\text{ in }L^{2}\left([0,T]\times \O\right).
\end{equation}

Now combined with (\ref{L2 con. rhoF in m}) and $\bar{W}_{m}\ra W_{\rm reg}$, by lemma \ref{lemma sto int convergence}, it holds\\
  $\int_{\O}\int_{0}^{T}\Pi_{m}\left(\bar{\rho}_{m}\Pi_{m}\left(\mathbf{F}_{k}(\bar{\rho}_{m},\bar{\u}_{m})\right)\right)\d \bar{W}_{m}\d x
\ra \int_{\O}\int_{0}^{T}\rho_{\rm reg}\mathbf{F}_{k}(\rho_{\rm reg},\u_{\rm reg})\d W_{\rm reg}\d x$ as $m\ra +\infty$ $\bar{\mathbb{P}}$ a.s. This completes the proof of the lemma. \hfill$\square$
\end{proof}

Notice that even $\eta$ and $\delta$ will vanish finally, by the Lipschitz continuity of $\rho \mathbf{F}_{k}(\rho,\u )$, the convergence of $\rho \mathbf{F}_{k}(\rho,\u )$ still holds without good regularity of $\rho$.\\

\textbf{Step 4:} $\left(\left(\bar{\Omega},\bar{\mathcal{F}},\bar{\mathbb{P}} \right),\rho_{\rm reg}, \u_{\rm reg}, W_{\rm reg}\right)$ is a martingale solution  after taking the limit.\\
%

By Vitali's convergence theorem and lemma \ref{Conver of sto int for m layer}, we have the following proposition:
\begin{proposition}
$\left(\left(\bar{\Omega},\bar{\mathcal{F}},\bar{\mathbb{P}} \right),\rho_{\rm reg}, \u_{\rm reg}, W_{\rm reg}\right)$ is a martingale solution to the system
\begin{equation}\label{system involving eps}
\left\{\begin{array}{l}\vspace{1.2ex}
\left(\rho_{\rm reg}\right)_{t}+\operatorname {div}(\rho_{\rm reg}\u_{\rm reg})= \eps \triangle \rho_{\rm reg},\\
\d  \left(\rho_{\rm reg}\u_{\rm reg}\right)+\left(\operatorname{div}(\rho_{\rm reg}\u_{\rm reg} \otimes \u_{\rm reg})+ \nabla\left( a\rho_{\rm reg}^{\gamma}\right)-\operatorname{div}(\rho_{\rm reg}
\mathbb{D} \u_{\rm reg})\right)\d t\\
=\left(-r_{0}|\u_{\rm reg}|^{2}\u_{\rm reg}-r_{1} \rho|\u_{\rm reg}|^{2} \u_{\rm reg}-r_{2}\u_{\rm reg}\right)\d t\\
\quad +\left(\frac{11}{10}\eta \nabla \rho_{\rm reg}^{-10}+\delta \rho_{\rm reg}\nabla \triangle^{9}\rho_{\rm reg}+\kappa\rho_{\rm reg}\left(\nabla\left(\frac{\triangle \sqrt{\rho_{\rm reg}}}{\sqrt{\rho_{\rm reg}}}\right)\right)\right)\d t\\
\quad -\left(\eps \nabla \rho_{\rm reg}\nabla \u_{\rm reg}\right)\d t-\left(\eps \triangle^{2} \u_{\rm reg}\right)\d t +\left(\rho_{\rm reg} \mathbb{F}(\rho_{\rm reg},\u_{\rm reg})\right)\d W_{\rm reg}.
\end{array}\right.
\end{equation}
\end{proposition}
We denote the energy as
\begin{align}
e_{\rm reg}(t)=\int_{\O}\left(\frac{1}{2}\rho_{\rm reg}|\u_{\rm reg}|^{2}+\frac{a}{\gamma}\rho_{\rm reg}^{\gamma} +\frac{\eta}{10}\rho_{\rm reg}^{-10}+\frac{\kappa}{2}|\nabla\sqrt{\rho_{\rm reg}}|^{2}+\frac{\delta}{2}|\nabla \triangle^{4} \rho_{\rm reg}|^{2}\right)\d x,
\end{align}
then we compute the estimates
\begin{equation}\label{energy estimate eps 1}
\mathbb{E}\left[e_{\rm reg}(t)^{r}\right]\ls \mathbb{E}\left[e_{\rm reg}(0)^{r}\right], r> 4,
\end{equation}
and
\begin{align}\label{energy estimate eps 2}
&\mathbb{E}\left[\left(\eps \frac{4a}{\gamma}\int_{0}^{t}\int_{\O}\left|\nabla \rho_{\rm reg}^{\frac{\gamma}{2}}\right|^{2}\d x\d s+\eps\eta \frac{11}{25} \int_{0}^{t}\int_{\O}\left|\nabla \rho_{\rm reg}^{-5}\right|^{2}\d x\d s+\eps\delta \int_{0}^{t}\int_{\O}\left|\triangle^{5} \rho_{\rm reg}\right|^{2}\d x\d s\right.\right.\notag\\
&\quad\left.\left.+\eps \kappa \int_{0}^{t}\int_{\O} \rho_{\rm reg}\left|\nabla^{2}\ln\rho_{\rm reg}\right|^{2}\d x\d s\int_{0}^{t}+\eps\int_{\O}\left|\triangle \u_{\rm reg}\right|^{2}\d x\d s+\int_{\O}\rho_{\rm reg}\left|\mathbb{D}\u_{\rm reg}\right|^{2}\d x\d s\right.\right.\notag\\
&\left.\left.\quad+\int_{0}^{t}\int_{\O}\left(r_{0}\left|\u_{\rm reg}\right|^{4}+r_{1}\rho_{\rm reg}\left|\u_{\rm reg}\right|^{4}+r_{2}\left|\u_{\rm reg}\right|^{2}\right)\d x\d s\right)^{r}\right]\\
&\ls C\left(1+\mathbb{E}\left[e_{\rm reg}(0)^{r}\right]\right),\notag
\end{align}
 for $C$ depends on $r$ and $T$.
\section{Vanishing artificial viscosity limit: $\eps\ra 0$}
  Before sending $\eps$ to $0$, we need to develop the stochastic B-D entropy with uniform energy estimates independent of $\kappa,\eps$. In order to construct the energy estimates uniformly in $\eps$, we first give the stochastic B-D entropy.

\subsection{The stochastic B-D entropy estimates}
In this subsection we define the approximated B-D entropy
\begin{align}\label{BD entropy with REG}
\tilde{e}_{\rm reg}=&\int_{\O}\left(\frac{1}{2}\rho_{\rm reg}\left|\u_{\rm reg}+\nabla \ln\rho_{\rm reg}\right|^{2}
+\int_{1}^{\rho_{\rm reg}}\frac{p(z)}{z}\d z\right.\\
&\left.\qquad+\frac{\eta}{10}\rho_{\rm reg}^{-10}+\frac{\delta}{2}\left|\nabla\triangle^{4}\rho_{\rm reg}\right|^{2}+\frac{\kappa}{2}\left|\nabla\sqrt{\rho_{\rm reg}}\right|^{2}+r_{2}\int_{\O}\ln_{-}\rho_{\rm reg}\right)\d x,\notag
\end{align}
and we have the following lemma.
\begin{lemma}
We conclude the following estimates: there hold
\begin{equation}
\begin{aligned}
\mathbb{E}\left[\sup\limits_{t\in[0,T]}\tilde{e}_{\rm reg}(t)^{r}\right]&\ls C,
\end{aligned}
\end{equation}
\begin{align}\label{BD estimate2}
&\mathbb{E}\left[\left(\eps \frac{4a}{\gamma}\int_{0}^{T}\int_{\O}\left|\nabla \rho_{\rm reg}^{\frac{\gamma}{2}}\right|^{2}\d x\d s\right)^{r}\right]\ls C,
\quad \mathbb{E}\left[\left(\eps\eta \frac{11}{25} \int_{0}^{t}\int_{\O}\left|\nabla \rho_{\rm reg}^{-5}\right|^{2}\d x\d s\right)^{r}\right]\ls C,\notag\\
&\mathbb{E}\left[\left(\eps \delta \int_{0}^{t}\int_{\O}\left|\triangle^{5} \rho_{\rm reg}\right|^{2}\d x\d s\right)^{r}\right]\ls C,
\quad \mathbb{E}\left[\left(\frac{1}{2}\int_{0}^{T}\int_{\O}\rho_{\rm reg}\left|\nabla\u_{\rm reg}-\nabla\u_{\rm reg}^{\top}\right|^{2}\right)^{r}\right]\ls C,\notag\\
 &\mathbb{E}\left[\left(\eps \kappa \int_{0}^{t}\int_{\O} \rho_{\rm reg}\left|\nabla^{2}\ln\rho_{\rm reg}\right|^{2}\d x\d s\right)^{r}\right]\ls C,
\quad \mathbb{E}\left[\left(r_{0}\int_{0}^{T}\int_{\O}\left|\u_{\rm reg}\right|^{4}\d x\d s\right)^{r}\right]\ls C,\notag\\
 &\mathbb{E}\left[\left(r_{1}\int_{0}^{T}\int_{\O}\rho_{\rm reg} \left|\u_{\rm reg}\right|^{4}\d x\d s\right)^{r}\right]\ls C,
\quad \mathbb{E}\left[\left(r_{2}\int_{0}^{T}\int_{\O}\left|\u_{\rm reg}\right|^{2}\d x\d s\right)^{r}\right]\ls C,\\
&\mathbb{E}\left[\left(\int_{0}^{t}\int_{\O}\eta \frac{11}{25} \left|\nabla \rho_{\rm reg}^{-5}\right|^{2}\d x\d s\right)^{r}\right]\ls C,
\quad \mathbb{E}\left[\left(\int_{0}^{t}\int_{\O}\gamma\rho_{\rm reg}^{\gamma}\left|\nabla\ln\rho_{\rm reg}\right|^{2}\d x\d s\right)^{r}\right]\ls C,\notag\\
&\mathbb{E}\left[\left(\int_{0}^{t}\int_{\O}\delta\left|\triangle^{5}\rho_{\rm reg}\right|^{2}\d x\d s\right)^{r}\right] \ls C,
\quad \mathbb{E}\left[\left(\int_{0}^{t}\int_{\O}\kappa \rho_{\rm reg}\left|\nabla^{2}\ln\rho_{\rm reg}\right|^{2}\d x\d s\right)^{r}\right]\ls C,\notag
\end{align}
where $C$ depends on $\delta,\eta,r,T,r_{0},r_{1}, \mathbb{E}\left[e_{\rm reg}(0)^{r}\right],\mathbb{E}\left[e_{\rm reg}(0)^{\frac{6r}{5}}\right]$ and $\mathbb{E}\left[e_{\rm reg}(0)^{k_{{\rm I}}r}\right]$, $\mathbb{E}\left[\tilde{e}_{\rm reg}(0)^{r}\right]$, $i=1,2,3$, $r>4$.
\end{lemma}
\begin{proof} For convenience, we denote $\rho_{\rm reg}$ as $\rho$ and denote $\u_{\rm reg}$ as $\u$ in this process of proof.
 Denoting $f\triangleq f(\zeta,q)=\rho\u\cdot \nabla\ln\rho=q\zeta$, $q=\rho\u$, $f_{\zeta}=q=\rho\u,f_{q}=\zeta$, $f_{qq}=0$, by It\^o's formula, it holds
\begin{equation}\label{Ito formula in eps layer}
\d\left(\int_{\O}\rho\u \cdot \nabla\ln\rho \d x\right)=\int_{\O}\d \left(\rho\u\right)\cdot \nabla\ln\rho \d x+\int_{\O}\rho\u \cdot\d \nabla\ln\rho \d x.\\
\end{equation}
 The first term reduces to
\begin{align}
&\int_{\O} \d (\rho\u)\cdot\nabla\ln\rho \d x \notag\\
=& \left(\int_{\O}-\odiv\left(\rho\u\otimes \u\right)\cdot\nabla\ln\rho \d x-\int_{\O}\nabla\rho^{\gamma}\cdot\nabla\ln\rho \d x+\int_{\O}\odiv\left(\rho \mathbb{D}\u\right)\cdot\nabla\ln\rho \d x \right.\notag\\
 &\left.+\frac{11}{10}\eta\int_{\O}\nabla\rho^{-10}\cdot\nabla\ln\rho \d x+\delta\int_{\O}\rho\nabla\triangle^{9}\cdot\nabla\ln\rho \d x +\kappa\int_{\O}\odiv\left(\rho \nabla^{2}\ln\rho\right)\cdot\nabla\ln\rho \d x\right.\notag\\
 &\left.+\eps \int_{\O}\left(\nabla\rho\nabla\u\right)\cdot\nabla\ln\rho \d x-\eps \int_{\O}\triangle^{2}\u\cdot\nabla\ln\rho \d x\right. \\
 &\left.-r_{0}\int_{\O}|\u|^{2}\u\cdot \nabla\ln\rho \d x-r_{1}\int_{\O}\rho|\u|^{2}\u\cdot \nabla\ln\rho \d x-r_{2}\int_{\O}\u\cdot \nabla\ln\rho \d x\right)\d t\notag\\
 & + \int_{\O}\rho \mathbb{F}(\rho,\rho\u)\d W \cdot \nabla\ln\rho \d x. \notag
\end{align}
The second term in \eqref{Ito formula in eps layer} is written as
\begin{equation}
\int_{\O}\rho\u \cdot\d(\nabla\ln\rho) \d x=\int_{\O}\odiv\left(\rho\u\right)\frac{\odiv\left(\rho\u\right)-\eps \triangle\rho}{\rho}\d x\d t.
\end{equation}
From the mass equation, it holds
\begin{align}
&\quad \d \left(\int_{\O}\frac{1}{2}\rho\left|\nabla \ln\rho\right|^{2}\d x\right)\\
&= \left(-\int_{\O} \rho\nabla\u:\nabla\ln\rho\otimes\nabla\ln\rho \d x - \int_{\O}\triangle\rho\odiv\u \d x+ \eps \int_{\O}\frac{\nabla\rho\cdot\nabla\triangle\rho}{\rho}\d x\right)\d t. \notag
\end{align}
After the simplification of the deterministic part\cite{Vasseur-Yu-q2016,BD2006}, we have
\begin{align}
&\quad \int_{\O} \d \left(\rho\u\right)\cdot\nabla\ln\rho \d x + \int_{\O}\rho\u \cdot\d \ln\rho \d x + \d \left(\int_{\O}\frac{1}{2}\rho\left|\nabla \ln\rho\right|^{2}\d x\right)\notag\\
&=\left(\int_{\O}\rho \left|\mathbb{D}\u\right|^{2} \d x-\frac{1}{2}\int_{\O} \frac{\rho}{2}\left|\nabla\u-\nabla\u^{\top}\right|^{2}\d x-\int_{\O}\nabla\rho^{\gamma}\cdot\nabla\ln\rho \d x\right.\notag\\
&\quad\left. + \frac{11}{10}\eta\int_{\O}\nabla\rho^{-10}\cdot\nabla\ln\rho \d x+\delta\int_{\O}\rho\nabla\triangle^{9}\rho\cdot\nabla\ln\rho \d x\right.\\
&\quad\left.+ \kappa\int_{\O}\odiv\left(\rho \nabla^{2}\ln\rho\right)\cdot\nabla\ln\rho \d x+\eps \int_{\O}\left(\nabla\rho\nabla\u\right)\nabla\ln\rho \d x  \right.\notag\\
&\quad\left.+\eps \int_{\O}\frac{\nabla\rho\cdot\nabla\triangle\rho}{\rho} \d x-\eps \int_{\O}\frac{\odiv\left(\rho\u\right)\triangle\rho}{\rho}\d x-\eps \int_{\O}\triangle^{2}\u \cdot \nabla\ln\rho \d x\right.\notag\\
&\quad\left. - r_{0}\int_{\O}\left|\u\right|^{2}\u\cdot \nabla\ln\rho \d x-r_{1}\int_{\O}\rho\left|\u\right|^{2}\u\cdot \nabla\ln\rho \d x
 -r_{2}\int_{\O}\u\cdot \nabla\ln\rho \d x  \right)\d t\notag\\
&\quad+ \left(\int_{\O}\rho \mathbb{F}(\rho,\rho\u)\cdot \nabla\ln\rho \d x\right)\d W . \notag
\end{align}
Together with the energy balance of $\d \left(\int_{\O}\frac{1}{2}\rho\u^{2}\d x\right)$ \eqref{energy balance}, it follows
\begin{align}\label{sto B-D balance}
&\d\left(\int_{\O}\left[\frac{1}{2}\rho\left|\u+\nabla \ln\rho\right|^{2}+\frac{\eta}{10}\rho^{-10}+\frac{\delta}{2}\left|\nabla\triangle^{4}\rho\right|^{2}+\frac{\kappa}{2}\left|\nabla\sqrt{\rho}\right|^{2}+\int_{1}^{\rho}\frac{p(z)}{z}\d z\right]\d x\right)\notag\\
&+\left(\eps \frac{4a}{\gamma}\int_{\O}\left|\nabla\rho^{\frac{\gamma}{2}}\right|^{2}\d x+\frac{11}{25}\eps\eta\int_{\O}\left|\nabla \rho^{-5}\right|^{2}\d x+\eps\int_{\O}\left|\triangle \u\right|^{2}\d x+\eps \kappa \int_{\O}\rho\left|\nabla^{2}\ln\rho\right|^{2}\d x\right.\notag\\
&\left.\quad +r_{0}\int_{\O}\left|\u\right|^{4}\d x+r_{1}\int_{\O}\rho\u^{4}\d x+r_{2}\int_{\O}\left|\u\right|^{2}\d x+\frac{1}{2}\int_{\O}\frac{\rho}{2}\left|\nabla\u-\nabla\u^{\top}\right|^{2}\d x\right)\d t\\
&+\left(\int_{\O}\gamma\rho^{\gamma}\left|\nabla\ln\rho\right|^{2}\d x+\int_{\O}\frac{11}{25}\eta\left|\nabla \rho^{-5}\right|^{2}\d x+\kappa\int_{\O}\rho\left|\nabla^{2}\ln\rho\right|^{2}\d x\d t+\delta\int_{\O}\left|\triangle^{5}\rho\right|^{2}\d x\right)\d t \notag\\
&= \left(\int_{\O}\rho\mathbb{F}\left(\rho,\u\right)\cdot \u \d x\right)\d W + \left(\int_{\O}\frac{1}{2}\rho\left|\mathbb{F}\left(\rho,\u\right)\right|^{2}\d x\right)\d t
+\left(\int_{\O}\rho\mathbb{F}(\rho,\rho\u)\cdot \nabla\ln\rho \d x\right)\d W  \notag\\
&+\left(\eps \int_{\O}\frac{\nabla\rho\nabla\triangle\rho}{\rho}\d x+\eps\int_{\O}\nabla\rho\nabla\u\nabla\ln\rho \d x\d t-\eps \int_{\O}\odiv (\rho\u)\frac{\triangle\rho}{\rho}\d x-\eps \int_{\O}\triangle\u\cdot\nabla\triangle\ln\rho \d x\right.\notag\\
&\left.\quad-r_{0}\int_{\O}\left|\u\right|^{2}\u\cdot\nabla\ln\rho \d x
-r_{1}\int_{\O}\rho\left|\u\right|^{2}\u\cdot\nabla\ln\rho \d x-r_{2}\int_{\O}\u\cdot \nabla\ln\rho \d x\right)\d t\notag\\
\triangleq & I_{1}+I_{2}+I_{3}+I_{4}+I_{5}+I_{6}+I_{7}+I_{8}+I_{9}+I_{10},\notag
\end{align}
where $\displaystyle \left(\int_{\O}\gamma\rho^{\gamma}\left|\nabla\ln\rho\right|^{2}\d x\right)\d t=\left(\int_{\O} \left|\nabla\rho^{\frac{\gamma}{2}}\right|^{2}\d x\right) \d t=\left(\int_{\O} \nabla\rho^{\gamma}\cdot \nabla\ln\rho \d x\right)\d t$.

For $I_{1}=\left(\int_{\O}\rho\mathbb{F}\left(\rho,\u\right)\cdot \u \d x\right)\d W$, similarly as \eqref{estimate of rho F u}, we have
\begin{align}
 \mathbb{E}\left[\left|\int_{0}^{t}\int_{\O}\rho\mathbb{F}\left(\rho,\u\right)\cdot \u \d x\d W\right|^{r} \right]
\ls C\left(\mathbb{E}\left[ e_{\rm reg}(0)^{r}\right]+1\right), \quad r>4,
\end{align}
with $C$ depends on $r,t$ and $\sum\limits_{k=1}^{+\infty} f^{2}_{k}$.

For term $\displaystyle I_{2}=\left(\int_{\O}\frac{1}{2}\rho|\mathbb{F}\left(\rho,\u\right)|^{2}\d x\right)\d s$, similarly as \eqref{Jensen used example}, we estimate
\begin{align}
\quad \mathbb{E}\left[\left|\int_{0}^{t}\int_{\O}\frac{1}{2}\rho|\mathbb{F}\left(\rho,\u\right)|^{2}\d x\d s\right|^{r}\right]
\ls C \left(\mathbb{E}\left[e_{\rm reg}(0)^{r}\right]+1\right),
\end{align}
where $C$ depends on $r,t$ and $\sum\limits_{k=1}^{+\infty} f^{2}_{k}$.

For $\displaystyle I_{3}=\left(\int_{\O}\rho\mathbb{F}(\rho,\u)\cdot\nabla\ln\rho \d x\right)\d W$, it holds
\begin{align}
&\quad \mathbb{E}\left[\left|\int_{0}^{t}\int_{\O}\rho\mathbb{F}(\rho,\u)\cdot\nabla\ln\rho \d x\d W\right|^{r}\right]
\ls C\mathbb{E}\left[\left(\int_{0}^{t}\left|\int_{\O}\rho\mathbb{F}(\rho,\u)\cdot\nabla\ln\rho \d x\right|^{2}\d s\right)^{\frac{r}{2}}\right]\\
&\ls C\mathbb{E}\left[\left(\int_{0}^{t}\sum\limits_{k=1}^{+\infty}f_{k}^2\left(\left\| \rho\nabla\ln\rho\right\|_{L^{1}\left( \O \right)}^2 +\left\| \left|\rho\u\right| \left|\nabla\ln\rho\right|\right\|_{L^{1}\left(\O\right)}^2 \right)\d s\right)^{\frac{r}{2}}\right], \notag
\end{align}
where $C$ depends on $r$ and $\sum\limits_{k=1}^{+\infty}f^{2}_{k}$. 
 Based on \eqref{energy estimate eps 1}, by H\"older's inequality and Cauchy's inequality, it holds
\begin{align}
&\int_{0}^{t}\left|\int_{\O}\sqrt{\rho}\sqrt{\rho}\nabla\ln\rho \d x\right|^{2}\d s
\ls  \int_{0}^{t} \int_{\O}\rho \d x\int_{\O}\rho\left|\nabla\ln\rho\right|^{2}\d x \d s\\
\ls & C \sup\limits_{s\in[0,t]} (e_{\rm reg}(s)+1)\int_{0}^{t} \int_{\O}\rho|\nabla\ln\rho|^{2}\d x\d s, \notag
\end{align}
so we have
\begin{align}
&\mathbb{E}\left[\left(\int_{0}^{t}\left|\int_{\O}\sqrt{\rho}\sqrt{\rho}\nabla\ln\rho \d x\right|^{2}\d s\right)^{\frac{r}{2}}\right]\\
\ls & C(\mathbb{E}[e_{\rm reg}(0)]^{r}+1)+\mathbb{E}\left[ \left(\int_{0}^{t} \int_{\O}\rho|\nabla\ln\rho|^{2}\d x\d s \right)^{r}\right],\notag
\end{align}
where $C$ depends on $r,t$ and $\sum\limits_{k=1}^{+\infty} f^{2}_{k}$, $r>4$.
\begin{align}
&\int_{0}^{t}\left(\int_{\O}\left|\rho\u\right| \left|\nabla\ln\rho\right| \d x\right)^{2}\d s
\ls  \int_{0}^{t}\left( \int_{\O}\rho\left|\u\right|^{2}\d x\right)\left(\int_{\O}\rho\left|\nabla\ln\rho\right|^{2}\d x\right) \d s \\
\ls &C \sup\limits_{s\in[0,t]} (e_{\rm reg}(s)+1)\int_{0}^{t} \int_{\O}\rho|\nabla\ln\rho|^{2}\d x\d s, \notag
\end{align}
Hence we have
\begin{align}
\mathbb{E}\left[\left(\int_{0}^{t}I_{3}\right)^{r}\right]
\ls & C(\mathbb{E}[e_{\rm reg}(0)]^{r}+1)+\mathbb{E}\left[ \left(\int_{0}^{t} \int_{\O}\rho|\nabla\ln\rho|^{2}\d x\d s \right)^{r}\right].\notag
\end{align}
For the term $\displaystyle  I_{4}=\eps \left(\int_{\O}\frac{\nabla\rho\cdot\nabla\triangle\rho}{\rho}\d x\right)\d s$, 
for $\kappa>0$, 
since $\sqrt{\eps \delta}\triangle^{5}\rho\in L^{2}([0,t]\times \O)$,
$H^{10}\left( \O \right)\hookrightarrow C^{8,\gamma}\left( \O \right)$, 
we do the following estimate
\begin{align}
 &\mathbb{E}\left[\left|\int_{0}^{t}\eps \int_{\O}\frac{\nabla\rho\nabla\triangle\rho}{\rho}\d x\d s\right|^{r}\right]\notag\\
\ls & \mathbb{E}\left[\left(\left\|\rho^{-1}\right\|_{L_t^{\infty}L_x^{10}}\int_{0}^{t} \eps \left(\int_{\O} \left|\nabla\rho\cdot\nabla\triangle\rho\right|^{\frac{10}{9}}\d x\right)^{\frac{9}{10}}\d s\right)^{r}\right]\notag\\
\ls & \mathbb{E} \left[\left(\left\|\rho^{-1}\right\|_{L_t^{\infty}L_x^{10}}\left\|\nabla\sqrt{\rho}\right\|_{L_t^{\infty}L_x^{2}}\eps\int_{0}^{t}\left(\int_{\O}\left|\nabla\triangle\rho\right|^{\frac{5}{2}}\d x\right)^{\frac{2}{5}}\d s\right)^{r}\right]\notag\\
\ls &\mathbb{E} \left[\left(\left\|\rho^{-1}\right\|_{L_t^{\infty}L_x^{10}}\left\|\nabla\sqrt{\rho}\right\|_{L_t^{\infty}L_x^{2}}\right)^{r}\left(\eps\int_{0}^{t}\left(\int_{\O}\left|\nabla\triangle\rho\right|^{5}\d x\right)^{\frac{1}{5}}\d s\right)^{r}\right]\\
\ls &\mathbb{E} \left[\left(\left\|\rho^{-1}\right\|_{L_t^{\infty}L_x^{10}}\left\|\nabla\sqrt{\rho}\right\|_{L_t^{\infty}L_x^{2}}\right)^{r}\left(\eps^{\frac{1}{2}}\delta^{-\frac{1}{2}}
\int_{0}^{t}\left(\int_{\O}\left|(\eps\delta)^{\frac{1}{2}}\triangle^{5}\rho\right|^{2}\d x\right)^{\frac{1}{2}}\d s\right)^{r}\right]\notag\\
\ls & \eps^{\frac{r}{2}}\delta^{-\frac{r}{2}}\mathbb{E} \left[\left(\left\|\rho^{-1}\right\|_{L_t^{\infty}L_x^{10}}\left\|\nabla\sqrt{\rho}\right\|_{L_t^{\infty}L_x^{2}}\right)^{r}\left(\int_{0}^{t}\left(\int_{\O}\left|(\eps\delta)^{\frac{1}{2}}\triangle^{5}\rho\right|^{2}\d x\right)\d s\right)^{\frac{r}{2}}\right]\notag\\
\ls &\eps^{\frac{r}{2}}\delta^{-\frac{r}{2}} \left(\mathbb{E}\left[\sup\limits_{s\in [0,t]}e_{\rm reg}(s)^{\frac{6r}{5}}\right]\right)^{\frac{1}{2}}\left(\mathbb{E}\left[\left(\int_{0}^{t}\int_{\O}\left|(\eps\delta)^{\frac{1}{2}}\triangle^{5}\rho\right|^{2}\d x\d s\right)^{r}\right]\right)^{\frac{1}{2}}\notag\\
\ls & C\eps^{\frac{r}{2}},\notag
\end{align}
where $C$ depends on $\delta,\eta,\kappa,r,t$ and $ \mathbb{E}\left[e_{\rm reg}(0)^{\frac{6r}{5}}\right]$, with the upper bound $$\left(\mathbb{E}\left[\sup\limits_{s\in [0,t]} e_{\rm reg}(s)^{\frac{6r}{5}}\right]\right)^{\frac{1}{2}}\ls C \left(\left(\mathbb{E}\left[e_{\rm reg}(0)^{\frac{6r}{5}}\right]\right)^{\frac{1}{2}}+1\right).$$
 Notice that the quantum term is indispensable for the regularized system because it is necessary for the estimate of $I_{4}$, which arises from the artificial viscosity $\eps \triangle \rho$.

For $\displaystyle I_{5}=\eps\left(\int_{\O}\nabla\rho\nabla\u\nabla\ln\rho \d x\right)\d s$, we treat it similarly to \cite{Zatorska2012}, the process is still given here. By Sobolev-Poincar\'e's inequalities on torus, $\displaystyle \int_{\O}\nabla\u\d x=0$, we have $\left\|\nabla\u\right\|_{L^{6}\left( \O \right)}\ls C \left\|\triangle \u\right\|_{L^{2}\left( \O \right)}$,
\begin{equation}
\begin{aligned}
&\quad \left|\int_{\O}\eps\nabla\rho\nabla\u\nabla\ln\rho\d x\right|\ls \eps C\left\|\nabla\u\right\|_{L^{6}\left( \O \right)}\left\|\rho^{-1}\right\|_{L^{10}\left( \O \right)}\left\|\rho\right\|^2_{W^{1,\frac{30}{11}}\left( \O \right)}\\
&\ls \eps C\left\|\triangle \u\right\|_{L^{2}\left( \O \right)}\left\|\rho^{-1}\right\|_{L^{10}\left( \O \right)}\left\|\rho\right\|^2_{W^{1,\frac{30}{11}}\left( \O \right)},
\end{aligned}
\end{equation}
by $C_{r}$-inequality.
Hence by H\"older's inequality and Cauchy's inequality, we obtain
\begin{align}
&\quad \mathbb{E}\left[\left(\int_{0}^{t}\int_{\O}\eps\nabla\rho\nabla\u\nabla\ln\rho \d x\d s\right)^{r}\right]\notag\\
&\ls  \mathbb{E}\left[\left(\int_{0}^{t}\eps C\left\|\triangle \u\right\|_{L^{2}\left( \O \right)}\left\|\rho^{-1}\right\|_{L^{10}\left( \O \right)}\left\|\rho\right\|^2_{W^{1,\frac{30}{11}}\left( \O \right)}\d s\right)^{r}\right]\\
&\ls  \mathbb{E}\left[\left(\eps \left(\int_{0}^{t}\left\|\triangle \u\right\|^2_{L^{2}\left( \O \right)}\d s\right)^{\frac{1}{2}}\left(\int_{0}^{t}\left\|\rho^{-1}\right\|^2_{L^{10}\left( \O \right)}\left\|\rho\right\|^{3}_{H^{9}\left( \O \right)}\left\|\rho\right\|_{H^{10}\left( \O \right)}\d s\right)^{\frac{1}{2}}\right)^{r}\right]\notag\\
&\ls \frac{\eps^{r}}{3} \mathbb{E}\left[\left(\int_{0}^{t}\left\|\triangle\u\right\|^2_{L^{2}\left( \O \right)}\d s\right)^{r}\right]+\eps^{r}3\mathbb{E}\left[\left(\int_{0}^{t}\left\|\rho^{-1}\right\|^2_{L^{10}\left( \O \right)}\left\|\rho\right\|^{3}_{H^{9}\left( \O \right)}\left\|\rho\right\|_{H^{10}\left( \O \right)}\d s\right)^{r}\right]\notag\\
&\ls \frac{\eps^{r}}{3}\mathbb{E}\left[\left(\int_{0}^{t}\left\|\triangle \u\right\|^2_{L^{2}\left( \O \right)}\d s\right)^{r}\right] + C\eps^{\frac{r}{2}} ,\notag
\end{align}
where $C$ depends on $\delta, \eta, r, t$ and $\mathbb{E}\left[e_{\rm reg}(0)^{k_{1}r}\right]$, in which $k_{1}$ is a specific positive constant. The last inequality is obtained similarly to the estimate for $I_{4}$. 

For $\displaystyle I_{6}=-\eps \left(\int_{\O}\odiv (\rho\u)\frac{\triangle\rho}{\rho}\d x\right)\d s$ and $\displaystyle I_{7}=-\eps \left(\int_{\O}\triangle\u\cdot\nabla\triangle\ln\rho \d x \right)\d s $, similarly to $I_{4}$, there holds
\begin{align}
&\mathbb{E}\left[\left(\int_{0}^{t}\int_{\O}-\eps\frac{\odiv(\rho\u)\triangle\rho}{\rho}\d x\d s\right)^{r}\right]\ls \frac{\eps^{r}}{3}\mathbb{E}\left[\left(\int_{0}^{t}\left\|\u\right\|^2_{H^{2}\left( \O \right)}\d s\right)^{r}\right]
       +C\eps^{\frac{r}{2}},
\end{align}
where $C$ depends on $\delta, \eta, \kappa, r, t$ and $\mathbb{E}\left[e_{\rm reg}(0)^{k_{2}r}\right]$, in which $k_{2}$ is a specific positive constant.
\begin{align}
&  \mathbb{E}\left[\left(-\eps\int_{0}^{t}\int_{\O}\triangle\u\cdot\nabla\triangle\ln\rho\d x\d s\right)^{r}\right]
\ls  \frac{\eps^{r}}{3}\mathbb{E}\left[\left(\int_{0}^{t}\left\|\triangle \u\right\|^2_{L^{2}\left( \O \right)}\d s\right)^{r}\right]+C\eps^{\frac{r}{2}},
\end{align}
where $C$ depends on $\delta, \eta, \kappa, r, t$ and $\mathbb{E}\left[e_{\rm reg}(0)^{k_{3}r}\right]$, in which $k_{3}$ is a specific positive constant.

About $\displaystyle I_{8}=-r_{0}\left(\int_{\O}|\u|^{2}\u\cdot\nabla\ln\rho \right)\d x\d s$, it holds
\begin{align}
&\left|-r_{0}\int_{0}^{t}\int_{\O}|\u|^{2}\u\cdot\nabla\ln\rho \d x\d s \right|
\ls r_{0}\int_{0}^{t}\int_{\O}\rho^{\frac{3}{4}}|\u|^{3} \left|\frac{\nabla\rho}{\rho\rho^{\frac{3}{4}}}\right|\d x\d s  \notag\\
\ls & r_{0}\left\|\rho^{-1-\frac{3}{4}}\right\|_{L_{t}^{\infty}L_{x}^{\infty}} \left\|\rho^{\frac{3}{4}}|\u|^{3}\right\|_{L_{t}^{\frac{4}{3}}L_{x}^{\frac{4}{3}}}\left\|\nabla\rho\right\|_{L_{t}^{\infty}L_{x}^{\infty}} \\
\ls & r_{0}\left\|\rho^{-1}\right\|_{L_{t}^{\infty}L_{x}^{\infty}}^{\frac{7}{4}} \left\| \rho^{\frac{3}{4}} |\u|^{3}\right\|_{L_{t}^{\frac{4}{3}}L_{x}^{\frac{4}{3}}}\left\|\nabla\triangle^{4}\rho\right\|_{L_{t}^{\infty}L_{x}^{2}}. \notag
\end{align}
Therefore, there holds
\begin{equation}
\mathbb{E}\left[\left|-r_{0}\int_{0}^{t}\int_{\O}|\u|^{2}\u\cdot \nabla\ln\rho \d x\d s\right|^{r}\right] \ls C_{4},
\end{equation}
where $C_{4}$ depends on $r,t,r_{0},r_{1},\delta, \eta$ and $\mathbb{E}\left[e_{\rm reg}(0)^{k_{4}r}\right]$, in which $k_{4}$ is a specific positive constant. This requires the terms $\eta \nabla\rho^{-10}$ and $\delta\nabla \triangle^{9}\rho$ to be left after $r_{0} |\u|^{2}\u$ vanishes.

As for $\displaystyle I_{9}=-r_{1}\left(\int_{\O}\rho|\u|^{2}\u\cdot \nabla\ln\rho \d x\right)\d s$, estimating
\begin{align}
&\left|-r_{1}\int_{\O}\rho|\u|^{2}\u\cdot \nabla\ln\rho \d x\right|=\left|-r_{1}\int_{\O}|\u|^{2}\u\cdot \nabla\rho \d x\right|\\
\ls& C\int_{\O}r_{1}\rho|\u|^{2}|\nabla\u|\d x\ls C \int_{\O}r_{1}\rho|\u|^{4}\d x+\frac{1}{8}\int_{\O}\rho|\nabla\u|^{2}\d x,\notag
\end{align}
$\frac{1}{8}\int_{\O}\rho|\nabla\u|^{2}\d x$ is controlled by $\int_{\O}\rho|\mathbb{D}\u|^{2}\d x$, by the energy estimate \eqref{energy estimate eps 2}, 
we have
\begin{equation}
 \mathbb{E}\left[\left|\int_{0}^{t}-r_{1}\int_{\O}\rho|\u|^{2}\u\cdot \nabla\ln\rho \d x\d s\right|^{r}\right]\ls C(\mathbb{E}\left[e_{\rm reg}(0)^{r}\right]+1),
\end{equation}
where $C$ depends on $r,t$ and $\sum\limits_{k=1}^{+\infty} f^{2}_{k}$.

About $\displaystyle I_{10}=-r_{2}\left(\int_{\O}\u\cdot\nabla\ln\rho \d x\right)\d s$, we calculate
\begin{equation}
-r_{2}\int_{\O}\u\cdot\nabla\ln\rho \d x=-r_{2}\int_{\O}\frac{\u\cdot\nabla\rho}{\rho}\d x
=r_{2}\int_{\O}(\ln\rho)_{t}\d x-\eps r_{2}\int_{\O}\frac{\triangle\rho}{\rho}\d x.
\end{equation}
We denote $\ln_{+}\rho=\ln \max\{\rho,1\}$, $\ln_{-}\rho=-\ln \min\{\rho,1\}$, therefore we have
\begin{equation}
 \int_{\O}\ln_{+}\rho\d x\ls \int_{\O}\rho \mathrm{I}_{\{\rho>1\}}\d x
  \ls\int_{\O}\rho^{\gamma}\mathrm{I}_{\{\rho>1\}}\d x\ls \int_{\O}\rho^{\gamma}\d x.
\end{equation}
So we have
\begin{align}
&\quad -r_{2}\int_{0}^{t}\int_{\O}\u\cdot\nabla\ln\rho\d x\d s\\
&\ls r_{2} \int_{\O} \left(\ln_{+}\rho-\ln_{-}\rho\right)(t) \d x + r_{2}\int_{\O}\left(\ln_{-}\rho_{0}-\ln_{+}\rho_{0}\right)\d x +\eps r_{2}\int_{0}^{t}\int_{\O}\frac{\left|\triangle\rho\right|}{\rho} \d x\d s\notag\\
&\ls r_{2}\int_{\O}\ln_{+}\rho(t)\d x - r_{2}\int_{\O}\ln_{-}\rho (t)\d x + r_{2}\int_{\O}\ln_{-}\rho_{0}\d x + \eps r_{2}\int_{0}^{t}\int_{\O}\frac{\left|\triangle\rho\right|}{\rho} \d x\d s\notag\\
&\ls r_{2}\int_{\O}\rho^{\gamma}(t)\d x- r_{2}\int_{\O}\ln_{-}\rho(t)\d x + r_{2}\int_{\O}\ln_{-}\rho_{0}\d x + \eps r_{2}\int_{0}^{t}\int_{\O}\frac{\left|\triangle\rho\right|}{\rho} \d x\d s.\notag
\end{align}
We move the term $\displaystyle -r_{2}\int_{\O}\ln_{-}\rho\d x$ to the left-hand side. Next we take the expectation of its $r$-th power
\begin{align}
&\quad \mathbb{E}\left[\left|r_{2}\int_{\O}\rho^{\gamma}\d x + r_{2}\int_{\O}\ln_{-}\rho\d x+\eps r_{2}\int_{0}^{t}\int_{\O}\frac{\left|\triangle\rho\right|}{\rho} \d x \d s\right|^{r}\right]\notag\\
&\ls r_{2}C_{r}\mathbb{E}\left[\left|\int_{\O}\rho^{\gamma}\d x\right|^{r}\right]+ r_{2}C_{r}\mathbb{E}\left[\left|\int_{\O}\ln_{-}\rho_{0}\d x\right|^{r}\right]
 +C_{r}\mathbb{E}\left[\left|\eps r_{2}\int_{0}^{t}\int_{\O}\frac{\left|\triangle\rho\right|}{\rho} \d x\d s\right|^{r}\right]\\
&\ls C_{1}+C_{r}\mathbb{E}\left[\left|r_{2}\int_{\O}\ln_{-}\rho_{0}\d x\right|^{r}\right]+C_{2}\eps^{\frac{r}{2}} ,\notag
\end{align}
 where $C_{1}$ depends on $r, r_{2}, t$ and $\mathbb{E}\left[e_{\rm reg}(0)^{r}\right]$, and $C_{2}$ depends on $r,r_{2},t,\delta,\eta$ and $\mathbb{E}[e_{\rm reg}(0)^{r}]$.
Finally, with recalling to the definition \eqref{BD entropy with REG}, we obtain the stochastic B-D entropy:
\begin{align}\label{origi-sto-BD entro}
&\mathbb{E}\left[\left(\tilde{e}_{\rm reg}(t)+\int_{0}^{t}\int_{\O}\left(\eps \frac{4a}{\gamma}\left|\nabla \rho_{\rm reg}^{\frac{\gamma}{2}}\right|^{2}+\eps\eta \frac{11}{25} \left|\nabla \rho_{\rm reg}^{-5}\right|^{2}+\eps\delta\left|\triangle^{5}\rho_{\rm reg}\right|^{2}+\eps \kappa \rho_{\rm reg}\left|\nabla^{2}\ln\rho_{\rm reg}\right|^{2}\right.\right.\right.\notag\\
&\quad \quad\quad \quad\quad \quad \left.\left.\left.+\frac{\rho_{\rm reg}}{2}\left|\nabla\u_{\rm reg}-\nabla\u_{\rm reg}^{\top}\right|^{2}+\eps\left|\triangle \u_{\rm reg}\right|^{2}+r_{0}\left|\u_{\rm reg}\right|^{4}+r_{1}\rho_{\rm reg} \left|\u_{\rm reg}\right|^{4}+r_{2}\left|\u_{\rm reg}\right|^{2}\right.\right.\right.\notag\\
&\quad \quad\quad \quad\quad \left.\left.\left.+\eta \frac{11}{25} \left|\nabla \rho_{\rm reg}^{-5}\right|^{2}+\gamma\rho_{\rm reg}^{\gamma}\left|\nabla\ln\rho_{\rm reg}\right|^{2}+\delta\left|\triangle^{5}\rho_{\rm reg}\right|^{2}+\kappa \rho_{\rm reg}\left|\nabla^{2}\ln\rho_{\rm reg}\right|^{2}\right)\d x\d s\right)^{r}\right]\notag\\
&\ls C_{3}\eps^{\frac{r}{2}}+C_{4}+C_{5}\left(\mathbb{E}\left[e_{\rm reg}(0)^{r}\right]+1\right)+C_{r}\mathbb{E}\left[\tilde{e}_{\rm reg}(0)^{r}\right]\\
&\quad +\mathbb{E}\left[\left( \int_{0}^{t}\int_{\O}\rho_{\rm reg}\left|\nabla\ln\rho_{\rm reg}\right|^{2}\d x\d s \right)^{r}\right] +\eps^{r}\mathbb{E}\left[\left( \int_{0}^{t}\left\|\u_{\rm reg}\right\|^2_{H^{2}\left( \O \right)}\d s\right)^{r}\right]\notag
\end{align}
with recovered notations, where $C_{3}$ depends on $\delta,\eta,\kappa,r,t$, $\mathbb{E}\left[e_{\rm reg}(0)^{r}\right]$, $\mathbb{E}\left[e_{\rm reg}(0)^{\frac{6r}{5}}\right]$,
 $\mathbb{E}\left[e_{\rm reg}(0)^{k_{1}r}\right]$, $\mathbb{E}\left[e_{\rm reg}(0)^{k_{2}r}\right]$, and $\mathbb{E}\left[e_{\rm reg}(0)^{k_{3}r}\right])$, $C_{4}$ depends on $r,t,r_{0},r_{1},\eta, \delta$ and $\mathbb{E}\left[e_{\rm reg}(0)^{k_{4}r}\right]$, $C_{5}$ depends on $r,t$ and $\sum\limits_{k=1}^{+\infty} f^{2}_{k}$.

This proves that
\begin{align}
&\mathbb{E}\left[\left( \tilde{e}_{\rm reg}(t)+\int_{0}^{t}\int_{\O}\left(\eps \frac{4a}{\gamma}\left|\nabla \rho_{\rm reg}^{\frac{\gamma}{2}}\right|^{2}+\eps\eta \frac{11}{25} \left|\nabla \rho_{\rm reg}^{-5}\right|^{2}+\eps\delta\left|\triangle^{5}\rho_{\rm reg}\right|^{2}+\eps \kappa \rho_{\rm reg}\left|\nabla^{2}\ln\rho_{\rm reg}\right|^{2}\right.\right.\right.\notag\\
&\quad \quad\quad \quad\qquad \qquad \left.\left.\left.+\frac{\rho_{\rm reg}}{2}\left|\nabla\u_{\rm reg}-\nabla\u_{\rm reg}^{\top}\right|^{2}+r_{0}\left|\u_{\rm reg}\right|^{4}+r_{1} \rho_{\rm reg} \left|\u_{\rm reg}\right|^{4}+r_{2}\left|\u_{\rm reg}\right|^{2}\right.\right.\right.\\
&\quad \quad\quad \quad\quad \left.\left.\left.+\eta \frac{11}{25} \left|\nabla \rho_{\rm reg}^{-5}\right|^{2}+\gamma\rho_{\rm reg}^{\gamma}\left|\nabla\ln\rho_{\rm reg}\right|^{2}+\delta\left|\triangle^{5}\rho_{\rm reg}\right|^{2}+\kappa \rho_{\rm reg}\left|\nabla^{2}\ln\rho_{\rm reg}\right|^{2}\right)\d x\d s\right)^{r}\right]\notag\\
&\ls C_{3}\eps^{\frac{r}{2}}+C_{4}+C_{5}\left(\mathbb{E}\left[e_{\rm reg}(0)^{r}\right]+1\right)+\mathbb{E}\left[\left( \int_{0}^{t}\int_{\O}\rho_{\rm reg}\left|\nabla\ln\rho_{\rm reg}\right|^{2}\d x\d s \right)^{r}\right]+C_{r}\mathbb{E}\left[\tilde{e}_{\rm reg}(0)^{r}\right].\notag
\end{align}
 Therefore, by the definition of $\tilde{e}_{\rm reg}(t)$, we have
\begin{align}
 \mathbb{E}\left[\tilde{e}_{\rm reg}(t)^{r}\right] \ls & C_{3}\eps^{\frac{r}{2}}+C_{4}+C_{5}\left(\mathbb{E}\left[e_{\rm reg}(0)^{r}\right]+1\right)+C_{r}\mathbb{E}\left[\tilde{e}_{\rm reg}(0)^{r}\right]+ \int_{0}^{t} \mathbb{E}\left[\tilde{e}_{\rm reg}(s)^{r}\right]\d s.
\end{align}
\end{proof}
\subsection{Passing to the limit $\eps\ra 0$}
\begin{flushleft}
\textbf{Step 1:} Choose the path space of the density and the velocity.
\end{flushleft}
From (\ref{BD estimate2}), if we assume that $\mathbb{E}\left[e_{\rm reg}(0)^{r}\right]$ and $\mathbb{E}\left[\tilde{e}_{\rm reg}(0)^{r}\right]$ are bounded,
recalling lemma \ref{Jungle inequality} for $\rho_{\rm reg}(x)$, it holds
\begin{equation}
\mathbb{E}\left[\kappa^{r}\left\|\rho_{\rm reg}^{\frac{1}{2}}\right\|_{L_{t}^{2}H_{x}^{2}}^{2r}\right] \ls C, \quad \mathbb{E}\left[\kappa^{r}\left\|\nabla\left(\rho_{\rm reg}^{\frac{1}{4}}\right)\right\|_{L_t^4 L_x^4}^{4r}\right] \ls C,\quad r>4,
\end{equation}
uniformly in $\eps, \delta, \eta, \kappa$. The following estimates hold independent of $\eps$:
\begin{align}\label{the no eps upper bound of the corresponding integral of terms}
 &\mathbb{E}\left[\sup\limits_{t\in[0,T]}\eta^{r}\left\|\rho_{\rm reg}^{-1}\right\|^{10r}_{L_{x}^{10}}\right]\ls C, \quad
 \mathbb{E}\left[\sup\limits_{t\in[0,T]}\left(\frac{1}{2}\right)^{r}\left\|\nabla\sqrt{\rho_{\rm reg}}\right\|^{2r}_{L_{x}^{2}}\right]\ls C, \notag\\
 &\mathbb{E}\left[\sup\limits_{t\in[0,T]}\delta^{r}\left\|\nabla\triangle^{4}\rho_{\rm reg}\right\|^{2r}_{L_{x}^{2}}\right]\ls C,\quad \mathbb{E}\left[\eta^{r}\left\|\nabla \rho_{\rm reg}^{-5}\right\|^{2r}_{L_{t}^{2}L_{x}^{2}}\right]\ls C,\\
 &\mathbb{E}\left[\gamma^{r} \left\|\nabla \rho_{\rm reg}^{\frac{\gamma}{2}}\right\|^{2r}_{L_{t}^{2}L_{x}^{2}}\right]\ls C,\quad
 \mathbb{E}\left[\delta^{r}\left\|\triangle^{5} \rho_{\rm reg}\right\|^{2r}_{L_{t}^{2}L_{x}^{2}}\right]\ls C, \notag
\end{align}
where $r>4$ and $C$ depends on $\delta, \eta, \kappa, r, T, \mathbb{E}\left[e_{\rm reg}(0)^{r}\right],\mathbb{E}\left[e_{\rm reg}(0)^{\frac{6r}{5}}\right]$ and $\mathbb{E}\left[e_{\rm reg}(0)^{k_{{\rm I}}r}\right]$, $i=1,2,3$, $r>4$.
Similarly as in the last section, we give the following estimates from the mass equation without proof here:
\begin{equation}
 \mathbb{E}\left[\left\|\left(\rho_{\rm reg}\right)_{t}\right\|_{L_t^2 L_x^2}^{r}\right]\ls  C,\quad \mathbb{E}\left[\left\|\rho_{\rm reg}\u_{\rm reg}\right\|_{L_t^2 L_x^2}^{r}\right] \ls C,
\end{equation}
\begin{equation}\label{independent on delta after eps ra 0}
\begin{aligned}
 \mathbb{E}\left[\left\|\left(\rho_{\rm reg}\right)_{t}\right\|_{L_t^2 L_x^{\frac{3}{2}}}^{r}\right]\ls  C,\quad \mathbb{E}\left[\left\|\left(\rho_{\rm reg}^{\frac{1}{2}}\right)_{t}\right\|_{L_t^2 L_x^2}^{r}\right] \ls C, \quad \mathbb{E}\left[\left\|\rho_{\rm reg}^{\frac{1}{2}}\right\|_{L_t^2 H_x^{2}}^{r}\right] \ls C,
\end{aligned}
\end{equation}
where $C$ depends on $\delta,\eta,\kappa,r,t,\mathbb{E}\left[e_{\rm reg}(0)^{r}\right],\mathbb{E}\left[e_{\rm reg}(0)^{\frac{6r}{5}}\right]$ and $\mathbb{E}\left[e_{\rm reg}(0)^{k_{{\rm I}}r}\right]$, $r>4$. Note that \eqref{independent on delta after eps ra 0} will be independent on $\delta$ and $\eta$ after $\eps\rightarrow 0$.

 Next, we choose the path space 
\begin{equation}
\mathcal{X}_{3}=\mathcal{X}_{\rho_{0}} \times \mathcal{X}_{\mathbf{q}_{0}}^{3} \times \mathcal{X}_{\frac{\mathbf{q}_{0}}{\sqrt{\rho_{0}}}}^{3} \times \mathcal{X}_{\rho}^{3} \times \mathcal{X}_{\u}^{3} \times \mathcal{X}_{\rho\u}^{3} \times \mathcal{X}_{W}^{3}, \\
\end{equation}
where $\mathcal{X}_{\rho_{0}}^{3}=L^{\gamma}\left(\O\right)\cap L^{1}\left(\O\right) \cap L^{-10}\left(\O\right)\cap H^{9}\left(\O\right)$, 
$\mathcal{X}_{\mathbf{q}_{0}}^{3}=L^{1}\left(\O\right))$,
$ \mathcal{X}_{\frac{\mathbf{q}_{0}}{\sqrt{\rho_{0}}}}^{3}=L^{2}\left(\O\right)$,
$\mathcal{X}_{\rho}^{3}=L^{2}\left([0,T];H^{10}\left( \O \right)\right)\cap L^{2}\left([0,T];W^{1,3}\left( \O \right)\right) \cap L^{\frac{5}{3}\gamma}\left([0,T]\times\O\right)\cap C\left([0,T];W^{1,3}\left( \O \right)\right)$,
$\mathcal{X}_{\u}^{3}= L^{2}\left([0,T]\times\O\right)\cap L^{4}\left([0,T]\times\O\right)$,
$ \mathcal{X}_{\rho\u}^{3}= L^{2}\left([0,T]; W^{1,\frac{3}{2}}\left( \O \right)\right)\cap C\left([0,T];L^{\frac{3}{2}}\left( \O \right)\right)$,
$\mathcal{X}_{W}^{3}= C\left([0,T];\mathcal{H}\right)$.\\

\begin{flushleft}
\textbf{Step 2:} The tightness of the laws and the limit $\eps\ra 0$.
\end{flushleft}

Similarly as in last section, we have the following proposition and skip a proof here.
\begin{proposition}
$\left\{\mathcal{L}\left[\rho_{0,{\rm reg}}, \mathbf{q}_{0,{\rm reg}}, \frac{\mathbf{q}_{0,{\rm reg}}}{\sqrt{\rho_{0,{\rm reg}}}}, \rho_{\rm reg}, \u_{\rm reg},\rho_{\rm reg}\u_{\rm reg}, W_{\rm reg}\right]\right\}$ is tight on $\mathcal{X}_{3}$.
\end{proposition}
\begin{proposition}
There exist two families of $\mathcal{X}_{3}$-valued Borel measurable random variables
 $$
 \left\{\bar{\rho}_{0,{\rm reg}},\bar{\mathbf{q}}_{0,{\rm reg}}, \frac{\bar{\mathbf{q}}_{0,{\rm reg}}}{\sqrt{\bar{\rho}_{0,{\rm reg}}}}, \bar{\rho}_{\rm reg}, \bar{\u}_{\rm reg}, \bar{\rho}_{\rm reg}\bar{\u}_{\rm reg}, \bar{W}_{\rm reg} \right\},
 $$
  and $\left\{\rho_{0, {\rm I}}, \mathbf{q}_{0, {\rm I}},\frac{\mathbf{q}_{0, {\rm I}}}{\sqrt{\rho_{0, {\rm I}}}}, \rho_{{\rm I}}, \u_{{\rm I}},\rho_{{\rm I}}\u_{{\rm I}}, W\right\}$, defined on a new complete probability space, we still denote it as $\left(\bar{\Omega},\bar{\mathcal{F}},\bar{\mathbb{P}}\right)$, for convenience we still use the same notation in the following sections, such that (up to a subsequence):
\begin{enumerate}
  \item
  $\mathcal{L}\left[\bar{\rho}_{0,{\rm reg}}, \bar{\mathbf{q}}_{0,{\rm reg}}, \frac{\bar{\mathbf{q}}_{0,{\rm reg}}}{\sqrt{\bar{\rho}_{0,{\rm reg}}}}, \bar{\rho}_{\rm reg}, \bar{\u}_{\rm reg}, \bar{\rho}_{\rm reg}\bar{\u}_{\rm reg}, \bar{W}_{\rm reg} \right],$
   coincides with\\ $\mathcal{L}\left[\rho_{0,{\rm reg}}, \mathbf{q}_{0,{\rm reg}}, \frac{\mathbf{q}_{0,{\rm reg}}}{\sqrt{\rho_{0,{\rm reg}}}}, \rho_{\rm reg}, \u_{\rm reg},\rho_{\rm reg}\u_{\rm reg}, W_{\rm reg}\right] $ on $\mathcal{X}_{3}$;
  \item $\mathcal{L}\left[\rho_{0, {\rm I}}, \mathbf{q}_{0, {\rm I}},\frac{\mathbf{q}_{0, {\rm I}}}{\sqrt{\rho_{0, {\rm I}}}}, \rho_{{\rm I}}, \u_{{\rm I}},\rho_{{\rm I}}\u_{{\rm I}}, W_{{\rm I}}\right]$ on $\mathcal{X}_{3}$ is a Radon measure;
  \item The family of random variables $\left\{\bar{\rho}_{0,{\rm reg}}, \bar{\mathbf{q}}_{0,{\rm reg}}, \frac{\bar{\mathbf{q}}_{0,{\rm reg}}}{\sqrt{\bar{\rho}_{0,{\rm reg}}}}, \bar{\rho}_{\rm reg}, \bar{\u}_{\rm reg}, \bar{\rho}_{\rm reg}\bar{\u}_{\rm reg}, \bar{W}_{\rm reg} \right\}$
   converges to
    $\left\{\rho_{0, {\rm I}}, \mathbf{q}_{0, {\rm I}},\frac{\mathbf{q}_{0, {\rm I}}}{\sqrt{\rho_{0, {\rm I}}}}, \rho_{{\rm I}}, \u_{{\rm I}},\rho_{{\rm I}}\u_{{\rm I}}, W_{{\rm I}}\right\}$ in the topology of $\mathcal{X}_{3}$, $\bar{\mathbb{P}}$ {\rm a.s.} as $\eps\ra 0$.
    \end{enumerate}
\end{proposition}
\par
\begin{flushleft}
\textbf{Step 3:} The system after taking the limit.
\end{flushleft}
\par
Similarly as the estimates in Lemma \ref{strong conver of rho}, by Aubin-Lion's lemma, we have the strong convergence of $\rho_{\rm reg}$:
\begin{equation}
\bar{\rho}_{\rm reg}\rightarrow \rho_{{\rm I}} \quad \text{in} \quad L^{r}\left(\Omega; L^{2}\left([0,T];W^{1,3}\left( \O \right)\right)\right), \quad r>4 ;
\end{equation}
 uniformly in $\eps$, $\eta$ and $\delta$.
And similarly there holds the strong convergence of $\bar{\rho}_{\rm reg}^{\frac{1}{2}}$:
\begin{equation}
\bar{\rho}_{\rm reg}^{\frac{1}{2}}\rightarrow \rho_{{\rm I}}^{\frac{1}{2}} \quad \text{in} \quad L^{r}\left(\Omega;L^{2}\left([0,T];W^{1,6}\left(\O\right)\right)\right), \quad r>4,
\end{equation}
 uniformly in $\eps$, $\eta$ and $\delta$. $  W^{1,6}\left( \O \right)\hookrightarrow C^{0,\frac{1}{2}}\left(\O\right)$, $ \bar{\rho}_{\rm reg}^{\frac{1}{2}}\bar{\u}_{\rm reg} \in L^{r}\left(\Omega;L^{\infty}\left([0,T];L^{2}\left(\O\right)\right)\right)$. So $\{\bar{\rho}_{\rm reg}\bar{\u}_{\rm reg}\}$ converges weakly to $\rho_{{\rm I}}\u_{{\rm I}}$ in $  L^{r}\left(\Omega; L^{2}\left([0,T]\times\O\right)\right)$, $r>4$.

For this $  \eps\rightarrow 0$ layer, by the strong convergence of $\bar{\rho}_{\rm reg}$ and $\bar{\rho}_{\rm reg}\bar{\u}_{\rm reg}$,
 we have the almost everywhere convergence of $ \bar{\rho}_{\rm reg}$ and $  \bar{\rho}_{\rm reg}\bar{\u}_{\rm reg}$, according to Vitali's convergence theorem,
 $\rho_{{\rm I}} $ satisfies the new equation
\begin{equation}
\left(\rho_{{\rm I}}\right)_{t}+\operatorname {div}(\rho_{{\rm I}} \u_{{\rm I}})=0,
\end{equation}
in distribution $\bar{\mathbb{P}}$ a.s.
\begin{proposition}
 $\left(\rho_{{\rm I}}, \u_{{\rm I}}\right)$ satisfies the new system involving $\eta, \delta, \kappa$:
\begin{equation}\label{eps layer system}
\left\{\begin{array}{l}
\left(\rho_{{\rm I}}\right)_{t}+\operatorname {div}(\rho_{{\rm I}} \u_{{\rm I}})=0,\\
\d \left(\rho_{{\rm I}} \u_{{\rm I}}\right)+\left(\operatorname{div}(\rho_{{\rm I}} \u_{{\rm I}} \otimes \u_{{\rm I}})+ \nabla \left(a\rho_{{\rm I}}^{\gamma}\right)-\operatorname{div}(\rho_{{\rm I}}
\mathbb{D} \u_{{\rm I}})\right)\d t\\
=\left(-r_{0}|\u_{{\rm I}}|^{2}\u_{{\rm I}}-r_{1} \rho_{{\rm I}}|\u_{{\rm I}}|^{2} \u_{{\rm I}}-r_{2}\u_{{\rm I}}\right)\d t+\left(\delta \rho_{{\rm I}}\nabla \triangle^{9}\rho_{{\rm I}}+\frac{11}{10}\eta \nabla \rho_{{\rm I}}^{-10}\right)\d t\\
\quad+\kappa \rho_{{\rm I}}\left(\nabla\left(\frac{\triangle \sqrt{\rho_{{\rm I}}}}{\sqrt{\rho_{{\rm I}}}}\right)\right)\d t+\rho_{{\rm I}} \mathbb{F}(\rho_{{\rm I}},\u_{{\rm I}})\d W_{{\rm I}}
\end{array}\right.
\end{equation}
in distribution $\bar{\mathbb{P}}$ {\rm a.s.}
\end{proposition}
\begin{proof}
For all $\varphi(t)\in C_{c}^{\infty}\left([0,T)\right)$, and all $\psi(x)\in C^{\infty}\left(\O\right)$, we have
\begin{align}
\int_{0}^{T}\int_{\O}\eps \triangle\bar{\rho}_{\rm reg} \varphi(t)\psi(x) \d x\d t
\ls \eps \delta^{-\frac{1}{2}}\left\|\sqrt{\delta}\bar{\rho}_{\rm reg}\right\|_{L_t^{2}H_x^{10}}^{r}\left\|\varphi(t)\right\|_{L_t^{2}}\left\|\psi(x)\right\|_{H_x^{9}}\ra 0,  
\end{align}
as $\eps\ra 0$, $\bar{\mathbb{P}}$ a.s.
Correspondingly, we get
\begin{align}
&\quad \int_{0}^{T}\int_{\O}\eps \nabla\bar{\rho}_{\rm reg}\nabla\bar{\u}_{\rm reg}  \varphi(t)\psi(x) \d x\d t \\
&\ls \eps 2\left\|\nabla\sqrt{\bar{\rho}_{\rm reg}}\right\|_{L_s^{\infty}L_x^{2}}\left\|\sqrt{\bar{\rho}_{\rm reg}}\nabla\bar{\u}_{\rm reg}\right\|_{L_t^{2}L_x^{2}}\left\| \varphi(t)\right\|_{L_t^{2}}\left\|\psi(x) \right\|_{H_x^{9}}\ra 0\notag
\end{align}
 as $\eps\ra 0$, $\bar{\mathbb{P}}$ a.s., and we have
\begin{equation}
\int_{0}^{T}\int_{\O}\eps \triangle^{2}\bar{\u}_{\rm reg} \varphi(t)\psi(x) \d x\d t
\ls \eps r_{0}^{-\frac{1}{2}}\left\|r_{2}^{\frac{1}{2}}\bar{\u}_{\rm reg}\right\|_{L_t^{2}L_x^{2}}\left\| \varphi(t)\right\|_{L_t^{2}}\left\|\psi(x) \right\|_{H_x^{9}}\ra 0,
\end{equation}
as $\eps\ra 0$. The weak convergence of $\nabla\bar{\rho}_{\rm reg}^{-5}$, $\triangle^{5}\bar{\rho}_{\rm reg}$, $  \nabla\bar{\rho}_{\rm reg}^{\frac{\gamma}{2}}$,
 $\bar{\rho}_{\rm reg}\nabla^{2}\ln\bar{\rho}_{\rm reg}$ still hold in the corresponding spaces. Next, we check the convergence of the term
  associated with stochastic forces in the momentum equation. Since $ \bar{\rho}_{\rm reg}\bar{\u}_{\rm reg}\ra \rho_{{\rm I}} \u_{{\rm I}} $
  strongly in $ L^{2}\left([0,T]\times \O\right)$ uniformly in $ \eps, \eta, \delta$ and $\bar{\mathbb{P}}$ a.s., $  \bar{\rho}_{\rm reg}\ra \rho_{{\rm I}} $ strongly in $  L^{2}\left([0,T];W^{1,3}\left( \O \right)\right)$
  uniformly in $  \eps, \eta, \delta$ and $\bar{\mathbb{P}}$ a.s., by \eqref{property of rho F}, so we have the almost everywhere convergence
  of $  \bar{\rho}_{\rm reg}\mathbf{F}_{k}\left(\bar{\rho}_{\rm reg},\bar{\u}_{\rm reg}\right)$. Hence we still have the weak convergence of the stochastic term.
More precisely, we obtain
\begin{equation}
\int_{0}^{T}\int_{\O}\bar{\rho}_{\rm reg}\mathbf{F}_{k}\left(\bar{\rho}_{\rm reg},\bar{\u}_{\rm reg}\right)\varphi(t)\psi(x)  \d \bar{W}_{\rm reg}\d x\ra \int_{0}^{T}\int_{\O}\rho_{{\rm I}} \mathbf{F}_{k}\left(\rho_{{\rm I}} ,\u_{{\rm I}} \right)\varphi(t)\psi(x) \d W_{{\rm I}}\d x
\end{equation}
 $\bar{\mathbb{P}}$ a.s. \hfill$\square$
 \end{proof}

\subsection{Energy estimates for the approximated solutions after $\eps\ra 0$}
$\u_{\rm reg}$ satisfies the energy estimates (\ref{BD estimate1}) and (\ref{BD estimate2}). We also need to take the limit $\eps \rightarrow 0$ to
get the new energy estimates of $\u_{{\rm I}}$.
 By the lower semi-continuity of the integrals of convex
 functions, we pass to the limit $\eps\rightarrow 0$, then \eqref{the no eps upper bound of the corresponding integral of terms} still holds. Let $\eps \ra 0$ in \eqref{origi-sto-BD entro}, then we gain
\begin{align}
 \mathbb{E}\left[\tilde{e}_{{\rm I}}(t)^{r}\right]\ls & C_{4}+C_{5}\left(\mathbb{E}\left[e_{{\rm I}}(0)^{r}\right]+1\right)+C_{r} \mathbb{E}\left[\tilde{e}_{{\rm I}}(0)^{r}\right]+
 \int_{0}^{t} \mathbb{E}\left[\tilde{e}_{{\rm I}}(s)^{r}\right]\d s
 \notag \\
 \ls & C + C_{r}\mathbb{E}\left[\tilde{e}_{{\rm I}}(0)^{r}\right] + \int_{0}^{t} \mathbb{E}\left[\tilde{e}_{{\rm I}}(s)^{r}\right]\d s.
\end{align}
$C_{4}+C_{5}\left(\mathbb{E}\left[e_{{\rm I}}(0)^{r}\right]+1\right)+C_{r}\mathbb{E}\left[\tilde{e}_{{\rm I}}(0)^{r}\right]$ is nondecreasing with
 respect to $t$, by Gr\"onwall's inequality, we obtain
\begin{equation}\label{BD estimate1}
\begin{aligned}
 \mathbb{E}\left[\tilde{e}_{{\rm I}}(t)^{r}\right] &\ls \left( C + C_{r}\mathbb{E}\left[\tilde{e}_{{\rm I}}(0)^{r}\right]\right)e^{t}\ls C\left(1+\mathbb{E}\left[\tilde{e}_{{\rm I}}(0)^{r}\right] \right),
\end{aligned}
\end{equation}
where $C$ depends on $\delta,\eta,r,t,r_{0},r_{1},\mathbb{E}\left[e_{{\rm I}}(0)^{r}\right],\mathbb{E}\left[e_{{\rm I}}(0)^{\frac{6r}{5}}\right]$ and $\mathbb{E}\left[e_{{\rm I}}(0)^{k_{{\rm I}}r}\right]$, $i=1,2,3$,$4$. In conclusion, we write
\begin{align}\label{BD entropy conclusion after vanish eps}
\mathbb{E}\left[\sup\limits_{t\in[0,T]}\tilde{e}_{{\rm I}}(t)^{r}\right] \ls C\left(1+\mathbb{E}\left[\tilde{e}_{{\rm I}}(0)^{r}\right] \right), r>4,
\end{align}
where $C$ depends on $\delta,\eta,r,T,r_{0},r_{1},\mathbb{E}\left[e_{{\rm I}}(0)^{r}\right],\mathbb{E}\left[e_{{\rm I}}(0)^{\frac{6r}{5}}\right]$ and $\mathbb{E}\left[e_{{\rm I}}(0)^{k_{{\rm I}}r}\right]$, $i=1,2,3,4$.
Thereupon, we deduce
\begin{align}\label{no eps BD estimate2}
\mathbb{E}\left[\left(\int_{0}^{t}\int_{\O}\frac{\rho_{{\rm I}}}{2}\left|\nabla\u_{{\rm I}}-\nabla\u_{{\rm I}}^{\top}\right|^{2}\d x\d s\right)^{r}\right]\ls C,
\quad \mathbb{E}\left[\left(r_{0}\int_{0}^{t}\int_{\O}\left|\u_{{\rm I}}\right|^{4}\d x\d s\right)^{r}\right]\ls C,\\
\mathbb{E}\left[\left(r_{1}\int_{0}^{t}\int_{\O}\rho_{{\rm I}} \left|\u_{{\rm I}}\right|^{4}\d x\d s\right)^{r}\right]\ls C,
\mathbb{E}\left[\left(r_{2}\int_{0}^{t}\int_{\O}\left|\u_{{\rm I}}\right|^{2}\d x\d s\right)^{r}\right]\ls C,\\
\quad \mathbb{E}\left[\left(\int_{0}^{t}\int_{\O}\eta \frac{11}{25} \left|\nabla \rho_{{\rm I}}^{-5}\right|^{2}\d x\d s\right)^{r}\right]\ls C,
\quad \mathbb{E}\left[\left(\int_{0}^{t}\int_{\O}\gamma\rho_{{\rm I}}^{\gamma}\left|\nabla\ln\rho_{{\rm I}}\right|^{2}\d x\d s\right)^{r}\right]\ls C,\\
\mathbb{E}\left[\left(\int_{0}^{t}\int_{\O}\delta\left|\triangle^{5}\rho_{{\rm I}}\right|^{2}\d x\d s\right)^{r}\right]\ls C,
\quad \mathbb{E}\left[\left(\int_{0}^{t}\int_{\O}\kappa \rho_{{\rm I}}\left|\nabla^{2}\ln\rho_{{\rm I}}\right|^{2}\d x\d s\right)^{r}\right]\ls C,
\end{align}
where $C$ depends on $r,T,\mathbb{E}\left[e_{{\rm I}}(0)^{r}\right]$ and $\mathbb{E}\left[\tilde{e}_{{\rm I}}(0)^{r}\right]$, $r>4$.

\section{Global existence of martingale solutions}

In this section, when we take $\kappa\ra 0$, the high regularity of $\rho$ will be lost, which will cause the lack of regularity of $\rho\u$. Our target turns to derive the Mellet-Vasseur \cite{Mellet-Vasseur2007} type inequality in the stochastic version, which guarantees the strong convergence of $\rho^{\frac{1}{2}}\u$.
\subsection{Approximation of Mellet-Vasseur type inequality}

As in \cite{Vasseur-Yu2016}, we define a $C^{\infty}$ cut-off functions $\phi_{K}$:
\begin{equation}\label{cut off functions of K}
\phi_{K}(\rho)=\left\{\begin{array}{l}
1, \text{ for any }\rho<K,\\
0, \text{ for any }\rho>2K,\\
\end{array}\right.
\end{equation}
where $K>0$ is any real number, $\left|\phi_{K}^{\prime}(\rho)\right|\leqslant \frac{4}{K}$. For any $\rho>0$, there exists $C>0$ such that
$$
\left|\phi_{K}^{\prime}(\rho) \sqrt{\rho}\right|+\left|\frac{\phi_{K}(\rho)}{\sqrt{\rho}}\right| \leqslant C,
$$
where $C$ depends on $K$.
For any $\mathbf{u}\in \mathbb{R}^{3}$, we set $\mathbf{v}=\phi_{K}(\rho)\mathbf{u}$. Define $\varphi_{n}(\u)=\tilde{\varphi}_{n}\left(|\u|^{2}\right)\in C^{1}\left(\mathds{R}^{3}\right)$,
where $\tilde{\varphi}_{n}$ is given on $\mathds{R}^{+}$ by
\begin{equation}
\tilde{\varphi}_{n}(y)=\left\{\begin{array}{ll}
(1+y) \ln (1+y), &  0 \leqslant y<n, \\
2(1+\ln (1+n)) y-(1+y) \ln (1+y)+2(\ln (1+n)-n), & n \leqslant y \leqslant C_{n}, \\
e(1+n)^{2}-2 n-2, & y \geq C_{n},
\end{array}\right.
\end{equation}
with  $\tilde{\varphi}_{n}(0)=0$, and $C_{n}=e(1+n)^{2}-1$. So we have
\begin{equation}\label{cut-off functions of u prime}
\tilde{\varphi}_{n}'(y)=\left\{\begin{array}{ll}
1+\ln (1+y), & \text { if } 0 \leqslant y \leqslant n, \\
1+2\ln(1+n)-\ln (1+y)\geq 0, & \text { if } n<y\leqslant C_{n}, \\
0, & \text { if } y > C_{n}.
\end{array}\right.
\end{equation}
 and $\tilde{\varphi}_{n}'(y) \leqslant 1+\ln (1+n)$ for $n<y \leqslant C_{n}$. And we have
\begin{equation}\label{cut off functions of u double prime}
\tilde{\varphi}_{n}''(y)=\left\{\begin{array}{ll}
\frac{1}{1+y}, & \text { if } 0 \leqslant y \leqslant n, \\
-\frac{1}{1+y}, & \text { if } n<y\leqslant C_{n}, \\
0, & \text { if } y > C_{n}.
\end{array}\right.
\end{equation}
$\tilde{\varphi}_{n}(y)$ is a nondecreasing function with respect to $y$ for any fixed $n$, and it is a nondecreasing function with respect to $n$ for any fixed $y$. Further, the convergence holds:
\begin{equation}
\tilde{\varphi}_{n}(y) \rightarrow(1+y) \ln (1+y) \text { almost everywhere as } n \rightarrow \infty.
\end{equation}
We denote $\nabla_{\u}$ as taking derivative with respect to $\u$, we have
\begin{align}
\nabla_{\u}\varphi_{n}(\mathbf{u})=& 2 \tilde{\varphi}_{n}^{\prime}\left(|\mathbf{u}|^{2}\right)\mathbf{u},\\
\nabla_{\u}^{2}\varphi_{n}(\mathbf{u})=& 2\left(2 \tilde{\varphi}_{n}^{\prime \prime}\left(|\mathbf{u}|^{2}\right) \mathbf{u} \otimes \mathbf{u}+\mathbb{I}_{3} \tilde{\varphi}_{n}^{\prime}\left(|\mathbf{u}|^{2}\right)\right).
\end{align}

 In last section, we know that $\rho_{{\rm I}}$, $\rho_{{\rm I}}\u_{{\rm I}}$ satisfies \eqref{eps layer system} 
weakly $\bar{\mathbb{P}}$ a.s.
We approximate \eqref{eps layer system} by
\begin{equation}\label{system for v}
\d(\rho_{{\rm I}} \mathbf{v}_{{\rm I}})+\left(\operatorname{div}\left(\rho_{{\rm I}} \u_{{\rm I}} \otimes \mathbf{v}_{{\rm I}}\right)-\operatorname{div} \mathbb{S}_{{\rm I}}+ \mathbf{R}_{{\rm I}}\right)\d t=\phi_{K}(\rho_{{\rm I}})\rho_{{\rm I}} \mathbb{F}(\rho_{{\rm I}},\u_{{\rm I}})\d W_{{\rm I}},
\end{equation}
where
\begin{align}
 \mathbb{S}_{{\rm I}}=& \rho_{{\rm I}}\phi_{K}(\rho_{{\rm I}})\left(\mathbb{D} \u_{{\rm I}}+\kappa \frac{\Delta \sqrt{\rho_{{\rm I}}}}{\sqrt{\rho_{{\rm I}}}} \mathbb{I}_{3}\right),  \notag\\
 \mathbf{R}_{{\rm I}}=& \rho_{{\rm I}}^{2} \u_{{\rm I}} \phi_{K}^{\prime}(\rho_{{\rm I}}) \operatorname{div} \u_{{\rm I}}+2 \rho_{{\rm I}}^{\frac{\gamma}{2}} \nabla \rho_{{\rm I}}^{\frac{\gamma}{2}} \phi_{K}(\rho_{{\rm I}})+\rho_{{\rm I}} \nabla \phi_{K}(\rho_{{\rm I}}) \mathbb{D} \u_{{\rm I}}\notag\\
&+r_{0}|\u_{{\rm I}}|^{2} \u_{{\rm I}} \phi_{K}(\rho_{{\rm I}})+r_{1} \rho_{{\rm I}}|\u_{{\rm I}}|^{2} \u_{{\rm I}} \phi_{K}(\rho_{{\rm I}})+r_{2} \u_{{\rm I}} \phi_{K}\left(\rho_{{\rm I}}\right) \\
&-\eta \frac{11}{10}\nabla\rho_{{\rm I}}^{-10}\phi_{K}\left(\rho_{{\rm I}}\right) - \delta \rho_{{\rm I}}\nabla \triangle^{9}\rho_{{\rm I}}\phi_{K}(\rho_{{\rm I}}) + \kappa \sqrt{\rho_{{\rm I}}} \nabla \phi_{K}\left(\rho_{{\rm I}}\right)\triangle \sqrt{\rho_{{\rm I}}}\notag\\
&+2 \kappa \phi_{K}\left(\rho_{{\rm I}}\right)\nabla \sqrt{\rho_{{\rm I}}} \triangle \sqrt{\rho_{{\rm I}}}.\notag
\end{align}

Let
$
\hbar(\rho_{{\rm I}},\rho_{{\rm I}}\mathbf{v}_{{\rm I}})=\rho_{{\rm I}}\left(1+\left|\mathbf{v}_{{\rm I}}\right|^{2}\right)\ln \left(1+ \left|\mathbf{v}_{{\rm I}}\right|^{2}\right)=\left(\rho_{{\rm I}}+\frac{\left|\rho_{{\rm I}}\mathbf{v}_{{\rm I}}\right|^{2}}{\rho_{{\rm I}}}\right)\ln \left(1+\frac{\left|\rho_{{\rm I}}\mathbf{v}_{{\rm I}}\right|^{2}}{\rho_{{\rm I}}^{2}}\right).
$
Since
\begin{equation}
\langle \d \left(\rho_{{\rm I}}\mathbf{v}_{{\rm I}}\right), \d \left(\rho_{{\rm I}}\mathbf{v}_{{\rm I}}\right)\rangle = \rho_{{\rm I}}^{2}\phi_{K}^{2}(\rho_{{\rm I}})\mathbb{F}(\rho_{{\rm I}},\u_{{\rm I}})\otimes \mathbb{F}(\rho_{{\rm I}},\u_{{\rm I}})\d t,
\end{equation}
by It\^o's formula, we have
\begin{align}\label{it in Page 36}
\d \hbar\left(\rho_{{\rm I}},\rho_{{\rm I}}\mathbf{v}_{{\rm I}}\right)
=&\hbar_{\rho_{{\rm I}}} \d \rho_{{\rm I}} +\nabla_{(\rho_{{\rm I}}\mathbf{v}_{{\rm I}})}\hbar \d \left(\rho_{{\rm I}} \mathbf{v}_{{\rm I}}\right)+\frac{\nabla_{(\rho_{{\rm I}}\mathbf{v}_{{\rm I}})}^{2}\hbar}{2} \langle \d \left(\rho_{{\rm I}}\mathbf{v}_{{\rm I}}\right), \d \left(\rho_{{\rm I}}\mathbf{v}_{{\rm I}}\right)\rangle  \notag\\
                                   = & \left(\left(1-|\mathbf{v}_{{\rm I}}|^{2}\right)\ln \left(1+|\mathbf{v}_{{\rm I}}|^{2}\right)-2|\mathbf{v}_{{\rm I}}|^{2}\right)\d\rho_{{\rm I}}\notag\\
                                     & + \left(2\mathbf{v}_{{\rm I}}\ln \left(1+|\mathbf{v}_{{\rm I}}|^{2}\right)+2\mathbf{v}_{{\rm I}}\right) \d \left(\rho_{{\rm I}}\mathbf{v}_{{\rm I}}\right)
                                   +\frac{\nabla_{(\rho_{{\rm I}}\mathbf{v}_{{\rm I}})}^{2}\hbar}{2} \langle \d \left(\rho_{{\rm I}}\mathbf{v}_{{\rm I}}\right), \d \left(\rho_{{\rm I}}\mathbf{v}_{{\rm I}}\right)\rangle \notag\\
                                    = & -\left(\left(1-|\mathbf{v}_{{\rm I}}|^{2}\right)\ln \left(1+|\mathbf{v}_{{\rm I}}|^{2}\right)-2|\mathbf{v}_{{\rm I}}|^{2}\right)\odiv\left(\rho_{{\rm I}}\u_{{\rm I}}\right)\d t\\
                                    &+ \left(2\mathbf{v}_{{\rm I}}\ln \left(1+|\mathbf{v}_{{\rm I}}|^{2}\right)+2\mathbf{v}_{{\rm I}}\right) \left(-\odiv\left(\rho_{{\rm I}}\u_{{\rm I}}\otimes \mathbf{v}_{{\rm I}}\right)\right)\d t\notag\\
                                    & + \left(2\mathbf{v}_{{\rm I}}\ln \left(1+|\mathbf{v}_{{\rm I}}|^{2}\right)+2\mathbf{v}_{{\rm I}}\right) \left(\odiv \mathbb{S}_{{\rm I}}- \mathbf{R}_{{\rm I}}\right)\d t \notag\\
                                    & + \left(2\mathbf{v}_{{\rm I}}\ln \left(1+|\mathbf{v}_{{\rm I}}|^{2}\right)+2\mathbf{v}_{{\rm I}}\right) \rho_{{\rm I}} \mathbb{F}(\rho_{{\rm I}},\u_{{\rm I}})\d W_{{\rm I}} \notag\\
                                    &+ \rho_{{\rm I}}\phi_{K}^{2}(\rho_{{\rm I}})\mathbb{F}(\rho_{{\rm I}},\u_{{\rm I}})\left(\left(1+\ln \left(1+|\mathbf{v}_{{\rm I}}|^{2}\right)\right)\mathbb{I}_{3}+2\frac{\rho_{{\rm I}}\mathbf{v}_{{\rm I}}\otimes \rho_{{\rm I}}\mathbf{v}_{{\rm I}}}{\left|\rho_{{\rm I}}\mathbf{v}_{{\rm I}}\right|^{2}+|\rho_{{\rm I}}|^{2}}\right)\mathbb{F}(\rho_{{\rm I}},\u_{{\rm I}})\d t.\notag
\end{align}
To simplify \eqref{it in Page 36}, we compute $  \odiv \left(\rho_{{\rm I}}\u_{{\rm I}}\otimes \mathbf{v}_{{\rm I}}\right)=\odiv \left(\rho_{{\rm I}}\u_{{\rm I}}\right)\mathbf{v}_{{\rm I}}+\left(\rho_{{\rm I}}\u_{{\rm I}}\cdot\nabla\right)\mathbf{v}_{{\rm I}}$.
Taking integration on $\O$ with respect to $t$ and $x$, we have
\begin{align}
&\int_{\O} \hbar\left(\rho_{{\rm I}},\rho_{{\rm I}}\mathbf{v}_{{\rm I}}\right) \d x \notag\\
= &- \int_{\O} \int_{0}^t \left(-|\mathbf{v}_{{\rm I}}|^{2}-1 \right)\ln \left(1+|\mathbf{v}_{{\rm I}}|^{2}\right)\odiv\left(\rho_{{\rm I}}\u_{{\rm I}}\right)\d s \d x \notag\\
 & - \int_{\O} \int_{0}^t 2\mathbf{v}_{{\rm I}}\cdot \left(\left(\rho_{{\rm I}}\u_{{\rm I}}\cdot\nabla\right)\mathbf{v}_{{\rm I}}\right)\d s \d x - \int_{\O} \int_{0}^t 2\mathbf{v}_{{\rm I}}\ln \left(1+ |\mathbf{v}_{{\rm I}}|^{2}\right)\cdot \left(\left(\rho_{{\rm I}}\u_{{\rm I}}\cdot\nabla\right)\mathbf{v}_{{\rm I}}\right) \d s \d x \notag\\
 & + \int_{\O} \int_{0}^t \left(2\mathbf{v}_{{\rm I}}\ln \left(1+ |\mathbf{v}_{{\rm I}}|^{2}\right)+2\mathbf{v}_{{\rm I}}\right)\left(\odiv \mathbb{S}_{{\rm I}}- \mathbf{R}_{{\rm I}}\right) \d s \d x \\
 & +\int_{\O} \int_{0}^t \left(2\mathbf{v}_{{\rm I}} \ln \left(1+ |\mathbf{v}_{{\rm I}}|^{2}\right)+2\mathbf{v}_{{\rm I}}\right)\rho_{{\rm I}}\phi_{K}(\rho_{{\rm I}}) \mathbb{F}(\rho_{{\rm I}},\u_{{\rm I}})\d W_{{\rm I}}\d x \notag\\
 & +\int_{\O} \int_{0}^t \rho_{{\rm I}}\phi_{K}^{2}(\rho_{{\rm I}})\mathbb{F}(\rho_{{\rm I}},\u_{{\rm I}})\left(\left(1+\ln \left(1+|\mathbf{v}_{{\rm I}}|^{2}\right)\right)\mathbb{I}_{3}+2\frac{\rho_{{\rm I}}\mathbf{v}_{{\rm I}}\otimes \rho_{{\rm I}}\mathbf{v}_{{\rm I}}}{\left|\rho_{{\rm I}}\mathbf{v}_{{\rm I}}\right|^{2}+|\rho_{{\rm I}}|^{2}}\right)\mathbb{F}(\rho_{{\rm I}},\u_{{\rm I}}) \d s \d x\notag\\
 & +\int_{\O} \hbar\left(\rho_{0,{\rm I}},\rho_{0,{\rm I}}\mathbf{v}_{0,{\rm I}}\right) \d x. \notag
\end{align}
The first three integrals on the right-hand side will cancel. 
So we have
\begin{align}\label{formula of h}
&\int_{\O}\hbar\left(\rho_{{\rm I}},\rho_{{\rm I}}\mathbf{v}_{{\rm I}}\right) \d x \notag\\
= &  \int_{0}^t\int_{\O} \left(2\mathbf{v}_{{\rm I}}\ln |\mathbf{v}_{{\rm I}}|^{2}+2\mathbf{v}_{{\rm I}}\right)\left(\odiv \mathbb{S}_{{\rm I}}- \mathbf{R}_{{\rm I}}\right)\d x \d s  \notag\\
 & +\int_{\O} \int_{0}^t \left(2\mathbf{v}_{{\rm I}} \ln |\mathbf{v}_{{\rm I}}|^{2}+2\mathbf{v}_{{\rm I}}\right)\phi_{K}(\rho_{{\rm I}})\rho_{{\rm I}} \mathbb{F}(\rho_{{\rm I}},\u_{{\rm I}})\d W_{{\rm I}}\d x \\
 & +\int_{0}^t \int_{\O}  \rho_{{\rm I}}\phi_{K}^{2}(\rho_{{\rm I}})\mathbb{F}(\rho_{{\rm I}},\u_{{\rm I}})\left(\left(1+\ln \left(1+|\mathbf{v}_{{\rm I}}|^{2}\right)\right)\mathbb{I}_{3}+2\frac{\rho_{{\rm I}}\mathbf{v}_{{\rm I}}\otimes \rho_{{\rm I}}\mathbf{v}_{{\rm I}}}{\left|\rho_{{\rm I}}\mathbf{v}_{{\rm I}}\right|^{2}+|\rho_{{\rm I}}|^{2}}\right)\mathbb{F}(\rho_{{\rm I}},\u_{{\rm I}})\d x \d s \notag\\
 & +\int_{\O} \hbar\left(\rho_{0,{\rm I}},\rho_{0,{\rm I}}\mathbf{v}_{0,{\rm I}}\right) \d x.\notag
\end{align}
Similarly formula \eqref{formula of h} holds for $|\mathbf{v}_{{\rm I}}|^{2}\geqslant n^{2}$, with setting
\begin{align}
& \hslash\left(\rho_{{\rm I}},\rho_{{\rm I}}\mathbf{v}_{{\rm I}}\right) \\
=& \rho_{{\rm I}}\left(2\left(1+\ln (1+n)\right)\frac{\left|\rho_{{\rm I}}\mathbf{v}_{{\rm I}}\right|^{2}}{\rho_{{\rm I}}^{2}}-\left(1+\frac{\left|\rho_{{\rm I}}\mathbf{v}_{{\rm I}}\right|^{2}}{\rho_{{\rm I}}^{2}}\right) \ln \left(1+\frac{\left|\rho_{{\rm I}}\mathbf{v}_{{\rm I}}\right|^{2}}{\rho_{{\rm I}}^{2}}\right)+2\left(\ln (1+n)-n\right)\right).\notag
\end{align}
In summary, we have
\begin{align}
\int_{\O}\rho_{{\rm I}} \varphi_{n}(\mathbf{v}_{{\rm I}}) \d x
=& - \int_{0}^{t} \int_{\O} \nabla_{\mathbf{v}_{{\rm I}}}\varphi_{n}(\mathbf{v}_{{\rm I}})  \mathbf{R}_{{\rm I}} \d x \d s -\int_{0}^{t} \int_{\O} \mathbb{S}_{{\rm I}}: \nabla\left(\nabla_{\mathbf{v}_{{\rm I}}}\varphi_{n}(\mathbf{v}_{{\rm I}})\right) \d x \d s \notag\\
&+ \int_{0}^{t} \int_{\O}\nabla_{\mathbf{v}_{{\rm I}}}\varphi_{n}(\mathbf{v}_{{\rm I}}) \rho_{{\rm I}} \phi_{K}(\rho_{{\rm I}})\mathbb{F}(\rho_{{\rm I}},\u_{{\rm I}}) \d x\d W_{{\rm I}}\\
& +\int_{0}^t\int_{\O}  \rho_{{\rm I}}\phi_{K}^{2}(\rho_{{\rm I}})\mathbb{F}(\rho_{{\rm I}},\u_{{\rm I}})\nabla_{\mathbf{v}_{{\rm I}}}^{2}\varphi_{n}(\mathbf{v}_{{\rm I}})\mathbb{F}(\rho_{{\rm I}},\u_{{\rm I}})\d x  \d s \notag\\
&+ \int_{\O}\rho_{0,{\rm I}} \varphi_{n}\left(\mathbf{v}_{0,{\rm I}}\right) \d x.\notag
\end{align}

\subsection{Vanishing the quantum effect: $\kappa\ra 0$}
\begin{flushleft}
\textbf{Step 1:} Choose the path space.
\end{flushleft}

We choose the space $  \mathcal{X}_{4}=\mathcal{X}_{\rho_{0}}^{4} \times \mathcal{X}_{\mathbf{q}_{0}}^{4}\times \mathcal{X}_{\frac{\mathbf{q}_{0}}{\sqrt{\rho_{0}}}}^{4}
\times\mathcal{X}_{\rho}^{4}\times\mathcal{X}_{\u}^{4}\times\mathcal{X}_{\rho\u}^{4} \times \mathcal{X}_{W}^{4}$,
 where $  \mathcal{X}_{\rho_{0}}^{4}=L^{\gamma}\left( \O \right)$, $\mathcal{X}_{\mathbf{q}_{0}}^{4}= L^{1}\left(\O\right)$,
 $ \mathcal{X}_{\frac{\mathbf{q}_{0}}{\sqrt{\rho_{0}}}}^{4}=L^{2}\left( \O \right)$,
  $ \mathcal{X}_{\rho}^{4}=L^{2}\left(0,T; L^{\frac{3}{2}}\left(\O\right)\right)\cap L^{\frac{5}{3}\gamma}\left([0,T]\times\O\right)$,
   $\mathcal{X}_{\u}^{4}= L^{2}\left([0,T]\times\O\right)\cap L^{4}\left([0,T]\times\O\right)$,
   $ \mathcal{X}_{\rho\u}^{4}$ $=C\left([0,T];L^{\frac{3}{2}}\left( \O \right)\right)$, $  \mathcal{X}_{W}^{4}=C\left([0,T];\mathcal{H}\right)$.\\

\begin{flushleft}
\textbf{Step 2:} The tightness of the laws.
\end{flushleft}
We then consider the regularity of $\rho_{{\rm I}}\u_{{\rm I}}$ and have the following tightness.
\begin{proposition}
 The set $\left\{\mathcal{L}\left[\rho_{{\rm I}}\u_{{\rm I}}\right]\right\}$ is tight on $\mathcal{X}_{\rho\u}^{4}=C\left([0,T];L^{\frac{3}{2}}\left( \O \right)\right)$ for $\gamma>1$.
\end{proposition}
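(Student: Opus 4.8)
The plan is to prove tightness of $\{\mathcal{L}[\rho_\kappa\u_\kappa]\}$ on $C([0,T];L^{3/2}(\O))$ by the same two-ingredient scheme used in the $m\to\infty$ and $\eps\to 0$ layers: a uniform-in-$\kappa$ spatial bound placing $\rho_\kappa\u_\kappa$ in a space compactly embedded in $L^{3/2}(\O)$, together with a uniform-in-$\kappa$ fractional time-Hölder estimate, after which Arzel\`a--Ascoli (in the form already invoked) yields relative compactness in $C([0,T];L^{3/2}(\O))$. First I would record the spatial bound: from the energy and B-D estimates (no longer using the quantum regularity of $\rho$, which degenerates as $\kappa\to 0$), one has $\rho_\kappa|\u_\kappa|^2$ bounded in $L^\infty(0,T;L^1_x)$ and $\rho_\kappa^{1/2}$ bounded in $L^\infty(0,T;L^6_x)$, so $\rho_\kappa\u_\kappa=\rho_\kappa^{1/2}\cdot\rho_\kappa^{1/2}\u_\kappa$ is bounded in $L^\infty(0,T;L^{3/2}_x)$; combined with $\nabla(\rho_\kappa\u_\kappa)=2\nabla\rho_\kappa^{1/2}\otimes\rho_\kappa^{1/2}\u_\kappa+\rho_\kappa^{1/2}\cdot\rho_\kappa^{1/2}\nabla\u_\kappa$ and the bounds on $\nabla\rho_\kappa^{1/2}$ and $\rho_\kappa^{1/2}\mathbb D\u_\kappa$ from \eqref{no eps BD estimate2}, one gets $\rho_\kappa\u_\kappa$ bounded in $L^2(0,T;W^{1,1}_x)$ (or better), hence the relevant interpolation space embeds compactly into $L^{3/2}(\O)$.

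Next I would establish the time regularity. Writing the weak form of the momentum equation in \eqref{eps layer system}, set $\rho_\kappa\u_\kappa(t)=\rho_\kappa\u_\kappa(0)+Y_\kappa(t)+\int_0^t\rho_\kappa\mathbb F(\rho_\kappa,\u_\kappa)\,\d W_\kappa$, where $Y_\kappa$ collects all the deterministic terms ($\odiv(\rho_\kappa\u_\kappa\otimes\u_\kappa)$, $\nabla(a\rho_\kappa^\gamma)$, $\odiv(\rho_\kappa\mathbb D\u_\kappa)$, the damping terms $r_0,r_1,r_2$, the artificial-pressure terms $\eta\nabla\rho_\kappa^{-10}$, $\delta\rho_\kappa\nabla\triangle^9\rho_\kappa$, and the quantum term $\kappa\,\odiv(\rho_\kappa\nabla^2\log\rho_\kappa)$). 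Exactly as in the proof of the $m$-layer tightness of $\rho_m\u_m$, one checks each family is bounded in $L^r(\Omega;L^{q_0}(0,T;W^{-l,2}(\O)))$ for suitable $q_0\ge\frac43$ and $l>\frac52$ using the uniform bounds \eqref{no eps BD estimate1}--\eqref{no eps BD estimate2}: the key new point is that the quantum contribution $\kappa\,\odiv(\rho_\kappa\nabla^2\log\rho_\kappa)$ is controlled because $\kappa\int_0^t\int_\O\rho_\kappa|\nabla^2\log\rho_\kappa|^2\,\d x\,\d s$ is bounded uniformly in $\kappa$ by \eqref{no eps BD estimate2}, so $\sqrt\kappa\cdot\rho_\kappa\nabla^2\log\rho_\kappa\in L^2$ and the factor $\sqrt\kappa\to 0$. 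This gives $Y_\kappa\in C^\varsigma([0,T];W^{-l,2}(\O))$ uniformly via Kolmogorov--Chentsov; the stochastic integral is handled by Burkholder--Davis--Gundy and the bound $\sup_t\|\rho_\kappa\Pi_m[\mathbf F_k]\|_{L^2_x}\lesssim f_k\|\rho_\kappa\|_{L^\infty_tL^2_x}$ which is uniform in $\kappa$ from the energy estimate, yielding $C^\varsigma([0,T];L^2(\O))$ regularity of the stochastic term. Interpolating the spatial $L^{3/2}$ bound against the negative-Sobolev time-Hölder bound (via a Gagliardo--Nirenberg/interpolation inequality $\|f\|_{W^{-l,2}}\lesssim$ something, $\|f\|_{L^{3/2}}$ above) one upgrades to $\mathbb E\big[\|\rho_\kappa\u_\kappa(t_2)-\rho_\kappa\u_\kappa(t_1)\|_{L^{3/2}(\O)}^r\big]\lesssim(t_2-t_1)^{r/4}$ uniformly in $\kappa$, precisely as in the corresponding step for $\rho_m\u_m$.

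Finally, having $\rho_\kappa\u_\kappa$ bounded in $L^\infty(0,T;L^{3/2}(\O))\cap C^\varsigma([0,T];W^{-l,2}(\O))$ with the fractional modulus of continuity in $L^{3/2}$, Arzel\`a--Ascoli together with the uniform convexity of $L^{3/2}(\O)$ (as used verbatim in the $m$-layer proof) shows that for any $\iota>0$ the set $\{\|\rho_\kappa\u_\kappa\|_{L^\infty_tL^{3/2}_x}\le L\}\cap\{[\rho_\kappa\u_\kappa]_{C^\varsigma([0,T];W^{-l,2})}\le L\}$ is relatively compact in $C([0,T];L^{3/2}(\O))$, and Chebyshev's inequality with the uniform moment bounds makes its complement have $\mathcal{L}[\rho_\kappa\u_\kappa]$-measure below $\iota$ for $L$ large. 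I expect the main obstacle to be bookkeeping rather than conceptual: one must verify that every term in $Y_\kappa$ — especially the high-order artificial pressure $\delta\rho_\kappa\nabla\triangle^9\rho_\kappa$, which is \emph{not} uniform in $\delta$ but is fixed here, and the quantum term whose bound carries the crucial $\sqrt\kappa$ factor — is estimated with constants that are finite for the parameters $\delta,\eta,r_0,r_1,r_2$ held fixed in this layer, and that the loss of quantum regularity of $\rho_\kappa$ does not spoil the $L^{3/2}$-spatial bound (it does not, since that bound only used $\rho_\kappa^{1/2}\in L^\infty_tL^6_x$ and $\rho_\kappa^{1/2}\u_\kappa\in L^\infty_tL^2_x$, both surviving $\kappa\to 0$).
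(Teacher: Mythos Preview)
Your approach is correct and follows the same two-ingredient scheme (uniform spatial bound plus fractional time regularity, then Arzel\`a--Ascoli) as the paper. There is, however, a genuine difference in how the stochastic integral is controlled, and one minor misattribution worth flagging.

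\textbf{The stochastic integral.} You control $\int \rho_\kappa\mathbb F\,\d W_\kappa$ in $C^\varsigma([0,T];L^2_x)$ via $\|\rho_\kappa\|_{L^\infty_tL^2_x}$ uniform in $\kappa$. This bound is \emph{not} a consequence of the energy estimate (the term $\frac{\kappa}{2}|\nabla\sqrt{\rho}|^2$ in $E(t)$ degenerates as $\kappa\to 0$); it comes from the B-D entropy \eqref{no eps BD estimate1}: the piece $\tfrac12\rho|\u+\nabla\log\rho|^2$ combined with the energy bound on $\rho|\u|^2$ yields $\int|\nabla\sqrt{\rho_\kappa}|^2\,\d x\lesssim \tilde E(t)+E(t)$ uniformly in $\kappa$, whence $\sqrt{\rho_\kappa}\in L^\infty_tH^1_x\hookrightarrow L^\infty_tL^6_x$ and $\rho_\kappa\in L^\infty_tL^3_x\subset L^\infty_tL^2_x$. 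With that correction, your estimate $\mathbb E\big[\|\int_{\tau_1}^{\tau_2}\rho_\kappa\mathbb F\,\d W_\kappa\|_{L^2_x}^r\big]\lesssim (\tau_2-\tau_1)^{r/2}$ is valid and gives $C^\varsigma$ regularity for $\varsigma<\tfrac12-\tfrac1r$.

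The paper takes a different route: it estimates the stochastic increment using only $\rho_\kappa\in L^\infty_tL^\gamma_x$ (from the energy) and $\rho_\kappa^\gamma\in L^1_tL^3_x$ (from the $\nabla\rho^{\gamma/2}$ piece of the B-D dissipation in \eqref{no eps BD estimate2}), interpolating to reach $L^2_x$ via $\|\rho\|_{L^2}\le \|\rho\|_{L^\gamma}^{\frac{3\gamma}{4}-\frac12}\|\rho\|_{L^{3\gamma}}^{\frac32-\frac{3\gamma}{4}}$; this yields the H\"older exponent $\frac{5\gamma-6}{4\gamma}$ and is exactly where the restriction $\gamma>\tfrac65$ enters the paper. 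Your argument sidesteps that interpolation by using the B-D control of $\nabla\sqrt{\rho}$ directly, which is simpler and does not require $\gamma>\tfrac65$ at this step. (The paper also writes its deterministic part $Y_\kappa$ in terms of the cutoff variable $\mathbf v_\kappa=\phi_K(\rho_\kappa)\u_\kappa$; you work with the uncut equation \eqref{eps layer system}, which is cleaner and equally valid here.)
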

Proof: We first consider the deterministic part, and we denote
\begin{align}
Y_{{\rm I}}(t)=&\rho_{{\rm I}} \mathbf{v}_{{\rm I}}(0)-\int_{0}^{t}\odiv(\rho_{{\rm I}}\u_{{\rm I}}\otimes  \mathbf{v}_{{\rm I}}) \d s+\int_{0}^{t}\odiv \mathbb{S}_{{\rm I}} \d s-\int_{0}^{t} \mathbf{R}_{{\rm I}}\d s\\
& + \int_{0}^{t} \rho_{{\rm I}}\phi_{K}^{2}(\rho_{{\rm I}})\mathbb{F}(\rho_{{\rm I}},\u_{{\rm I}})\nabla_{\mathbf{v}_{{\rm I}}}^{2}\varphi_{n}(\mathbf{v}_{{\rm I}})\mathbb{F}(\rho_{{\rm I}},\u_{{\rm I}})\d s.\notag
\end{align}
In this layer,  $\kappa$ goes to $0$ later. The high regularity on $\rho_{{\rm I}}^{\frac{1}{2}}$ and $\rho_{{\rm I}}^{\frac{1}{4}}$ will not hold uniformly on $\kappa$, but we use $  \rho_{{\rm I}}^{\gamma}\in L^{r}\left(\Omega ; L^{1}\left([0,T];L^{3}\left( \O \right)\right)\right)$, $  \rho_{{\rm I}} \in L^{r}\left(\Omega ;L^{\infty}\left([0,T];L^{\gamma}\left( \O \right)\right)\right)$ uniformly in $\kappa$ to derive that
 $\frac{\d Y_{{\rm I}}(t)}{\d t}$ is bounded in 
$L^{r}\left(\Omega; L^{\frac{4}{3}}\left([0,T];W^{-l,2}\left(\O\right)\right)\right)$ for $l>\frac{5}{2}$ uniformly in $\kappa, \eta, \delta$.
Similar as in Proposition \ref{compactness of Y},
$\mathbb{E}\left[\left\|Y_{{\rm I}}(t)\right\|_{C^{0,\varsigma}\left([0,T];W^{-l,2}\left( \O \right)\right)}^{r}\right]\ls C$ uniformly in $\kappa$, where $C$ depends on $r$, $ \varsigma<\frac{1}{4}$.
Next, we study the time regularity of stochastic integral.
Applying the Burkh\"older-Davis-Gundy's inequality and H\"older's inequality,
\begin{align}
& \mathbb{E}\left[\left(\left\|\int_{\tau_{1}}^{\tau_{2}} \phi_{K}\left(\rho_{{\rm I}}\right) \rho_{{\rm I}}\mathbb{F}\left(\rho_{{\rm I}},\u_{{\rm I}}\right)\d W_{{\rm I}}\right\|_{L^{\frac{3}{2}}\left(\O \right)}\right)^{r}\right]\notag\\
\ls & \mathbb{E}\left[\left(\int_{\O}\left(\int_{\tau_{1}}^{\tau_{2}}\left|\rho_{{\rm I}} \mathbb{F}\left(\rho_{{\rm I}},\u_{{\rm I}}\right)\right|\d W_{{\rm I}} \right)^{\frac{3}{2}}\d x\right)^{\frac{2}{3}r}\right] \notag\\
\ls & \mathbb{E}\left[\left(\int_{\O}\left(\int_{\tau_{1}}^{\tau_{2}}\left|\rho_{{\rm I}} \mathbb{F}\left(\rho_{{\rm I}},\u_{{\rm I}}\right)\right|^{2}\d t \right)^{\frac{3}{4}}\d x\right)^{\frac{2}{3}r}\right] \notag \\
\ls & \mathbb{E}\left[\left(\int_{\O}\left(\int_{\tau_{1}}^{\tau_{2}}\sum\limits_{k=0}^{\infty} f_{k}^{2}\left(\left|\rho_{{\rm I}}\right| + \left|\rho_{{\rm I}} \u_{{\rm I}} \right|\right)^{2}\d t \right)^{\frac{3}{4}}\d x\right)^{\frac{2}{3}r}\right]\\
= & \left(\sum\limits_{k=0}^{\infty} f_{k}^{2}\right)^{\frac{r}{2}} \mathbb{E}\left[\left(\int_{\O}\left(\int_{\tau_{1}}^{\tau_{2}}\left(\left|\rho_{{\rm I}}\right| + \left|\rho_{{\rm I}} \u_{{\rm I}} \right|\right)^{2}\d t \right)^{\frac{3}{4}}\d x\right)^{\frac{2}{3}r}\right]\notag\\
\ls &C \mathbb{E}\left[\left(\int_{\O}\left(\int_{\tau_{1}}^{\tau_{2}}\left(\left|\rho_{{\rm I}}\right|^{2} + \left|\rho_{{\rm I}} \u_{{\rm I}} \right|^{2}\right)\d t \right)^{\frac{3}{4}}\d x\right)^{\frac{2}{3}r}\right]  \notag\\
\ls & C\mathbb{E}\left[\left(\left(\int_{\O}\int_{\tau_{1}}^{\tau_{2}}\left(\left|\rho_{{\rm I}}\right|^{2} + \left|\rho_{{\rm I}} \u_{{\rm I}} \right|^{2}\right)\d t \d x\right)^{\frac{3}{4}}\left(\int_{\O}\d x\right)^{\frac{1}{4}}\right)^{\frac{2}{3}r}\right] \notag\\
=& C \mathbb{E}\left[\left(\left(\int_{\O}\int_{\tau_{1}}^{\tau_{2}}\left(\left|\rho_{{\rm I}}\right|^{2} + \left|\rho_{{\rm I}} \u_{{\rm I}} \right|^{2}\right)\d t \d x\right)^{\frac{3}{4}}\right)^{\frac{2}{3}r}\right], \notag
\end{align}
where $C$ depends on $r, \sum\limits_{k=0}^{\infty} f_{k}^{2}$. By the B-D entropy estimates, we have
$$\nabla \rho_{{\rm I}}^{\frac{1}{2}}\in L^{r}\left(\Omega; L^{\infty}\left([0,T];L^{2}\left(\O\right)\right)\right),$$
 by Sobolev's embedding, we further have
 \begin{align}
\rho_{{\rm I}}\in L^{r}\left(\Omega; L^{\infty}\left([0,T];L^{3}\left(\O\right)\right)\right).
\end{align}
We estimate
\begin{align}\label{sto estimate in kappa1}
\int_{\O}\int_{\tau_{1}}^{\tau_{2}}\left|\rho_{{\rm I}}\right|^{2} \d t \d x =&\int_{\tau_{1}}^{\tau_{2}}\int_{\O}\left|\rho_{{\rm I}}\right|^{2}\d x \d t
\ls \left\|\rho_{{\rm I}}\right\|_{L_t^{\infty}L_{x}^{3}}^{2}\int_{\tau_{1}}^{\tau_{2}}\left(\int_{\O}\d x\right)^{\frac{1}{3}} \d t \\
\ls & \left\|\rho_{{\rm I}}\right\|_{L_t^{\infty}L_{x}^{3}}^{2}\left(\tau_{2}-\tau_{1}\right). \notag
\end{align}
So we have
\begin{align}
\mathbb{E}\left[\left(\left(\int_{\O}\int_{\tau_{1}}^{\tau_{2}}\left|\rho_{{\rm I}}\right|^{2}\d t \d x \right)^{\frac{3}{4}}\right)^{\frac{2}{3}r}\right]\ls \mathbb{E}\left[\left\|\rho_{{\rm I}}\right\|_{L_t^{\infty}L_{x}^{3}}^{r}\right]\left(\tau_{2}-\tau_{1}\right)^{\frac{r}{2}}.
\end{align}
We employ the regularity of $r_{2}\mathbb{E}\left[\left(\int_{0}^{t}\int_{\O}\rho_{{\rm I}}\left|\u_{{\rm I}}\right|^{4}\d x \d t\right)^{r}\right] \ls C$ and the high regularity of $\rho$ from the B-D entropy to estimate
\begin{align}\label{sto estimate in kappa2}
&\int_{\O}\int_{\tau_{1}}^{\tau_{2}}\left|\rho_{{\rm I}} \u_{{\rm I}} \right|^{2}\d t \d x
\ls \left\|\rho_{{\rm I}}^{\frac{1}{2}} \left|\u_{{\rm I}}\right|^{2}\right\|_{L_{t}^{2}L_{x}^{2}}\left\| \rho_{{\rm I}}^{\frac{3}{2}}\right\|_{L_{t}^{2}L_{x}^{2}}\\
\ls &\left\|\rho_{{\rm I}}^{\frac{1}{2}} \left|\u_{{\rm I}}\right|^{2}\right\|_{L_{t}^{2}L_{x}^{2}}\left\| \rho_{{\rm I}}^{\frac{3}{2}}\right\|_{L_{t}^{\infty}L_{x}^{2}}\left(\tau_{2}-\tau_{1}\right)^{\frac{1}{2}}.\notag
\end{align}

Thereupon, up to a modification it holds
\begin{equation}\label{continuity of stochastic terms in kappa}
\int_{0}^{t} \rho_{{\rm I}} \mathbb{F}(\rho_{{\rm I}},\u_{{\rm I}})\d W_{{\rm I}}\in L^{r}\left(\Omega; C^{0,\varsigma}\left([0,T];L^{2}\left( \O \right)\right)\right),
\end{equation}
where $\varsigma < \frac{1}{4}-\frac{1}{r}$ is a  positive constant. We choose some $r>4$ here and hereafter. 
The analysis of deterministic part $Y_{{\rm I}}(t)$ and (\ref{continuity of stochastic terms in kappa}) imply that $\left\|\rho_{{\rm I}}\u_{{\rm I}}\right\|_{L^{2}\left(\O\right)}$ is continuous in $[0,T]$ uniformly in $\kappa$, $\eta$, $\delta$ and $r_{0}$, up to a modification, i.e.,
\begin{equation}\label{strong cover of rho u}
\mathbb{E}\left[\left\| \rho_{{\rm I}}\u_{{\rm I}}\right\|_{C^{0,\varsigma}\left([0,T];L^{\frac{3}{2}}\left(\O\right)\right)}^{r}\right]\ls C ,
\end{equation}
where C depends on $r$ and the initial data. With a standard argument as in previous section, we have the tightness of $\mathcal{L}\left[\{\rho_{{\rm I}}\u_{{\rm I}}\}\right]$
in $\mathcal{X}_{\rho\u}^{4} $. \hfill$\square$

\begin{flushleft}
\textbf{Step 3:} The convergence $\kappa\ra 0$.
\end{flushleft}

We have the corresponding tightness and Skorokhod's representation theorem.
\begin{proposition}
There exists  two families of $\mathcal{X}_{4}$-valued Borel measurable random variables,
 $$
 \left\{\bar{\rho}_{0, {\rm I}}, \bar{\mathbf{q}}_{0, {\rm I}}, \frac{\bar{\mathbf{q}}_{0, {\rm I}}}{\sqrt{\bar{\rho}_{0, {\rm I}}}}, \bar{\rho}_{{\rm I}}, \bar{\u}_{{\rm I}}, \bar{\rho}_{{\rm I}}\bar{\u}_{{\rm I}}, \bar{W}_{{\rm I}} \right\},
 $$
  and $\left\{\rho_{0, {\rm II}}, \mathbf{q}_{0, {\rm II}},\frac{\mathbf{q}_{0}}{\sqrt{\rho_{0}}}, \rho_{{\rm II}}, \u_{{\rm II}},\rho_{{\rm II}}\u_{{\rm II}}, W_{{\rm II}}\right\}$, defined on a new complete probability space
  $\left(\bar{\Omega},\bar{\mathcal{F}},\bar{\mathbb{P}}\right)$, such that (up to a subsequence):
\begin{enumerate}
  \item
  $\mathcal{L}\left[\bar{\rho}_{0, {\rm I}}, \bar{\mathbf{q}}_{0, {\rm I}}, \frac{\bar{\mathbf{q}}_{0, {\rm I}}}{\sqrt{\bar{\rho}_{0, {\rm I}}}}, \bar{\rho}_{{\rm I}}, \bar{\u}_{{\rm I}}, \bar{\rho}_{{\rm I}}\bar{\u}_{{\rm I}}, \bar{W}_{{\rm I}} \right],$
   coincides with \\
    $\mathcal{L}\left[\rho_{0, {\rm I}}, \mathbf{q}_{0, {\rm I}}, \frac{\mathbf{q}_{0, {\rm I}}}{\sqrt{\rho_{0, {\rm I}}}}, \rho_{{\rm I}}, \u_{{\rm I}},\rho_{{\rm I}}\u_{{\rm I}}, W_{{\rm I}}\right]$ on $\mathcal{X}_{4}$;
  \item $\mathcal{L}\left[\rho_{0, {\rm II}}, \mathbf{q}_{0, {\rm II}},\frac{\mathbf{q}_{0, {\rm II}}}{\sqrt{\rho_{0, {\rm II}}}}, \rho_{{\rm II}}, \u_{{\rm II}},\rho_{{\rm II}}\u_{{\rm II}}, W_{{\rm II}}\right]$ on $\mathcal{X}_{4}$ is a Radon measure;
  \item The family of random variables $\left\{\bar{\rho}_{0, {\rm I}}, \bar{\mathbf{q}}_{0, {\rm I}}, \frac{\bar{\mathbf{q}}_{0, {\rm I}}}{\sqrt{\bar{\rho}_{0, {\rm I}}}}, \bar{\rho}_{{\rm I}}, \bar{\u}_{{\rm I}}, \bar{\rho}_{{\rm I}}\bar{\u}_{{\rm I}}, \bar{W}_{{\rm I}}\right\}$
   converges to\\
    $\left\{\rho_{0, {\rm II}}, \mathbf{q}_{0, {\rm II}},\frac{\mathbf{q}_{0, {\rm II}}}{\sqrt{\rho_{0, {\rm II}}}}, \rho_{{\rm II}}, \u_{{\rm II}},\rho_{{\rm II}}\u_{{\rm II}}, W_{{\rm II}}\right\}$ in the topology of $\mathcal{X}_{4}$, $\bar{\mathbb{P}}$ {\rm a.s.} as $\kappa\ra 0$.
    \end{enumerate}
    \end{proposition}

\begin{flushleft}
\textbf{Step 4:} The estimates after taking the limit.
\end{flushleft}

Corresponding to \cite{Vasseur-Yu2016}, we handle the deterministic terms by taking $K=\kappa^{-\frac{3}{4}}$.
\begin{align}
&\mathbb{E}\left[\left|\int_{\O} \bar{\rho}_{{\rm I}} \varphi_{n}(\bar{\mathbf{v}}_{{\rm I}}) \d x\right|^{r}\right]\notag\\
\ls & C_{r}\mathbb{E}\left[\left|- \int_{0}^{t} \int_{\O} \nabla_{\bar{\mathbf{v}}_{{\rm I}}}\varphi_{n}(\bar{\mathbf{v}}_{{\rm I}})  \mathbf{R}_{{\rm I}} \d x \d s - \int_{0}^{t} \int_{\O} \mathbb{S}_{{\rm I}}: \nabla\left(\nabla_{\bar{\mathbf{v}}_{{\rm I}}}\varphi_{n}(\bar{\mathbf{v}}_{{\rm I}})\right) \d x \d s \right|^{r}\right]\notag\\
&+ C_{r}\mathbb{E}\left[\left|\int_{0}^{t} \int_{\O}\nabla_{\bar{\mathbf{v}}_{{\rm I}}}\varphi_{n}(\bar{\mathbf{v}}_{{\rm I}}) \phi_{K}(\bar{\rho}_{{\rm I}})\bar{\rho}_{{\rm I}} \mathbb{F}(\bar{\rho}_{{\rm I}},\bar{\u}_{{\rm I}}) \d x\d W_{{\rm I}}\right|^{r}\right]\\
&+ C_{r}\mathbb{E}\left[\left|\int_{0}^t\int_{\O}  \bar{\rho}_{{\rm I}}\phi_{K}^{2}(\bar{\rho}_{{\rm I}})\mathbb{F}(\bar{\rho}_{{\rm I}},\bar{\u}_{{\rm I}})\nabla_{\bar{\mathbf{v}}_{{\rm I}}}^{2}\varphi_{n}(\bar{\mathbf{v}}_{{\rm I}})\mathbb{F}(\bar{\rho}_{{\rm I}},\bar{\u}_{{\rm I}}) \d x \d s\right|^{r}\right]\notag\\
&+ C_{r}\mathbb{E}\left[\left| \int_{\O}\rho_{0,{\rm I}} \varphi_{n}\left(\frac{\mathbf{q}_{0,{\rm I}}}{\rho_{0,{\rm I}}}\right) \d x\right|^{r}\right]\notag\\
\ls &C, \notag
\end{align}
where $C$ depends on $\mathbb{E}[(e_{{\rm I}}(0))^{r}],r,t\text{ and }n$, $C_{r}$ is a constant merely dependent of $r$.
We study the convergence of $\bar{\rho}_{{\rm I}}$.
\begin{equation}
\begin{aligned}
 &\quad \mathbb{E}\left[\left\|\bar{\rho}_{{\rm I}}^{\frac{1}{2}}\bar{\rho}_{{\rm I}} ^{\frac{1}{2}}\odiv\bar{\u}_{{\rm I}} \right\|_{L_t^{2} L_x^{\frac{3}{2}}}^{r}\right]  \ls \mathbb{E}\left[\left\|\bar{\rho}_{{\rm I}}^{\frac{1}{2}}\right\|_{L_t^{\infty}L_x^{6}}^{r}\left\|\bar{\rho}_{{\rm I}}^{\frac{1}{2}}\odiv\bar{\u}_{{\rm I}}
       \right\|_{L_t^{2}L_x^{2}}^{r}\right] \ls C,\\
\end{aligned}
\end{equation}
together with
\begin{equation}
\begin{aligned}
&\quad \mathbb{E}\left[\left\| \nabla \bar{\rho}_{{\rm I}}^{\frac{1}{2}}\left(\bar{\rho}_{{\rm I}} ^{\frac{1}{2}}\bar{\u}_{{\rm I}}\right)\right\|_{L_t^{2} L_x^{1}}^{r}\right]\ls\mathbb{E}\left[\left\|\nabla \bar{\rho}_{{\rm I}}^{\frac{1}{2}}\right\|_{L_t^{\infty}L_x^{2}}^{r}\left\|\bar{\rho}_{{\rm I}}^{\frac{1}{4}}\right\|_{L_t^{\infty}L_x^{4}}^{r}\left\|\bar{\rho}_{{\rm I}}^{\frac{1}{4}}\bar{\u}_{{\rm I}} \right\|_{L_t^{4}L_x^{4}}^{r}\right]\ls C,
\end{aligned}
\end{equation}
implies
\begin{equation}
\begin{aligned}
&\quad \mathbb{E}\left[\left\|\left(\bar{\rho}_{{\rm I}}\right)_{t}\right\|_{L_t^2 L_x^{1}}^{r}\right]\ls  C,\\
\end{aligned}
\end{equation}
uniformly in $\delta,\eta$, here $C$ depends on $r, r_{1}$, $r>4$. \\
We calculate that
\begin{equation}
|\nabla \bar{\rho}_{{\rm I}}|=2\left|\nabla \bar{\rho}_{{\rm I}} ^{\frac{1}{2}}\right| \bar{\rho}_{{\rm I}} ^{\frac{1}{2}},\\
\end{equation}
so $\{\bar{\rho}_{{\rm I}}\}$ is bounded in $L^{r}\left(\Omega;L^{\infty}\left(0, T; W^{1,1}\left( \O \right)\right)\right)$. By Aubin-Lion's lemma we obtain the strong convergence of $\bar{\rho}_{{\rm I}}$ in $L^{r}\left(\Omega;L^{2}\left(0, T; L^{\frac{3}{2}}\left( \O \right)\right)\right)$. \\
 From the stochastic B-D entropy \eqref{BD entropy conclusion after vanish eps}, we know
 \begin{equation}\label{estimate for log rho}
-r_{2}\mathbb{E}\left[\left|\int_{\O} \ln_{-} \bar{\rho}_{{\rm I}} \d x\right|^{r}\right] \ls C,\text{ for some }r> 4,
\end{equation}
where $C$ depends on $r, T, \mathbb{E}[e_{{\rm I}}(0)]^{r}, \mathbb{E}[\tilde{e}_{{\rm I}}(0)]^{r})$.
Vassuer-Yu \cite{Vasseur-Yu2016} argued that
\begin{equation}
\int_{\Omega}\left(\ln \left(\frac{1}{\rho_{{\rm II}}}\right)\right)_{+} d x  \leq \int_{\Omega} \lim \inf_{\kappa \rightarrow 0} \left(\ln \left(\frac{1}{\bar{\rho}_{{\rm I}}}\right)\right)_{+} d x  \leq \lim \inf_{\kappa \rightarrow 0} \int_{\Omega}\left(\ln \left(\frac{1}{\bar{\rho}_{{\rm I}}}\right)\right)_{+} d x,
\end{equation}
by Fatou's lemma, so $\left(\ln \left(\frac{1}{\rho_{{\rm II}}}\right)\right)_{+}$ is bounded in $L^{r}\left(\Omega; L^{\infty}\left(0, T ; L^{1}\left(\O\right)\right)\right)$. Therefore (\ref{estimate for log rho}) allows us to conclude that
\begin{equation}
\left|\left\{x \mid\rho_{{\rm II}}(t, x)=0\right\}\right|=0, \quad \text { for almost every } t, x, \omega.
\end{equation}
For almost every $(t, x)$, $\bar{\rho}_{{\rm I}}(t, x)\neq 0$, the convergence
$
\bar{\u}_{{\rm I}}(t, x)=\frac{\bar{\rho}_{{\rm I}} \bar{\u}_{{\rm I}}}{\bar{\rho}_{{\rm I}}} \rightarrow \u_{{\rm II}}(t, x)
$
and
$
\bar{\mathbf{v}}_{{\rm I}} \rightarrow \u_{{\rm II}}(t, x)
$
hold $\mathbb{P}$ a.s. as $\kappa \rightarrow 0$. $\phi_{K}(\bar{\rho}_{{\rm I}})\ra 1$ holds almost everywhere as $K\ra \infty$, $\nabla_{\bar{\mathbf{v}}_{{\rm I}}} \varphi_{n }\ls 1+\ln(1+C_{n})$ uniformly in $K$ for fixed $n$, therefore $\nabla_{\bar{\mathbf{v}}_{{\rm I}}}\varphi_{n}(\bar{\mathbf{v}}_{{\rm I}}) \ra \nabla_{\u_{{\rm II}}}\varphi_{n}\left(\u_{{\rm II}}\right)$ holds almost everywhere. $  r_{0}|\bar{\u}_{{\rm I}}|^{2} \bar{\u}_{{\rm I}} \phi_{K}(\bar{\rho}_{{\rm I}})+\eta \nabla\bar{\rho}_{{\rm I}}^{-10}\phi_{K}(\bar{\rho}_{{\rm I}}) + \delta \bar{\rho}_{{\rm I}}\nabla \triangle^{9}\bar{\rho}_{{\rm I}}\phi_{K}(\bar{\rho}_{{\rm I}}) \ra r_{0}|\u_{{\rm II}}|^{2} \u_{{\rm II}}+\eta \nabla \rho_{{\rm II}}^{-10} + \delta \rho_{{\rm II}}\nabla \triangle^{9}\rho_{{\rm II}}$ holds almost everywhere. These three terms are $L^{1}$ integrable. By Lebesgue's dominated convergence theorem, taking the limit $K\ra \infty$, i.e. $\kappa\ra 0$, we have
\begin{equation}
\int_{\O} \bar{\rho}_{{\rm I}} \varphi_{n}( \phi_{K}(\bar{\rho}_{{\rm I}})\bar{\u}_{{\rm I}}) \d x \ra \int_{\O}\rho_{{\rm II}} \varphi_{n}(\u_{{\rm II}}) \d x \quad \bar{\mathbb{P}} \text{ a.s.}
 \end{equation}
For the pressure term, integration by parts implies
\begin{align}
&-\int_0^t\int_{\O} \nabla_{\bar{\mathbf{v}}_{{\rm I}}}\varphi_{n}(\bar{\mathbf{v}}_{{\rm I}})\nabla \bar{\rho}_{{\rm I}}^{\gamma} \phi_{K}(\bar{\rho}_{{\rm I}})\d x\d s\\
=&\int_0^t\int_{\O} \nabla_{\bar{\mathbf{v}}_{{\rm I}}}^{2}\varphi_{n}(\bar{\mathbf{v}}_{{\rm I}}):\nabla \bar{\mathbf{v}}_{{\rm I}}\bar{\rho}_{{\rm I}}^{\gamma}\d x\d s+\int_0^t\int_{\O} \nabla_{\bar{\mathbf{v}}_{{\rm I}}}\varphi_{n}(\bar{\mathbf{v}}_{{\rm I}})\nabla\phi_{K}(\bar{\rho}_{{\rm I}})\bar{\rho}_{{\rm I}}^{\gamma}\d x\d s
\triangleq  P_{1}+P_{2}, \notag
\end{align}
where
\begin{align}
P_{1}
=&\int_0^t\int_{\O} \bar{\rho}_{{\rm I}}^{\gamma}\phi_{K}^{2}(\bar{\rho}_{{\rm I}}) \nabla_{\bar{\mathbf{v}}_{{\rm I}}}^{2}\varphi_{n}(\bar{\mathbf{v}}_{{\rm I}}):\nabla \bar{\u}_{{\rm I}}\d x\d s\\
 &+ \int_0^t\int_{\O} \bar{\rho}_{{\rm I}}^{\gamma}  \phi_{K}(\bar{\rho}_{{\rm I}}) \nabla_{\bar{\mathbf{v}}_{{\rm I}}}^{2}\varphi_{n}(\bar{\mathbf{v}}_{{\rm I}}):\nabla \phi_{K}(\bar{\rho}_{{\rm I}})\otimes \bar{\u}_{{\rm I}} \d x\d s
\triangleq  P_{11}+P_{12}. \notag
\end{align}
Due to $  \phi_{K}^{\prime}(\bar{\rho}_{{\rm I}})\bar{\rho}_{{\rm I}}^{\frac{1}{2}}\ls \frac{4}{K^{\frac{1}{2}}}$, we have
\begin{align}
\mathbb{E}\left[|P_{2}|^{r}\right] \ls & C \mathbb{E}\left[\kappa^{-\frac{1}{4}} \left\|\bar{\rho}_{{\rm I}}^{\frac{1}{2}}\phi_{K}'(\bar{\rho}_{{\rm I}})\right\|_{L_{t}^{\infty}L_{x}^{\infty}}^{r} \left\|\kappa^{\frac{1}{4}} \nabla\bar{\rho}_{{\rm I}}^{\frac{1}{4}}\right\|_{L_{t}^{4}L_{x}^{4}}^{r}\left\|\bar{\rho}_{{\rm I}}^{\gamma+\frac{1}{4}}\right\|_{L_{t}^{\frac{4}{3}}L_{x}^{\frac{4}{3}}}^{r}\right]\\
\ls & C\left(\kappa^{-\frac{1}{4}}\frac{2}{K^{\frac{1}{2}}}\right)^{r}=C\kappa^{\frac{1}{8}r}\ra 0 \text{ as }\kappa \ra 0,\notag
\end{align}
where $C$ depends on $\mathbb{E}[(e_{{\rm I}}(0))^{r}],r,t\text{ and } n $. We estimate
\begin{align}
\mathbb{E}\left[|P_{12}|^{r}\right]
= &\mathbb{E}\left[\left|\int_{\O}\int_0^t \bar{\rho}_{{\rm I}}^{\gamma+\frac{1}{4}}\phi_{K}(\bar{\rho}_{{\rm I}}) \nabla_{\bar{\mathbf{v}}_{{\rm I}}}^{2}\varphi_{n}(\bar{\mathbf{v}}_{{\rm I}}):\phi_{K}'(\bar{\rho}_{{\rm I}})2\bar{\rho}_{{\rm I}}^{\frac{1}{2}}\nabla\bar{\rho}_{{\rm I}}^{\frac{1}{4}} \bar{\u}_{{\rm I}}\d s\d x \right|^{r}\right]\notag\\
\ls & C \mathbb{E}\left[\kappa^{-\frac{1}{4}} \left\|\bar{\rho}_{{\rm I}}^{\frac{1}{2}}\phi_{K}'(\bar{\rho}_{{\rm I}})\right\|_{L_{t}^{\infty}L_{x}^{\infty}}^{r} \left\|\kappa^{\frac{1}{4}} \nabla\bar{\rho}_{{\rm I}}^{\frac{1}{4}}\right\|_{L_{t}^{4}L_{x}^{4}}^{r}\left\|\bar{\rho}_{{\rm I}}^{\gamma+\frac{1}{4}}\right\|_{L_{t}^{\frac{4}{3}}L_{x}^{\frac{4}{3}}}^{r}\right]\\
\ls & C\left(\kappa^{-\frac{1}{4}}\frac{2}{K^{\frac{1}{2}}}\right)^{r}= C\kappa^{\frac{1}{8}r}\ra 0 \text{ as }\kappa \ra 0,\notag
\end{align}
in which we used $  \left| \nabla_{\bar{\mathbf{v}}_{{\rm I}}}^{2}\varphi_{n}\mathbf{v}_{k}\right|\ls C(n)$, with $C$ depending on $\mathbb{E}[(e_{{\rm I}}(0))^{r}],r,t\text{ and } n $.
Since
\begin{equation}
  \nabla_{\bar{\mathbf{v}}_{{\rm I}}}^{2}\varphi_{n}(\bar{\mathbf{v}}_{{\rm I}}):\nabla\bar{\u}_{{\rm I}}=4\tilde{\varphi}_{n}^{\prime \prime}\left(|\bar{\mathbf{v}}_{{\rm I}}|^{2}\right)\bar{\mathbf{v}}_{{\rm I}}\otimes\bar{\mathbf{v}}_{{\rm I}}:\nabla\bar{\u}_{{\rm I}}
   +2\tilde{\varphi}_{n}^{\prime}\left(\left|\bar{\mathbf{v}}_{{\rm I}}\right|^{2}\right)\odiv \bar{\u}_{{\rm I}},
\end{equation}
 we have
\begin{align}\label{Mellet-Vasseur P11 estimates}
   &P_{11}=4\int_0^t \int_{\O} \phi_{K}^{2}(\bar{\rho}_{{\rm I}})\bar{\rho}_{{\rm I}}^{\gamma}\left|\nabla\bar{\u}_{{\rm I}}\right|\d x\d s
           +2\int_0^t \int_{\O} \phi_{K}^{2}(\bar{\rho}_{{\rm I}})\bar{\rho}_{{\rm I}}^{\gamma}\left|\odiv \bar{\u}_{{\rm I}} \right|\tilde{\varphi}_{n}^{\prime}\left(\left|\bar{\mathbf{v}}_{{\rm I}}\right|^{2}\right)\d x\d s\notag\\
\ls & 4\int_0^t \int_{\O} \bar{\rho}_{{\rm I}}\left|\nabla  \bar{\u}_{{\rm I}}\right|^{2}+4 \int_{\O}\int_0^t \bar{\rho}_{{\rm I}}^{2\gamma-1}\d x\d s+2\int_0^t\int_{\O}  \phi_{K}^{2}(\bar{\rho}_{{\rm I}})\bar{\rho}_{{\rm I}}^{\gamma}\left|\mathbb{D} \bar{\u}_{{\rm I}} \right|\tilde{\varphi}_{n}'(\left|\bar{\mathbf{v}}_{{\rm I}}\right|^{2})\d x\d s \\
\ls & C+2\int_0^t \int_{\O} \phi_{K}^{4}(\bar{\rho}_{{\rm I}})\bar{\rho}_{{\rm I}}\left|\mathbb{D}\bar{\u}_{{\rm I}}\right|^{2}\tilde{\varphi}_{n}'\left(\left|\bar{\mathbf{v}}_{{\rm I}}\right|^{2}\right)\d x\d s +2\int_0^t \int_{\O} \tilde{\varphi}_{n}'\left(\left|\bar{\mathbf{v}}_{{\rm I}}\right|^{2}\right)\bar{\rho}_{{\rm I}}^{2\gamma-1}\d x\d s ,\notag
\end{align}
with $C$ depending on $\mathbb{E}[(e_{{\rm I}}(0))^{r}],r,t\text{ and } n $. 
\begin{align}
 \mathbb{S}_{{\rm I}}= \bar{\rho}_{{\rm I}}\phi_{K}(\bar{\rho}_{{\rm I}})\left(\mathbb{D} \bar{\u}_{{\rm I}}+\kappa \frac{\Delta \sqrt{\bar{\rho}_{{\rm I}}}}{\sqrt{\bar{\rho}_{{\rm I}}}} \mathbb{I}_{3}\right)\triangleq \mathbb{S}_{1}+\mathbb{S}_{2},
\end{align}
We estimate term by term,
\begin{align}
&\int_0^t \int_{\O}\nabla  \nabla_{\bar{\mathbf{v}}_{{\rm I}}}\varphi_{n}(\bar{\mathbf{v}}_{{\rm I}}): \mathbb{S}_{1} \d x\d s
=\int_0^t \int_{\O} \nabla \left( \nabla_{\bar{\mathbf{v}}_{{\rm I}}}\varphi_{n}(\bar{\mathbf{v}}_{{\rm I}})\right):\bar{\rho}_{{\rm I}}\phi_{K}(\bar{\rho}_{{\rm I}})\mathbb{D} \bar{\u}_{{\rm I}}  \d x \d s \notag\\
=&\int_0^t \int_{\O} \bar{\rho}_{{\rm I}}\phi_{K}(\bar{\rho}_{{\rm I}}) \nabla_{\bar{\mathbf{v}}_{{\rm I}}}^{2}\varphi_{n}(\bar{\mathbf{v}}_{{\rm I}})\nabla\bar{\u}_{{\rm I}}:\mathbb{D} \bar{\u}_{{\rm I}}  \d x \d s\\
 & +\int_0^t \int_{\O} \bar{\rho}_{{\rm I}}\phi_{K}(\bar{\rho}_{{\rm I}})\bar{\u}_{{\rm I}}^{\top} \nabla_{\bar{\mathbf{v}}_{{\rm I}}}^{2}\varphi_{n}(\bar{\mathbf{v}}_{{\rm I}})\mathbb{D} \bar{\u}_{{\rm I}}\nabla\left(\phi_{K}(\bar{\rho}_{{\rm I}})\right) \d x \d s\notag\\
\triangleq & A_{1}+ A_{2}.\notag
\end{align}
Similarly, $\mathbb{E}\left[|A_{2}|^{r}\right] \ra 0$ as $\kappa \ra 0 $.
\begin{align}
A_{1}= & 2 \int_0^t \int_{\O} \phi_{K}^{2}(\bar{\rho}_{{\rm I}})\tilde{\varphi}_{n}'(|\bar{\mathbf{v}}_{{\rm I}}|^{2}) \bar{\rho}_{{\rm I}}\nabla\bar{\u}_{{\rm I}}: \mathbb{D} \bar{\u}_{{\rm I}} \d x \d s \notag\\
      & + 4\int_0^t \int_{\O} \phi_{K}^{2}(\bar{\rho}_{{\rm I}})\tilde{\varphi}_{n}''\left(|\bar{\mathbf{v}}_{{\rm I}}|^{2}\right)\bar{\mathbf{v}}_{{\rm I}}\otimes \bar{\mathbf{v}}_{{\rm I}}\nabla \bar{\u}_{{\rm I}}: \mathbb{D} \bar{\u}_{{\rm I}} \d x \d s\\
\geq & 2 \int_0^t \int_{\O}\phi_{K}^{4}(\bar{\rho}_{{\rm I}})\tilde{\varphi}_{n}'(|\bar{\mathbf{v}}_{{\rm I}}|^{2})\bar{\rho}_{{\rm I}}\left|\mathbb{D}\bar{\u}_{{\rm I}}\right|^{2} \d s \d x -4 \int_{\O}\int_0^t \phi_{K}^{2}(\bar{\rho}_{{\rm I}})\bar{\rho}_{{\rm I}}\left|\mathbb{D}\bar{\u}_{{\rm I}}\right|^{2} \d x \d s, \notag
\end{align}
combining with the pressure term $P_{11}$, it holds
\begin{align}
-A_{1}+P_{11}\ls& (-2+2) \int_0^t \int_{\O}\phi_{K}^{4}(\bar{\rho}_{{\rm I}})\tilde{\varphi}_{n}'(|\bar{\mathbf{v}}_{{\rm I}}|^{2})\bar{\rho}_{{\rm I}}\left|\mathbb{D}\bar{\u}_{{\rm I}}\right|^{2}\d x \d s \\
&+ 4\int_0^t\int_{\O} \phi_{K}^{2}(\bar{\rho}_{{\rm I}})\bar{\rho}_{{\rm I}}\left|\mathbb{D}\bar{\u}_{{\rm I}}\right|^{2} \d x \d s +2\int_0^t\int_{\O}\tilde{\varphi}_{n}'\left(\left|\bar{\mathbf{v}}_{{\rm I}}\right|^{2}\right)\bar{\rho}_{{\rm I}}^{2\gamma-1} \d x\d s,\notag
\end{align}
and
\begin{equation}
\mathbb{E}\left[\left| -A_{1}+P_{11}\right|^{r}\right]\ls C+ C_{r} \mathbb{E}\left[\left| 2\int_{\O}\int_0^t\tilde{\varphi}_{n}'\left(\left|\bar{\mathbf{v}}_{{\rm I}}\right|^{2}\right)\bar{\rho}_{{\rm I}}^{2\gamma-1}\d s \d x\right|^{r}\right],
\end{equation}
where $C$ depends on $\mathbb{E}[(e_{{\rm I}}(0))^{r}],r,t \text{ and } n $. We also have
\begin{align}
    &\mathbb{E}\left[\left|\int_0^t \int_{\O} 2 \kappa  \nabla_{\bar{\mathbf{v}}_{{\rm I}}}\varphi_{n}(\bar{\mathbf{v}}_{{\rm I}})\phi_{K}(\bar{\rho}_{{\rm I}}) \nabla \sqrt{\bar{\rho}_{{\rm I}}} \triangle \sqrt{\bar{\rho}_{{\rm I}}}\d x\d s\right|^{r}\right]\\
\ls &C  \kappa \kappa^{-\frac{1}{4}}\kappa^{-\frac{1}{2}}\left\|\kappa^{\frac{1}{4}}\nabla\bar{\rho}_{{\rm I}}^{\frac{1}{4}}\right\|_{L_{t}^{4}L_{x}^{4}}^{r}
\left\|\kappa^{\frac{1}{2}}\triangle\bar{\rho}_{{\rm I}}^{\frac{1}{2}}\right\|_{L_{t}^{2}L_{x}^{2}}^{r}\left\|\bar{\rho}_{{\rm I}}^{\frac{1}{4}}\right\|_{L_{t}^{4}L_{x}^{4}}^{r}
\ls  C \kappa^{\frac{1}{4}} \text{ as }\kappa\ra 0,\notag
\end{align}
where $C$ depends on $\mathbb{E}[(e_{{\rm I}}(0))^{r}],r,t \text{ and } n $.
 Similarly, there holds
\begin{equation}
\mathbb{E}\left[\left|\int_0^t\int_{\O}  \nabla_{\bar{\mathbf{v}}_{{\rm I}}}\varphi_{n}(\bar{\mathbf{v}}_{{\rm I}})\mathbb{S}_{2}\d x\d s\right|^{r}\right]\ra 0;
\end{equation}
\begin{equation}
\int_{\O}\int_0^t  \nabla_{\bar{\mathbf{v}}_{{\rm I}}}\varphi_{n}(\bar{\mathbf{v}}_{{\rm I}})\left(r_{0}|\bar{\u}_{{\rm I}}|^{2} \bar{\u}_{{\rm I}}  +r_{1} \bar{\rho}_{{\rm I}}|\bar{\u}_{{\rm I}}|^{2} \bar{\u}_{{\rm I}} + r_{2}\bar{\u}_{{\rm I}} \right) \phi_{K}(\bar{\rho}_{{\rm I}})\d s\d x\geqslant 0;
\end{equation}
and
\begin{align}
& \int_{0}^t\int_{\O} \bar{\rho}_{{\rm I}}\phi_{K}^{2}\left(\bar{\rho}_{{\rm I}}\right)\mathbb{F}\left(\bar{\rho}_{{\rm I}},\bar{\u}_{{\rm I}}\right) \nabla_{\bar{\mathbf{v}}_{{\rm I}}}^{2}\varphi_{n}\left(\bar{\mathbf{v}}_{{\rm I}}\right)\mathbb{F}\left(\bar{\rho}_{{\rm I}},\bar{\u}_{{\rm I}}\right) \d x \d s\\
\ra & \int_{0}^t \int_{\O} \rho_{{\rm II}}\mathbb{F}\left(\rho_{{\rm II}},\u_{{\rm II}}\right) \nabla_{\u_{{\rm II}}}^{2}\varphi_{n}\left(\u_{{\rm II}}\right)\mathbb{F}\left(\rho_{{\rm II}},\u_{{\rm II}}\right) \d x  \d s \text{ as }K\ra \infty, \notag
\end{align}
$\bar{\mathbb{P}}$ a.s. according to Lebesgue's dominated convergence theorem. The terms
\begin{equation}
-\int_0^t \int_{\O} \nabla_{\bar{\mathbf{v}}_{{\rm I}}}^{2}\varphi_{n}(\bar{\mathbf{v}}_{{\rm I}})\bar{\rho}_{{\rm I}}^{2} \bar{\u}_{{\rm I}} \phi_{K}^{\prime}(\bar{\rho}_{{\rm I}}) \operatorname{div} \bar{\u}_{{\rm I}}\d x\d s,
\end{equation}
and
\begin{equation}
-\int_0^t \int_{\O} \nabla_{\bar{\mathbf{v}}_{{\rm I}}}^{2}\varphi_{n}(\bar{\mathbf{v}}_{{\rm I}})\bar{\rho}_{{\rm I}} \nabla \phi_{K}(\bar{\rho}_{{\rm I}}) \mathbb{D} \bar{\u}_{{\rm I}}\d x\d s,
\end{equation}
will go to zero due to $ \phi_{K}^{\prime}(\bar{\rho}_{{\rm I}})\bar{\rho}_{{\rm I}}^{\frac{1}{2}}\ls \frac{4}{K^{\frac{1}{2}}}$.
Lemma \ref{lemma sto int convergence} implies that
\begin{equation}
\int_{0}^{t} \int_{\O} \nabla_{\bar{\mathbf{v}}_{{\rm I}}}\varphi_{n}(\bar{\mathbf{v}}_{{\rm I}})\phi_{K}(\bar{\rho}_{{\rm I}})\bar{\rho}_{{\rm I}} \mathbb{F}(\bar{\rho}_{{\rm I}},\bar{\u}_{{\rm I}}) \d x\d \bar{W}_{{\rm I}}\ra \int_{0}^{t} \int_{\O}  \nabla_{\u_{{\rm II}}}\varphi_{n}(\u_{{\rm II}}) \rho_{{\rm II}} \mathbb{F}(\rho_{{\rm II}},\u_{{\rm II}}) \d x\d W_{{\rm II}}.
\end{equation}
 $\bar{\mathbb{P}}$ a.s. as $\kappa\ra 0$. Therefore, we have the following lemma.
 \begin{lemma}\label{Mellet-Vasseur mid process conclution}
 $\left(\rho_{{\rm II}}, \u_{{\rm II}}\right)$ satisfies
\begin{align}
&\int_{\O}\rho_{{\rm II}} \varphi_{n}(\u_{{\rm II}}) \d x= - \int_{0}^{t} \int_{\O} \nabla_{\u_{{\rm II}}}\varphi_{n}(\u_{{\rm II}}) \mathbf{R}_{{\rm II}} \d x \d s + \int_{\O}\rho_{0, {\rm II}} \varphi_{n}\left(\frac{\mathbf{q}_{0, {\rm II}}}{\rho_{0, {\rm II}}}\right) \d x\\
&+\int_0^t \int_{\O} \rho_{{\rm II}}^{\gamma} \nabla_{\u_{{\rm II}}}^{2}\varphi_{n}(\u_{{\rm II}}):\nabla \u_{{\rm II}}\d x\d s+ \int_0^t\int_{\O} \rho_{{\rm II}} \nabla_{\u_{{\rm II}}}^{2}\varphi_{n}(\u_{{\rm II}})\nabla\u_{{\rm II}}:\mathbb{D}\u_{{\rm II}}  \d x \d s\notag\\
&+\int_{0}^t\int_{\O}  \rho_{{\rm II}}\mathbb{F}(\rho_{{\rm II}},\u_{{\rm II}}) \nabla_{\u_{{\rm II}}}^{2}\varphi_{n}(\u_{{\rm II}}) \mathbb{F}(\rho_{{\rm II}},\u_{{\rm II}})  \d x\d s + \int_{0}^{t} \int_{\O} \nabla_{\u_{{\rm II}}}\varphi_{n}(\u_{{\rm II}})  \rho_{{\rm II}} \mathbb{F}(\rho_{{\rm II}},\u_{{\rm II}}) \d x\d W_{{\rm II}}, \notag
\end{align}
and the estimate
\begin{align}
&  \mathbb{E}\left[\left|  \int_{\Omega} \rho_{{\rm II}}  \varphi_{n} (\u_{{\rm II}} ) \d x  +\int_{0}^{t} \int_{\O} \nabla_{\u_{{\rm II}}}\varphi_{n}(\u_{{\rm II}}) \mathbf{R}_{{\rm II}} \d x \d s \right|^{r}\right]\notag \\
\ls & \bar{C} + C_{r}\mathbb{E}\left[\left|\int_{\Omega} \rho_{0, {\rm II}} \varphi_{n} \left(\frac{\mathbf{q}_{0, {\rm II}}}{\rho_{0, {\rm II}}}\right) \d x\right|^{r}\right]\notag\\
&+ C_{r} \mathbb{E}\left[\left|\int_{0}^t \int_{\O}\rho_{{\rm II}}\mathbb{F}(\rho_{{\rm II}},\u_{{\rm II}})\nabla_{\u_{{\rm II}}}^{2}\varphi_{n}\left(\u_{{\rm II}}\right)\mathbb{F}(\rho_{{\rm II}},\u_{{\rm II}}) \d x \d s\right|^{r}\right]\\
& + C_{r} \mathbb{E}\left[\left|\int_{0}^{t} \int_{\O}\left(1+\tilde{\varphi}_{n}^{\prime}\left(|\u_{{\rm II}} |^{2}\right)\right) \rho_{{\rm II}} ^{2 \gamma-1} \d x \d s\right|^{r}\right]\notag\\
& + C_{r}\mathbb{E}\left[\left| \int_{\O}\int_{0}^{t} \nabla_{\u_{{\rm II}}}\varphi_{n}(\u_{{\rm II}}) \rho_{{\rm II}}  \mathbb{F}(\rho_{{\rm II}} ,\u_{{\rm II}} )\d W_{{\rm II}} \d x\right|^{r}\right],\notag
\end{align}
where
\begin{equation}
\mathbf{R}_{{\rm II}}=\eta \nabla\rho_{{\rm II}}^{-10} + \delta \rho_{{\rm II}}\nabla \triangle^{9}\rho_{{\rm II}} +r_{0}|\u_{{\rm II}}|^{2} \u_{{\rm II}}  +r_{1} \rho_{{\rm II}}|\u_{{\rm II}}|^{2} \u_{{\rm II}} + r_{2}\u_{{\rm II}},
\end{equation}
$\bar{C} $ depends on $r, T$, and $\mathbb{E}\left[e_{{\rm I}}(0)^{r}\right]$, $C_{r}$ merely depends on $r$.
\end{lemma}
\begin{proposition} $\left(\left(\bar{\Omega},\bar{\mathcal{F}},\bar{\mathbb{P}} \right),\rho_{{\rm II}}, \u_{{\rm II}}, W_{{\rm II}}\right)$ is a martingale solution to
\begin{align}\label{system of kappa vanish}
\left\{\begin{array}{l}\vspace{1.2ex}
\left(\rho_{{\rm II}}\right)_{t}+\operatorname {div}(\rho_{{\rm II}} \u_{{\rm II}})= 0,\\
\d \left(\rho_{{\rm II}} \u_{{\rm II}}\right) + \left(\operatorname{div}(\rho_{{\rm II}} \u_{{\rm II}} \otimes \u_{{\rm II}})+ \nabla \left(a\rho_{{\rm II}}^{\gamma}\right)-\operatorname{div}(\rho_{{\rm II}}
\mathbb{D} \u_{{\rm II}})-\frac{11}{10}\eta \nabla \rho_{{\rm II}}^{-10} \right)\d t \\
= (-r_{0}|\u_{{\rm II}}|^{2}\u_{{\rm II}}-r_{1} \rho_{{\rm II}}|\u_{{\rm II}}|^{2} \u_{{\rm II}}-r_{2}\u_{{\rm II}}) \d t
+ \delta \rho_{{\rm II}}\nabla \triangle^{9}\rho_{{\rm II}}  \d t+ \rho_{{\rm II}} \mathbb{F}(\rho_{{\rm II}},\u_{{\rm II}}) \d W_{{\rm II}}.
\end{array}\right.
\end{align}
\end{proposition}
\begin{proof}
 The weak convergence of $\nabla\bar{\rho}_{{\rm I}}^{\frac{\gamma}{2}}$,
 $\bar{\rho}_{{\rm I}}^{\frac{1}{2}}\nabla^{2}\ln\bar{\rho}_{{\rm I}}$ still holds in the corresponding spaces. By Vitali's convergence theorem, the limits of $\bar{\rho}_{{\rm I}}$ and $\bar{\u}_{{\rm I}}$ satisfy the mass conservation equation. We just consider the convergence of stochastic forces.
We show that what is limit of stochastic integral $  \int_{0}^{T}\int_{\O}\bar{\rho}_{{\rm I}} \mathbb{F}(\bar{\rho}_{{\rm I}},\bar{\u}_{{\rm I}})\d \bar{W}_{{\rm I}}\d x$.

By \eqref{sto estimate in kappa1} and \eqref{sto estimate in kappa2}, we have
\begin{align}
 \int_0^t \int_{\O} \left|\bar{\rho}_{{\rm I}} \mathbb{F}(\bar{\rho}_{{\rm I}},\bar{\u}_{{\rm I}})\right|^{2}\d x\d t
 \ls C,
\end{align}
 where $C$ depends on $T,r,\mathbb{E}\left[e(0)^{r}\right]$ and $\mathbb{E}\left[\tilde{e}(0)^{r}\right]$.
In addition, we have derived the strong convergence of $\bar{\rho}_{{\rm I}}$ in $L^{2}\left(0,T; L^{\frac{3}{2}}\left(\O\right)\right)$ and the strong convergence of $\bar{\rho}_{{\rm I}}\bar{\u}_{{\rm I}}$ in $L^{2}\left(0,T; L^{\frac{3}{2}}\left(\O\right)\right)$, hence we know $\bar{\rho}_{{\rm I}}\ra \rho_{{\rm II}} $ almost everywhere, $\bar{\rho}_{{\rm I}}\bar{\u}_{{\rm I}}\ra \rho_{{\rm II}} \u_{{\rm II}} $ almost everywhere up to a subsequence. Therefore $\bar{\rho}_{{\rm I}} \mathbb{F}\left(\bar{\rho}_{{\rm I}},\bar{\u}_{{\rm I}}\right)\ra \rho_{{\rm II}}  \mathbb{F}\left(\rho_{{\rm II}} ,\u_{{\rm II}} \right)$ holds almost everywhere. By dominated convergence theorem, we have
\begin{equation}
\int_0^t \int_{\O} \left|\bar{\rho}_{{\rm I}} \mathbb{F}\left(\bar{\rho}_{{\rm I}},\bar{\u}_{{\rm I}}\right)\right|^{2}\d x\d t \ra \int_0^t \int_{\O} \left|\rho_{{\rm II}} \mathbb{F}\left(\rho_{{\rm II}},\u_{{\rm II}}\right)\right|^{2}\d x\d t, \quad \bar{\mathbb{P}} \text{ a.s.}.
\end{equation}
Then the convergence of the stochastic integral follows according to Lemma \ref{Conver of sto int for m layer}. \hfill$\square$
\end{proof}
 Moreover, letting
 \begin{align}\label{no kappa BD estimate1}
&e_{{\rm II}}(t)=\int_{\O}\left(\frac{1}{2}\rho_{{\rm II}}\left|\u_{{\rm II}}\right|^{2}+\frac{\eta}{10}\rho_{{\rm II}}^{-10}+\frac{\delta}{2}\left|\nabla\triangle^{4}\rho_{{\rm II}}\right|^{2}+\int_{1}^{\rho_{{\rm II}}}\frac{p(z)}{z}\d z\right)\d x, \\
&\tilde{e}_{{\rm II}}(t)=\int_{\O}\left(\frac{1}{2}\rho_{{\rm II}}\left|\u_{{\rm II}}+\nabla \ln\rho_{{\rm II}}\right|^{2}+\frac{\eta}{10}\rho_{{\rm II}}^{-10}+\frac{\delta}{2}\left|\nabla\triangle^{4}\rho_{{\rm II}}\right|^{2}+\int_{1}^{\rho_{{\rm II}}}\frac{p(z)}{z}\d z\right)\d x,
\end{align}
by Gr\"onwall's inequality, we have the following estimates,
\begin{align}\label{no kappa BD estimate1}
\mathbb{E}\left[\sup\limits_{t\in[0,T]}e_{{\rm II}}(t)^{r}\right] \ls C,\quad \mathbb{E}\left[\sup\limits_{t\in[0,T]}\tilde{e}_{{\rm II}}(t)^{r}\right] \ls C,
\end{align}
where $C$ depends on $r,T,\mathbb{E}\left[e_{{\rm II}}(0)^{r}\right]$, $\mathbb{E}\left[\tilde{e}_{{\rm II}}(0)^{r}\right]$, $r>4$.

\subsection{Vanishing the artificial pressure and Rayleigh damping: $ \delta, \eta, r_{0}\ra 0 $}
We take $n\ra +\infty$ to derive a new stochastic Mellet-Vasseur type inequality. Recall that in the process of the stochastic B-D entropy estimates,
estimate of the term $r_{0}\u_{{\rm II}}^{3}$ is dependent on $\delta$ and $\eta$, which will cause trouble if we firstly take $\delta,\eta \ra 0$. So we will take the limit $n\ra \infty, \delta,\eta,r_{0}\ra 0$ at the same time, then we do the energy estimates for the system, which shows the terms like $\mathbb{E}\left[\left(\int_{\O}\delta \mid \nabla\triangle^{4}\rho_{{\rm II}}\mid^{2}\d x\right)^{r}\right]$ in the energy estimates will vanish after we take the limit $\delta\ra 0$. So does the $\eta-$term in the stochastic B-D entropy.\\

\begin{flushleft}
\textbf{Step 1:} Choose the path space and show the tightness of the laws and the convergence.
\end{flushleft}

Take the space $  \mathcal{X}_{5}=\mathcal{X}_{\rho_{0}}^{5} \times \mathcal{X}_{\frac{\mathbf{q}_{0}}{\sqrt{\rho_{0}}}}^{5}
\times\mathcal{X}_{\mathbf{q}_{0}}^{5} \times\mathcal{X}_{\rho}^{5}\times \mathcal{X}_{\rho^{\frac{1}{2}}\u}^{5}\times \mathcal{X}_{W}^{5}$,
where $\mathcal{X}_{\rho_{0}}^{5}=L^{\gamma}\left(\O\right)$, $\mathcal{X}_{\frac{\mathbf{q}_{0}}{\sqrt{\rho_{0}}}}^{5}=L^{2}\left(\O\right)$,
$\mathcal{X}_{\mathbf{q}_{0}}^{5}=L^{1}\left(\O\right)$,
$\mathcal{X}_{\rho}^{5}=L^{2}\left(0,T; L^{\frac{3}{2}}\left(\O\right)\right)\cap L^{\frac{5}{3}\gamma}\left([0,T]\times\O\right)$,
 $\mathcal{X}_{\rho^{\frac{1}{2}}\u}^{5}=L^{2}\left([0,T]\times\O\right)$,
 $\mathcal{X}_{W}^{5}=C\left([0,T];\mathcal{H}\right)$. Then we have the corresponding tightness and convergence  theorem.
\begin{proposition}
There exists two families of $\mathcal{X}_{5}$-valued Borel measurable random variables
 $$\left\{\bar{\rho}_{0, {\rm II}},\frac{\bar{\mathbf{q}}_{0, {\rm II}}}{\sqrt{\bar{\rho}_{0, {\rm II}}}}, \bar{\mathbf{q}}_{0, {\rm II}}, \bar{\rho}_{{\rm II}},\bar{\rho}_{{\rm II}}^{\frac{1}{2}}\bar{\u}_{{\rm II}}, \bar{W}_{{\rm II}}\right\}$$
 and
  $\left\{\rho_{0,\rm{III}},\frac{\mathbf{q}_{0, {\rm III}}}{\sqrt{\rho_{0, {\rm III}}}}, \mathbf{q}_{0, {\rm III}}, \rho_{{\rm III}} , \rho_{{\rm III}}^{\frac{1}{2}}\u_{\rm{\tiny{III}}} , W_{{\rm III}} \right\}$,
  defined on a new complete probability space $\left(\bar{\Omega},\bar{\mathcal{F}},\bar{\mathbb{P}}\right)$, such that (up to a subsequence):
\begin{enumerate}
  \item
  $\mathcal{L}\left[\bar{\rho}_{0, {\rm II}},\frac{\bar{\mathbf{q}}_{0, {\rm II}}}{\sqrt{\bar{\rho}_{0, {\rm II}}}}, \bar{\mathbf{q}}_{0, {\rm II}}, \bar{\rho}_{{\rm II}}, \bar{\rho}_{{\rm II}}^{\frac{1}{2}}\bar{\u}_{{\rm II}}, \bar{W}_{{\rm II}} \right]$ and
   $\mathcal{L}\left[\rho_{0, {\rm II}},\frac{\mathbf{q}_{0, {\rm II}}}{\sqrt{\rho_{0, {\rm II}}}}, \mathbf{q}_{0, {\rm II}}, \rho_{{\rm II}}, \rho_{{\rm II}}^{\frac{1}{2}}\u_{{\rm II}}, W_{{\rm II}} \right]$ \\
   coincides with each other on $\mathcal{X}_{5}$;
  \item $\mathcal{L}\left[\rho_{0, {\rm III}},\frac{\mathbf{q}_{0, {\rm III}}}{\sqrt{\rho_{0, {\rm III}}}}, \mathbf{q}_{0, {\rm III}}, \rho_{{\rm III}} , \rho_{{\rm III}}^{\frac{1}{2}}\u_{{\rm III}} , W_{{\rm III}} \right]$ on $\mathcal{X}_{5}$ is a Radon measure;
  \item Random variables $\left\{\bar{\rho}_{0, {\rm II}},\frac{\bar{\mathbf{q}}_{0, {\rm II}}}{\sqrt{\bar{\rho}_{0, {\rm II}}}}, \bar{\mathbf{q}}_{0, {\rm II}}, \bar{\rho}_{{\rm II}},\bar{\rho}_{{\rm II}}^{\frac{1}{2}}\bar{\u}_{{\rm II}}, \bar{W}_{{\rm II}}\right\}$
   converges to\\
    $\left\{\rho_{0, {\rm III}},\frac{\mathbf{q}_{0, {\rm III}}}{\sqrt{\rho}_{0, {\rm III}}}, \mathbf{q}_{0, {\rm III}}, \rho_{{\rm III}}, \rho_{{\rm III}}^{\frac{1}{2}}\u_{{\rm III}}, W_{{\rm III}} \right\}$ in the topology of $\mathcal{X}_{5}$, $\bar{\mathbb{P}}$ {\rm a.s.} as $n\ra \infty$. \\
\end{enumerate}

\end{proposition}
\begin{flushleft}
\textbf{Step 2:} The estimates for the limit system.
\end{flushleft}

Recall \eqref{BD estimate2} and \eqref{BD estimate1},
 by Gagliardo-Nirenberg's interpolation inequality, it holds
 $\delta^{\frac{9}{19}}\left|\nabla\triangle^{4} \bar{\rho}_{\delta}\right| \in L^{\frac{19}{9}}\left(0,T; L^{3}(\O)\right)$ \cite{Vasseur-Yu-q2016}.
 Therefore, for any $\iota>0$, Chebyshev's inequality shows this highest order derivative of $\rho$ term goes to $0$ $\bar{\mathbb{P}}$ almost everywhere. For all $\varphi(t)\in C_{c}^{\infty}\left([0,T)\right)$, and all $\psi(x)\in C^{\infty}\left(\O\right)$, we have
\begin{align}
&\mathbb{P}\left[\left\{\left|\int_{0}^{t}\int_{\O}\delta \bar{\rho}_{{\rm II}}\nabla\triangle^{9}\bar{\rho}_{{\rm II}} \varphi(t)\psi(x) \d x\d s\right|>\iota\right\}\right]\\
\ls &C \frac{\mathbb{E}\left[\left|\int_{0}^{t}\int_{\O}\delta \triangle^{5}\bar{\rho}_{{\rm II}}\nabla\triangle^{4}\left(\bar{\rho}_{{\rm II}}
                                                                                    \varphi(t)\psi(x)\right) \d x\d s \right|^{r}\right]}{\iota^{r}}. \notag
\end{align}
Taking the highest order derivative of $\bar{\rho}_{{\rm II}}$ for example, since the lower order terms are controlled by this term, we have
\begin{align}
&\frac{\mathbb{E}\left[\left|\int_{0}^{t}\int_{\O}\delta \triangle^{5}\bar{\rho}_{{\rm II}}\nabla\triangle^{4}\left(\bar{\rho}_{{\rm II}}
                                                                                    \right) \varphi(t)\psi(x)\d x\d s \right|^{r}\right]}{\iota^{r}}\notag\\
\ls&\frac{\mathbb{E}\left[\left|C(\varphi,\psi) \int_{0}^{t} \int_{\Omega} \sqrt{\delta}\left|\triangle^{5} \bar{\rho}_{{\rm II}}\right|
            \delta^{\frac{9}{19}}\left|\nabla\triangle^{4} \bar{\rho}_{{\rm II}}\right| \delta^{\frac{1}{38}} \d x \d s \right|^{r}\right]}{\iota^{r}}\\
\ls &\frac{C(\varphi,\psi) \delta^{\frac{1}{38}}\left( \mathbb{E}\left[\sqrt{\delta}\left\|\triangle^{5}\bar{\rho}_{{\rm II}}\right\|_{L_{t}^{2} L_{x}^{2}}\right]^{2r}\right)^{\frac{1}{2}}
               \left( \mathbb{E}\left[ \left\|\delta^{\frac{9}{19}} \nabla\triangle^{4}\bar{\rho}_{{\rm II}}\right\|_{L_{t}^{\frac{19}{9}} L_{x}^{3}}\right]^{2r}\right)^{\frac{1}{2}}}{\iota^{r}}\notag\\
&\ra 0 \quad\text{ as }\delta \ra 0.\notag
\end{align}
In order to handle
\begin{align}
&\mathbb{E}\left[\left|\int_0^t\int_{\O} \delta \bar{\rho}_{{\rm II}}\nabla\triangle^{9}\bar{\rho}_{{\rm II}} \cdot \nabla_{\bar{\u}_{{\rm II}}}\varphi_{n} \left(\bar{\u}_{{\rm II}}\right) \d x \d s\right|^{r}\right]\notag\\
=& \mathbb{E}\left[\left|\int_0^t\int_{\O} \delta \bar{\rho}_{{\rm II}}\sum\limits_{i=1}^{3}\frac{\partial}{\partial x_{i}}\triangle^{9}\bar{\rho}_{{\rm II}} \left(1+\ln\left(1+|\bar{\u}_{{\rm II}}|^{2}\right)\right)2 \bar{u}_{{\rm II},i}\d x \d s \right|^{r}\right]\\
=& 3^{r-1}\sum\limits_{i=1}^{3}\mathbb{E}\left[\left|\int_0^t\int_{\O} \delta \bar{\rho}_{{\rm II}}\frac{\partial}{\partial x_{i}}\triangle^{9}\bar{\rho}_{{\rm II}} \left(1+\ln\left(1+|\bar{\u}_{{\rm II}}|^{2}\right)\right)2\bar{u}_{{\rm II}, i}\d x \d s \right|^{r}\right],\notag
\end{align}
we calculate
\begin{align}
    &\mathbb{E}\left[\left|\int_0^t \int_{\O}\delta \bar{\rho}_{{\rm II}}\frac{\partial}{\partial x_{1}}\triangle^{9}\bar{\rho}_{{\rm II}}\left(1+\ln\left(1+|\bar{\u}_{{\rm II}}|^{2}\right)\right)2\bar{u}_{{\rm II},1}\d x \d s \right|^{r}\right]\notag\\
\ls & C_{n,r}\mathbb{E}\left[\left|\int_0^t \int_{\O}\delta \bar{\rho}_{{\rm II}}\left|\frac{\partial}{\partial x_{1}}\triangle^{9}\bar{\rho}_{{\rm II}}\right|\d x \d s\right|^{r}\right]\notag\\
 =& C_{n,r}\mathbb{E}\left[\left|\int_0^t \int_{\O}\left(I_{\{\frac{\partial}{\partial x_{1}}\triangle^{9}\bar{\rho}_{{\rm II}}>0\}}-I_{\{\frac{\partial}{\partial x_{1}}\triangle^{9}\bar{\rho}_{{\rm II}}\ls 0\}}\right)\delta\bar{\rho}_{{\rm II}}\frac{\partial}{\partial x_{1}}\triangle^{9}\bar{\rho}_{{\rm II}}\d x \d s\right|^{r}\right] \\
 =& C_{n,r}\mathbb{E}\left[\left|\int_0^t \int_{\O}I_{\{\frac{\partial}{\partial x_{1}}\triangle^{9}\bar{\rho}_{{\rm II}}>0\}}\delta\triangle^{5}\bar{\rho}_{{\rm II}}\frac{\partial}{\partial x_{1}}\triangle^{4}\bar{\rho}_{{\rm II}}\d x \d s\right.\right.\notag\\
   &\quad \quad  \quad\left.\left.-\int_0^t \int_{\O} I_{\{\frac{\partial}{\partial x_{1}}\triangle^{9}\bar{\rho}_{{\rm II}}\ls 0\}}\delta\triangle^{5}\bar{\rho}_{{\rm II}}\frac{\partial}{\partial x_{1}}\triangle^{4}\bar{\rho}_{{\rm II}}\d x \d s\right|^{r}\right]\notag\\
\ls & C_{n,r}\mathbb{E}\left[\left|\int_0^t \int_{\O}\delta\left|\triangle^{5}\bar{\rho}_{{\rm II}}\right|\left|\frac{\partial}{\partial x_{1}}\triangle^{4}\bar{\rho}_{{\rm II}}\right|\d x \d s\right|^{r}\right],\notag
\end{align}
where $C_{n,r}=\left(2(e(1+n)^{2}-1)\left(1+\ln n\right)+1\right)^{r}$. So we have
\begin{align}
    &\mathbb{E}\left[\left|\int_0^t\int_{\O} \delta \bar{\rho}_{{\rm II}}\nabla\triangle^{9}\bar{\rho}_{{\rm II}} \cdot \nabla_{\bar{\u}_{{\rm II}}}\varphi_{n} \left(\bar{\u}_{{\rm II}}\right) \d x \d s\right|^{r}\right]\notag\\
\ls &3^{r}C\mathbb{E}\left[\left(\int_0^t \int_{\O} \delta\left|\triangle^{5}\bar{\rho}_{{\rm II}}\right| \left|\nabla\triangle^{4}\bar{\rho}_{{\rm II}}\right| \d x \d s\right)^{r}\right]\notag\\
= &3^{r}C\mathbb{E}\left[\left( \delta^{\frac{1}{38}}\int_0^t\int_{\O} \delta^{\frac{1}{2}} |\triangle^{5}\bar{\rho}_{{\rm II}}| \delta^{\frac{9}{19}} |\nabla\triangle^{4}\bar{\rho}_{{\rm II}}|\d x \d s\right)^{r}\right] \\
\ls &3^{r}C\left( \delta^{\frac{1}{38}} \right)^{r}\left( \mathbb{E}\left[\delta^{\frac{1}{2}}\left\|\triangle^{5}\bar{\rho}_{{\rm II}}\right\|_{L_{t}^{2} L_{x}^{2}}\right]^{2r}\right)^{\frac{1}{2}}
               \left( \mathbb{E}\left[ \left\|\delta^{\frac{9}{19}} \nabla\triangle^{4}\bar{\rho}_{{\rm II}}\right\|_{L_{t}^{\frac{19}{9}} L_{x}^{3}}\right]^{2r}\right)^{\frac{1}{2}}\notag\\
\ls &3^{r}C\left( \delta^{\frac{1}{38}} \right)^{r},\notag
\end{align}
 for any $\delta<n^{-\alpha}$, $\alpha> 76$, where $C$ depends on $r,T,\mathbb{E}\left[(e_{{\rm II}}(0))^{r}\right]$ and $\mathbb{E}\left[(\tilde{e}_{{\rm II}}(0))^{r}\right]$. \\
 $\mathbb{E}\left[\left(\left|\int_0^T\int_{\O} \delta \bar{\rho}_{{\rm II}}\nabla\triangle^{9}\bar{\rho}_{{\rm II}} \nabla_{\bar{\u}_{{\rm II}}}\varphi_{n} \left(\bar{\u}_{{\rm II}}\right) \d x \d t\right|\right)^{r}\right]\ra 0$ as $n\ra \infty$.

Then, we consider the limit $\eta\ra 0$. We claim that $\eta\int_{\O}\bar{\rho}_{{\rm II}}^{-10}\d x \ra 0$ as $\eta\ra 0$. In fact,
\begin{equation}
\left|\left\{x \mid \bar{\rho}_{{\rm II}}(t, x)=0\right\}\right|=0 \quad \text{ for almost every } t,x,\omega.
\end{equation}
By Aubin-Lion's lemma, the strong convergence of $\bar{\rho}_{{\rm II}}$ in $L^{2}(0,T; L^{1}\left( \O \right))$ still holds, so $\bar{\rho}_{{\rm II}}^{-10}\ra \rho_{{\rm III}}^{-10}$ holds almost everywhere. So the $\eta\int_{\O}\bar{\rho}_{{\rm II}}^{-10}\d x$ in energy will vanish as $\eta\ra 0$. Next, we deal with $\mathbb{E}\left[\left(\left|\int_0^t\int_{\O} \eta \nabla\bar{\rho}_{{\rm II}}^{-10}\nabla_{\bar{\u}_{{\rm II}}}\varphi_{n} \left(\bar{\u}_{{\rm II}}\right) \d x \d s\right|\right)^{r}\right]$. Since
$
 \mathbb{E}\left[\left(\int_{\O}\eta \frac{\bar{\rho}_{{\rm II}}^{-10}}{10} \d x\right)^{r}\right]\ls C,
$
$
 \mathbb{E}\left[\left(\int_{0}^{T}\int_{\O}\eta \frac{11}{25} \left|\nabla \bar{\rho}_{{\rm II}}^{-5}\right|^{2}\d x\d s\right)^{r}\right]\ls C,
$
calculating
\begin{equation}
|\nabla \bar{\rho}_{{\rm II}}^{-10}|= 2\left|\nabla \bar{\rho}_{{\rm II}} \right|\bar{\rho}_{{\rm II}}^{-11}= 2\left|\nabla \bar{\rho}_{{\rm II}}^{\frac{1}{2}}\right|\bar{\rho}_{{\rm II}}^{-10-\frac{1}{2}}.
\end{equation}
by H\"older's inequality, we have
\begin{align}
 \eta^{\frac{11}{20}}\left\|\bar{\rho}_{{\rm II}}^{-10-\frac{1}{2}}\right\|_{L_{t}^{1}L_{x}^{\frac{60}{23}}}
\ls \eta^{\frac{1}{2}}\left\|\bar{\rho}_{{\rm II}}^{-10}\right\|_{L_{t}^{1}L_{x}^{3}} \eta^{\frac{1}{20}}\left\|\bar{\rho}_{{\rm II}}^{-\frac{1}{2}}\right\|_{L_{t}^{\infty}L_{x}^{20}}\ls C,
\end{align}
where $C$ depends on $r,T,\mathbb{E}\left[(e_{{\rm II}}(0))^{r}\right]$ and $\mathbb{E}\left[(\tilde{e}_{{\rm II}}(0))^{r}\right]$.
So by H\"older's inequality, it holds
\begin{align}
 &\eta^{\frac{11}{20}} \int_0^t\left|\int_{\O}\nabla\bar{\rho}_{{\rm II}}^{-10}\d x\right|^{\frac{60}{23}}\d s
=\eta^{\frac{11}{20}} \int_0^t \left|\int_{\O}\bar{\rho}_{{\rm II}}^{-10-\frac{1}{2}}\left|\nabla \bar{\rho}_{{\rm II}}^{\frac{1}{2}}\right|\d x\right|^{\frac{60}{23}}\d s \notag\\
\ls  &C\sup\limits_{s\in[0,T]}\left(\int_{\O}\left|\nabla \bar{\rho}_{{\rm II}}^{\frac{1}{2}}\right|^{2}\d x\right)^{\frac{1}{2}} \int_0^t \eta^{\frac{11}{20}}\left(\int_{\O}\bar{\rho}_{{\rm II}}^{(-10-\frac{1}{2})\frac{60}{23}}\d x\right)^{\frac{23}{60}\frac{60}{23}}\d s\\
\ls &C \sup\limits_{s\in[0,T]}\left(\int_{\O}\left|\nabla \bar{\rho}_{{\rm II}}^{\frac{1}{2}}\right|^{2}\d x\right)^{\frac{1}{2}}\eta^{\frac{1}{2}}\left\|\bar{\rho}_{{\rm II}}^{-10}\right\|_{L_{t}^{1}L_{x}^{3}} \eta^{\frac{1}{20}}\left\|\bar{\rho}_{{\rm II}}^{-\frac{1}{2}}\right\|_{L_{t}^{\infty}L_{x}^{20}},\notag
\end{align}
where $C$ depends on the length of the period of the area.
So we conclude that $\eta^{\frac{253}{1200}}\nabla\bar{\rho}_{{\rm II}}^{-10}\in L^{\frac{60}{23}}\left(0,T;L^{1}\left( \O \right)\right)$. Next, we estimate
\begin{align}
    &\mathbb{E}\left[\left|\int_0^t\int_{\O} \eta \nabla\bar{\rho}_{{\rm II}}^{-10} \nabla_{\bar{\u}_{{\rm II}}}\varphi_{n}\left(\bar{\u}_{{\rm II}}\right) \d x \d s\right|^{r}\right]\notag\\
\ls &\mathbb{E}\left[\left(\left|\int_0^t\int_{\O} \eta |\nabla\bar{\rho}_{{\rm II}}^{-10}|  \left(1+\ln\left(1+|\bar{\u}_{{\rm II}}|^{2}\right)\right)2|\bar{\u}_{{\rm II}}| \d x \d s\right|\right)^{r}\right]\notag\\
\ls &\mathbb{E}\left[\left(2(e(1+n)^{2}-1)^{\frac{1}{2}}\left(1+\ln(1+n)\right) \int_0^t \int_{\O} \eta \left|\nabla\bar{\rho}_{{\rm II}}^{-10}\right| \d x \d s\right)^{r}\right]\\
\ls &\mathbb{E}\left[\left(2(e(1+n)^{2}-1)^{\frac{1}{2}}\left(1+\ln(1+n)\right)\eta^{\frac{947}{1200}} \int_0^t \int_{\O} \eta^{\frac{253}{1200}}\left|\nabla\bar{\rho}_{{\rm II}}^{-10}\right|\d x\d s\right)^{r}\right]\notag \\
\ls & C\mathbb{E}\left[\left(2(e(1+n)^{2}-1)^{\frac{1}{2}}\left(1+\ln(1+n)\right)\eta^{\frac{947}{1200}}
\left\|\eta^{\frac{253}{1200}}\nabla\bar{\rho}_{{\rm II}}^{-10}\right\|_{L_{t}^{\frac{60}{23}}L_{x}^{1}}\right)^{r}\right],\notag
\end{align}
 for any $\eta<n^{-\beta}$, $\beta> \frac{2400}{947}$, $\mathbb{E}\left[\left(\left|\int_0^T\int_{\O} \eta \nabla\bar{\rho}_{{\rm II}}^{-10}\nabla_{\bar{\u}_{{\rm II}}}\varphi_{n}\left(\bar{\u}_{{\rm II}}\right)\d x \d t\right|\right)^{r}\right]\ra 0$ as $n\ra \infty$.
For all $\varphi(t)\in C_{c}^{\infty}\left([0,T)\right)$, and all $\psi(x)\in C^{\infty}\left(\O\right)$, for any $\iota>0$, we obtain
\begin{align}
&\quad \mathbb{P}\left[\left\{\left|\int_{0}^{t}\int_{\O}\eta \nabla \bar{\rho}_{{\rm II}}^{-10}\varphi(t)\psi(x) \d x\d s \right|>\iota\right\}\right]\notag\\
&\ls  \frac{\mathbb{E}\left[\left|\int_{0}^{t}\varphi(t)\int_{\O}\eta \bar{\rho}_{{\rm II}}^{-10} \nabla\psi(x) \d x\d s \right|^{r}\right]}{\iota^{r}}
  \ls  \frac{\mathbb{E}\left[\left|\int_{0}^{t}\varphi(t)\int_{\O}\eta \bar{\rho}_{{\rm II}}^{-10} \nabla\psi(x) \d x\d s \right|^{r}\right]}{\iota^{r}}\\
&\ls  \eta^{\frac{r}{2}}\frac{\mathbb{E}\left[\left(\left\|\eta^{\frac{1}{2}}\bar{\rho}_{{\rm II}}^{-10}\right\|_{L_t^{2}L_x^{3}}\left\|\varphi(t)\right\|_{L_t^{2}}\left\|\psi(x)\right\|_{H_x^{10}}\right)^{r}\right]}{\iota^{r}}
\ls  C \eta^{\frac{r}{2}} \ra 0\quad\text{as }\eta\ra 0,\notag
\end{align}
where $C$ depends on $r,T,\mathbb{E}\left[(e_{{\rm II}}(0))^{r}\right]$ and $\mathbb{E}\left[(\tilde{e}_{{\rm II}}(0))^{r}\right]$.

From (\ref{lower bound of rho}), we have
\begin{align}
 \mathbb{E}\left[\left\| \bar{\rho}_{{\rm II}}^{-1} \right\|^r_{L_t^{\infty}L_x^{\infty}}\right]
& \ls C_{r}\eta^{-\frac{3}{10}r}\left(\mathbb{E}\left[\eta^{r}\left\|\bar{\rho}_{{\rm II}}^{-1}\right\|_{L_t^{\infty}L_x^{10}}^{10r}\right]\right)^{\frac{3}{10}}
       \delta^{-r}\left(\mathbb{E}\left[\left(\delta^{r}\left\| \bar{\rho}_{{\rm II}}\right\|_{L_t^{\infty}H_x^{9}}^{2r}\right)^{\frac{10}{7}}\right]\right)^{\frac{7}{10}}\\
&\quad +C_{r}\eta^{-r}\mathbb{E}\left[\eta^{r}\left\|\bar{\rho}_{{\rm II}}^{-1}\right\|_{L_t^{\infty}L_x^{10}}^{10r}\right]^{\frac{1}{5}}\delta^{-\frac{1}{2}r}\mathbb{E}\left[\left(\delta^{\frac{1}{2}r}\left\| \bar{\rho}_{{\rm II}}\right\|_{L_t^{\infty}H_x^{9}}^{r}\right)^{\frac{5}{4}}\right]^{\frac{4}{5}}.\notag
\end{align}
We estimate the added damping term as follows
\begin{align}
&-r_{0}\int_{0}^{t}\int_{\O}|\bar{\u}_{{\rm II}}|^{2}\bar{\u}_{{\rm II}}\cdot\nabla\ln\bar{\rho}_{{\rm II}} \d x\d s
\ls  r_{0}\int_{0}^{t}\int_{\O}\bar{\rho}_{{\rm II}}^{\frac{3}{4}}|\bar{\u}_{{\rm II}}|^{2}\bar{\u}_{{\rm II}} |\frac{\nabla\bar{\rho}_{{\rm II}}}{\bar{\rho}_{{\rm II}}\bar{\rho}_{{\rm II}}^{\frac{3}{4}}}|\d x\d s \\
\ls & r_{0}\left\|\bar{\rho}_{{\rm II}}^{-1}\right\|_{L_{t}^{\infty}L_{x}^{\infty}}^{\frac{7}{4}} \left\||\bar{\rho}_{{\rm II}}^{\frac{3}{4}}\bar{\u}_{{\rm II}}|^{2}\bar{\u}_{{\rm II}}\right\|_{L_{t}^{\frac{4}{3}}L_{x}^{\frac{4}{3}}}\left\|\nabla\triangle^{4}\bar{\rho}_{{\rm II}}\right\|_{L_{t}^{\infty}L_{x}^{2}}.\notag
\end{align}
So for our setting, $\eta=n^{-\alpha}$, $\delta=n^{-\beta}$, roughly we choose $r_{0}=n^{-\left(1+\frac{7}{4}\left(\frac{13}{10}\alpha+\frac{3}{2}\beta\right)+\frac{\beta}{2}\right)}$, then
\begin{align}
&-\mathbb{E}\left[\left|r_{0}\int_{0}^{t}\int_{\O}|\bar{\u}_{{\rm II}}|^{2}\bar{\u}_{{\rm II}}\cdot \nabla\ln\bar{\rho}_{{\rm II}} \d x\d s\right|^{r}\right]
\ls  r_{0}^{r}\left(\eta^{-\frac{3}{10}r}\delta^{-r}+\eta^{-r}\delta^{-\frac{1}{2}r}\right)^{\frac{7}{4}}\delta^{-\frac{r}{2}} C
\ra 0 \text{ as } n \ra \infty, \notag
\end{align}
where $C$ depends on $r,T,\mathbb{E}\left[(e_{{\rm II}}(0))^{r}\right]$ and $\mathbb{E}\left[(\tilde{e}_{{\rm II}}(0))^{r}\right]$.
Therefore we take limit $r_{0}\ra 0$, i.e., $n\ra \infty$, in the Stochastic B-D entropy estimates.

For passing to the limit $n\ra \infty$, we still have
\begin{align}
   &\mathbb{E}\left[\left(\left|\int_0^t\int_{\O}\bar{\rho}_{{\rm II}} \mathbf{F}_{k}\nabla_{\bar{\u}_{{\rm II}} }\varphi_{n}\left(\bar{\u}_{{\rm II}} \right)\d x\d \bar{W}_{{\rm II}} \right|\right)^{r}\right]\notag\\
\ls&\mathbb{E}\left[\left(\left|\int_0^t\left(\int_{\O}\bar{\rho}_{{\rm II}} \mathbf{F}_{k}\nabla_{\bar{\u}_{{\rm II}} }\varphi_{n}\left(\bar{\u}_{{\rm II}} \right)\d x\right)^{2}\d s\right|\right)^{\frac{r}{2}}\right]\notag\\
\ls&\mathbb{E}\left[\left(f_{k}^{2}\int_0^t\left(\int_{\O}\bar{\rho}_{{\rm II}} (1+\ln (1+|\bar{\u}_{{\rm II}} |^{2}))2|\bar{\u}_{{\rm II}} |\d x\right)^{2}\d s \right)^{\frac{r}{2}}\right]\\
\ls&\mathbb{E}\left[\left(f_{k}^{2}\int_0^t\left(\int_{\O}\bar{\rho}_{{\rm II}} |\bar{\u}_{{\rm II}} |^{2}\d x \int_{\O}\bar{\rho}_{{\rm II}} (1+\ln (1+|\bar{\u}_{{\rm II}} |^{2}))^{2}\d x\right) \d s\right)^{\frac{r}{2}}\right]\notag\\
\ls&\mathbb{E}\left[\left(f_{k}^{2}\left\| \bar{\rho}_{{\rm II}} ^{\frac{1}{2}}\bar{\u}_{{\rm II}} \right\|_{L_s^{\infty}L_x^{2}}^{2}\left(t\left\|\bar{\rho}_{{\rm II}} \right\|_{L_t^{\infty}L_x^{2}}+\left\|\left(\bar{\rho}_{{\rm II}} \right)^{\frac{1}{4}}\bar{\u}_{{\rm II}} \right\|_{L_t^{4}L_x^{4}}\right)\right)^{\frac{r}{2}}\right]\ls C,\notag
\end{align}
where $C$ depends on $r, T, \mathbb{E}\left[(e_{{\rm II}}(0))^{r}\right]$ and $\mathbb{E}\left[(\tilde{e}_{{\rm II}}(0))^{r}\right]$. 
When $n \ra +\infty $, for the stochastic part, since
\begin{equation}
\nabla_{\bar{\u}_{{\rm II}}}^{2}\varphi_{n}\left(\bar{\u}_{{\rm II}}\right)\ra \left(\left(2+\ln\left(1+|\u_{{\rm III}}|^{2}\right)\right)\mathbb{I}_{3}+4\frac{\rho_{{\rm III}}\u_{{\rm III}}\otimes\rho_{{\rm III}}\u_{{\rm III}}}{|\rho_{{\rm III}}\u_{{\rm III}}|^{2}}\right),
\end{equation}
we gain
\begin{align}
&\mathbb{E}\left[\left|\int_{\O}\int_{0}^t \bar{\rho}_{{\rm II}}\mathbb{F}(\bar{\rho}_{{\rm II}},\bar{\u}_{{\rm II}})\nabla_{\bar{\u}_{{\rm II}}}^{2}\varphi_{n}\left(\bar{\u}_{{\rm II}}\right)\mathbb{F}(\bar{\rho}_{{\rm II}},\bar{\u}_{{\rm II}}) \d s \d x\right|^{r}\right]\notag\\
\ls &\sum\limits_{k=1}^{+\infty} f_{k}^{2}\left(\mathbb{E}\left[\left|\int_{\O}\int_{0}^t\bar{\rho}_{{\rm II}}|\bar{\u}_{{\rm II}}|^{2}\d s \d x\right|^{r}\right]+\mathbb{E}\left[\left|\int_{\O}\int_{0}^t\bar{\rho}_{{\rm II}}\d s \d x\right|^{r}\right]\right)\ls C,
\end{align}
where $C$ depends on $r,T,\mathbb{E}\left[(e_{{\rm II}}(0))^{r}\right]$ and $\mathbb{E}\left[(\tilde{e}_{{\rm II}}(0))^{r}\right]$. Hence we prove the following lemma.
\begin{lemma}\label{Mellet-Vasseur lemma}
(Mellet-Vasseur's inequality in the stochastic version) For $\varpi\in (0,2)$, where $\varpi$ is small, for some $r> 4$, it holds
\begin{align}
     &\mathbb{E}\left[\left(\int_{\O} \rho_{{\rm III}} \left(1+|\u_{{\rm III}} |^{2}\right) \ln \left(1+|\u_{{\rm III}} |^{2}\right) \d x \right)^{r}\right]\notag\\
\ls & \bar{C}+C_{r}\mathbb{E}\left[\left( \int_{\O} \rho_{0,{\rm III}}\left(1+\frac{\left|\mathbf{q}_{0,{\rm III}}\right|^{2}}{\rho_{0,{\rm III}}^{2}}\right) \ln \left(1+\frac{\left|\mathbf{q}_{0,{\rm III}}\right|^{2}}{\rho_{0,{\rm III}}^{2}}\right) \d x \right)^{r}\right]\\
&+\mathbb{E}\left[\left(8\left(\int_{\O}\left(\frac{\left|\mathbf{q}_{0,{\rm III}}\right|^{2}}{\rho_{0,{\rm III}}}+\frac{\rho_{0,{\rm III}}^{\gamma}}{\gamma-1}+\left|\nabla \sqrt{\rho_{0,{\rm III}}}\right|^{2}\right) \d x+2 e_{0,{\rm III}}\right) \right)^{r}\right]\notag\\
     &+\mathbb{E}\left[\left( C \int_{0}^{t}\left(\int_{\O}\left(\rho_{{\rm III}}^{2 \gamma-1-\frac{\varpi}{2}}\right)^{\frac{2}{2-\varpi}}\right)^{\frac{2-\varpi}{2}}\left(\int_{\O} \rho_{{\rm III}} \left(2+\ln \left(1+|\u_{{\rm III}} |^{2}\right)\right)^{\frac{2}{\varpi}} \d x\right)^{\frac{\varpi}{2}} \d s \right)^{r}\right],\notag
\end{align}
and there exists a constant $C$, such that
 $$\left(\int_{\O} \rho_{{\rm III}}\left(2+\ln \left(1+|\u_{{\rm III}} |^{2}\right)\right)^{\frac{2}{\varpi}} \d x\right)^{\frac{\varpi}{2}}\ls C \int_{\O}\rho_{{\rm III}} \u_{{\rm III}} ^{2}\d x,$$
$\bar{C}$ and $C_{r}$ share the same meaning as in lemma \ref{Mellet-Vasseur mid process conclution}.
\end{lemma}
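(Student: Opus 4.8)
The plan is to pass to the limit $n\to\infty$ (equivalently $\delta,\eta,r_0\to 0$) in the identity and the $r$-th moment estimate of Lemma~\ref{Mellet-Vasseur mid process conclution}, using the Skorokhod representation of the previous proposition so that all convergences of $\bar\rho_n,\bar\rho_n^{1/2}\bar\u_n$ are $\bar{\mathbb P}$-almost sure in the topology of $\mathcal X_5$. First I would record that the estimates already established in this subsection show that each artificial term produces a moment bounded by a positive power of $\delta$, $\eta$ or $r_0$: namely $\mathbb E[|\int_0^T\!\int_{\O}\delta\bar\rho_n\nabla\triangle^9\bar\rho_n\cdot\nabla_{\bar\u_n}\varphi_n(\bar\u_n)\,\d x\,\d t|^r]\lesssim \delta^{r/38}\to 0$, the analogous statement for $\eta\nabla\bar\rho_n^{-10}$ via the bound $\eta^{253/1200}\nabla\bar\rho_n^{-10}\in L^{60/23}(0,T;L^1)$, and the damping estimate for $-r_0\int|\bar\u_n|^2\bar\u_n\cdot\nabla\log\bar\rho_n$ controlled by $r_0^r(\eta^{-3/10}\delta^{-1}+\eta^{-1}\delta^{-1/2})^{7r/4}\delta^{-r/2}$, which tends to $0$ precisely because $\eta=n^{-\alpha}$, $\delta=n^{-\beta}$, $r_0=n^{-(1+\frac74(\frac{13}{10}\alpha+\frac32\beta)+\frac\beta2)}$ are all polynomial powers of $n$ with the damping power dominating. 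Consequently in the limit the only surviving term from $\mathbf R=\eta\nabla\rho^{-10}+\delta\rho\nabla\triangle^9\rho+r_0\u+r_1\rho|\u|^2\u+r_2|\u|^2\u$ that is not nonpositive (the $r_1,r_2$ contributions have a sign when paired with $\nabla_{\u}\varphi_n(\u)=2\tilde\varphi_n'(|\u|^2)\u$, since $\tilde\varphi_n'\ge 0$) also drops out.

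Next I would pass to the limit in the remaining terms of the identity in Lemma~\ref{Mellet-Vasseur mid process conclution}. The left side $\int_{\O}\rho\varphi_n(\u)\,\d x$ and the initial term $\int_{\O}\rho_0\varphi_n(\u_0)\,\d x$ converge by the a.e.\ convergence of $\bar\rho_n$, $\bar\rho_n^{1/2}\bar\u_n$ together with the monotone/dominated convergence using $\varphi_n(\u)\nearrow(1+|\u|^2)\ln(1+|\u|^2)$; here the stochastic Mellet--Vasseur quantity on the left is handled by Fatou, so one gets an inequality, not an equality, which is all the statement claims. The viscous term $\int_0^t\!\int_{\O}\rho\nabla_{\u}^2\varphi_n(\u)\nabla\u:\mathbb D\u\,\d x\,\d s$ is, as in Vasseur--Yu, rewritten using $\nabla_{\u}^2\varphi_n(\u):\nabla\u=4\tilde\varphi_n''\u\otimes\u:\nabla\u+2\tilde\varphi_n'\odiv\u$ and absorbed/bounded against $\int\rho|\mathbb D\u|^2$, producing the $\rho^{2\gamma-1}$ term with the logarithmic weight; I would estimate $\int_0^T\!\int_{\O}(1+\tilde\varphi_n'(|\u|^2))\rho^{2\gamma-1}\,\d x\,\d t$ by Hölder as $\int_0^T(\int_{\O}\rho^{(2\gamma-1-\varpi/2)\frac{2}{2-\varpi}})^{\frac{2-\varpi}{2}}(\int_{\O}\rho(2+\ln(1+|\u|^2))^{2/\varpi})^{\varpi/2}\,\d t$ for small $\varpi\in(0,2)$, which is exactly the last term in the conclusion; the exponent $2\gamma-1-\varpi/2$ is where the restriction $\gamma>\frac65$ must be used so that $(2\gamma-1-\varpi/2)\frac{2}{2-\varpi}\le\frac53\gamma$, matching the available $L^{5\gamma/3}_{t,x}$ bound on $\rho$. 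The Itô-correction term $\int_0^t\!\int_{\O}\rho\mathbb F\nabla_{\u}^2\varphi_n(\u)\mathbb F\,\d x\,\d s$ converges by dominated convergence (using $|\nabla_{\u}^2\varphi_n|\lesssim 2+\ln(1+|\u|^2)$ and $\sum f_k^2<\infty$), and is bounded by $\bar C$ via $\mathbb E[(\int\rho|\u|^2)^r]+\mathbb E[(\int\rho)^r]\lesssim \mathbb E[E(0)^r]+1$; the stochastic integral $\int_0^t\!\int_{\O}\nabla_{\u}\varphi_n(\u)\rho\mathbb F\,\d x\,\d W$ converges by Lemma~\ref{lemma sto int convergence} after establishing $L^2([0,T]\times\O)$-convergence of the integrand, and its $r$-th moment is controlled by Burkholder--Davis--Gundy exactly as in the displayed estimate bounding it by $C$ depending on $r,T,\mathbb E[E(0)^r],\mathbb E[\tilde E(0)^r]$.

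Finally I would collect all these pieces, move the (nonnegative) $r_1,r_2$ damping contributions and the viscous $\int\rho|\mathbb D\u|^2$ piece to the appropriate side, and invoke the energy inequality \eqref{energy estimate eps 1}--\eqref{energy estimate eps 2} and the stochastic B--D estimate \eqref{no eps BD estimate1} to bound $\mathbb E[(\int_{\O}(\rho_0|\u_0|^2+\frac{\rho_0^\gamma}{\gamma-1}+|\nabla\sqrt{\rho_0}|^2)\,\d x+2E_0)^r]$ and hence obtain the stated inequality, together with the auxiliary pointwise bound $(\int_{\O}\rho(2+\ln(1+|\u|^2))^{2/\varpi}\,\d x)^{\varpi/2}\le C\int_{\O}\rho|\u|^2\,\d x$, which follows from the elementary inequality $(2+\ln(1+y))^{1/\varpi}\le C_\varpi y$ for $y\ge y_0$ combined with a constant contribution on $\{|\u|\le y_0^{1/2}\}$ absorbed into $\int\rho$. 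I expect the main obstacle to be the bookkeeping that makes the three artificial powers $\delta^{r/38}$, $\eta^{947 r/1200}$, $r_0^r(\cdots)$ all vanish simultaneously for a single admissible scaling $\eta=n^{-\alpha},\delta=n^{-\beta},r_0=n^{-\gamma_0}$ with $\alpha>76$, $\beta>\frac{2400}{947}$ and $\gamma_0$ large — in particular verifying that the negative powers of $\eta,\delta$ appearing through $\|\rho^{-1}\|_{L^\infty_{t,x}}$ in the damping estimate are genuinely overwhelmed by the chosen power of $r_0$ — and, secondarily, the $\gamma>\frac65$ integrability check for the $\rho^{2\gamma-1}$ term, which is the step that forces the adiabatic-exponent restriction of the theorem.
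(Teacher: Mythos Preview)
Your proposal is correct and follows essentially the same route as the paper: the lemma is obtained by letting $n\to\infty$ (with $\delta,\eta,r_0$ coupled to negative powers of $n$) in Lemma~\ref{Mellet-Vasseur mid process conclution}, using the Skorokhod representation, discarding the artificial terms by the powers you list, observing the sign of the $r_1,r_2$ contributions, handling the viscous/pressure coupling via the $P_{11}$--$A_1$ cancellation already recorded in \eqref{Mellet-Vasseur P11 estimates}, bounding the It\^o correction and the stochastic integral exactly as you indicate, and splitting $\rho^{2\gamma-1}\tilde\varphi_n'$ by H\"older with the small parameter $\varpi$.

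One bookkeeping slip to fix: you have the roles of $\alpha$ and $\beta$ reversed. In the paper $\delta=n^{-\alpha}$ with $\alpha>76$ (because the $\delta$-term carries only $\delta^{1/38}$ against $C_{n,r}\sim n^{2r}$) and $\eta=n^{-\beta}$ with $\beta>\tfrac{2400}{947}$; with your stated scaling $\delta=n^{-\beta}$, $\beta>\tfrac{2400}{947}$, the quantity $C_{n,r}\,\delta^{r/38}$ would blow up, not vanish. Swap the two exponents and your argument goes through verbatim. The $\gamma>\tfrac65$ restriction is, in the paper, actually enforced earlier (in the $\kappa\to0$ layer, equation \eqref{continuity of stochastic terms in kappa}), not at the H\"older step here; the exponent $\tfrac{2(2\gamma-1)}{2-\varpi}$ only needs to sit inside the available $L^{5\gamma/3}_{t,x}$ bound, which holds for all $\gamma>1$ once $\varpi$ is small enough.
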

\begin{proposition} $\left(\left(\bar{\Omega},\bar{\mathcal{F}},\bar{\mathbb{P}} \right),\rho_{{\rm III}}, \u_{{\rm III}}, W_{{\rm III}}\right)$ is a martingale solution to
\begin{equation}\label{system of kappa n vanished}
\left\{\begin{array}{l}
\left(\rho_{{\rm III}}\right)_{t}+\operatorname {div}(\rho_{{\rm III}} \u_{{\rm III}})= 0, \\
\d  \left(\rho_{{\rm III}}\u_{{\rm III}}\right) + \left(\operatorname{div}(\rho_{{\rm III}} \u_{{\rm III}} \otimes \u_{{\rm III}})+ \nabla \left(a\rho_{{\rm III}}^{\gamma}\right)-\operatorname{div}(\rho_{{\rm III}}
\mathbb{D} \u_{{\rm III}})\right)\d t\\
= (-r_{1} \rho_{{\rm III}}|\u_{{\rm III}}|^{2} \u_{{\rm III}}-r_{2}\u_{{\rm III}}) \d t + \rho_{{\rm III}} \mathbb{F}(\rho_{{\rm III}},\u_{{\rm III}}) \d W_{{\rm III}}.
\end{array}\right.
\end{equation}
\end{proposition}
\begin{proof}
 The weak convergence of  $\nabla\bar{\rho}_{{\rm II}}^{\frac{\gamma}{2}}$, still holds in the corresponding spaces. By Vitali's convergence theorem, $\rho_{{\rm III}}$ and $\u_{{\rm III}}$ satisfy the mass conservation equation, it suffices to see whether the convergence of the term associated with stochastic forces holds or not in the momentum equation.
Since we still have the strong convergence of $\bar{\rho}_{{\rm II}}$ in $L^{2}\left(0,T; L^{\frac{3}{2}}\left(\O\right)\right)$ and the strong convergence of $\bar{\rho}_{{\rm II}}^{\frac{1}{2}}\bar{\u}_{{\rm II}}$ in $L^{2}\left([0,T]\times\O\right)$, hence $\bar{\rho}_{{\rm II}}\ra\rho_{{\rm III}} $ holds almost everywhere, and $ \bar{\rho}_{{\rm II}}\bar{\u}_{{\rm II}}\ra \rho_{{\rm III}} \u_{{\rm III}} $ holds almost everywhere up to a subsequence. Therefore $ \bar{\rho}_{{\rm II}} \mathbb{F}(\bar{\rho}_{{\rm II}},\bar{\u}_{{\rm II}}) \ra\rho_{{\rm III}}  \mathbb{F}\left(\rho_{{\rm III}},\u_{{\rm III}}\right)$ almost everywhere. By dominated convergence theorem, we have
\begin{equation}
\int_0^t \int_{\O} |\bar{\rho}_{{\rm II}} \mathbb{F}(\bar{\rho}_{{\rm II}},\bar{\u}_{{\rm II}})|^{2}\d x\d t \ra \int_0^t \int_{\O} |\rho_{{\rm III}} \mathbb{F}(\rho_{{\rm III}},\u_{{\rm III}})|^{2}\d x\d t,
\end{equation}
 $\bar{\mathbb{P}}$ a.s. \hfill$\square$
\end{proof}
 We define the energy $e(t)=\int_{\O}\left(\frac{1}{2}\rho_{{\rm III}}|\u_{{\rm III}} |^{2}+\frac{a}{\gamma}\rho_{{\rm III}} ^{\gamma} \right)\d x$,
 then the following lemma holds.
\begin{lemma}
For system \eqref{system of kappa n vanished}, it holds
\begin{enumerate}
  \item for some $r> \max\left\{6\gamma, \frac{4\gamma}{5\gamma-6}\right\}$,
   \begin{equation}\label{energy estimate1 kappa n vanished}
\mathbb{E}\left[\sup\limits_{t\in[0,T]}e_{{\rm III}}(t)^{r}\right] \ls  C\left(t^{\frac{r}{2}}+t^{r}+\mathbb{E}\left[e_{{\rm III}}(0)^{r}\right]\right)e^{C\left(t^{\frac{r}{2}}+t^{r}\right)}
\ls C\left(\mathbb{E}\left[e_{{\rm III}}(0)^{r}\right]+1\right),
\end{equation}
where $C$ depends on $r$ $T$, and $\sum\limits_{k=1}^{+\infty}f^{2}_{k}$. 
\item Let $\tilde{e}_{{\rm III}}=\int_{\O}\left(\frac{1}{2}\rho_{{\rm III}}|\u_{{\rm III}}+\nabla \ln\rho_{{\rm III}}|^{2}+\int_{1}^{\rho_{{\rm III}}}\frac{p(z)}{z}\d z+r_{2}\int_{\O}\ln_{-}\rho_{{\rm III}}\right)\d x$, then we have
\begin{align}\label{BD estimate kappa n vanished}
\mathbb{E}\left[\sup\limits_{t\in[0,T]}\tilde{e}_{{\rm III}}(t)^{r}\right]&\ls C,\\
\mathbb{E}\left[\left(r_{1}\int_{0}^{t}\int_{\O}\rho_{{\rm III}}|\u_{{\rm III}}|^{4}\d x\d s\right)^{r}\right]&\ls C,
\quad \mathbb{E}\left[\left(r_{2}\int_{0}^{t}\int_{\O}|\u_{{\rm III}}|^{2} \d x\d s\right)^{r}\right]\ls C,
\end{align}
where $C$ depends on $r, T, \mathbb{E}\left[e_{{\rm III}}(0)^{r}\right],\mathbb{E}\left[e_{{\rm III}}(0)^{\frac{6r}{5}}\right]$ and $\mathbb{E}\left[e_{{\rm III}}(0)^{k_{i}r}\right]$, $i=1,2,3$,
 in which $k_{{\rm I}}$ are specific constants.
\end{enumerate}
\end{lemma}
\subsection{Vanishing the drag forces: $r_{1}\ra 0,r_{2}\ra 0$ }
\par
\begin{flushleft}
\textbf{Step 1:} Choose the path space, and show the tightness of the laws and the convergence.
\end{flushleft}

We study the convergence of $\rho_{{\rm III}}$.
\begin{equation}
\begin{aligned}
 &\quad \mathbb{E}\left[\left\|\rho_{{\rm III}}^{\frac{1}{2}}\rho_{{\rm III}} ^{\frac{1}{2}}\odiv\u_{{\rm III}} \right\|_{L_t^{2} L_x^{\frac{3}{2}}}^{r}\right]  \ls \mathbb{E}\left[\left\|\rho_{{\rm III}}^{\frac{1}{2}}\right\|_{L_t^{\infty}L_x^{6}}^{r}\left\|\rho_{{\rm III}}^{\frac{1}{2}}\odiv\u_{{\rm III}}
       \right\|_{L_t^{2}L_x^{2}}^{r}\right] \ls C,\\
\end{aligned}
\end{equation}
together with
\begin{equation}
\begin{aligned}
&\quad \mathbb{E}\left[\left\| \nabla \rho_{{\rm III}}^{\frac{1}{2}}(\rho_{{\rm III}} ^{\frac{1}{2}}\u_{{\rm III}} )\right\|_{L_t^{2} L_x^{1}}^{r}\right]\ls\mathbb{E}\left[\left\|\nabla \rho_{{\rm III}}^{\frac{1}{2}}\right\|_{L_t^{\infty}L_x^{2}}^{r}\left\|\rho_{{\rm III}}^{\frac{1}{2}}\u_{{\rm III}} \right\|_{L_t^{2}L_x^{2}}^{r}\right] \ls C,\\
\end{aligned}
\end{equation}
implies
\begin{equation}
\begin{aligned}
&\quad \mathbb{E}\left[\left\|\left(\rho_{{\rm III}}\right)_{t}\right\|_{L_t^2 L_x^{1}}^{r}\right]\ls  C,\\
\end{aligned}
\end{equation}
uniformly in $r_{1},r_{2}$. \\
Calculating$\left|\nabla \rho_{{\rm III}}\right|=2\left|\nabla \rho_{{\rm III}} ^{\frac{1}{2}}\right| \rho_{{\rm III}} ^{\frac{1}{2}},$
 $\rho_{{\rm III}}$ is bounded in $L^{\infty}\left(0, T; W^{1,1}\left( \O \right)\right)$, by Aubin-Lion's lemma, we know that the strong convergence of $\rho_{{\rm III}}$ still holds in $L^{2}\left(0, T; L^{\frac{3}{2}}\left(\O \right)\right)$.

Take the space $\mathcal{X}_{6}=\mathcal{X}_{\rho_{0}}^{6} \times \mathcal{X}_{\frac{\mathbf{q}_{0}}{\sqrt{\rho_{0}}}}^{6}\times\mathcal{X}_{\rho}^{6}
 \times \mathcal{X}_{\rho^{\frac{1}{2}}\u}^{6}\times \mathcal{X}_{W}^{6}$, where $\mathcal{X}_{\rho_{0}}^{6}=L^{\gamma}\left( \O \right)$,
 $\mathcal{X}_{\frac{\mathbf{q}_{0}}{\sqrt{\rho_{0}}}}^{6}=L^{2}\left( \O \right)$, $\mathcal{X}_{\rho}^{6}=L^{2}\left(0,T; L^{\frac{3}{2}}\left( \O \right)\right)$,
  $\mathcal{X}_{\rho^{\frac{1}{2}}\u}^{6}=L^{2}\left([0,T]\times\O\right)$, $\mathcal{X}_{W}^{6}=C([0,T];\mathcal{H})$. Similarly, we have the following convergence. 
\begin{proposition}
There exist two families of $\mathcal{X}_{6}$-valued Borel measurable random variables
$$
\left\{\bar{\rho}_{0, {\rm III}}, \frac{\bar{\mathbf{q}}_{0, {\rm III}}}{\sqrt{\bar{\rho}_{0, {\rm III}}}}, \bar{\rho}_{{\rm III}},\bar{\rho}_{{\rm III}}^{\frac{1}{2}}\bar{\u}_{{\rm III}}, \bar{W}_{{\rm III}} \right\}
$$
 and
  $\left\{\rho_{0},\frac{\mathbf{q}_{0}}{\sqrt{\rho_{0}}}, \rho,  \rho^{\frac{1}{2}}\u, W \right\}$,
  defined on a new complete probability space $\left(\bar{\Omega},\bar{\mathcal{F}},\bar{\mathbb{P}} \right)$, such that (up to a sequence):
\begin{enumerate}
  \item
  $\mathcal{L}\left[\bar{\rho}_{0, {\rm III}}, \frac{\bar{\mathbf{q}}_{0, {\rm III}}}{\sqrt{\bar{\rho}_{0, {\rm III}}}}, \bar{\rho}_{{\rm III}},  \bar{\rho}_{{\rm III}}^{\frac{1}{2}}\bar{\u}_{{\rm III}}, \bar{W}_{{\rm III}} \right]$ and
   $\mathcal{L}\left[\rho_{0, {\rm III}}, \frac{\mathbf{q}_{0, {\rm III}}}{\sqrt{\rho_{0, {\rm III}}}}, \rho_{{\rm III}},  \rho_{{\rm III}}^{\frac{1}{2}}\u_{{\rm III}}, W_{{\rm III}} \right]$ \\
   coincide with each other on $\mathcal{X}_{6}$;
  \item $\mathcal{L}\left[\rho_{0},\frac{\mathbf{q}_{0}}{\sqrt{\rho_{0}}}, \rho, \rho^{\frac{1}{2}}\u, W \right]$ on $\mathcal{X}_{6}$ is a Radon measure;
  \item The random variables $\left\{\bar{\rho}_{0, {\rm III}}, \frac{\bar{\mathbf{q}}_{0, {\rm III}}}{\sqrt{\bar{\rho}_{0, {\rm III}}}}, \bar{\rho}_{{\rm III}},  \bar{\rho}^{\frac{1}{2}}\bar{\u}_{{\rm III}}, \bar{W}_{{\rm III}} \right\}$
   converges $\bar{\mathbb{P}}$ {\rm a.s.} to\\
    $\left\{\rho_{0},\frac{\mathbf{q}_{0}}{\sqrt{\rho_{0}}}, \rho, \rho \u , W \right\}$ in the topology of $\mathcal{X}_{6}$. 
\end{enumerate}
\end{proposition}
\begin{flushleft}
\textbf{Step 2:} $\left(\left(\bar{\Omega},\bar{\mathcal{F}},\bar{\mathbb{P}} \right),\rho, \u, W\right)$ is a global martingale solution to our system \eqref{sto NS system}, which finally proves our main theorem.
\end{flushleft}
\begin{lemma}
The stochastic B-D entropy estimates become
\begin{align}\label{BD estimate  kappa n vanished}
\tilde{e}=\int_{\O}\left(\frac{1}{2}\rho|\u+\nabla \ln\rho|^{2}+\int_{1}^{\rho}\frac{p(z)}{z}\d z\right)\d x,\quad
\mathbb{E}\left[\sup\limits_{t\in[0,T]}\tilde{e}(t)^{r}\right]&\ls C,
\end{align}
where $C$ depends on $r,T,\mathbb{E}\left[e(0)^{r}\right],\mathbb{E}\left[e(0)^{\frac{6r}{5}}\right]$ and $\mathbb{E}\left[e(0)^{k_{{\rm I}}r}\right]$, $i=1,2,3$, in which $k_{{\rm I}}$ are specific constants.
\end{lemma}
\begin{lemma} $\left(\left(\bar{\Omega},\bar{\mathcal{F}},\bar{\mathbb{P}} \right),\rho, \u, W\right)$ is a global-in-time martingale solution to system \eqref{sto NS system}.
\end{lemma}
\begin{proof}
 The proof is similar to the last section. We just show that the terms involving $r_{1} \bar{\rho}_{{\rm III}}\left|\bar{\u}_{{\rm III}}\right|^{2} \bar{\u}_{{\rm III}}$ and $r_{2} \bar{\u}_{{\rm III}}$ tend to zero in distribution sense as $r_{2} \rightarrow 0$ and $r_{1} \rightarrow 0$ $\bar{\mathbb{P}}$ a.s. 
For $r_{1} \bar{\rho}_{{\rm III}}\left|\bar{\u}_{{\rm III}}\right|^{2} \bar{\u}_{{\rm III}}$,  for all $\varphi(t)\in C_{c}^{\infty}\left([0,T)\right)$, and all $\psi(x)\in C^{\infty}\left(\O\right)$, we have
\begin{align}
    &\int_{0}^{T} \int_{\O} r_{1}\bar{\rho}_{{\rm III}}\left| \bar{\u}_{{\rm III}}\right|^{2} \bar{\u}_{{\rm III}}\varphi(t)\psi(x)\d x \d t \\
\ls &r_{1}^{\frac{1}{2}}\left\|r_{1}^{\frac{1}{2}} \bar{\rho}_{{\rm III}}^{\frac{1}{2}}\left|\bar{\u}_{{\rm III}}\right|^{2}\right\|_{L_t^{2}L_x^{2}}\left\|\bar{\rho}_{{\rm III}}^{\frac{1}{2}}\ \bar{\u}_{{\rm III}}\right\|_{L_t^{2}L_x^{2}}\|\varphi(t)_{L_t^{2}}\|\psi(x)\|_{L_x^{\infty}}
 \ra 0\notag
\end{align}
as $r_{1}\rightarrow 0$.
 The term $r_{2}\bar{\u}_{{\rm III}}$
\begin{align}
    &\int_{0}^{T} \int_{\O} r_{2}\bar{\u}_{{\rm III}}\varphi(t)\psi(x) \d x \d t
\ls \int_{0}^{T} \int_{\O}  r_{2}^{\frac{1}{2}} r_{2}^{\frac{1}{2}}\left|\bar{\u}_{{\rm III}}  \varphi(t)\psi(x)\right| \d x \d t\\
\ls &r_{2}^{\frac{1}{2}}\left\|r_{2}^{\frac{1}{2}} \bar{\u}_{{\rm III}}\right\|_{L_t^{2}L_x^{2}}\|\varphi(t)\|_{L_t^{2}}\|\psi(x)\|_{L_x^{2}} \ra 0\notag
\end{align}
as $r_{2}\rightarrow 0$.
Letting $r_{1}\rightarrow 0$ and $r_{2}\rightarrow 0$, we obtains that
\begin{align}
&-\varphi(0)\int_{\O} \mathbf{q}_{0} \cdot \psi(x) \d x-\int_{0}^{T}\varphi_{t}(t) \int_{\O} \rho\u \psi(x) \d x \d t \notag\\
 &-\int_{0}^{T}\varphi(t) \int_{\O} \rho\u\otimes \mathbf{u}: \nabla \psi(x) \d x \d t-\int_{0}^{T} \varphi(t)\int_{\O} \rho^{\gamma} \odiv \psi(x) \d x \d t\\
 &-\int_{0}^{T}\varphi(t) \int_{\O} \rho \mathbb{D} \mathbf{u}: \nabla \psi(x) \d x \d t =\int_{0}^{T} \varphi(t)\int_{\O} \rho\mathbb{F}(\rho,\u)\psi(x)\d x\d W,
\notag
\end{align}
$\bar{\mathbb{P}}$ a.s. \hfill $\square$
\end{proof}

\section{Appendix: Some preliminaries in stochastic analysis}
 In this section, for the convenience of the reader, we list some basic definitions and theorems in measure theory and  stochastic analysis.
 \subsection{Definitions} \label{def in app}
 We first list some definitions.
\begin{enumerate}
  \item [1.]\label{sto basis} {\bf Stochastic basis.} Let $\left(\Omega, \mathcal{F}, \mathbb{P}\right)$ be a stochastic basis. A filtration $\mathcal{F}=\left(\mathcal{F}_{t}\right)_{t \in \mathbf{T}}$ is a family of $\sigma$-algebras on $\Omega$ indexed by $\mathbf{T}$ such that $\mathcal{F}_{s} \subseteq$ $\mathcal{F}_{t} \subseteq \mathcal{F}$, $s \leq t$, $s, t \in \mathbf{T}$. A measurable space, together with a filtration, is called a filtered space.
  \item [2.]
{\bf Adapted stochastic process.} A stochastic process $X$ is $\mathcal{F}$-adapted if $X_{t}$ is $\mathcal{F}_{t}$-measurable for every $t \in \mathbf{T}$; 
\item [3.] {\bf Wiener Process.} An $\mathbb{R}^{m}$ -valued stochastic process $W$ is called an $\mathcal{F}$-adapted Wiener process, provided:
 $W(t)$ is $\mathcal{F}_{t}$-adapted;\quad  $W(0)=0, \quad  \mathbb{P}$ a.s.;\quad
$W$ has continuous trajectories: $t \mapsto W(t)$ is continuous $\mathbb{P}$ a.s.; \quad
$W$ has independent increments: $W(t)-W(s)$ is independent of $\mathcal{F}_{s}$ for all $0 \leq s \leq$ $t<\infty$.
\item [4.]\label{stopping time def}{\bf Stopping time.} On $\left(\Omega, \mathcal{F}, \mathbb{P}\right)$, a random time is a measurable mapping $\tau: \Omega \rightarrow \mathbf{T} \cup \infty$. A random time is a stopping time if $\{\tau \leq t\} \in \mathcal{F}_{t}$ for every $t \in \mathbf{T}$.
For a process $X$ and a subset $V$ of the state space we define the hitting time of $X$ in $V$ as
\begin{align}
\tau_{V}(\omega)=\inf \left\{\left.t \in \mathbf{T}\right| X_{t}(\omega) \in V\right\}
\end{align}
If $X$ is a continuous adapted process and $V$ is closed, then $\tau_{V}$ is a stopping time.
\item [5.] {\bf Martingale.} A process $X$ is called a martingale if
$X$ is adapted;
$X_{t}$ is integrable for every $t \in \mathbf{T}$;
$X_{s}=\mathbb{E}\left[X_{t} \mid \mathcal{F}_{s}\right]$ whenever $s \leq t, s, t \in \mathbf{T}$.
\item [6.]\label{progressive measurability}
{\bf Progressive measurability.} Let $\left(\Omega,\mathcal{F},\mathbb{P} \right)$ be a filtrated space. Stochastic process $X$ is progressively measurable or simply progressively measurable, if for $\omega\in \Omega$, $(\omega, s) \mapsto X(s,\omega), s \leq t$ is $\mathcal{F}_{t} \otimes \mathcal{B}(\mathbf{T} \cap[0, t])$-measurable for every $t \in \mathbf{T}$, where $\mathcal{B}(\mathbf{T} \cap[0, t])$ is Borel-algebra.
  In particular, progressively measurable processes are automatically adapted. The reciprocal is true if the paths of the process are regular enough. Let $X$ be right-continuous or left-continuous $\mathcal{F}$-adapted process. Then $X$ is progressively measurable. Any measurable and adapted process admits a progressive modification. 
\item [7.] {\bf Stochastic integral.} By $\mathcal{M}_{c}^{2}$ we denote the space of square integrable continuous martingales $M$ such that $M_{0}=0$. For $M$ in $\mathcal{M}_{c}^{2}$, we denote by $\langle M\rangle$ the continuous natural process of bounded variation in the Doob-Meyer decomposition of the sub-martingale $M^{2}$, that is, $M^{2}-\langle M\rangle$ is a martingale. We call $\langle M\rangle$ the quadratic variations of $M$. Let $\mathcal{L}^{2}(M)$ be the space of progressively measurable process $H$ such that $\mathbb{E}\left[\int_{0}^{t} H^{2} \d\langle M\rangle\right]<+\infty$. There exists a unique continuous linear functional $\int \cdot \d M: \mathcal{L}^{2}(M) \rightarrow$ $\mathcal{M}_{2}^{c}$ coinciding with the elementary stochastic integral on $\mathcal{S}$ and for which holds
\begin{align}
\mathbb{E}\left[\left(\int_{0}^{t} H \d M\right)^{2}\right]=\mathbb{E}\left[\int_{0}^{t} H^{2} \d\langle M\rangle\right].
\end{align}
Furthermore, the following properties hold
\begin{itemize}
                                             \item Linearity: $\int(\alpha H+\beta G) \d M=\alpha \int H \d M+\beta \int G \d M$ for $\alpha$ and $\beta$ constants;
                                             \item Stopping property: $\int 1_{\{\cdot \leq \tau\}} H \d M=\int H \d M^{\tau}=\int_{0}^{\cdot \wedge \tau} H \d M$;
                                             \item It\^o-Isometry: for every $t$,
$
\mathbb{E}\left[\left(\int_{0}^{t} H \d M\right)^{2}\right]=\mathbb{E}\left[\int_{0}^{t} H^{2} \d\langle M\rangle\right].
$
\end{itemize}
\item [8.]\label{def semi-martingale}{\bf Semi-Martingale.} A semi-martingale is a process $X$ with decomposition $X=X_{0}+M+A$, where $A$ is the difference of two increasing continuous processes and $M$ is a continuous local martingale. For two semi-martingales $X=X_{0}+M+A$ and $Y=Y_{0}+N+B$, we define
    \begin{itemize}
      \item the quadratic variations: $\langle X\rangle \triangleq \langle M\rangle$;
      \item the co-variations: $\langle X, Y\rangle \triangleq (\langle X+Y\rangle-\langle X-Y\rangle) / 4=(\langle M+N\rangle-\langle M-N\rangle) / 4=\langle M, N\rangle$.
    \end{itemize}
     For every progressive process $H$ such that $\int_{0}^{t}|H| \d|A|<\infty$ and $\int_{0}^{t}|H|^{2} \d\langle M\rangle<\infty$ a.s. for every $t$, we therefore define
\begin{align}
\int H \d X\triangleq\int H \d M+\int H \d A.
\end{align}
  \item [9.] \label{Radon measure} {\bf Radon measure.} Let $\mathcal{M}$ be a measure on the $\sigma$-algebra with its element belonging to a Hausdorff topological space $S$. The measure $\mathcal{M}$ is called inner regular or tight if, for any open set $U$,
\begin{align}
\mathcal{M}(U)=\sup_{K\subseteqq U}\{\mathcal{M}(K): K \quad is \quad compact\}.
\end{align}
The measure $\mathcal{M}$ is called outer regular if for any Borel set $B$,
\begin{align}
\mathcal{M}(B)=\inf_{B\subseteqq U}\{\mathcal{M}(U): U \quad is \quad open\}.
\end{align}
The measure $\mathcal{M}$  is called locally finite if every point of $S$ has a neighborhood $U$ for which $\mathcal{M}(U)$ is finite.
The measure $\mathcal{M}$  is called a Radon measure if it is inner regular, outer regular, and locally finite.
 A measure on $\mathbb {R}$ is a Radon measure if and only if it is a locally finite Borel measure. Since a probability measure is globally finite, hence locally finite, so every probability measure on a Radon space is also a Radon measure. In particular, a separable complete metric space $(S, \mathcal{M})$ is a Radon space. Gaussian measure on Euclidean space $\mathbb {R} ^{n}$ with its Borel $\sigma$-algebra is a Radon measure. 
  \item [10.]\label{def of law}
    {\bf Law of a random variable.} Let $(S, \mathcal{S})$ be a measurable space. An $S$-valued random variable is a measurable mapping
 $\mathbf{U}:(\Omega, \mathcal{F}) \rightarrow(S, \mathcal{S})$. 
 We denote by $\mathcal{L}[\mathbf{U}]$ or also $\mathcal{L}_{S}[\mathbf{U}]$ the law of $\mathbf{U}$ on $S$, that is, $\mathcal{L}[\mathbf{U}]$ is the push-forward probability measure on $(S, \mathcal{S})$ given by
\begin{equation}
\mathcal{L}\left[\mathbf{U}\right](A)=\mathbb{P}\left[\left\{\mathbf{U} \in A\right\}\right], \quad A \in \mathcal{S}.\\
\end{equation} 
In measure theory, a pushforward measure 
is obtained by using a measurable function, transferring a measure from one measurable space to another space.
\end{enumerate}
\subsection{Theorems}\label{appendix 2nd part}
In the paper, the following theorems are employed.
\begin{enumerate}
\item [1.] {\bf Chebyshev's inequality.} Let $X$ be a random variable in probability space $\left(\Omega, \mathcal{F}, \mathbb{P}\right)$, $\varepsilon>0$. For every $0<r<\infty$, Chebyshev's inequality reads
\begin{equation}
\mathbb{P}\left[\left\{|X| \geq \varepsilon\right\}\right] \leq \frac{1}{\varepsilon^r}\mathbb{E}\left[|X|^r\right] .
\end{equation}
  \item [2.]\label{Skorokhod thm}{\bf Jakubowski's extension of Skorokhod's representation theorem.}\cite{Skorokhod1957LimitTF}\cite{Jakubowski} Let $(S, \mathcal{M})$ be a sub-Polish space and let $\mathcal{S}$ be the $\sigma$-field generated by $\left\{f_{n}; n \in \mathbb{N}\right\}$. If $\left(\mu_{n}\right)_{n \in \mathbb{N}}$ is a tight sequence of probability measures on $(S, \mathcal{S})$, then there exists a subsequence $\left(n_{k}\right)$ and $S$-valued Borel measurable random variables $\left(\mathbf{U}_{k}\right)_{k \in \mathbb{N}}$ and $\mathbf{U}$ defined on the standard probability space $([0,1], \overline{\mathcal{B}([0,1])}, \bar{\mathbb{P}})$, such that $\mu_{n_{k}}$ is the law of $\mathbf{U}_{k}$ and $\mathbf{U}_{k}(\omega)$ converges to $\mathbf{U}(\omega)$ in $S$ for every $\omega \in[0,1]$. Moreover, the law of $\mathbf{U}$ is a Radon measure.
  \item [3.]\label{BDG inequa}{\bf Burkholder-Davis-Gundy's inequality.} \cite{BDG-inequality} Let $M$ be a continuous local martingale. Let $M^{\ast}=\max\limits_{0\ls s\ls t}|M(s)|$, for any $m>0$, then there exist constants $K^{m}$ and $K_{m}$ such that
\begin{equation}
 K_{m} \mathbb{E}\left[\left(\langle M\rangle_{T}\right)^{m}\right]\ls \mathbb{E}\left[\left(M^{\ast}_{T}\right)^{2m}\right]\ls K^{m} \mathbb{E}\left[\left(\langle M\rangle_{T}\right)^{m}\right],
\end{equation}
for every stopping time $T$. For $m\geq 1$, $K^{m}=\left(\frac{2m}{2m-1}\right)^{\frac{2m(2m-2)}{2}}$, which is equivalent to $e^{m}$ as $m\ra \infty$.
  \item [4.]\label{Centov thm}{\bf Kolmogorov-Centov's continuity theorem.}\cite{Karatzas1988} Let $(\Omega, \mathcal{F}, \mathbb{P})$ be a probability space and $\bar{X}$ a process on $[0, T]$. Suppose that
\begin{equation}
\mathbb{E}\left[\left|\bar{X}_{t}-\bar{X}_{s}\right|^{\alpha}\right] \leq C|t-s|^{1+\beta},
\end{equation}
for every $s<t \leq T$ and some strictly positive constants $\alpha, \beta$ and $C$. Then $\bar{X}$ admits a continuous modification $X$, $\mathbb{P}\left[\left\{X_{t}=\bar{X}_{t}\right\}\right]=1$ for every $t$, and $X$ is locally H\"older continuous for every exponent $0<\gamma<\frac{\beta }{\alpha},$ namely,
\begin{equation}
\mathbb{P}\left[\left\{\omega\left| \sum_{0<t-s<h(\omega), t, s \leq T} \frac{\left|X_{t}(\omega)-X_{s}(\omega)\right|}{|t-s|^{\gamma}} \leq \delta\right.\right\}\right]=1,
\end{equation}
where $h(\omega)$ is an strictly positive random variable a.s. and the constant $\delta>0$.
\item [5.]\label{Kolmogorov thm}{\bf Kolmogorov's theorem.} \cite{Karatzas1988} The sequence of probability measures $\left\{\mathbb{P}_{n}\right\}_{n=1}^{\infty}$ on  \\
$\left(C[0, \infty), \mathcal{B}\left(C[0, \infty)\right)\right)$,
is tight if and only if
\begin{align}
\lim\limits_{\lambda\ra \infty}\sup_{n\geqslant 1} \mathbb{P}_{n}\left[\left\{\left.\omega\right||\omega(0)|>\lambda\right\} \right]=0,
\end{align}
\begin{align}
\lim\limits_{\delta \ra 0} \sup_{n\geqslant 1} \mathbb{P}_{n}\left[\left\{\omega\left|\max\limits_{|s-t|\ls \delta}|\omega(s)-\omega(t)|>\eps \right.\right\}\right]=0;\quad 0 \ls s,t \ls T; \quad \forall T>0, \eps>0.
\end{align}
  \item [6.]\label{Ito formula}{\bf It\^o's formula.} \cite{Ito1944} Let $X$ and $Y$ be semi-martingales, then it holds
\begin{equation}
\begin{aligned}
X Y &= X_{0} Y_{0}+\int X \d Y+\int Y \d X+\int \d\langle X,Y \rangle\\
&=X_{0} Y_{0}+\int X \d Y+\int Y \d X + \langle X,Y\rangle,\\
\end{aligned}
\end{equation}
$\langle X,Y\rangle$ means the co-variation of $X$ and $Y$. In particular, this formula can be used to calculate integration by part. More precisely, if $X$ is continuous, then $\langle \d X,\d Y\rangle=0 $ and it holds $\int X \d Y=-\int Y \d X+ X Y-X_{0} Y_{0}$.
\end{enumerate}

\bigskip

\bigskip
\noindent{\bf Data Availability Statement} \quad Data sharing is not applicable to this article as no data sets were generated or analysed during the current study.

\end{document}